\newmdtheoremenv{theorem}{Theorem}
\newmdtheoremenv{proposition}[theorem]{Proposition}
\newtheorem{lemma}[theorem]{Lemma}
\newtheorem{definition}[theorem]{Definition}
\newtheorem{remark}[theorem]{Remark}
\newtheorem{fact}[theorem]{Fact}
\newcommand{\mylabel}[2]{#2\def\@currentlabel{#2}\label{#1}}
\newcommand{\ensemblenombre}[1]{\mathbb{#1}}
\newcommand{\N}{\ensemblenombre{N}}
\newcommand{\R}{\ensemblenombre{R}}
\renewcommand{\P}{\mathbb{P}}
\newcommand{\K}{\mathbb{K}}
\newcommand{\bU}{\mathbb{U}}
\newcommand{\Ec}[1]{\mathbb{E} \left[#1\right]}
\newcommand{\Pp}[1]{\mathbb{P} \left(#1\right)}
\newcommand{\Ecsq}[2]{\mathbb{E} \left[#1\mathrel{}\middle|\mathrel{}#2\right]}
\newcommand{\Ppsq}[2]{\mathbb{P} \left(#1\mathrel{}\middle|\mathrel{}#2\right)}
\newcommand{\Var}[1]{\mathrm{Var}\left(#1\right)}
\newcommand{\Varsq}[2]{\mathrm{Var} \left(#1\mathrel{}\middle|\mathrel{}#2\right)}
\newcommand{\intervalle}[4]{\mathopen{#1}#2
                            \mathclose{}\mathpunct{},#3
                            \mathclose{#4}}
\newcommand{\intervalleff}[2]{\intervalle{[}{#1}{#2}{]}}
\newcommand{\intervalleof}[2]{\intervalle{(}{#1}{#2}{]}}
\newcommand{\intervallefo}[2]{\intervalle{[}{#1}{#2}{)}}
\newcommand{\intervalleoo}[2]{\intervalle{(}{#1}{#2}{)}}
\newcommand{\petito}[1]{o\mathopen{}\left(#1\right)}
\newcommand{\enstq}[2]{\left\lbrace#1\mathrel{}\middle|\mathrel{}#2\right\rbrace}
\newcommand{\ind}[1]{\mathbf{1}_{\lbrace #1 \rbrace}}
\newcommand{\cT}{\mathcal{T}}
\newcommand{\cL}{\mathcal{L}}
\newcommand{\cX}{\mathcal{X}}
\newcommand{\cY}{\mathcal{Y}}
\newcommand{\rM}{\mathrm{M}}
\newcommand{\rT}{\mathrm{T}}
\newcommand{\sP}{\mathscr{P}}
\newcommand{\sR}{\mathscr{R}}
\newcommand{\sA}{\mathscr{A}}
\newcommand{\sQ}{\mathscr{Q}}
\newcommand{\sS}{\mathscr{S}}
\newcommand{\sfX}{\mathsf{X}}
\newcommand{\sfY}{\mathsf{Y}}
\newcommand{\sfA}{\mathsf{A}}
\newcommand{\sfB}{\mathsf{B}}
\newcommand{\sfD}{\mathsf{D}}
\newcommand{\rmH}{\mathrm{H}}
\newcommand{\rmP}{\mathrm{P}}
\newcommand{\bfA}{\mathbf{A}}
\newcommand{\bL}{\mathbb{L}}
\newcommand{\cF}{\mathcal{F}}
\newcommand{\cW}{\mathcal{W}}
\newcommand{\cM}{\mathcal{M}}
\newcommand{\ttT}{\mathtt{T}}
\newcommand{\sD}{\mathscr{D}}
\newcommand{\cb}{{\mathcal B}}
\DeclareMathOperator{\diam}{diam}
\DeclareMathOperator{\cst}{cst}
\DeclareMathOperator{\PGW}{PGW}
\DeclareMathOperator{\Poi}{Poi}
\begin{document}
\title{The scaling limit of the root component in the Wired Minimal Spanning Forest of the Poisson Weighted Infinite Tree}
\author{Omer \textsc{Angel}\thanks{Department of Mathematics, University of British Columbia. \hfill \href{mailto:angel@math.ubc.ca}{\texttt{angel@math.ubc.ca}}}  
	 \quad and \quad Delphin \textsc{S\'enizergues}\thanks{MODAL'X, UPL, Univ. Paris Nanterre, CNRS, F92000 Nanterre, France.\hfill  \href{mailto:dsenizer@parisnanterre.fr}{\texttt{dsenizer@parisnanterre.fr}}}}
\maketitle
\begin{abstract}
	
In this paper we prove a scaling limit result for the component of the root in the Wired Minimal Spanning Forest (WMSF) of the Poisson-Weighted Infinite Tree (PWIT), where the latter tree arises as the local weak limit of the Minimal Spanning Tree (MST) on the complete graph endowed with i.i.d.\ weights on its edges. 
The limiting object $\mathcal{M}^\infty$ can be obtained by aggregating independent Brownian trees using two types of gluing procedures: one that we call the \emph{Brownian tree aggregation process} and resembles the so-called \emph{stick-breaking construction of the Brownian tree}; and another one that we call the \emph{chain} construction, which simply corresponds to gluing a sequence of metric spaces along a line.
\end{abstract}

\section{Introduction}

Let $(V, E , w)$ be a finite, connected, weighted graph, where $(V, E)$ is the underlying graph and $w:E \rightarrow \intervallefo{0}{\infty}$ is the weight function. 
A spanning tree of $(V, E)$ is a
tree that is a subgraph of $(V, E)$ with vertex set $V$. 
A minimal spanning tree (MST) $T$ of $(V, E , w)$ satisfies
\begin{align*}
	\sum_{e\in T}w(e)= \min \enstq{\sum_{e\in T'}w(e)}{T' \text{ is a spanning tree of } (V,E)}.
\end{align*}
 This combinatorial optimization problem has been extensively studied in computer science, combinatorics and probability.
 We refer to \cite[Section~1.1]{addario-berry_scaling_2017} for an overview of the probabilistic results related to the MST problem. 

In this paper we place ourselves in the following random setting: we consider $K_n$ the complete graph on $n$ vertices and assign i.i.d.\ weights to the edges distributed as uniform random variables in $\intervalleff{0}{1}$. 
In that case, the edge-weights are almost surely distinct so there almost surely exist a unique MST, which we call $M_n$. 
 We are interested in some asymptotic properties of $M_n$ as $n\rightarrow \infty$ and, in particular, in different ways of defining a notion of limit for this object. 
 
\paragraph{The scaling limit of $M_n$.}
For a measured metric space $(X,d,\mu)$, we denote by $\mathrm{Scale}(a,b;X)$ the space $X$ with distances scaled by $a$ and measure scaled by $b$.
In \cite{addario-berry_scaling_2017}, the authors consider $M_n$ as random measured metric space, by endowing its set of vertices with the graph distance and the counting measure.
They obtain the following scaling limit convergence.
We shall recall the definition of the Gromov--Hausdorff--Prokhorov topology later in Section~\ref{subsec:reminder about GHP}. 
\begin{theorem}[Theorem~1.1 of \cite{addario-berry_scaling_2017}]
Seen as a measured metric space, we have the following convergence in the Gromov--Hausdorff--Prokhorov topology
\begin{align*}
	\mathrm{Scale}(n^{-\frac13},n^{-1}; M_n) \underset{n\rightarrow \infty}{\rightarrow} \cM^{\mathrm{comp}},
\end{align*}
where the limiting object $\cM^{\mathrm{comp}}$ is a compact random tree that has Minkowski dimension equal to $3$ almost surely.
\end{theorem}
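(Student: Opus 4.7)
The plan is to reconstruct $\mathrm{M}_n$ via Kruskal's algorithm coupled to the Erdős--Rényi process $(G(n,p))_{p \in [0,1]}$: processed in increasing order of weight, an edge of $K_n$ belongs to $\mathrm{M}_n$ exactly when it merges two distinct components of $G(n,p^-)$. Distances in $\mathrm{M}_n$ between two vertices then split into a path inside the component where they first coalesce, plus ``merger edges'' added during earlier coalescences of subcomponents, and the goal is to pass to the limit through this decomposition.

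First I would localize the macroscopic content of $\mathrm{M}_n$ inside the critical window $p = (1 + t n^{-1/3})/n$, $t \in \R$. In the subcritical regime $p < (1-\varepsilon)/n$, components have $O(\log n)$ vertices and contribute negligibly to rescaled distances; in the supercritical regime $p > (1+\varepsilon)/n$, the giant component quickly absorbs everything and the merger edges add only $O(1)$ extra length after rescaling by $n^{-1/3}$. Inside the critical window, components have size $\Theta(n^{2/3})$ and diameter $\Theta(n^{1/3})$, with GHP scaling limits given by the continuum critical Erdős--Rényi components of Addario-Berry, Broutin and Goldschmidt. The core step is to upgrade this convergence to the MST: each critical component contains only a bounded number of surplus edges, whose weights lie within $O(n^{-4/3})$ of their Kruskal-swap competitors, so restricting $\mathrm{M}_n$ to a critical component produces a specific spanning tree obtainable by deleting the locally heaviest edge in each cycle. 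Because the competing edge-weights are comparable, this surgery distorts pairwise distances by $o(n^{1/3})$ after rescaling, yielding GHP convergence of each critical component inside $\mathrm{M}_n$ to its continuum analogue. Next, as $t$ varies, critical components merge in a random order driven by the size-biased exploration of $G(n,p)$; the sequence of merger times together with the gluing edges converges jointly with the components themselves, and $\cM^{\mathrm{comp}}$ appears as a GHP-countable aggregation of independent rescaled continuum critical components along their limiting merger structure.

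The main obstacle is that the MST is defined by a global optimality condition while the Erdős--Rényi scaling limit is built from a local exploration: surplus edges in the continuum components must be matched with the corresponding Kruskal swaps uniformly over a dense set of window parameters $t$, in order to control the metric distortion between a critical component and its MST restriction. Tightness in GHP then reduces to bounding, for each $\varepsilon > 0$, the total $n^{-2/3}$-mass of components with diameter at least $\varepsilon n^{1/3}$ persisting past a large time in the window, which follows from standard tail bounds on critical component sizes. Finally, the Minkowski dimension $3$ comes from matching estimates: the continuum critical components have Minkowski dimension $2$ (they are finitely glued tilted Brownian continuum random trees, each of dimension $2$), and integrating over the one-dimensional family of critical-window times adds one further dimension, with the upper bound following from translating the covering-number estimates used in the tightness argument.
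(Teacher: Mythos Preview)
This theorem is not proved in the present paper at all: it is quoted verbatim as Theorem~1.1 of \cite{addario-berry_scaling_2017} and used only as background motivation in the introduction. There is therefore no ``paper's own proof'' to compare against here; the paper's actual contributions concern the non-compact object $\cM^\infty$, not $\cM^{\mathrm{comp}}$.

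That said, your outline does track the broad strategy of \cite{addario-berry_scaling_2017}: couple Kruskal's algorithm to the Erd\H{o}s--R\'enyi process, use the GHP scaling limit of critical components from Addario-Berry--Broutin--Goldschmidt, pass the cycle-breaking surgery to the limit, and then control the evolution as $t\to\infty$ through the critical window. Two points deserve flagging. First, the step ``this surgery distorts pairwise distances by $o(n^{1/3})$'' hides essentially all the work: in \cite{addario-berry_scaling_2017} the passage from the critical random graph to its MST requires a delicate coupling of the cycle-breaking procedure in the discrete and continuum, together with uniform-in-$t$ GHP tightness estimates that are the technical heart of that paper; your sentence does not indicate how you would obtain this. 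Second, your Minkowski-dimension argument is not a proof. The limit $\cM^{\mathrm{comp}}$ is a single compact metric space, not a one-parameter family fibred over the window variable $t$, so ``integrating over $t$ adds one dimension'' is at best a heuristic. In \cite{addario-berry_scaling_2017} the dimension is obtained from explicit two-sided volume-growth bounds $r^{3+o(1)}$ for balls, proved via moment estimates on the number of vertices within graph distance $r$ in $\mathrm{M}_n$; the lower bound in particular does not reduce to ``CRT has dimension $2$''.
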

The limiting tree appearing in the statement of the theorem is not constructed in an explicit way in \cite{addario-berry_scaling_2017} and remains quite mysterious. 
This object is believed to be universal in the sense that it is expected to arise as the scaling limit of the MST of random graphs that exhibit mean field behavior when their edges are endowed with i.i.d.\ weights: random regular graphs, random graphs with given degree sequence, inhomogeneous graphs, high dimensional hypercubes and more. 
One step in this direction is the work \cite{addario-berry_geometry_2021} of Addario-Berry and Sen where they rigorously prove such a result for the uniform $3$-regular graph.    
In the recent preprint \cite{broutin_convex_2023}, Broutin and Marckert present an explicit construction of $\cM^{\mathrm{comp}}$ as the \emph{convex minorant tree} obtained from a trajectory of Brownian motion with parabolic drift, together with a sequence of independent uniform random variable $(U_i)_{i\geq 1}$ on $\intervalleoo{0}{1}$, and refer to the object $\cM^{\mathrm{comp}}$ as the \emph{parabolic Brownian tree}. 
This new construction provides deeper insight into the object's structure, and in particular, it allows them to prove that the Hausdorff dimension of the object is $3$.
\paragraph{The local weak limit.}
Another way to consider a limit of the object $\mathrm{M}_n$ is to examine its local behavior around a random vertex. 
A general theorem of Aldous and Steele \cite{aldous_objective_2004} ensures that for the appropriate \emph{weighted} local topology (one that accounts for the weights of the edges as well as the structure of the graph), if the underlying weighted graph converges locally in that topology towards a limit, then the corresponding MST converges as well towards the Wired Minimal Spanning Forest (WMSF) of the limiting graph. 
In the case of $M_n$, the underlying graph $K_n$ endowed with appropriate i.i.d.\ weights on the edges is known to converge to the Poisson-Weighted Infinite Tree (PWIT) and so the weighted local limit of $M_n$ is the WMSF of the PWIT.

In \cite{addario-berry_local_2013}, the same convergence is proved in the specific case of the object $M_n$ and the structure of the limit is investigated in more depth.
Finally, in the recent paper \cite{nachmias_wired_2024}, Nachmias and Tang prove that under some mild conditions, which apply in our case, this convergence in the weighted local topology transfers to a convergence in the usual local topology when one forgets the weights.
Note that in this case the limiting object is only a tree instead of being a forest: any vertex that is not in the root component of the WMSF is at distance infinity from the root when using the usual graph distance.  
\begin{theorem}[\cite{aldous_objective_2004,addario-berry_local_2013,nachmias_wired_2024}]\label{thm:local convergence louigi}
We have the convergence 
\begin{align*}
 M_n \underset{n\rightarrow \infty}{\rightarrow} \mathrm M^\infty,
\end{align*}
in the local weak topology, where $\mathrm M^\infty$ is the root component of the WMSF of the PWIT.
\end{theorem}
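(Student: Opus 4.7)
The statement combines three pieces that can be assembled in sequence, so the plan is less to give a fresh argument than to explain how each ingredient fits. The first ingredient is Aldous and Steele's objective method: if a sequence of finite weighted graphs converges, in the weighted local (Benjamini--Schramm) topology when rooted at a uniformly chosen vertex, to some (possibly infinite) limit graph $(G_\infty, w_\infty)$, then under a mild tail condition on the weights the minimal spanning trees converge to the wired minimal spanning forest of $(G_\infty, w_\infty)$ in the weighted local topology. So my first step would be to verify the hypotheses of this general result for our sequence.

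The second step is therefore to identify the weighted local limit of $(K_n, w_n)$ with i.i.d.\ uniform weights. The standard observation is that, viewed from a uniform root $v$, the edges incident to $v$ have weights that are i.i.d.\ uniform on $[0,1]$; rescaling weights by $n$, the set of weights to the other vertices converges (after reordering) to a Poisson process of rate $1$ on $\intervallefo{0}{\infty}$. Iterating this at each neighbor and using that the graph has no short cycles in the local limit (two uniform vertices are at graph distance $\Theta(\log n)$ and any short cycle disappears after rescaling weights), one obtains exactly the PWIT as the weighted local limit. For the MST the exact rescaling of weights is irrelevant because the MST depends only on the order of the weights, so Aldous--Steele gives that $\mathrm{M}_n$ converges to the WMSF of the PWIT in the weighted local topology.

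The third step is to pass from weighted local convergence to unweighted local convergence. This is exactly the content of the Nachmias--Pang result: they show that, under mild conditions that apply to this setting (essentially that the degree of the root in the WMSF has a limiting distribution without escape of mass, which they verify for the PWIT), the weighted local convergence of the MST upgrades to a local convergence of the unweighted rooted graphs. Finally, a cosmetic observation: in the unweighted local topology one only sees the root component, since vertices in different components of the WMSF are at graph distance $+\infty$; hence the limit is precisely the root component $\mathrm M^\infty$ of the WMSF of the PWIT.

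The main technical obstacle hidden in this pipeline is the Nachmias--Pang upgrade. The subtle point is that, a priori, the MST $\mathrm{M}_n$ could carry, near the root, edges whose rescaled weights are very large, and such edges would not be visible in the weighted limit but would contribute to balls of small graph radius in $\mathrm{M}_n$. Ruling this out amounts to showing that typical distances in $\mathrm{M}_n$ along edges used by the MST correspond to small rescaled weights, which is the crux of the Nachmias--Pang argument. Since we are allowed to invoke that paper as a black box, assembling the three ingredients above yields the claim.
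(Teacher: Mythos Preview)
Your proposal is correct and matches the paper's own treatment: this theorem is stated as a citation result, and the paper's surrounding discussion assembles exactly the same three ingredients in the same order (Aldous--Steele's objective method giving weighted local convergence to the WMSF of the PWIT, the identification of the PWIT as the weighted local limit of $K_n$, and the Nachmias--Pang upgrade from weighted to unweighted local convergence, with the observation that only the root component survives). Your write-up is slightly more detailed than the paper's paragraph-long sketch, but the approach is identical.
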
 
The limiting tree is shown in \cite{addario-berry_local_2013} to be one-ended and having roughly cubic volume growth. These results are refined in \cite{nachmias_wired_2024}, where the authors also prove results about the random walk on such a graph; in particular they prove that the spectral dimension and typical displacement exponent are almost surely respectively $\frac{3}{2}$ and $\frac{1}{4}$.

\paragraph{Towards a non-compact scaling limit.}
The goal of this paper is to introduce $\mathcal{M}^\infty$, a continuous non-compact version of the object $M_n$, the MST of $K_n$. 
This should be reminiscent of the case of the uniform spanning tree $T_n$ of $K_n$, which is simply a uniform random labeled tree on $n$ vertices. 
It is well-known in that case that three limiting objects can be associated to $T_n$: its scaling limit called the \emph{Brownian CRT} (after rescaling the distances by $n^{1/2}$), its local weak limit which is an infinite discrete tree called the \emph{Kesten tree}; and its non-compact scaling limit called the \emph{infinite Brownian CRT}.
This non-compact limit arises as several different limits: the scaling limit of the Kesten tree, the local limit of the Brownian CRT (i.e.\ what we see in the CRT when we zoom in around a random point), and as the scaling limit of $T_n$ when the distances are rescaled by a quantity $o(n^{1/2})$. 

In our case, the goal is to introduce the corresponding continuous non-compact object $\mathcal{M}^\infty$ for $M_n$ and prove one of these convergences, namely the analogue of the first one i.e., that $\mathcal{M}^\infty$ is the scaling limit of $\mathrm{M}^\infty$, see Figure~\ref{fig:mst limits}.
The two other convergences will be studied in a separate paper. 

\begin{figure}
	\centering
	\includegraphics[height=7cm]{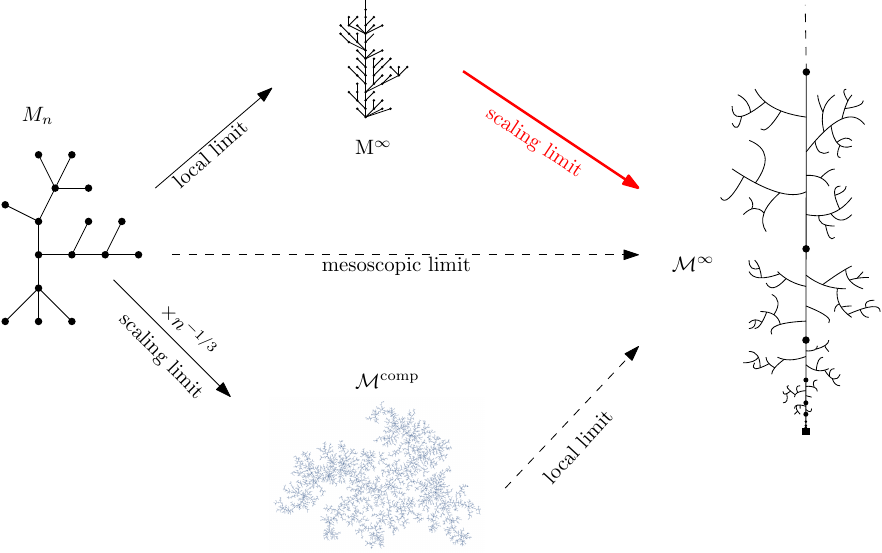}
	\caption{The objects $M_n$, $\mathcal{M}^{\mathrm{comp}}$, $\mathrm{M}^\infty$, and $\mathcal{M}^\infty$ and their relationships. This paper focuses on the red arrow. The solid black arrows are already proved in the literature, and the dashed ones are conjectured.}
	\label{fig:mst limits}
\end{figure}

\subsection{Structure of the root component of the WMSF of the PWIT}
\label{subsec:structure of root component of the WMSF of the PWIT}
\begin{figure}
	\centering
	\subfloat[Decomposition of the invasion percolation tree $\mathrm{T}^\infty$ into ponds that are linked along  an infinite spine by forward maximal edges, drawn in dotted line. 
	The edges in the dark red pond $\mathrm{T}_{\alpha_2}^\bullet$ all have weight smaller than $\alpha_2$, the forward maximal edge that exits it.]{\includegraphics[page=1,height=4cm]{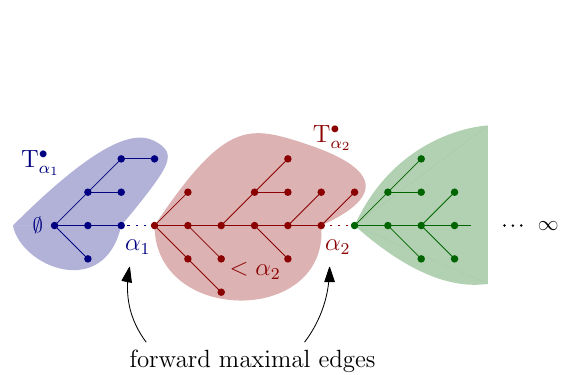}} \qquad 
	\subfloat[Decomposition of the invasion percolation tree $T(u)$ into ponds. The invasion percolation starting from $u$ explores the orange pond first, then the purple one and the dark red one. The rest of the invasion coincides with that of the root from there.]{\includegraphics[page=2,height=4cm]{structureIP.pdf}}
	\caption{Structure of invasion percolation clusters on the PWIT}
	\label{fig:structure of IPC on PWIT}
\end{figure}

First we describe the objects of study.
Let $\bU$ be the Ulam-Harris tree; this is the tree with vertex set
$V (\bU)=\bigcup_{k\geq 0}\N^k$, and for each $k\geq 1$ and each vertex $v = (n_1, \dots, n_k) \in \N^k$, an
edge between $v$ and its parent $(n_1, \dots, n_{k-1})$. 
For each $v = (n_1 ,\dots, n_k ) \in \bU$, let $(x_i^{v} , i \geq 1)$ be the atoms of a
homogeneous rate one Poisson process on $\intervallefo{0}{\infty}$, and for each $i\geq 1$ give the edge from $v$
to its child $v_i = (n_1 , \dots , n_k , i)$ the weight $W ({v, v_i }) =x^{v}_i $. 
Writing $W = \{W (e), e \in E(\bU)\}$,
where $E(\bU)$ is the edge set of $\bU$, the Poisson-weighted infinite tree is the tree $\bU$ rooted at $\emptyset$, endowed with the weight function $W$, which we write as a triple $(\mathbb U,\emptyset, W)$.

\paragraph{Invasion percolation, forward maximal edges and ponds.}
Invasion percolation (also called Prim's algorithm) on a rooted locally finite weighted graph with distinct weights is defined as follows: the algorithm grows a component from the root of the graph, adding at each step the lowest weight edge leaving the current component. 
When performed on a finite connected weighted graph, this procedure yields the minimal spanning tree of that graph. 
Performed on an infinite graph however, it yields an infinite tree whose set of vertices is a proper subset of the original graph, which we call the invasion percolation cluster of the root. 
 
We consider $\mathrm T^\infty$ the invasion cluster of the root of the weighted rooted tree $(\mathbb U,\emptyset, W)$. 
It is known that $\mathrm T^\infty$  is almost surely one-ended and that it admits a decomposition into ponds, by cutting at so-called \emph{forward maximal edges}. 
We say that an edge $e \in E(\mathrm{T}^\infty)$ is forward maximal if it disconnects the root $\emptyset$ from infinity and is such that if there exists $e'\in E(\mathrm{T}^\infty)$ such that $e$ is in the path joining $\emptyset$ and $e'$ then $W(e')<W(e)$. In terms of the invasion percolation process described above, this means that any edge explored after $e$ will have a smaller weight than $e$.
 
Denote by $\alpha_1>\alpha_2> \dots$ the respective weights of the successive forward maximal edges $e_1,e_2,\dots $ in the order in which they are discovered by the invasion percolation process, and denote by $\sA=\{\alpha_1, \alpha_2, \dots\}$ the set of all their values.  
We can remove those edges in order to disconnect $\mathrm T^\infty$ into finite connected components that we call \emph{ponds}.
We denote those ponds by $(\mathrm{T}^\bullet_{\alpha})_{\alpha \in \sA}$, by indexing them by the weight of the corresponding forward maximal edge exiting the component, which we call the \emph{type} (called \emph{activation time} in \cite{nachmias_wired_2024}) of the vertices belonging to the corresponding pond.
Here, each $\mathrm{T}^\bullet_{\alpha}$ is then a finite rooted tree, rooted at its vertex closest to the root and pointed at the vertex from which the next forward maximal edge exits. See Figure~\ref{fig:structure of IPC on PWIT} for an illustration.

\paragraph{Invasion percolation from every vertex.}
For any vertex $u\in  V (\bU)$, 
we can repeat the procedure above \emph{from vertex $u$} by just considering $T(u)$ the invasion percolation cluster of the rooted weighted tree $\bU (u) = (U, u, W )$, the PWIT re-rooted at vertex $u$.
Let $\widetilde{\mathrm M}$ be the union of all the trees $(T (u) , u \in V (\bU))$.
The forest $\widetilde{\mathrm M}$ is the so-called Wired Minimal Spanning Forest of the PWIT. 
Now the tree $\mathrm M^\infty$, which we study in this paper, is defined as the connected component of the forest $\widetilde{\mathrm M}$ containing the root $\emptyset$ of $\bU$. 
It is almost surely one-ended, see \cite[Corollary~7.2]{addario-berry_local_2013}.

The tree $\mathrm M^\infty$ of course contains $\mathrm T^\infty= T(\emptyset)$, the invasion percolation cluster of the root. 
Consider a vertex $u\in V(\bU)$ that is such that $u\notin \rT^\infty$ and its invasion percolation cluster $T(u)$. 
The tree $T(u)$ can be disconnected along its own forward maximal edges in the same way as $T(\emptyset)$. 
Now if we suppose that $u\in \mathrm{M}^\infty$ then the facts that $T(\emptyset)$ and $T(u)$ are one-ended, as well as $\mathrm{M}^\infty$, imply that the ponds and forward maximal edges of $T(\emptyset)$ and $T(u)$ coincide except for a finite number of them: this ensures that the invasion percolation process from vertex $u$ visits a finite number of ponds before starting to explore one of the ponds $(\mathrm{T}^\bullet_{\alpha})_{\alpha \in \sA}$. 
In order to avoid confusion, for $u \in \mathrm{M}^\infty$, we call any of the forward maximal edges of $T(u)$ that is not a forward maximal edge of $T(\emptyset)$ a \emph{back-edge}. 
Any pond of $T(u)$ that does not overlap with $\rT^\infty$ is denoted by $T_\beta$, where $\beta$ is the weight of the first back-edge visited by invasion percolation started from that vertex. 
Again we say that all the vertices in $T_\beta$ have type $\beta$.

Now, consider $\mathrm{M}^\infty$ and as before we disconnect it along the forward maximal edges $e_1, e_2 \dots$ of $\mathrm{T}^\infty$. 
We denote by $(\rM^\bullet_{\alpha})_{\alpha \in \sA}$ the obtained sequence of connected components. 
Each of those component $\rM^\bullet_{\alpha}$ can actually be decomposed again into the union of $\mathrm{T}^\bullet_{\alpha}$ and a set of trees $T_\beta$ connected to each other along a tree structure by back-edges. 
In fact, $\rM^\bullet_{\alpha}$ can be obtained from $\mathrm{T}^\bullet_{\alpha}$ by running the so-called Poisson--Galton--Watson Aggregation Process which we recall in Section~\ref{sec:scaling limit result}. 
The idea of this paper is to define a similar structure in the continuum and prove that it is indeed the scaling limit of $\mathrm{M}^\infty$, resp. $\mathrm{T}^\infty$. 

\subsection{Summary of the results}

\begin{figure}
	\centering
	\includegraphics[height=4cm]{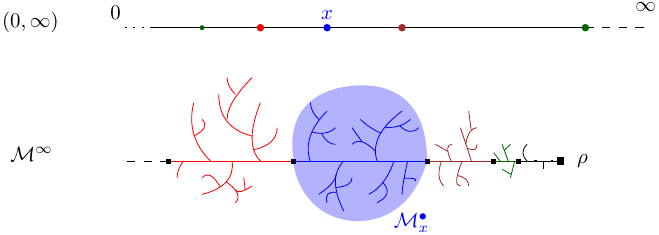}
	\caption{Construction of the non-compact continuous object $\cM^{\infty}$ (on the bottom) from a Poisson point process $\mathscr{P}$ (on top).
	The color of the trees below correspond to the atoms of the PPP above. Note that in $\mathscr{P}$ there is an accumulation of atoms near $\infty$ (not represented here).}
\end{figure}

We define an infinite one-ended continuum random tree $\cM^\infty$ jointly with $\cT^\infty \subset \cM^\infty$, which are the continuous analogues of respectively $\mathrm M^\infty$ and $\mathrm T^\infty$.

\paragraph{Joint construction of $\cT^\infty$ and $\cM^\infty$.}
We first explain how we can describe the object $\cM^\infty$ from another object that we denote by $\cM^\bullet$, which we then discuss below. 
This is slightly analogous to the construction of Brownian motion by chaining of excursions sampled from the Ito measure, though in our construction each component has another one immediately following it (among other differences).
Let 
\begin{itemize}
	\item $\mathscr{P}$ be a Poisson Point Process on $\intervalleoo{0}{\infty}$ with intensity $\frac{1}{t}\mathrm{d}t$, 
	\item $\left((\cT^\bullet_x, \cM^\bullet_x), \ x \in \mathscr{P}\right)$ a collection of random variables, where conditionally on $\mathscr{P}$ they are independent and we have
	\begin{align*}
		(\cT^\bullet_x, \cM^\bullet_x) \overset{(d)}{=} \left(\mathrm{Scale}(x^{-1},x^{-2};\cT^\bullet), \  \mathrm{Scale}(x^{-1},x^{-3};\cM^\bullet)\right),
	\end{align*}
where $(\cT^\bullet,\cM^\bullet)$ is a couple of random pointed rooted measured trees such that $\cT^\bullet\subset \cM^\bullet$ that we will describe in more details later on.
\end{itemize}
Then $\cM^\infty$ is constructed from the sequence $(\cM_x^\bullet)_{x\in \mathscr{P}}$ as 
\begin{align*}
	\mathrm{Chain}(\cM_x^\bullet, \  x\in \mathscr{P}).
\end{align*}
The tree $\cT^\infty$ is constructed in a similar manner from $(\cT_x^\bullet)_{x\in \mathscr{P}}$ and can be seen as a subset of $\cM^\infty$.
The $\mathrm{Chain}$ construction will be defined in detail in Section~\ref{sec:constructions and convergences}, but essentially consists in taking all the sequence of rooted pointed trees $(\cM_x^\bullet)_{x\in \mathscr{P}}$ and gluing them head to tail in decreasing order of $x$-value.
The resulting space is completed by adding an extra point $\rho_\infty$ which is characterized by $\rho_\infty:=\lim_{x\rightarrow \infty, x\in \mathscr{P}}\rho_x$, where for any $x\in \mathscr{P}$, the point $\rho_x$ denotes the common root of $\cT^\bullet_x$ and $\cM^\bullet_x$.
The obtained space is an unbounded complete measured tree, rooted at $\rho_\infty$. 

Notice the similar structure to that of $(\mathrm{T}^\infty, \mathrm{M}^\infty)$: the point process $\mathscr{P}$ corresponds to the set $\sA$ of weights of forward maximal edges, the trees $(\cT^\bullet_x, \cM^\bullet_x)$ for $x\in \mathscr{P}$ take the place of the $(\mathrm{T}_\alpha^\bullet, \rM_\alpha^\bullet)$ for  $\alpha \in \sA$,
and the Chain construction mimics the way that we can get $\mathrm{M}^\infty$ and $\mathrm{T}^\infty$ from linking all the $(\rM_\alpha^\bullet)_{\alpha \in \sA}$ (resp. the $(\mathrm{T}_\alpha^\bullet)_{\alpha \in \sA}$) using the forward maximal edges.

\paragraph{Convergence results.}
Before discussing the construction of $(\cT^\bullet,\cM^\bullet)$, which are the building blocks of $\cT^\infty$ and $\cM^\infty$, we state our scaling limit convergence result.  
The result below is expressed in the local Gromov--Hausdorff--Prokhorov topology, the definition of which is recalled in Section~\ref{sec:constructions and convergences}. 
\begin{theorem}\label{thm:convergence of the MST of the PWIT to Minfty}
	The tree $\cM^\infty$ is the scaling limit of $\mathrm M^\infty$ for the local Gromov-Hausdorff-Prokhorov topology.
	More precisely 
	\begin{align*}
		\mathrm{Scale}\left(r,\frac{r^3}{C_0}; \mathrm{M}^\infty \right)\underset{r\rightarrow 0}{\rightarrow} \cM^\infty,
	\end{align*}
	where the constant $C_0$ is defined later in \eqref{eq:def C0}.	
	The above convergence holds jointly with 
	\begin{align*}
		\mathrm{Scale}\left(r,r^2; \mathrm{T}^\infty \right)\underset{r\rightarrow 0}{\rightarrow} \cT^\infty.
	\end{align*}
\end{theorem}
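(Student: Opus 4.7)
The strategy is to exploit the parallel chain decompositions on the two sides: $\mathrm{M}^\infty$ is built by gluing the ponds $(\rM^\bullet_\alpha)_{\alpha\in\sA}$ along the forward-maximal edges in decreasing order of $\alpha$, while by definition $\cM^\infty=\mathrm{Chain}(\cM^\bullet_x,\,x\in\mathscr{P})$. I would prove the convergence in three stages: first the convergence of the set of types, then the joint convergence of the individual ponds, and finally a continuity-plus-tail-control argument for the Chain construction.

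\paragraph{Step 1 (convergence of the set of types).} I would first show that the rescaled set of forward-maximal weights $\{r^{-1}\alpha:\alpha\in\sA\}$ converges in distribution to $\mathscr{P}$. Invasion percolation on the PWIT enjoys a clean renewal structure across forward-maximal edges: after each such edge the remaining invasion continues on a freshly-sampled sub-PWIT. This makes the sequence $(\alpha_i)_{i\ge 1}$ tractable, and the scale-invariant intensity $\diff t/t$ emerges as the natural limit of its rescaling by $r^{-1}$.

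\paragraph{Step 2 (scaling limit of the building blocks).} Writing $\alpha=xr$, I would then establish the joint convergence
\begin{align*}
	\bigl(\scale(r,r^2;\mathrm{T}^\bullet_{xr}),\ \scale(r,r^3/C_0;\rM^\bullet_{xr})\bigr)\xrightarrow[r\to 0]{}(\cT^\bullet_x,\cM^\bullet_x),
\end{align*}
over all ponds whose rescaled type $\alpha/r$ belongs to a fixed compact subset of $\intervalleoo{0}{\infty}$. Conditionally on its type, a pond of $\mathrm{T}^\infty$ is close in law to a Poisson Galton--Watson tree conditioned on its exit edge having weight $\alpha$, and $\rM^\bullet_\alpha$ is obtained from $\mathrm T^\bullet_\alpha$ via the Poisson--Galton--Watson aggregation process. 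The scaling exponents are dictated by the quadratic, resp.\ cubic, volume-growth of $\mathrm{T}^\infty$ and $\mathrm{M}^\infty$, and the constant $C_0$ appears as the asymptotic ratio between the discrete counting measure on $\rM^\bullet$ and its continuum counterpart on $\cM^\bullet$. Asymptotic independence of distinct ponds follows from the branching Markov property of the invasion.

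\paragraph{Step 3 (chaining, tail control, and main obstacle).} Steps 1 and 2 combine to yield, for every cut-off $X_0<\infty$, the joint convergence of the chain built from ponds with $\alpha\le X_0 r$ toward $\mathrm{Chain}(\cM^\bullet_x,\,x\in\mathscr{P},\,x\le X_0)$ in the (global) Gromov--Hausdorff--Prokhorov topology. Local GHP convergence around the root $\rho_\infty$ then follows from controlling the contribution of ponds with very large rescaled type: in the continuum, Campbell's formula readily yields $\E\bigl[\sum_{y\in\mathscr{P},\,y>X_0}L_y/y\bigr]=\E[L]/X_0$, where $L_y$ is an independent copy of the root-to-distinguished-point distance in $\cM^\bullet$, and the analogous estimate in the discrete setting can be obtained from known diameter bounds for ponds near the root of $\rT^\infty$. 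The hardest part will be Step 2: identifying the laws of $(\cT^\bullet,\cM^\bullet)$ and the constant $C_0$ requires a careful scaling analysis of the Poisson--Galton--Watson aggregation process that builds $\rM^\bullet$ from $\mathrm T^\bullet$, and this is where most of the technical work is expected to concentrate.
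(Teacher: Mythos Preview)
Your three-step strategy is exactly the one the paper follows: decompose $\mathrm{M}^\infty$ as a chain of ponds, prove convergence of the index set and of the individual ponds (via the aggregation-process picture), then invoke Proposition~\ref{prop:convergence of chains} with a tail estimate near the root. Structurally there is nothing to correct.

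There is, however, a concrete error in your rescaling that would make Step~1 vacuous. The forward-maximal weights satisfy $\alpha_1>\alpha_2>\dots>1$ with accumulation point $1$, not $0$; consequently $\{r^{-1}\alpha:\alpha\in\sA\}\subset(r^{-1},\infty)$ escapes to infinity and cannot converge to a point process on $(0,\infty)$. The right rescaling is $\sA^r=\{r^{-1}(\alpha-1):\alpha\in\sA\}$, and the pond indexed by $t\in\sA^r$ is $\rM^\bullet_{1+tr}$, not $\rM^\bullet_{tr}$. With this correction your Step~1 becomes Lemma~\ref{lem:values taken by chain converge to PPP}, your Step~2 is Proposition~\ref{prop:convergence in distribution towards cN} (proved, as you anticipate, by analysing the scaling limit of the PGWA weight process and the seed), and your Step~3 is Lemma~\ref{lem:tightness near the root}.

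One further remark on Step~3: the paper does not bound the discrete tail by a Campbell-type first-moment computation over many small ponds. Instead it couples the union $\rM^\bullet_{\ge 1+Ar}$ of all ponds with type at least $1+Ar$ into a \emph{single} pond $\check{\rM}^\bullet_{1+r/2}$ (by exploiting that all off-spine subtrees are stochastically dominated by $\PGW((1+r/2)^*)$ trees), and then reads off the diameter and mass bounds from the already-established convergence of individual ponds. This is slicker than summing diameters, since it avoids having to control the number of ponds with large rescaled type.
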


Note that invasion percolation on a $d$-regular tree and its scaling limit were investigated in \cite{angel_scaling_2013}.
While the behavior does not depend significantly on $d$, the extension to the PWIT was only investigated later in \cite{addario-berry_invasion_2012}.
In \cite{angel_scaling_2013}, the tree generated by the invasion percolation process started at the root is studied using its encoding via functions such as the contour function, height function or Lukasiewicz path. 
The scaling limit is then described as well by its encoding by a height function that is described as the solution to some stochastic differential equation. 
Heuristically, the obtained tree should have the same distribution as $\cT^\infty$, 
up to some deterministic scaling, 
even though our description may seem quite different at first glance.

\paragraph{Construction of $\cM^\bullet$ from $\cT^\bullet$: the Brownian tree aggregation process.}
We now describe in more details the distribution of the pair $(\cT^\bullet, \cM^\bullet)$ used to construct $\cT^\infty$ and $\cM^\infty$. 
They are seen as (nested) random metric spaces, endowed with a common root $\rho$ and distinguished point $v$, and measures (one for each) that are not necessarily normalized to  be probability measures. 

First, $\cT^\bullet$ has the distribution of a Brownian tree with random mass $X$, where $X\overset{(d)}{=}\sqrt{2}\cdot \Gamma$  where $\Gamma$ has distribution $ \mathrm{Gamma}(\frac{1}{2})$, endowed with a randomly chosen point $v$ (on top of being rooted at some point $\rho$). 
The construction of $\cM^\bullet$ from  $\cT^\bullet$ then uses a process that we call the \emph{Brownian tree aggregation process} (which is analogous to the Poisson-Galton-Watson aggregation process used in the discrete setting to describe the construction of $\rM_\alpha^\bullet$ for any $\alpha>1$).
A useful analogy for the reader is the stick-breaking construction of the CRT, where line segments of random lengths are glued at uniform points on the union of all past segments.
However, unlike that process,
 there are infinitely many attached trees being added in any time interval.

Here is a first intuitive way of understanding the construction of $\cM^\bullet$.
This is a growth process for a tree, started at time $t=1$ (which turns out to be more convenient) with the \emph{seed} $\cT^\bullet$. 
Immediately after time $t=1$, independent Brownian trees arrive and are attached to the existing tree by having their root identified to a point already present.
At time $t$, attachments of trees of mass $y$ to an element of mass $\mathrm d m$ of the current structure arrive at rate
\begin{equation}\label{eq:rate aggregation tree}
\frac{1}{y^{\frac32} \sqrt{2\pi} } \exp\left(-\frac{t^2}{2}y\right)\mathrm d y \mathrm d m,
\end{equation}
where each corresponding tree of mass $y$ is a Brownian tree with the given mass (conditionally on all the rest). 
The object $\cM^\bullet$ is the limit as $t\to\infty$ of the process defined above, still rooted and pointed at respectively $\rho$ and $v$. 

The way that we define such a process formally deviates slightly from this intuition, and starts with the process giving the total mass of the tree at time $t$.
We would like to define a pure jump Markov process $(\mathcal{W}_t)_{t\geq 1}$ in terms of its generator. 
Since this Markov process is inhomogeneous in time, we actually give the generator of the process $((t,\cW_t))_{t\geq 1}$, which is time-homogeneous.
The generator $\mathbf{A}$ of the process $(t,\cW_t)$ is such that for any smooth and compactly supported function $f:(0,\infty) \times \intervalleoo{0}{\infty} \rightarrow \R$ we have
\begin{align}\label{eq:definition of continuous W}
\mathbf{A}f(t,x)= \partial_t f(t,x) + x\cdot \int_{0}^{\infty}\left(f(t,x+y)-f(t,x)\right) \frac{\exp\left(-\frac{t^2}{2}y\right)}{y^\frac{3}{2}\sqrt{2\pi}}\mathrm d y.
\end{align} 
This process represents the evolution of the total mass that is present at time $t$.
The process $(\cW_t)_{t\ge 1}$ is a subordinator and a pure jump process, and so it has countably many jumps.
For every jump time $t$ of the process, take an independent Brownian tree
\begin{align*}
	\cT_t
\end{align*}
with mass given by the size of the jump $w_t:=\mathcal{W}_t-\mathcal{W}_{t-}$.
For every such time $t$, a point $X_t$ is sampled proportionally to the mass measure on 
\begin{align}
  \cT^\bullet \sqcup \bigsqcup_{1<s<t} \cT_t.
\end{align}
The tree at time $t$ is obtained by gluing the root of every $\cT_s$, for all jump times $s\le t$ of the process $\mathcal W$, to the corresponding point $X_s$.
For $t=\infty$, we take the completion of the obtained space.
The fact that this construction makes sense and almost surely yields a compact metric space is proved in a more general setting in the corresponding section.

We should also mention how the measure on $\cM^\bullet$ is obtained:
The total mass of the tree constructed up to time $t$ is $\cW_t$ and tends to infinity.
The measure on $\cM^\bullet$ arises as the scaling limit as $t\to\infty$ of the mass-measure on the trees that have index less than $t$. 
As it turns out, the correct normalization is $\frac{1}{t}$.
The measure $\mu$ on $\cM^\bullet$ is defined as $\mu := \lim_{t\rightarrow \infty} \frac1t \mu_t$, where $\mu_t$ is the measure on the tree at time $t$, and the limit is a weak limit of measures.
The limiting measure on $\cM^\bullet$ has total mass given by
\begin{align*}
  Y:=\lim_{t\to\infty} \frac{\mathcal{W}_t}{t}.
\end{align*}
More about that in Section~\ref{sec:the tree Minfty}.

Thanks to the construction of $\cM^\bullet$ as an aggregation process, we can determine some of its geometric properties.
 We call $\bL^{\bullet,c}$ the set of GHP-isometry classes of rooted, measured, compact, pointed length space. 
 This set is endowed with the so-called GHP distance, which makes it Polish (more details in Section~\ref{subsec:reminder about GHP}).

\begin{theorem}\label{thm:properties of Nbullet}
	The random metric space $\cM^\bullet$ is well-defined as a random variable in $\bL^{\bullet,c}$. 
	Furthermore
	\begin{enumerate}[label=(\roman*)]
	\item\label{it:mass and diameter Nbullet moments of all orders} Its total mass and its diameter have moments of all orders.
	\item\label{it:dimension Nbullet is 3} It almost surely has Minkowski and Hausdorff dimension $3$.
	\item\label{it:measure Nbullet is diffuse with full support} Its measure is almost surely non-zero, diffuse, and has full support.
	\end{enumerate}
      \end{theorem}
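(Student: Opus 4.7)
I use the explicit aggregation construction of $\cM^\bullet$: its mass process $(\cW_t)_{t\geq 1}$ is a pure jump Markov process with an explicit generator, and each of the countably many building blocks attached along the way is a Brownian tree of known random mass, which has Hausdorff and Minkowski dimension $2$ and moments of all orders for its diameter. I address the three items in order.

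\medskip

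\noindent\textbf{Step 1 (moments).} A direct change of variable $u=t^2 y/2$ gives $\int_0^\infty y^{j}\,\nu_t(\diff y) = c_j t^{-(2j-1)}$ for each integer $j\ge 1$, where $\nu_t(\diff y)=\frac{e^{-t^2y/2}}{y^{3/2}\sqrt{2\pi}}\,\diff y$ is the Lévy kernel in \eqref{eq:rate aggregation tree} and $c_j$ is explicit. Applying the generator $\mathbf{A}$ from \eqref{eq:definition of continuous W} to $x\mapsto x^k$ and taking expectations yields a linear ODE $\frac{\diff}{\diff t}m_k(t) = \frac{k}{t}m_k(t)+\cdots$ (lower-order terms involving $m_j(t)$ for $j<k$) whose solution satisfies $m_k(t):=\mathbb{E}[\cW_t^k]=O(t^k)$ by induction. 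In particular $M_t:=\cW_t/t$ is a positive $L^2$-bounded martingale converging in $L^p$ for every $p\ge 1$ to $Y$, which therefore has moments of all orders. For the diameter I would bound $\diam(\cM^\bullet)$ by twice the height above $\rho$, and control the height by decomposing along a root-to-leaf path: each building block crossed contributes a Brownian-tree height of order $\sqrt{w_s}$ with good tails, and one shows, using the tree structure and the Poissonian nature of the aggregation, that the number of building blocks of mass at least $m$ crossed along any given geodesic has exponential tails, giving moments of all orders for $\diam(\cM^\bullet)$.

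\medskip

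\noindent\textbf{Step 2 (dimension).} For the Minkowski upper bound I cover each building-block Brownian tree of mass $y$ by $O(y/\varepsilon^2)$ balls of radius $\varepsilon$ when $y\ge \varepsilon^2$, and absorb smaller building blocks into their attachment ball thanks to their $O(\sqrt{y})$ diameter. Integrating against the intensity in \eqref{eq:rate aggregation tree} and combining with the moment estimate from Step~1 yields an $\varepsilon$-covering number of order $\varepsilon^{-3}$, hence $\dim_{\mathrm M}\cM^\bullet\le 3$ and a fortiori $\dim_{\mathrm H}\cM^\bullet\le 3$. For the Hausdorff lower bound I apply the mass distribution principle to $\mu$: it suffices to exhibit an a.s.\ finite random constant $C$ with $\mu(B(x,r))\leq Cr^3$ for every $x\in\cM^\bullet$ and every small enough $r$. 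The natural approach is to use the two-parameter scaling $(x^{-1},x^{-3})$ appearing in the construction of $\cM^\infty$, together with a stationarity-at-a-typical-point argument for $\mu$, to reduce this uniform bound to a tail estimate on $\mu(B(\cdot,r))/r^3$ at a single sampled point.

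\medskip

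\noindent\textbf{Step 3 (measure).} Positivity $\mu\ne 0$, i.e.\ $Y>0$ a.s., follows from the $L^2$-boundedness of the martingale $M_t$ combined with the identification $Y=\lim M_t$ and $\mathbb{E}[Y]>0$: the event $\{Y=0\}$ would contradict $M_t\to Y$ in $L^2$ with positive mean, and its probability can then be shown to vanish via a zero--one law on the tail $\sigma$-algebra of $(\cW_t)_t$. Diffuseness of $\mu$ is inherited from the fact that each Brownian tree carries a diffuse measure and that the weak rescaling $\lim_{t\to\infty}\mu_t/t$ cannot create atoms, since attachment points $X_s$ are themselves sampled from a diffuse measure. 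Full support follows because any open neighborhood of any $x\in\cM^\bullet$ contains, with probability one, later building blocks attached with positive mass contribution.

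\medskip

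\noindent\textbf{Main obstacle.} The hardest step is the Hausdorff lower bound in Step~2, namely the uniform ball-mass estimate $\mu(B(x,r))\le Cr^3$. The building blocks attached at very late times $t$ are numerous and small, so a naive union bound does not close; one must exploit the Gaussian cutoff $e^{-t^2 y/2}$ in \eqref{eq:rate aggregation tree} together with a multiscale dyadic decomposition of mass scales. This is where I expect most of the technical work to lie, and also where the specific value $3$ of the dimension gets pinned down from the interplay of the $(x^{-1},x^{-3})$ rescaling with the inverse-mass tail of $\nu_t$.
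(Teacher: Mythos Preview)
Your overall architecture matches the paper's: verify growth and tail conditions on the weight process and the blocks, then feed them into general aggregation estimates. The moment computations for $\cW_t$ via the generator are essentially what the paper does (it proves $\cW_t/t$ is an $L^2$-bounded martingale and also gets exponential tails for $Z_1=\sup_t\cW_t/t$ by a Chernoff argument on the driving Poisson point process). Two places, however, are genuine gaps rather than just sketches.

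\textbf{Hausdorff lower bound.} Your proposed route via the $(x^{-1},x^{-3})$ scaling is a red herring: that scaling relates the different chain-links $\cM^\bullet_x$ inside $\cM^\infty$, not the internal geometry of a single $\cM^\bullet$; there is no self-similarity of $\cM^\bullet$ to exploit. The paper's argument is structural rather than scaling-based. For a $\mu_\infty$-typical point $Y_\infty$, it shows that for each large $n$ there is an ancestor block $\sfB_{\hat t_n}$ with $\hat t_n\asymp t_n$ such that (a) $Y_\infty$ belongs to the descendant subtree $\sfD(\hat t_n)$, (b) the distance from $Y_\infty$ to the attachment point of that subtree is $\gtrsim t_n^{-1-\delta}$, and (c) $\mu_\infty(\sfD(\hat t_n))\le t_n^{-3+o(1)}$. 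Point (b) traps the ball $B(Y_\infty,r)$ with $r\sim t_n^{-1}$ inside $\sfD(\hat t_n)$, and (c) comes from moment bounds on the P\'olya-urn martingales $s\mapsto \mu_s(\sfD(t))$. The inputs needed are exactly the paper's conditions (AGHaus1)--(AGHaus3): $W_t\sim Z_3 t$, negligibility of jumps smaller than $t^{-2-\delta}$, and a uniform lower bound on the root-to-typical-point distance inside each block. Your ``stationarity/scaling'' suggestion does not produce any of these ingredients.

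\textbf{Positivity of $Y$.} Your claim that $L^2$-convergence with positive mean forces $Y>0$ a.s.\ is false (supercritical branching martingales with positive extinction probability are a counterexample), and the unspecified ``zero--one law on the tail $\sigma$-algebra of $(\cW_t)$'' does not apply here. The paper's argument is a direct second-moment computation: $\E[Y\mid\cW_t]=\cW_t/t$ and $\Var(Y\mid\cW_t)=\cW_t/(3t^4)$, so Chebyshev gives $\P(Y=0\mid\cW_t)\le (3t^3\cW_t)^{-1}\to 0$ a.s. Relatedly, your diffuseness argument (``each block measure is diffuse, so the limit is'') is incomplete: nothing a priori prevents $\mu_\infty$ from concentrating at accumulation points of attached blocks. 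In the paper, diffuseness falls out of the same ball-mass estimate $\mu_\infty(B(Y_\infty,r))\le r^{3+o(1)}$, and full support uses the urn property that any set of positive $\mu_t$-mass keeps positive $\mu_\infty$-mass.
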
 
Note that, as a result, the non-compact random metric space $\cM^\infty$ which is a countable union of copies of $\cM^\bullet$ also has Hausdorff dimension $3$ and diffuse mass measure with full support.

\subsection{Organization of this paper}
We start in Section~\ref{sec:constructions and convergences} by recalling some facts about the  Gromov--Hausdorff--Prokhorov topology and introducing the two constructions called $\mathrm{Chain}$ and aggregation process ($\mathrm{Aggreg}$) respectively. 
These constructions are particularly important because we interpret all our objects in this framework. 
In particular, we state some sufficient conditions for the convergence of such structures.
In Section~\ref{sec:the tree Minfty} we then use these constructions to rigorously construct the trees $\cM^\bullet$ and $\cM^\infty$. In particular, we prove Theorem~\ref{thm:properties of Nbullet} in Section~\ref{subsec:properties of the weight process}.
Then in Section~\ref{sec:scaling limit result}, we study the tree $\mathrm{M}^\infty$ and decompose it in a way that use again the $\mathrm{Chain}$ and $\mathrm{Aggreg}$ constructions in an analogous way. 
We then prove Theorem~\ref{thm:convergence of the MST of the PWIT to Minfty} in Section~\ref{subsec:proof of main theorem} by using results from Section~\ref{sec:constructions and convergences} that ensure that the limit of objects constructed as a $\mathrm{Chain}$ (resp. $\mathrm{Aggreg}$) of discrete objects can be described as a $\mathrm{Chain}$ (resp. $\mathrm{Aggreg}$) of continuous ones.

\subsection*{Acknowledgements.}
O.A.'s research was supported in part by NSERC. 
The authors are grateful to an anonymous referee for their thorough reading of the manuscript and their numerous remarks and comments.  
\begin{table}[htbp]\caption{Table of notation of the main objects of the paper}
	\centering 
	\begin{tabular}{l c p{13cm} }
		\toprule
			\multicolumn{3}{c}{}\\
		\multicolumn{3}{c}{\underline{Discrete objects}}\\
		\multicolumn{3}{c}{}\\
		$\sA$ & : & set of weights of forward maximal edges of $\rT^\infty$ \\
		$\rT^\infty$ & : & invasion percolation cluster of the root of the PWIT\\
		$\rT_\alpha^\bullet$ & : & defined for $\alpha\in \sA$; it is the pond located just before the forward maximal edge of weight $\alpha$\\
		$M_n$ & : & minimal spanning tree of $K_n$ endowed with independent random weights \\  
		$\rM^\infty$ & : & the root component of the WMSF of the PWIT\\
		$\rM_\alpha^\bullet$ & : & the connected component of $\rM^\infty$ after removing the forward maximal edges of $\rT^\infty$ that contains $\rT_\alpha^\bullet$\\
        \multicolumn{3}{c}{}\\
        \multicolumn{3}{c}{\underline{Continuous objects}}\\
       	\multicolumn{3}{c}{}\\
        $\mathscr{P}$ & : & Poisson point process on $\intervalleoo{0}{\infty}$ with intensity measure $\frac{\dd t}{t}$\\
        $\cT^\infty$ & : & scaling limit of $\rT^\infty$\\
       	$\cT^\bullet$ & : & scaling limit of $\rT_\alpha^\bullet$ \\
       	$\cM^\infty$ & : & scaling limit of $\rM^\infty$\\
       	$\cM^\bullet$ & : & scaling limit of $\rM_\alpha^\bullet$ \\	
       	$(\cW_{t})_{t\geq 1}$ & : & weight process used in the construction of $\cM^\bullet$ \\
       	$X$ & : & total mass of $\cT^\bullet$ and starting value of $(\cW_{t})_{t\geq 1}$, i.e.\ $X=\cW_1$\\
       	$Y$ & : & total mass of $\cM^\bullet$, obtained as $Y=\lim_{t\rightarrow \infty} \frac{\cW_t}{t}$ \\
       	$\mathscr{J}$ &: & set of jumps of the weight process $(\cW_{t})_{t\geq 1}$\\
       	$(w_t)_{t\in \mathscr{J}}$ & : & jumps $w_t=\cW_t - \cW_{t-}$ of the weight process \\
       	$\cT_t$ & : & Brownian CRT with total mass $w_t$ \\
		\multicolumn{3}{c}{}\\
		\multicolumn{3}{c}{\underline{Constructions}}\\
		\multicolumn{3}{c}{}\\
		$\mathrm{mass}(\mathsf{X})$ & : & total mass of the measure carried by the metric measured space $\mathsf{X}$\\
		$\mathrm{diam}(\mathsf{X})$ & : & diameter of the metric (possibly measured) space $\mathsf{X}$\\
		$\mathrm{Scale}(a,b;\mathsf{X})$ & : & version of the metric measured space $\mathsf{X}$ where the distances are scaled by $a$ and the mass is scaled by $b$ \\
		$\mathrm{Chain}$ & : & construction taking a sequence of length spaces and linking them, each one to the next\\
		$\mathrm{Aggreg}$ & : & construction that yields a metric space from the result of an aggregation process\\
		$\mathbb{L}$ & : & set of equivalence classes of rooted, measured, locally compact length spaces\\
		$\mathbb{L}^c$ & : & set of equivalence classes of rooted, measured, compact length spaces\\
		$\mathbb{L}^{\bullet,c}$ & : & set of equivalence classes of rooted, pointed, measured, compact length spaces\\
		\multicolumn{3}{c}{}\\
		\multicolumn{3}{c}{\underline{Other symbols}}\\
		\multicolumn{3}{c}{}\\
		$C_0$ & : & some positive constant defined in \eqref{eq:def C0} \\
		$r$  & : & positive parameter destined to tend to $0$\\
		$\cst$ & : & a positive constant whose value may change along the lines \\
		\bottomrule
	\end{tabular}
	\label{ton:introduction}
\end{table}

 \section{Constructions and convergences}\label{sec:constructions and convergences}
 In this section, we introduce two constructions from sequences of measured metric space. One of them is fairly simple and consists in linking a sequence of metric spaces in a linear manner: we call it the \emph{chain} construction. 
 The other one is slightly more involved and consists of gluing some ordered family of metric spaces on top of each other in a random way: this is called an \emph{aggregation process}. It is a natural extension of the so called "line-breaking" construction of random trees \cite{aldous_continuum_1991,aldous_inhomogeneous_2000,goldschmidt_line_2015}, already extended to more general metric spaces \cite{goldschmidt_stable_2022,senizergues_random_2019}, where we do not constrain the blocks that we glue together to arrive in a discrete order anymore. 
 
 We state some properties of these two constructions (in particular convergence results) and we explain how to interpret our objects of interest in this setting. 
 These results will be the base of the rest of our arguments.  
 
 In this section we only expose results and give an idea of how they are used later in the paper. 
 The proofs of the results stated here will be postponed to the Appendix.
 \subsection{Reminder about the pointed GH and GHP topologies and local versions}\label{subsec:reminder about GHP}
 In this section, we give a brief reminder about the definition of the Hausdorff, Gromov--Hausdorff and Gromov--Hausdorff--Prokhorov topologies on compact metric spaces, and their local equivalent on boundedly finite rooted metric spaces. We will mainly follow the exposition of \cite{abraham_note_2013}. 
 This section can be skipped at first reading.

 Let $(X,d^X)$ be a Polish metric space and $\cb(X)$ be its set of Borel measurable subsets. 
 The diameter of $A\in \cb(X)$ is given by:
 \[
 \diam(A)=\sup\enstq{d^X(x,y)}{x,y\in A}.
 \]
 For $A,B\in
 \cb(X)$, we set:
 \[
 d_\text{H}^X(A,B)= \inf \{ \epsilon >0;\ A\subset B^\epsilon\
 \mathrm{and}\ B\subset 
 A^\epsilon \}, 
 \]
 the Hausdorff distance between $A$ and $B$, where 
 \begin{equation}
 \label{eq:espilon fattening}
 A^\epsilon = \enstq{x\in
 X}{\inf_{y\in A} d^X(x,y) < \epsilon}
 \end{equation}
 is the $\epsilon$- fattening of $A$. 
 If $(X,d^X)$ is compact, then the space of
 compact subsets of $X$, endowed with the Hausdorff distance, is compact.

 Let $\mathrm{Mes}_{f}(X)$ denote the set of all finite Borel measures on $X$. If $\mu,\nu \in
 \mathrm{Mes}_f(X)$, we set:
 \[
 d_\text{P}^X(\mu,\nu) = \inf \enstq{ \epsilon >0}{\mu(A)\le \nu(A^\epsilon) +
 \epsilon 
 \text{ and } 
 \nu(A)\le \mu(A^\epsilon)+\epsilon\ 
 \text{ for any closed set } A }, 
 \]
 the Prokhorov distance between $\mu$ and $\nu$. 
 It is well known, that
 $(\mathrm{Mes}_f(X), d_\text{P}^X)$ is a Polish metric space, and that the
 topology generated by $d_\text{P}^X$ is exactly the topology of weak
 convergence. 
 
 The notion of Prokhorov distance can be extended in the following way. 
 A Borel measure is said to be locally finite if the measure of any
 bounded Borel set is finite. 
 Let $\mathrm{Mes}(X)$
 denote the set of all locally finite Borel measures on $X$. Let
 $\rho$ be a distinguished element of $X$, which we call the
 root. 
 We consider the closed ball of radius $R$ centered at
 $\rho$: 
 \begin{equation}
 \label{eq:X(r)}
 X^{(R)}=\enstq{x\in X}{d^X(\rho,x)\leq R},
 \end{equation}
 and for $\mu\in \mathrm{Mes}(X)$ its restriction $\mu^{(R)}$ to $X^{(R)}$:
 \begin{equation}
 \label{eq:mu(r)}
 \mu^{(R)} (dx)=\mathbf{1}_{X^{(R)}} (x)\; \mu(dx).
 \end{equation}
 If $\mu,\nu \in
 \mathrm{Mes}(X)$, the generalized Prokhorov distance between $\mu$ and $\nu$ is defined as:
 \begin{equation}
 \label{eq:dgP}
 d_\text{gP}^X(\mu,\nu) = \int_0^\infty \exp{-R} \left(1 \wedge
 d^X_{\text{P}}\left(\mu^{(R)},\nu^{(R)}\right)
 \right) \ \dd R.
 \end{equation}
 The function $d_\text{gP}^X$ is well defined and is a
 distance. 
 Furthermore $(\mathrm{Mes}(X), d_\text{gP}^X)$ is a Polish metric
space, and the topology generated by $d_\text{gP}^X$ is exactly the
 topology of vague convergence. 
 When there is no ambiguity on the metric space $(X, d^X)$, we may write $d$,
 $d_\text{H}$, and $d_\text{P}$ instead of $d^X$, $d^X_\text{H}$ and
 $d^X_\text{P}$. 

 If $\Phi:X\rightarrow X'$ is a Borel-measurable map between two Polish metric
 spaces and if $\mu$ is a Borel measure on $X$, we will note $\Phi_*\mu$
 the image measure on $X'$ defined by $\Phi_*\mu(A)=\mu(\Phi^{-1}(A))$,
 for any Borel set $A\subset X$. 
 
 \begin{definition}
 	\label{defi:rwms} 
 	$ $
 	\begin{itemize}
 		\item A rooted measured metric space $\sfX = (X,d, \rho,\mu)$ is a
 		metric space $(X , d)$ with a distinguished element $\rho\in X$,
 		called the root, and a locally finite Borel measure $\mu$.
 		\item Two rooted measured metric spaces $\sfX=(X,d,\rho,\mu)$
 		and $\sfX'=(X',d',\rho',\mu') $ are said to be
 		GHP-isometric if there exists an isometric one-to-one map $\Phi:X
 		\rightarrow X'$ such that $\Phi(\rho)= \rho'$ and 
 		$\Phi_* \mu = \mu'$. In that case, $\Phi$ is called a GHP-isometry. 
 	\end{itemize}
 \end{definition}
 
 Notice that if $(X, d)$ is compact, then a locally finite measure on
 $X$ is finite and belongs to $\mathrm{Mes}_f(X)$. 
 We will now use a
 procedure due to Edwards \cite{edwards_structure_1975} and developed by Gromov \cite{gromov_groups_1981} to compare any two
 compact rooted measured metric spaces, even if they are not subspaces of
 the same Polish metric space.

 \subsubsection{Gromov-Hausdorff-Prokhorov metric for compact spaces}
 Let $(X,d)$ and
 $(X',d')$ be two compact metric spaces. The Gromov-Hausdorff metric
 between $(X,d)$ and $(X',d')$ is given by:
 \begin{equation}
 \label{eq:d-GH}
 d_{\text{GH}}^c((X,d), (X',d')) = \inf_{\varphi,\varphi',Z} 
 d_\text{H}^Z(\varphi(X),\varphi'(X')) , 
 \end{equation}
 where the infimum is taken over all isometric embeddings
 $\varphi:X\hookrightarrow Z$ and $\varphi':X'\hookrightarrow Z$ into some
 common Polish metric space $(Z,d^Z)$. 
The last display actually defines a metric on the set of isometry classes of compact metric spaces.\\
 
 Now, we introduce the Gromov--Hausdorff--Prokhorov distance for compact
 spaces. 
 Let $\sfX=(X,d,\rho,\mu)$ and
 $\sfX'=(X',d',\rho',\mu')$ be two compact rooted measured metric
 spaces, and define:
 \begin{equation} 
 \label{f:def}
 d_{\mathrm{GHP}}^c(\sfX,\sfX') = \inf_{\Phi,\Phi', Z} \left(
 d^Z(\Phi(\rho),\Phi'(\rho')) + d_\text{H}^Z(\Phi(X),\Phi'(X')) + 
 d_\text{P}^Z(\Phi_* \mu,\Phi_*'
 \mu') \right), 
 \end{equation}
 where the infimum is taken over all isometric embeddings
 $\Phi:X\hookrightarrow Z$ and $\Phi':X'\hookrightarrow Z$ into some
 common Polish metric space $(Z,d^Z)$. 
 
 Note that equation \eqref{f:def} does
 not actually define a distance, as
 $d_{\mathrm{GHP}}^c(\sfX,\sfX')=0$ if $\sfX$ and $\sfX'$ are GHP-isometric.
 Therefore, we shall consider $\K$, the set of GHP-isometry classes of
 compact rooted measured metric space and identify a compact rooted
 measured metric space with its class in $\K$. Then the function
 $d_{\mathrm{GHP}}^c$ is finite on $\K^2$. 
 
 \begin{theorem}[Theorem~2.5 of \cite{abraham_note_2013}]
 	\label{theo:dcGHP}
 	$ $ 
 	\begin{itemize}
 		\item[(i)] The function $d_{\mathrm{GHP}}^c$ defines a distance on $\K$.
 		\item[(ii)] The space
 		$(\K, d_{\mathrm{GHP}}^c)$ is a Polish metric space.
 	\end{itemize} 
 \end{theorem}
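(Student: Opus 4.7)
The plan is to verify the four metric axioms for $d_{\mathrm{GHP}}^c$ on $\K$ and then separately establish separability and completeness. Symmetry and non-negativity are immediate from the definition~\eqref{f:def}, so the real content lies in the triangle inequality, the implication $d_{\mathrm{GHP}}^c=0 \Rightarrow {}$GHP-isometric, and the two topological properties.

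For the triangle inequality, I would fix three classes $\sfX_1,\sfX_2,\sfX_3 \in \K$ and $\epsilon>0$, and pick isometric embeddings $\Phi_1 : X_1 \hookrightarrow Z_{12}$, $\Phi_2 : X_2 \hookrightarrow Z_{12}$ nearly realizing the infimum defining $d_{\mathrm{GHP}}^c(\sfX_1,\sfX_2)$, and analogously $\Psi_2 : X_2 \hookrightarrow Z_{23}$, $\Psi_3 : X_3 \hookrightarrow Z_{23}$ for the pair $(\sfX_2,\sfX_3)$. I would then form the amalgam $Z = Z_{12}\sqcup Z_{23}/{\sim}$ identifying $\Phi_2(x)\sim\Psi_2(x)$ for $x\in X_2$, equipped with the largest semimetric making the inclusions of $Z_{12}$ and $Z_{23}$ isometric. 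Because $X_2$ is compact, this semimetric is a genuine Polish metric, and in this common ambient space the triangle inequalities for the root term, Hausdorff distance, and Prokhorov distance combine to yield the desired bound.

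For the separation axiom, suppose $d_{\mathrm{GHP}}^c(\sfX,\sfX')=0$. I would extract a sequence of embeddings into spaces $Z_n$ with GHP-cost at most $2^{-n}$, and from each extract a correspondence $R_n\subset X\times X'$ containing $(\rho,\rho')$ of metric distortion at most $C\cdot 2^{-n}$, together with a measure $\pi_n$ on $X\times X'$ whose marginals approximate $(\mu,\mu')$ and that is essentially supported on $R_n$. Using compactness of $X$ and $X'$, I would fix countable dense subsets and pass to a subsequence by a diagonal argument so that the restriction of $R_n$ becomes a genuine isometry on these dense sets; this isometry extends to a root-preserving isometry $\Phi:X\to X'$ by uniform continuity. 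Prokhorov's theorem applied to the $\pi_n$ on the compact space $X\times X'$ then yields a weak limit supported on the graph of $\Phi$ and with marginals $\mu,\mu'$, forcing $\Phi_*\mu=\mu'$.

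Separability follows by exhibiting the countable family of finite rooted metric spaces with rational distances, a rational-mass atom at the root, and rational-mass atoms at the other vertices, as dense: given $\sfX$ and $\epsilon>0$, I take a finite $\epsilon$-net containing $\rho$, replace $\mu$ by the atomic measure putting the mass of each Voronoi cell at its representative, and perturb to rational values. For completeness, I would extract from a Cauchy sequence $(\sfX_n)$ a subsequence with summable GHP-jumps and iteratively apply the amalgamation procedure above to embed the whole subsequence into a single Polish space $(Z,d^Z)$. Inside $Z$ the images form a Hausdorff-Cauchy sequence of uniformly totally bounded compact sets (bounded diameters, and by the Cauchy property uniformly controlled covering numbers), so by Gromov's precompactness criterion they converge in Hausdorff distance to a compact set $K$; the pushed-forward measures are Prokhorov-Cauchy and converge to some $\mu_\infty$; the roots converge to a point $\rho_\infty$; and $(K,d^Z,\rho_\infty,\mu_\infty)$ is the required limit. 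The main obstacle, appearing both in the triangle inequality and in the construction of the Cauchy limit, is the metric amalgamation of two Polish spaces along a common isometrically embedded subspace; this is where compactness of the spaces in $\K$ is essential, since in the non-compact setting the pushout semimetric need not separate points without additional care. Once this gluing is in hand, the remaining steps reduce to standard compactness arguments in a common ambient Polish space.
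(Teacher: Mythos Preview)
The paper does not prove this theorem: it is stated as background and attributed verbatim to \cite{abraham_note_2013} (Theorem~2.5 there), so there is no in-paper proof to compare against. Your outline is the standard route (amalgamation for the triangle inequality, a limiting correspondence/coupling for identity of indiscernibles, finite rational spaces for separability, and an ambient-space embedding for completeness) and is essentially what one finds in the cited reference; there is nothing to add beyond noting that compactness of $X_2$ is not actually needed for the amalgam to be a genuine metric, and that for completeness the Hausdorff limit of the images is compact directly because the space of compact subsets of a complete metric space is itself complete under $d_{\mathrm H}$, without invoking Gromov's precompactness criterion.
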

 
The function $d_{\mathrm{GHP}}^c$ is called the Gromov--Hausdorff--Prokhorov distance. 
The  Gromov--Hausdorff--Prokhorov distance could be defined without reference to any root. 
However, the introduction of the root is necessary to extend the definition of the Gromov--Hausdorff--Prokhorov distance to spaces that are not necessarily compact.

We sometimes need to endow our spaces with extra structure, like an additional distinguished point for example.
In that case, it is possible to modify the definition of the Gromov--Hausdorff--Prokhorov distance to account for this extra piece of data and the corresponding space $\mathbb K^\bullet$ of equivalence classes of root \emph{pointed} compact metric spaces is also Polish.
We denote by $d_{\mathrm{GHP}}^{\bullet, c}$ the corresponding distance.
 \subsubsection{Gromov--Hausdorff--Prokhorov distance for locally compact length spaces}
A metric space $(X, d)$ is a length space if for every $x, y \in X$ , we have $d(x, y) = \inf\{\mathrm{Len}(\gamma)\}$,
- where the infimum is taken over all rectifiable curves $\gamma:\intervalleff{0}{1} \rightarrow X$ such that $\gamma(0) = x$
 and $\gamma(1) = y$ , and where $\mathrm{Len}(\gamma)$ is the length of the rectifiable curve $\gamma$.

 Let $\mathbb L$ be the set of GHP-isometry classes of rooted, measured, complete
	and locally compact length spaces and identify a rooted, measured, complete
 	and locally compact length space with its class in $\mathbb L$. 
 	We will also denote by $\mathbb L^c=\mathbb K \cap \mathbb L$, and $\mathbb L^{\bullet,c}$ for the space of their pointed counterparts.
 
 If $\sfX=(X,d,\rho,\mu)\in \mathbb L$, then for $R\geq 0$ we will consider
 its restriction to the closed ball of radius $R$ centered at
 $\rho$, denoted by $\sfX^{(R)}=(X^{(R)}, d^{(R)}, \rho^{(R)}, \mu^{(R)})$, where
 $X^{(R)} $ is defined by \eqref{eq:X(r)}, the distance $d^{(R)}$ is the
 restriction of $d$ to $X^{(R)}$, and the measure $\mu^{(R)}$ is defined
 by \eqref{eq:mu(r)}. 
If $\sfX$ belongs to $\mathbb L$ , then $\sfX^{(R)}$ belongs to $\K$ for all $R\geq 0$.
 
The following distance is well defined on $\mathbb L \times \mathbb{L}$:
 \[
 d_{\mathrm{GHP}}(\sfX,\sfY) = \int_0^\infty e^{-R} \cdot  \left(1 \wedge
 d^c_{\mathrm{GHP}}\left(\sfX^{(R)},\sfY^{(R)}\right) 
 \right)  \dd R.
\]
The weight $e^{-R}$ and truncation are arbitrary and their choice does not affect the resulting topology.

\begin{theorem}[Theorem~2.9 and Proposition~2.10 of \cite{abraham_note_2013}]
	We have the following:
 	\begin{itemize}
 		\item[(i)] The function $d_{\mathrm{GHP}}$ defines a distance on $\mathbb L$.
 		\item[(ii)] The space $(\mathbb L, d_{\mathrm{GHP}})$ is a Polish metric space.
 		\item[(iii)] The two distances $d_{\mathrm{GHP}}$ and $d_{\mathrm{GHP}}^c$ define the same topology on $\mathbb K \cap \mathbb L$.
 	\end{itemize}
\end{theorem}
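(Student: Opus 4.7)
This statement is a standard extension of a topology on compact rooted spaces to one on locally compact rooted length spaces via weighted integrals of truncations, closely analogous to how vague convergence of Radon measures is built from weak convergence on compact sets. The key ingredient is that for any $\sfX\in\mathbb L$, the ball $\sfX^{(R)}$ is compact (by the Hopf--Rinow property for complete locally compact length spaces), varies in a controlled way in $R$, and that the set of ``critical radii'' where $\sfX^{(R)}$ fails to depend continuously on $R$ in $d^c_{\mathrm{GHP}}$ is at most countable. Throughout, I would use the integral representation of $d_{\mathrm{GHP}}$ to reduce everything to statements about $d^c_{\mathrm{GHP}}$ on nested compact slices.

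\textbf{Step 1 (part (i)).} The integrand is bounded by $1$ and integrated against the finite measure $e^{-R}\,\dd R$, so $d_{\mathrm{GHP}}$ is finite and at most $1$. Symmetry and the triangle inequality pass through the integral from the corresponding properties of $d^c_{\mathrm{GHP}}$. The only subtle point is definiteness: if $d_{\mathrm{GHP}}(\sfX,\sfY)=0$, then $d^c_{\mathrm{GHP}}(\sfX^{(R)},\sfY^{(R)})=0$ for Lebesgue-a.e.\ $R$, so along a sequence $R_n\uparrow\infty$ there exist GHP-isometries $\Phi_n:\sfX^{(R_n)}\to\sfY^{(R_n)}$. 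Since the balls are compact, an Arzel\`a--Ascoli diagonal extraction on the countable $1$-Lipschitz family $(\Phi_n)$ yields a subsequential limit $\Phi:\sfX\to\sfY$; one checks it is isometric, bijective, sends the root to the root, and pushes $\mu$ to $\mu'$ using weak convergence on each ball.

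\textbf{Step 2 (part (ii)).} For separability, truncations show that $\K\cap\mathbb L$ is dense in $\mathbb L$, and the Polish property of $(\K,d^c_{\mathrm{GHP}})$ from the previous theorem provides a countable dense family. For completeness, given a Cauchy sequence $(\sfX_n)\subset\mathbb L$, Fatou together with the integral formula shows that for each $R>0$ outside a Lebesgue-null set, the sequence $(\sfX_n^{(R)})$ is Cauchy in $(\K,d^c_{\mathrm{GHP}})$ and hence converges to some $\sfY_R\in\K$. The compatibility $\sfY_R=(\sfY_{R'})^{(R)}$ for $R<R'$ follows by passing to the limit in $n$ from the trivial identity $\sfX_n^{(R)}=(\sfX_n^{(R')})^{(R)}$. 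One then takes the metric inductive limit of the family $(\sfY_R)_R$, obtaining a rooted measured locally compact length space $\sfY\in\mathbb L$ with $\sfY^{(R)}=\sfY_R$ for a.e.\ $R$, and dominated convergence gives $d_{\mathrm{GHP}}(\sfX_n,\sfY)\to 0$.

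\textbf{Step 3 (part (iii)) and main obstacle.} If $d^c_{\mathrm{GHP}}(\sfX_n,\sfX)\to 0$ with all spaces compact, then $\sfX_n^{(R)}\to\sfX^{(R)}$ in $\K$ for every $R$ outside the at-most-countable set of critical radii of $\sfX$ (values where $\mu(\partial \mathrm{B}(\rho,R))>0$), and dominated convergence gives $d_{\mathrm{GHP}}(\sfX_n,\sfX)\to 0$. Conversely, if $\sfX\in\K\cap\mathbb L$ and $d_{\mathrm{GHP}}(\sfX_n,\sfX)\to 0$, picking $R>\diam(\sfX)$ forces $\sfX^{(R)}=\sfX$, and integrability then yields $\sfX_n^{(R)}\to\sfX$ in $\K$, which in turn forces $\diam(\sfX_n)\leq R$ for $n$ large, so $\sfX_n=\sfX_n^{(R)}$ and the two topologies agree. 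The genuine technical difficulty is concentrated in the patching arguments in (i) and (ii): reconstructing a single GHP-isometry (respectively a single locally compact length space) from a compatible family indexed by an increasing sequence of radii. The length-space hypothesis plays its role precisely here, guaranteeing that the balls $\sfX^{(R)}$ are themselves geodesic and that nothing pathological happens radially outside a null set of radii, which is what makes the diagonal extraction and the inductive-limit construction well-defined.
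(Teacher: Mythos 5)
The paper does not actually prove this statement; it imports it verbatim as Theorem~2.9 and Proposition~2.10 of \cite{abraham_note_2013}, so there is no proof in the present paper to compare your sketch against. That said, your outline is a reasonable reconstruction of the argument in that reference: the role of the length-space hypothesis in making $R\mapsto \sfX^{(R)}$ $1$-Lipschitz in Hausdorff distance, the at-most-countable set of radii where $\mu$ charges a sphere, and the Arzel\`a--Ascoli and inductive-limit constructions for definiteness and completeness are indeed the load-bearing ideas.

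Two places where the sketch is thinner than the argument ultimately needs to be. In Step~2, the identity $\sfX_n^{(R)}=(\sfX_n^{(R')})^{(R)}$ gives $\sfY_R = (\sfY_{R'})^{(R)}$ only as an equality of GHP-isometry classes; before any inductive limit can be taken one must fix actual isometric inclusions $\sfY_R\hookrightarrow \sfY_{R'}$ for $R<R'$ that are mutually compatible, which is another diagonal extraction on nested compacts (you gesture at this as the ``genuine technical difficulty,'' but it is worth noting that it is the same kind of compactness argument as in Step~1, not a separate black box). In Step~3, the assertion that $\sfX_n^{(R)}\to\sfX$ ``forces $\diam(\sfX_n)\leq R$ for $n$ large'' needs a small extra observation: if $\sfX_n$ had a point at distance $>R$ from $\rho_n$, then since $\sfX_n$ is a length space a continuous path from $\rho_n$ to that point would (by the intermediate value theorem applied to $d(\rho_n,\cdot)$) produce a point of $\sfX_n^{(R)}$ at distance exactly $R$ from the root, so the rooted radius of $\sfX_n^{(R)}$ is $\geq R$; since pointed GH convergence to $\sfX$ forces rooted radii to converge to that of $\sfX$, which is $<R$ by hypothesis, this is impossible for $n$ large. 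With these two points filled in, the sketch is a correct account of the ADH proof.
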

All the objects that we study in this paper can be understood in that context of length spaces. 
The discrete objects that we are considering (discrete trees) can be seen as length spaces by considering their \emph{cable graph}, meaning that we consider every edge in the tree as a segment of length $1$. 
The continuous objects that we are considering are all weak limits of such discrete objects that are indeed length spaces. 
This makes them length spaces as well by the fact that taking Gromov--Hausdorff limits preserves the property of being a length space, see \cite[Theorem~7.5.1]{burago_course_2001}. 
 
\subsection{The chain construction}\label{subsec:chain construction}

In this section, we present a construction that takes a family of compact measured metric spaces and glues them sequentially along a path. 

\subsubsection{Construction}

Suppose that we are given a discrete set $\mathscr{N}\subset \intervalleoo{0}{\infty}$ and a family $(\mathsf L_t^\bullet, t\in \mathscr{N})$ of rooted pointed measured compact length spaces indexed by the set $\mathscr{N}$, where
\begin{align*}
  \mathsf L_t^\bullet=(L_t,d_t,\rho_t,v_t,\nu_t).
\end{align*} 
We would like to define the object obtained by linking every $\mathsf L_t^\bullet$ by identifying the distinguished point $v_t$ to the root $\rho_{t'}$ of the next block in ascending order.  
For the result to be well defined we make the following assumptions.

Assume that
\begin{enumerate}[label=(Ch\arabic*)]
 	\item\label{assum:chain1} For any $A>0$ we have 
 	\begin{align*}
 	\sum_{t\in \mathscr{N}\cap \intervalleoo{A}{\infty}} d_t(\rho_t,v_t)<\infty \quad \text{ and } \quad \sum_{t\in \mathscr{N}\cap \intervalleoo{A}{\infty}} \nu_t(L_t)<\infty
 	\end{align*}
 	\item \label{assum:chain2}We have
 	\begin{align*}
 	\sup_{t\in \mathscr{N}\cap \intervalleoo{A}{\infty}} \diam(\mathsf L_t^\bullet) \underset{A \rightarrow\infty}{\rightarrow} 0.
 	\end{align*}
 	\item\label{assum:chain3} For any $\epsilon>0$, one of the two following conditions is satisfied:
 	\begin{enumerate}[label=(Ch3\alph*)]
 	 \item\label{assum:chain3a} $\sum_{t\in \mathscr{N}\cap \intervalleoo{0}{\epsilon}} d_t(\rho_t,v_t)=\infty$,
 	\item\label{assum:chain3b} $\mathscr{N}\cap \intervalleoo{0}{\epsilon}$ is finite.
 	\end{enumerate}
\end{enumerate}
We define $L_\infty=\{\rho_\infty\}$ and  consider
\begin{align*}
  \bigsqcup_{t\in \mathscr N\cup \{\infty\}} L_t
\end{align*}
which we endow with the pseudo-distance $d$ defined as
\begin{equation}
  d(x,y)=d(y,x)=
  \begin{cases}
    d_t(x,y) &\text{if } x,y\in L_t,\\
    d_t(x,v_t) + \displaystyle\sum_{r \in\mathscr{N}\cap(s,t)}d_r(\rho_r,v_r)  + d_s(\rho_s,y) &\text{if } (x,y)\in L_s\times L_t,\ t<s.
  \end{cases}
\end{equation}
Then $\mathrm{Chain}(\mathsf L_t^\bullet, t\in \mathscr{N})$ is defined as
\begin{align*}
  \left(\bigsqcup_{t\in \mathscr N\cup \{\infty\}} L_t\right)/\sim
\end{align*}
where $\sim$ is the equivalence relation generated by $x\sim y$ if $d(x,y)=0$. 
The space $\mathrm{Chain}(\mathsf L_t^\bullet, t\in \mathscr{N})$ is seen as a rooted measured metric space after endowing the space with the projection of the distance $d$ and the measure $\sum_{t\in \mathscr N} \nu_t$ and the root $\rho_\infty$.
 
\begin{lemma}\label{lem:chain is well defined}
  If assumptions \ref{assum:chain1}, \ref{assum:chain2}, \ref{assum:chain3} hold then $\mathrm{Chain}(\mathsf L_t^\bullet, t\in \mathscr{N})$ is an element of $\mathbb L$.
\end{lemma}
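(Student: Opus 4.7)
The plan is to verify in turn: (i) the formula defines a pseudo-distance whose quotient is a metric space; (ii) the resulting space is a length space; (iii) it is complete and locally compact; (iv) the measure is locally finite.

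Well-definedness of $d$ uses the discreteness of $\mathscr N$ — so $\mathscr N \cap (s,t)$ is finite for every $0<s<t<\infty$, making the sum in the formula a finite sum — together with (Ch1) to make sense of the distance to $\rho_\infty$, which I read naturally as $d(\rho_\infty, y) = \sum_{r\in \mathscr N\cap (t,\infty)} d_r(\rho_r, v_r) + d_t(\rho_t, y)$ for $y\in L_t$. Symmetry is clear and the triangle inequality reduces to a routine case analysis on the relative order of the indices. Upon quotienting, $v_t$ is identified with $\rho_{t'}$ whenever $t'$ is the $\mathscr N$-successor of $t$. The length-space property then follows by concatenating minimizing paths in each intermediate block $L_r$, using that each $L_t$ is itself a length space.

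The substantive work is to establish completeness and local compactness. I would fix $x\in L_{t_0}$ (the case $x=\rho_\infty$ being analogous) and a radius $R>0$, and control which blocks the closed ball $\overline B(x,R)$ meets. Blocks with index $t>t_0$ contribute only if $\sum_{r\in \mathscr N\cap(t_0,t)}d_r(\rho_r,v_r)\le R+\diam(\mathsf L_t^\bullet)$; by (Ch1) and (Ch2), this leaves finitely many ``large'' blocks plus a tail of arbitrarily small-diameter blocks accumulating at $\rho_\infty$. Blocks with index $t<t_0$ contribute only if $\sum_{r\in \mathscr N\cap(t,t_0)}d_r(\rho_r,v_r)\le R$, which by (Ch3) holds for only finitely many $t$: in case (Ch3a) the partial sum diverges as $t\to 0$, while in case (Ch3b) only finitely many indices exist near zero. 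Hence $\overline B(x,R)$ is the union of finitely many compact pieces together with a tail that (Ch1) and (Ch2) make totally bounded — choosing $A$ large enough that $\sum_{r>A}d_r(\rho_r,v_r)+\sup_{r>A}\diam(\mathsf L^\bullet_r)<\epsilon$ puts everything indexed by $t>A$ within $\epsilon$ of $\rho_\infty$. A Cauchy sequence is bounded and so lies in such a ball; it then either has a convergent subsequence inside finitely many compact blocks, or it approaches $\rho_\infty$ by the tail estimate. This yields both completeness and compactness of closed balls, hence local compactness.

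Local finiteness of the measure follows by the same dichotomy: (Ch1) bounds the total mass of the tail near $\rho_\infty$, and the other contributions come from finitely many blocks of finite mass. The main obstacle I expect is assembling these estimates carefully to show total boundedness of $\overline B(\rho_\infty, R)$, where (Ch1) and (Ch2) must be combined — (Ch1) to truncate the tail after finitely many blocks and (Ch2) to cover the remaining small-diameter tail by a single ball around $\rho_\infty$; the small-$t$ end is pure bookkeeping handled by the dichotomy in (Ch3).
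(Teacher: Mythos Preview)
Your proposal is correct and follows essentially the same approach as the paper: use (Ch3) to cut off the small-$t$ end at finitely many blocks, use (Ch1) and (Ch2) to absorb the large-$t$ tail into a single $\epsilon$-ball around $\rho_\infty$, and cover the finitely many intermediate compact blocks individually. The only cosmetic difference is that the paper works exclusively with closed balls centered at the root $\rho_\infty$ (which suffices, since balls around any other point are contained in larger balls around $\rho_\infty$), whereas you treat general centers $x\in L_{t_0}$; this extra generality is harmless but not needed.
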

The proof of this lemma can be found in Appendix~\ref{app:chain}.

\begin{remark}
If only \ref{assum:chain3} fails, then a length space in $\mathbb L$ can still be constructed from $\mathrm{Chain}(\mathsf L_t^\bullet, t\in \mathscr{N})$, except that it needs to be completed with a second point $\rho_0$ on the other side.
Also, note that in our definition, we exclude the possibility that the set $\mathscr N$ could be dense. 
The construction could be made more general to accommodate for such situations, but this would require the addition of more (possibly an uncountable infinity) completion points in between the links of the chain. 
Such a construction appears for example in \cite[Section~6.1]{senizergues_decorated_2023}.
\end{remark}

\subsubsection{Convergence of chains}

In order to prove scaling limit convergence results for the discrete objects mentioned above to the continuous ones, we provide here sufficient conditions for convergence. 
Since we will consider the convergence of random such objects, we state the condition for a random object constructed as $\mathrm{Chain}(\mathsf L_t^{\bullet,r}, t\in \mathscr{N}^r)$ to converge to a limit expressed as $\mathrm{Chain}(\mathsf L_t^{\bullet,0}, t\in \mathscr{N}^0)$ as $r\rightarrow 0$.
We assume that for any $r\geq 0$, so in particular for $r=0$ as well, the point process $\mathscr{N}^r$ and the family $(\mathsf L_t^{\bullet,r}, t\in \mathscr{N}^r)$ satisfy  \ref{assum:chain1}, \ref{assum:chain2}, \ref{assum:chain3} almost surely. 
Additionally we assume the following:
 \begin{enumerate}[label=(ChConv\arabic*)]
 	\item \label{assum:convergence chains 1}
 	The process $((t,\mathsf L_t^{\bullet,r}),\ t\in \mathscr N^r)$ converges in distribution as $r\rightarrow0$ to $((t,\mathsf L_t^{\bullet,0}),\ t\in \mathscr N^0)$ as point processes on $\intervalleoo{0}{\infty}$ with marks in  $\mathbb{L}^{\bullet,c}$.
 	We mean by that that for any compact set $K\subset \intervalleoo{0}{\infty}$, the random measure 
 	\begin{align*}
\sum_{t \in \mathscr N^r \cap K} \delta_{(x, \mathsf L_t^{\bullet,r})} \overset{(d)}{\rightarrow} \sum_{t \in \mathscr N^0 \cap K} \delta_{(x, \mathsf L_t^{\bullet,0})}
 	\end{align*}
 as $r\rightarrow 0$ for the topology of the weak convergence of measures on $K\times \mathbb{L}^{\bullet,c}$.
 	\item \label{assum:convergence chains 2} For any $\epsilon>0$
 	\begin{align*}
 	\limsup_{A\rightarrow\infty }\limsup_{r\rightarrow0 }\Pp{ \mathrm{mass}\left(\mathrm{Chain}(\mathsf L_t^{\bullet,r}, t\in \mathscr{N}^r\cap \intervalleoo{A}{\infty})\right) >\epsilon}=0.
 	\end{align*}
 	\item \label{assum:convergence chains 3} For any $\epsilon>0$
 	\begin{align*}
 	\limsup_{A\rightarrow\infty }\limsup_{r\rightarrow0 }\Pp{ \mathrm{diam}\left(\mathrm{Chain}(\mathsf L_t^{\bullet,r}, t\in \mathscr{N}^r\cap \intervalleoo{A}{\infty})\right) >\epsilon}=0.
 	\end{align*}
 \end{enumerate}
 Then we have the following result.
 \begin{proposition}\label{prop:convergence of chains}
Suppose that for all $r> 0$, the family $(\mathsf L_t^{\bullet,r}, t\in \mathscr{N}^r)$ satisfies \ref{assum:chain1}, \ref{assum:chain2} and \ref{assum:chain3} almost surely. 
Additionally, assume that $(\mathsf L_t^{\bullet,0}, t\in \mathscr{N}^0)$ satisfies \ref{assum:chain1}, \ref{assum:chain2} and \ref{assum:chain3a} almost surely. 
Then, under \ref{assum:convergence chains 1},\ref{assum:convergence chains 2}, \ref{assum:convergence chains 3}, 
we have the following convergence in distribution in the space $\mathbb L$,
 	\begin{align*}
 	\mathrm{Chain}(\mathsf L_t^{\bullet,r}, t\in \mathscr{N}^r) \underset{r\rightarrow0}{\rightarrow} \mathrm{Chain}(\mathsf L_t^{\bullet,0}, t\in \mathscr{N}^0).
 	\end{align*}
 \end{proposition}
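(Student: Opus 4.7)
The plan is to reduce the convergence in local GHP to the convergence of the ball-truncations $(\mathsf C^r)^{(R)}$ for Lebesgue-a.e.\ $R > 0$ (writing $\mathsf C^r := \mathrm{Chain}(\mathsf L_t^{\bullet,r}, t\in \mathscr N^r)$), and then to handle each such truncated ball by isolating a finite middle portion of the chain on which the point-process convergence \ref{assum:convergence chains 1} applies directly. First I pass to a Skorokhod coupling: the space of locally finite marked point measures on $(0,\infty)\times \mathbb L^{\bullet,c}$ (with the vague topology on compact windows) is Polish, so I may work on a single probability space where the convergence \ref{assum:convergence chains 1}, applied to the countable exhaustion $K_n = [1/n,n]$, holds almost surely simultaneously for every $n$. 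By the very definition of $d_{\mathrm{GHP}}$ through ball truncations, it then suffices to show that $d^c_{\mathrm{GHP}}((\mathsf C^r)^{(R)}, (\mathsf C^0)^{(R)}) \to 0$ almost surely for Lebesgue-a.e.\ $R>0$.

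Fix such an $R$ and $\epsilon > 0$. Using \ref{assum:convergence chains 2}--\ref{assum:convergence chains 3} I pick $A$ so large that, with probability at least $1-\epsilon$ uniformly in small $r$ (and for $r=0$), the tail chain $\mathrm{Chain}(\mathsf L_t^{\bullet,r}, t\in \mathscr N^r \cap (A,\infty))$ has both mass and diameter at most $\epsilon$; collapsing this tail to its root $\rho_\infty^r$ inside $\mathsf C^r$ then induces a perturbation of GHP size $O(\epsilon)$ on $(\mathsf C^r)^{(R)}$. On the other side, property \ref{assum:chain3a} of the limit lets me pick a random $T = T(R,\epsilon)>0$ such that $\sum_{s \in \mathscr N^0 \cap (T,A]} d_s^0(\rho_s^0,v_s^0) > R+1$; by the almost sure point-process convergence on $[T,A]$ and the continuity of the functional $\mathsf L^\bullet \mapsto d(\rho,v)$ on $\mathbb L^{\bullet,c}$, the same bound with constant $R+\tfrac12$ holds for $\mathsf C^r$ for all $r$ small. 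This forces $(\mathsf C^r)^{(R)}$, after the tail-collapse, to be contained in the finite sub-chain indexed by $\mathscr N^r \cap [T, A]$.

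The last ingredient is that for any fixed finite index set, the map $(\mathsf L_{t_i}^\bullet)_{1\le i\le k} \mapsto \mathrm{Chain}(\mathsf L_{t_i}^\bullet, 1\le i\le k)$ is Lipschitz from $(\mathbb L^{\bullet,c})^k$ to $\mathbb L^c$ in the respective GHP topologies; this is immediate from the gluing definition via concatenation of optimal block-wise couplings. Applying this to the almost surely convergent finite configurations $\mathscr N^r \cap [T,A]$ with their marks yields convergence of the finite sub-chains in $d_{\mathrm{GHP}}^c$, and combining with the tail estimate gives $d_{\mathrm{GHP}}^c((\mathsf C^r)^{(R)}, (\mathsf C^0)^{(R)}) = O(\epsilon)$ with probability at least $1-O(\epsilon)$; sending $\epsilon\to 0$ completes the proof.

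I expect the main obstacle to be the tail-collapse step: translating the abstract bounds on diameter and mass provided by \ref{assum:convergence chains 2}--\ref{assum:convergence chains 3} into an actual GHP estimate between $(\mathsf C^r)^{(R)}$ and its modification where the tail has been replaced by the single point $\rho_\infty^r$. This requires a short lemma of the form ``a rooted measured metric space of diameter and mass at most $\epsilon$ is GHP-close within $O(\epsilon)$ to a one-point space,'' together with the verification that inserting such a perturbation on one side of a chain only contributes additively to the overall GHP distance. A secondary subtlety is to show that generic $R$ suffices in the reduction step, so that the limiting measure puts no mass on the sphere $\partial B(\rho_\infty^0, R)$; this holds outside an at most countable set of $R$ by finiteness of the measure on balls.
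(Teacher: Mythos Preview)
Your proposal is correct and follows essentially the same approach as the paper's proof: Skorokhod coupling for \ref{assum:convergence chains 1}, continuity of the finite-chain gluing on a bounded window, tail control via \ref{assum:convergence chains 2}--\ref{assum:convergence chains 3}, and \ref{assum:chain3a} to ensure the ball of radius $R$ around $\rho_\infty$ is captured by a finite sub-chain. The only organizational difference is that the paper first proves convergence of $\mathrm{Chain}(\mathsf L_t^{\bullet,r}, t\in \mathscr N^r\cap(a,\infty))$ in $\mathbb L^c$ via a diagonal sequence $A_r\to\infty$ and then passes to $\mathbb L$, whereas you truncate at both ends simultaneously inside each ball $(\mathsf C^r)^{(R)}$; both routes rest on the same ingredients.
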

The proof of this proposition can be found in Appendix~\ref{app:chain}.

\subsection{Aggregation processes}\label{subsec:presentation aggregation process}

The second construction that we will use is that of \emph{aggregation processes}. 
In this case, we create a metric space by gluing together a (potentially infinite) collection of measured metric spaces $\left(\mathsf{B}_t, \ t\in \mathscr J\right)$ in a \emph{random} manner.
The reader familiar with the stick-breaking construction of the CRT may wish to keep that in mind as a useful analogy.
Indeed, the stick breaking construction can be defined as a special case of an aggregation process as defined here.
However, unlike the stick breaking construction, here it is possible that infinitely many blocks are being glued within a finite time interval.
(For any $t<\infty$ the blocks used up to time $t$ still have finite total mass.)

\subsubsection{Construction}

Since the definition of this process is a bit more involved, we express it for a particular case where the blocks $\left(\mathsf{B}_t, t\in \mathscr J\right)$ that we glue together are random and satisfy some \emph{ad hoc} assumptions that will be satisfied by the objects studied in this paper. 
We start with 
\begin{itemize}
\item a \emph{weight process} $(W_t)_{t\geq t_0}$, which is a càdlàg increasing pure-jump process, such that $W_{t_0}>0$, (we denote by $\mathscr J$ its set of jump times),
\item a \emph{seed} 
$\mathsf S^\bullet=(S,\rho,v,d_S,\nu_S)$
which is a compact, rooted, pointed, measured length space (\emph{i.e.} an element of $\mathbb L^{\bullet,c}$) and has total mass equal to $W_{t_0}$. 
\item a \emph{block-distribution family} $\eta=\left(\eta(t,w): \ t>t_0, w>0\right)$ of distributions on $\mathbb L^{c}$ which is such that for any $t>t_0,w>0$, an object $\mathsf{B}(t,w)$ sampled under the measure $\eta(t,w)$ almost surely has mass $w$. 
\end{itemize}
From these, we construct below some rooted (pointed) random metric space
\begin{align*} 
	\mathrm{Aggreg}((W_t)_{t\geq t_0},\mathsf S^\bullet,\eta).
\end{align*}
For each jump time $t\in \mathscr J$ we let $w_t=W_t-W_{t-}$ and we let $w_{t_0}=W_{t_0}$.
Then we sample a random \emph{block} $\mathsf B_t=(B_t,d_t,\rho_t,\nu_t)$, whose distribution is $\eta(t,w_t)$, independently for all $t \in \mathscr J$. 
Then the object of interest in constructed from the collection $\left(\mathsf B_t, t\in \mathscr J\right)$, by quotienting the set
\begin{align*}
  S\sqcup \bigsqcup_{t > t_0} B_t,
\end{align*}
by the appropriate identification of pairs of points:
For every jump time $t\in \mathscr J$, we pick a random point $X_t$ on the set $\bigsqcup_{t_0 \leq s <t} B_s$ using a normalized version of the measure $\sum_{t_0 \leq s <t} \nu_s$.
Define the equivalence relation $\sim$ as generated by the relations $X_t\sim \rho_t$ for each $t\in \mathscr J$.
The object of interest is then the \emph{metric gluing}
\begin{align*}
  (A_\infty,\rho,v,d) :=\overline{\left(\mathsf S^\bullet \sqcup \bigsqcup_{t> t_0} \mathsf B_t\right)/ \sim}
\end{align*}
in the sense of \cite{burago_course_2001}, where the root $\rho$ and distinguished point $v$ are inherited from the seed $\mathsf S^\bullet$.
The overline in the last display represent the operation of completion with respect to the distance defined by the metric gluing.
Note that the completion and the gluing operation both preserve the property of being a length space.
We will see below that under some reasonable assumptions, this space can be shown to be almost surely compact.

For any $t\geq t_0$, we can consider the probability measure $\mu_t:=W_t^{-1} \sum_{t_0\leq s \leq t} \nu_s$. 
Again, under some reasonable assumptions, this sequence of measures almost surely converge for the weak topology as $t\rightarrow\infty$ so that $A_\infty$ can be endowed with a probability measure $\mu_\infty:=\lim_{t\rightarrow \infty} \mu_t$.
This makes 
\begin{align*}
	\mathrm{Aggreg}((W_t)_{t\geq t_0},\mathsf S^\bullet,\eta):=\mathsf A_\infty^\bullet=(A_\infty,\rho,v,d,\mu_\infty)
\end{align*}
an element of $\mathbb L^{\bullet,c}$ almost surely.
In what comes next we will work under the following two assumptions, which are satisfied in all the cases studied in this paper.
 \begin{enumerate}[label=(AG\arabic*)]
 	\item \label{assum:weight process} For some deterministic non-increasing function $\delta: \intervallefo{t_0}{\infty} \rightarrow\intervalleff{0}{1}$ that tends to $0$ as $t\rightarrow \infty$, for some random variables $Z_1$ and $Z_2$ we have for all $t\geq t_0$,
 	\begin{align}\label{eq:estimate on the weight process and its jumps}
 	W_t\leq Z_1 \cdot t \qquad \text{ and } \qquad w_t\leq Z_2 \cdot t^{-2+\delta(t)}. 
 	\end{align}
 	\item \label{assum:blocks}
 	We have
 	\begin{align*}
 	\sup_{t\geq t_0, w>0}\Pp{\frac{\diam \mathsf B(t,w)}{\sqrt{w}}\geq x}<C_1\cdot \exp\left(-C_2 x\right)
 	\end{align*} 
\end{enumerate}

The first assumption ensures that the jumps of the process get indeed smaller and smaller in a quantifiable way. 
The second one ensures that the scaling of the distances in the blocks are of the order of the square root of their mass. 
Those assumptions above have been taken so that they apply to our setting in a way that give us some quantitative results that depend on the specific exponents $2$ and $\frac{1}{2}$ that appear in \ref{assum:blocks} and \ref{assum:weight process}.
The arguments that we use in the proofs are quite robust and can be modified to account for other assumptions of the same type. 

\begin{proposition}\label{prop:aggregate is well-defined}
  Under assumptions \ref{assum:weight process} and \ref{assum:blocks}, the object 
  \begin{align*}
   \mathsf {A}_\infty^\bullet=\mathrm{Aggreg}((W_t)_{t\geq t_0},\mathsf S^\bullet,\eta) 
  \end{align*}
 is almost surely well-defined as an $\mathbb L^{\bullet,c}$-valued random variable.
  Furthermore, if the random variables $Z_1$ and $Z_2$ have moments of all orders, then the Hausdorff distance  $d_{\rmH}(\mathsf{A}_\infty^\bullet,\mathsf{S}^\bullet)$ also has moments of all order.
\end{proposition}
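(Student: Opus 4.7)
To establish that $\mathsf{A}_\infty^\bullet \in \mathbb{L}^{\bullet,c}$ almost surely, I would verify three points: (a) the metric gluing yields a length space, (b) the resulting space is almost surely compact, and (c) the limit measure $\mu_\infty$ exists. Point (a) is immediate since metric gluing and completion both preserve the length-space property (see \cite[Chapter~7]{burago_course_2001}), while (c) is a routine Prokhorov-type argument given (b). The bulk of the work is therefore (b), which I would obtain as a corollary of the moment estimate for $R := d_\rmH(\mathsf{A}_\infty^\bullet, \mathsf{S}^\bullet)$: once $R$ is shown to have moments of all orders, the nested approximations $\mathsf A_T$ form a Cauchy sequence in the GHP topology and the limit is compact.

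The construction endows the index set $\{t_0\}\cup\mathscr J$ with a random rooted tree structure $\mathcal{T}^{\mathrm{ag}}$: for each jump $t$, the attachment point $X_t$ lies in a unique prior block (or in $\mathsf S$), which I call the parent $P_t$ of $t$. By the triangle inequality, any $x \in \mathsf B_t$ satisfies
\begin{equation*}
d(x,\rho) \;\leq\; \mathrm{diam}(\mathsf S) \;+\; \sum_{s \in \mathrm{Anc}(t) \cup \{t\}} \mathrm{diam}(\mathsf B_s),
\end{equation*}
where $\mathrm{Anc}(t)$ denotes the ancestors of $t$ in $\mathcal{T}^{\mathrm{ag}}$, so $R - \mathrm{diam}(\mathsf S)$ is at most the supremum over $t\in\mathscr J$ of these ancestral diameter sums.

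To bound that supremum I would split the jump times into dyadic windows $\mathscr J_k := \mathscr J \cap [2^k t_0, 2^{k+1} t_0)$. On each $\mathscr J_k$, assumption \ref{assum:weight process} gives the pathwise bound $w_t \leq Z_2 (2^k t_0)^{-2+\delta(2^k t_0)}$, so by \ref{assum:blocks} each $\mathrm{diam}(\mathsf B_t)$ is sub-exponential at scale $\sqrt{Z_2}(2^k t_0)^{-1+\delta(2^k t_0)/2}$; together with an upper bound of $Z_1\cdot 2^{k+1} t_0/\epsilon$ on the number of jumps of size $\geq \epsilon$ in $\mathscr J_k$ (from $W_t \leq Z_1 t$), a union bound yields moment estimates for $D_k := \sup_{t \in \mathscr J_k} \mathrm{diam}(\mathsf B_t)$ that decay fast in $k$. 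The crux lies in controlling the ancestral counts: using that $P_t$ is picked proportional to mass and that the mass contributed by blocks in $\mathscr J_{k'}$ is at most $\lesssim Z_1\cdot 2^{k'} t_0$, one gets $\mathbb{P}(P_t \in \mathscr J_{k'} \mid \mathcal F_{t-}) = O(2^{k'-k})$ uniformly for $t\in\mathscr J_k$. Consequently the dyadic levels of successive ancestors behave essentially like a geometric random walk, and the number $N_k$ of ancestors of any $t$ falling in $\mathscr J_k$ has moments of all orders. Combining $R \lesssim \mathrm{diam}(\mathsf S) + \sum_k N_k D_k$ with these estimates and the moment hypotheses on $Z_1, Z_2$ yields all moments of $R$. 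Compactness then follows since $d_\rmH(\mathsf A_T, \mathsf A_\infty^\bullet) \to 0$ almost surely by the same tail estimates, and each $\mathsf A_T$ is compact (finitely many blocks of diameter above any $\epsilon$). The main obstacle is precisely this tree-structural argument: the naive bound $R \leq \sum_{t\in\mathscr J}\mathrm{diam}(\mathsf B_t)$ has infinite expectation in our setting, and one must genuinely exploit how mass-proportional attachment biases new blocks toward attaching to early, heavy ones.
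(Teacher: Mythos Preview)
Your overall architecture --- exploit the random tree structure on $\mathscr J$, use that ancestry indicators $\ind{s \prec t}$ are independent Bernoullis with parameter $w_s/W_s$, and bound $R$ by a sum of (maximal block diameter at a scale)$\times$(number of ancestors at that scale) --- is exactly the paper's strategy. The difference is that you slice $\mathscr J$ by \emph{time} windows $\mathscr J_k = \mathscr J \cap [2^k t_0, 2^{k+1} t_0)$, whereas the paper slices by \emph{jump size} via $\mathscr E(2^{-k}) = \{t : 2^{-k} \le w_t < 2^{-k+1}\}$. This difference matters, and your version has a genuine gap.

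The gap is in your estimate $\mathbb{P}(P_t \in \mathscr J_{k'} \mid \mathcal F_{t-}) = O(2^{k'-k})$ for $t \in \mathscr J_k$. This would require $W_{t-} \gtrsim 2^k t_0$, but \ref{assum:weight process} gives only the \emph{upper} bound $W_t \le Z_1 t$; there is no lower bound on $W$, so $W_{t-}$ may be as small as $W_{t_0}$ even for $t$ in a late window. Hence your ``geometric random walk'' picture for the ancestral levels is unjustified and the control on $N_k$ does not follow. The paper's size-class decomposition sidesteps this: enumerating $\mathscr E(2^{-k})$ as $v_1 < v_2 < \cdots$, one has $W_{v_i} \ge \sum_{j \le i} w_{v_j} \ge i\cdot 2^{-k}$ simply because $W_{v_i}$ already contains those earlier jumps, and since $w_{v_i} < 2^{-k+1}$ this yields $\sum_{v \in \mathscr E(2^{-k})} w_v/W_v \le 2\sum_{i\le|\mathscr E(2^{-k})|} i^{-1} = O(k + \log(Z_1 Z_2))$ --- a harmonic bound that needs no hypothesis on how fast $W$ grows. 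A secondary issue: your $D_k = \sup_{t \in \mathscr J_k} \diam(\mathsf B_t)$ is a supremum over a possibly \emph{infinite} set (a bounded time window can contain infinitely many small jumps), so a union bound does not give moment control directly; handling this would force you to further stratify $\mathscr J_k$ by jump size, at which point you have essentially rebuilt the paper's decomposition.
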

The proof of this proposition can be found in Section~\ref{app:subsubsec:proof of prop aggregate is well-defined} of the Appendix. 
We remark that without some assumptions similar to the above it is possible to give examples of similar aggregation processes where the distances between some points in $\mathsf{A}_t$ are infinite, even for finite $t$.


\subsubsection{Extra conditions on the aggregation process to get Hausdorff and Minkowski dimension}
Finally, we introduce more precise conditions on the blocks and weight process (which will be satisfied in our applications) that allow us to compute the almost sure Hausdorff and Minkowski dimension of our object $\mathsf{A}_\infty^\bullet$.
We use $N_\epsilon(\mathsf{X})$ to denote the minimal number of balls of radius $\epsilon$ needed to cover a metric space $\mathsf{X}$.
\begin{enumerate}[label=(AGMink)]
	\item \label{assum:Minkowski coverings} There exists some constant $C$ such that for all $t\geq t_0$ and $w>0$ we have for $\mathsf B(t,w)\sim \eta(t,w)$,
	\begin{align}
		\Ec{N_\epsilon(\mathsf B(t,w))} \leq C \cdot (1\vee  w \epsilon^{-2+\petito{1}}).
	\end{align}
\end{enumerate}
\begin{enumerate}[label=(AGHaus\arabic*)]
	\item\label{assum:aghaus:weight process grows linearly} The weight process $(W_t)_{t\geq t_0}$ satisfies
	\begin{align*}
		W_t\sim Z_3 \cdot  t,
	\end{align*} 
	as $t\rightarrow \infty$ for some random variable $Z_3$.
	\item\label{assum:aghaus:small jumps don't contribute to weight process} The weight process $(W_t)_{t\geq t_0}$ is such that for all $\delta>0$,
	\begin{align*}
		\lim_{t\rightarrow\infty} \frac{\sum_{s\leq t} w_s \ind{w_s \geq s^{-2-\delta}}}{W_t}=1.
	\end{align*}
	\item\label{assum:aghaus:random point on block is far from root} The measure $\nu_{t,w}$ carried on $\mathsf B(t,w)=(B_{t,w},\rho_{t,w},d_{t,w},\nu_{t,w})$ has almost surely full support and there exists positive $a$ and $b$ such that uniformly in $t$ and $w$, for $U(t,w)$ a random point in $\mathsf B(t,w)$ sampled under a normalized version of the measure $\nu_{t,w}$ we have
	\begin{align*}
		\Pp{d(\rho_{t,w},U(t,w))\geq a w^{\frac{1}{2}}}\geq b.
	\end{align*}
\end{enumerate}

\begin{proposition}\label{prop:dimension of aggregate is 3}
	Assume \ref{assum:weight process} and \ref{assum:blocks} are satisfied so that $\mathsf{A}_\infty^{\bullet}$ is almost surely well-defined as a random element of $\mathbb L^{\bullet,c}$.
Then
	\begin{enumerate}[label=(\roman*)]
		\item\label{it:minkowski dimension is less than 3} If we further assume that $\mathsf{S}^\bullet$ has upper Minkowski dimension less than $3$ almost surely and that \ref{assum:Minkowski coverings} holds, then a.s.\ the upper Minkowski dimension of $\mathsf{A}_\infty^{\bullet}$ satisfies
		\begin{align*}
			\overline{\dim}_{\mathrm{Mink}}(\mathsf{A}_\infty^{\bullet})\leq 3.
		\end{align*}
		\item\label{it:Hausdorff dimension is more than 3} If we further assume that \ref{assum:aghaus:weight process grows linearly},\ref{assum:aghaus:small jumps don't contribute to weight process} and \ref{assum:aghaus:random point on block is far from root} hold, then almost surely the measure $\mu_\infty$ is diffuse and has full support and 
		\begin{align*}
			\dim_{\mathrm{Haus}}(\mathsf{A}_\infty^{\bullet}) \geq 3.
		\end{align*}
	\end{enumerate} 
	In particular if all those conditions are satisfied, then the Minkowski dimension of $\mathsf{A}_\infty^{\bullet}$ almost surely exists and $\dim_{\mathrm{Mink}}(\mathsf{A}_\infty^{\bullet})= \dim_{\mathrm{Haus}}(\mathsf{A}_\infty^{\bullet}) =3$ a.s..
\end{proposition}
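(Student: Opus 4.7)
My approach is to truncate the aggregation at time $T_\epsilon \sim \epsilon^{-1-\eta}$ for some small $\eta>0$, keeping only the ``skeleton'' $\mathsf{A}_\epsilon^{\mathrm{skel}}$ consisting of the seed together with the blocks $\mathsf{B}_t$ such that $t\leq T_\epsilon$ and $w_t\geq\epsilon^2$. First, adapting the techniques underlying Proposition~\ref{prop:aggregate is well-defined} (which rely on \ref{assum:weight process} and \ref{assum:blocks}), I would show that with high probability $d_{\mathrm H}(\mathsf{A}_\epsilon^{\mathrm{skel}},\mathsf{A}_\infty^\bullet)\leq \epsilon/2$: every block in a sub-branch attached to the skeleton has diameter $\lesssim\sqrt{w_t}\leq\epsilon$, and the moment control on the weight process produces a uniform bound on the accumulated diameter of such sub-branches. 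Then I would cover $\mathsf{A}_\epsilon^{\mathrm{skel}}$ by balls of radius $\epsilon/2$: by subadditivity,
\begin{align*}
N_{\epsilon/2}(\mathsf{A}_\epsilon^{\mathrm{skel}})\leq N_{\epsilon/2}(\mathsf{S}^\bullet)+\sum_{\substack{t\in\mathscr{J}\\t\leq T_\epsilon,\,w_t\geq\epsilon^2}} N_{\epsilon/2}(\mathsf{B}_t).
\end{align*}
Using \ref{assum:Minkowski coverings} in the form $\mathbb{E}[N_{\epsilon/2}(\mathsf{B}_t)]\lesssim w_t\epsilon^{-2+o(1)}$ and summing via $\sum_t w_t\leq W_{T_\epsilon}\leq Z_1 T_\epsilon$ from \ref{assum:weight process}, I get $\mathbb{E}[\sum_t N_{\epsilon/2}(\mathsf{B}_t)]\lesssim T_\epsilon\epsilon^{-2+o(1)}=\epsilon^{-3+o(1)}$. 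Together with the hypothesis $N_{\epsilon/2}(\mathsf{S}^\bullet)\leq\epsilon^{-3+o(1)}$, a Borel--Cantelli argument along a geometric subsequence $\epsilon_n=2^{-n}$ upgrades this expectation bound to the almost-sure inequality $\overline{\dim}_{\mathrm{Mink}}(\mathsf{A}_\infty^\bullet)\leq 3$.

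\textbf{Plan for part (ii).} Here I would apply the mass distribution principle. A $\mu_\infty$-sample $X$ can be identified with an end of the genealogical tree on $\{\mathsf{B}_t:t\in\mathscr{J}\}$; let $b_0<b_1<\cdots$ be its ancestor chain and $a_k\in\mathsf{B}_{b_k}$ the attachment point of $\mathsf{B}_{b_{k+1}}$ into $\mathsf{B}_{b_k}$, so that $d(X,a_k)=\sum_{j>k}d_{b_j}(\rho_{b_j},a_j)$ is non-increasing in $k$. A geometric argument shows that any point outside the subtree rooted at $\mathsf{B}_{b_{k+1}}$ lies at distance at least $d(X,a_k)$ from $X$, so setting $K=\min\{k:d(X,a_k)\leq r\}$ yields $B(X,r)\cap\mathsf{A}_\infty^\bullet\subseteq\mathrm{subtree}(\mathsf{B}_{b_K})$. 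Using \ref{assum:aghaus:weight process grows linearly} together with \ref{assum:aghaus:small jumps don't contribute to weight process}, I would establish the subtree-mass estimate $\mu_\infty(\mathrm{subtree}(\mathsf{B}_t))\asymp w_t/W_t\lesssim t^{-3+o(1)}$, with the upper bound on $w_t$ supplied by \ref{assum:weight process}. On the distance side, \ref{assum:aghaus:random point on block is far from root}, combined with \ref{assum:aghaus:small jumps don't contribute to weight process} to ensure the ancestor blocks of a $\mu_\infty$-sample are size-biased toward jumps with $w_{b_j}\geq b_j^{-2-o(1)}$, gives $d_{b_j}(\rho_{b_j},a_j)\gtrsim\sqrt{w_{b_j}}\gtrsim b_j^{-1-o(1)}$ with positive conditional probability. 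A near-independence argument across levels then yields $d(X,a_k)\gtrsim b_{k+1}^{-1-o(1)}$ with high probability, forcing $b_K\asymp r^{-1-o(1)}$ and hence
\begin{align*}
\mu_\infty(B(X,r))\leq\mu_\infty(\mathrm{subtree}(\mathsf{B}_{b_K}))\lesssim b_K^{-3+o(1)}= r^{3-o(1)}.
\end{align*}
Frostman's lemma then delivers $\dim_{\mathrm{Haus}}(\mathsf{A}_\infty^\bullet)\geq 3$; diffuseness of $\mu_\infty$ follows at once, and full support follows from the same subtree-mass estimate by noting that any open ball accumulates sub-branches of attached blocks whose mass contributions aggregate to a positive quantity.

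\textbf{Main obstacle.} The principal technical hurdle is rigorously establishing the subtree-mass estimate $\mu_\infty(\mathrm{subtree}(\mathsf{B}_t))\asymp w_t/W_t$. The subtree mass $M_t(s)$ evolves as a multiplicative growth process: at each new jump $s'>t$ of $W$, the new mass $w_{s'}$ joins the subtree with probability $M_t(s'-)/W_{s'-}$, so $M_t(s)/W_s$ is a martingale-like quantity whose almost-sure limit one needs to identify. Taming this process requires \ref{assum:aghaus:small jumps don't contribute to weight process} to absorb the noise from small jumps and \ref{assum:aghaus:weight process grows linearly} to fix the normalization $W_t\sim Z_3 t$. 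A secondary difficulty is the near-independence claim for the increments $d_{b_j}(\rho_{b_j},a_j)$ along the random chain $(b_k)$, which needs careful conditioning on the $\sigma$-algebra of the chain before invoking \ref{assum:aghaus:random point on block is far from root} fibre-by-fibre.
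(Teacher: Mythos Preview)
Your proposal is essentially correct and tracks the paper's argument closely, with a few differences in packaging worth noting.

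For part (i), your time truncation at $T_\epsilon\sim\epsilon^{-1-\eta}$ is redundant: under \ref{assum:weight process} the condition $w_t\ge\epsilon^2$ already forces $t\le \epsilon^{-1-o(1)}$, so the paper keeps only the size cutoff. The paper then uses the compactness estimate $d_{\mathrm H}(\mathsf{A}^{(k)},\mathsf{A}_\infty)\le 2\sum_{i>k}X(i)Y(i)$ already established in the proof of Proposition~\ref{prop:aggregate is well-defined} to enlarge the radii, exactly as you intend with ``the moment control on the weight process produces a uniform bound on the accumulated diameter''. Otherwise your covering count and Borel--Cantelli along $\epsilon_k=2^{-k/2}$ match the paper.

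For part (ii), both you and the paper use the mass distribution principle and the inclusion $\mathrm{Ball}(Y_\infty,d(Y_\infty,Y_{t-}))\subset\mathsf D(t)$. The paper's framing is slightly cleaner than your ancestor-chain picture: rather than tracking the increments $d_{b_j}(\rho_{b_j},a_j)$ along the chain, it fixes a super-exponential grid $t_n=\exp((1+\epsilon)^n)$ and exploits directly that the events $\{Y_\infty\in\mathsf D(t)\}$ for $t\in\mathscr J$ are \emph{exactly} independent Bernoulli$(w_t/W_t)$ (Proposition~\ref{prop:convergence to mu infty}), combined with \ref{assum:aghaus:random point on block is far from root} to lower-bound $d(Y_t,\rho_t)$ conditionally on $Y_\infty\in\mathsf D(t)$. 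This sidesteps your ``near-independence'' worry entirely. Your main obstacle---the subtree-mass bound $\mu_\infty(\mathsf D(t))\le t^{-3+o(1)}$---is handled in the paper by a moment recursion on the P\'olya-urn martingale $s\mapsto M_{t,s}(\mathsf B_t)$ (this is Lemma~\ref{lem:control limit mass of every block}), giving $\mathbb E[M_{t,\infty}(\mathsf B_t)^N]\le t^{-3N+o(1)}$, then Markov plus Borel--Cantelli over $t\in\mathscr S^\delta$; this is the precise form of what you sketch. One small correction: full support is obtained in the paper not from the subtree-mass upper bound but from the Pemantle-urn fact that $M_{t,\infty}(\mathcal C)>0$ a.s.\ whenever $\mu_t(\mathcal C)>0$, combined with the assumption that each block carries a measure of full support.
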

The proof of this proposition is divided in two parts, the proofs of \ref{it:minkowski dimension is less than 3} and \ref{it:Hausdorff dimension is more than 3} can respectively be found in Section~\ref{subsec:app:upper-bound on Minkowski dimension} and Section~\ref{subsec:app:lower-bound on the Hausdorff dimension} of the Appendix. 

\subsubsection{Convergence of aggregation processes}\label{subsubsec:convergence of aggregation process}

Now, assume that for any $r>0$ we have
a weight process $(W_t^r)_{t\geq t_0}$, a seed
$\mathsf S^{\bullet,r}$ and a block-distribution family $\eta^r=\left(\eta^r(t,w): \ t>t_0, w>0\right)$, for which we assume that \ref{assum:weight process} and \ref{assum:blocks} are satisfied.
We make the following assumptions to ensure a convergence result on aggregation processes.
\begin{enumerate}[label=(AGConv\arabic*)]
 	\item\label{assum:convergence weight process} On every compact interval we have the following convergence in distribution for the Skorokhod topology:
 	\begin{align*}
 	(W_t^r)_{t\geq t_0} \rightarrow (W_t)_{t\geq t_0} \quad \text{as} \quad r\rightarrow 0.
 	\end{align*}
 	\item \label{assum:convergence seed} We have $\mathsf \mathsf S^{\bullet,r}\underset{r\rightarrow 0}{\rightarrow} \mathsf \mathsf S^{\bullet}$ in distribution in $\mathbb L^{\bullet,c}$.
 	\item\label{assum:convergence block distribution} For any $t>t_0$ and $w>0$ we have $\eta^r(t,w)\rightarrow \eta(t,w)$ for the Prokhorov distance (on the set of probability measures over $\mathbb{L}^c$) as $r\rightarrow0$, and the mapping $(t,w)\mapsto \eta(t,w)$ is continuous (for the Prokhorov distance as well). 
 	\item\label{assum:tight control on weight process} For some non-increasing function $\delta: \intervallefo{t_0}{\infty} \rightarrow\intervalleff{0}{1}$ that tends to $0$ as $t\rightarrow \infty$, for some random variables $Z_i^r$ with $i\in\{1,2\}$
 	where the family $(Z_i^r)_{0<r\leq 1}$ is tight, 
 	we have for all $t\geq t_0$,
 	\begin{align}
	W_t^r\leq Z_1^r \cdot t \quad \text{ and } \quad w_t^r\leq Z_2^r \cdot t^{-2+\delta(t)}. 
      \end{align}
 	\item\label{assum:uniform control on tail}There exists constant $c,C>0$ such that we have
 	\begin{align*}
 	\sup_{0<r\leq 1}\sup_{t> t_0, w>0}\Pp{\frac{\diam \mathsf B^r(t,w)}{\sqrt{w}}\geq x} \leq C \exp(-cx).
 	\end{align*} 
\end{enumerate}
      
\begin{proposition}\label{prop:convergence of aggregation processes}
  Under the assumptions above, we have the convergence in $\mathbb{L}^{\bullet,c}$ in distribution 
  \begin{align*}
    \mathrm{Aggreg}((W_t^r)_{t\geq t_0},\mathsf S^{\bullet,r},\eta^r) \underset{r\rightarrow 0}{\rightarrow} \mathrm{Aggreg}((W_t)_{t\geq t_0},\mathsf S^\bullet,\eta),
  \end{align*}
\end{proposition}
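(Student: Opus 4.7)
The plan is to reduce the statement to a continuity property of aggregates built from finitely many blocks, via a two-scale truncation argument (first in time, then in jump size).

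\emph{Tail truncation in time.} Writing $\mathsf A_T^{\bullet,r}$ for the aggregate obtained by keeping only jumps of $(W^r_t)_t$ that fall in $[t_0,T]$, the argument underlying Proposition~\ref{prop:aggregate is well-defined} bounds $d_\rmH(\mathsf A_\infty^{\bullet,r},\mathsf A_T^{\bullet,r})$ by a sum of the form $\sum_{t>T,\,t\in\mathscr J^r}\diam\mathsf B_t^r$. Thanks to \ref{assum:tight control on weight process} and \ref{assum:uniform control on tail}, each $\diam\mathsf B_t^r$ has a sub-Gaussian tail on the scale $\sqrt{w_t^r}\leq (Z_2^r)^{1/2}t^{-1+\delta(t)/2}$, and tightness of $(Z_2^r)_{0<r\leq 1}$ makes this tail sum small in probability uniformly in $r$. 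In parallel, $W_\infty^r-W_T^r$ is controlled uniformly via \ref{assum:tight control on weight process}, so the Prokhorov distance between $\mu_\infty^r$ and the appropriately normalised restriction $\mu_T^r$ tends to $0$ uniformly in $r$ as $T\to\infty$.

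\emph{Small-block truncation.} For a fixed $T$, let $\tilde{\mathsf A}_T^{\bullet,r,\epsilon}$ be the aggregate built only from jumps in $[t_0,T]$ of size at least $\epsilon$. Using \ref{assum:uniform control on tail}, the removed distance contribution is controlled in probability by $C\sum_{t\leq T,\,w_t^r<\epsilon}\sqrt{w_t^r}\leq C\sqrt{\epsilon}\sqrt{W_T^r}$, by Cauchy--Schwarz, which is tightly small in $\epsilon$ by \ref{assum:tight control on weight process}. The corresponding removed mass is bounded by $\epsilon W_T^r$, hence negligible after renormalisation. Consequently $d_{\rmH}$ and the Prokhorov distance between $\mathsf A_T^{\bullet,r}$ and $\tilde{\mathsf A}_T^{\bullet,r,\epsilon}$ tend to $0$ in probability uniformly in $r$ as $\epsilon\to 0$.

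\emph{Finite-block convergence.} For fixed $T$ and $\epsilon$, the limit process $(W_t)_{t\geq t_0}$ almost surely exhibits only finitely many jumps of size $\geq\epsilon$ on $[t_0,T]$. By \ref{assum:convergence weight process} and a Skorokhod representation, one can realise the convergence so that the locations and sizes of these big jumps converge pointwise. Combined with \ref{assum:convergence seed}, the continuity of $\eta$ granted by \ref{assum:convergence block distribution}, and the continuity of sampling a point from a weakly converging sequence of measures, this makes the finitely many successive block-gluing operations a continuous functional on $\mathbb L^{\bullet,c}$. Inductively over the big jumps, one obtains $\tilde{\mathsf A}_T^{\bullet,r,\epsilon}\to\tilde{\mathsf A}_T^{\bullet,\epsilon}$ in distribution, and a diagonal argument letting $T\to\infty$ and $\epsilon\to 0$ concludes the proof.

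\emph{Main obstacle.} The most delicate point is the third step: the attachment points $X_t$ for successive blocks live in different (prelimit and limit) ambient metric spaces, so the continuity of the sampling step must be verified in an embedding where the current aggregates are close in Hausdorff distance. This forces the argument to proceed inductively through the big jumps, enlarging at each step the Skorokhod coupling so that the freshly sampled prelimit point converges to its limiting counterpart, while ensuring that the resulting coupling is consistent across jumps. Carrying out this bookkeeping while keeping uniform control on the tail corrections from Steps~1 and~2 is where the bulk of the technical work lies.
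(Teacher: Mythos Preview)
Your overall strategy---truncate in time and in jump size, then invoke continuity for finite gluings---is exactly the skeleton of the paper's argument. However, the quantitative tail controls you propose in Steps~1 and~2 do not work, and this is a genuine gap rather than a detail.

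In Step~1 you bound $d_\rmH(\mathsf A_\infty^{\bullet,r},\mathsf A_T^{\bullet,r})$ by $\sum_{t>T}\diam\mathsf B_t^r$, and in Step~2 you bound the small-block contribution by $\sum_{t\leq T,\,w_t^r<\epsilon}\sqrt{w_t^r}$. Both sums are typically \emph{infinite}: in the continuous weight process the jump intensity at level $y$ behaves like $y^{-3/2}$ near $0$, so $\sum_{t\in\mathscr J}\sqrt{w_t}$ diverges on every nontrivial time interval (equivalently, $\int_0^\infty\sqrt{y}\,u(t,y)\,dy=\infty$). Your Cauchy--Schwarz inequality $\sum_{w_t<\epsilon}\sqrt{w_t}\leq C\sqrt{\epsilon}\sqrt{W_T}$ is also simply false: take $W_T/\epsilon$ jumps each of size $\epsilon$ to get $W_T/\sqrt{\epsilon}$ on the left. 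Similarly, your measure argument in Step~1 invokes ``$W_\infty^r-W_T^r$'', but $W_\infty^r=\infty$ under \ref{assum:tight control on weight process}; the limiting measure $\mu_\infty$ is a genuine weak limit of the \emph{normalised} measures $\mu_t$, not a restriction of a finite measure, and closeness of $\mu_t$ to $\mu_\infty$ is not a mass-balance statement.

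What the paper does instead is to exploit the \emph{tree structure} of the aggregation. The distance from a point in some late block $\mathsf B_s$ to the truncated aggregate is bounded not by the sum of all later diameters, but by the sum of diameters along the ancestor chain $\{u:u\preceq s\}$; Proposition~\ref{prop:height of random point is sum of bernoulli} makes the indicators $\ind{u\preceq s}$ independent Bernoulli$(w_u/W_u)$, and the dyadic decomposition $\mathscr E(2^{-k})$ together with the $X(k)Y(k)$ bounds of Lemma~\ref{lem:bounds on Xk and Yk} turns this into a finite, controllable quantity. For the Prokhorov part, the paper does not compare masses directly but uses the P\'olya-urn martingale structure: the $\mu_\infty$-mass of each cell $\sfD(t,\mathcal C)$ is the limit of a bounded martingale whose variance is controlled by $\sum_{s>t}(w_s/W_s)^2$, which \emph{is} summable. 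Your Step~3 is essentially right and matches the paper, but it cannot be reached without first repairing the truncation estimates along these lines.
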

The proof of this proposition can be found in Section~\ref{subsec:app:convergence of aggregation processes} of the  Appendix.

\section{The tree $\cM^\infty$}\label{sec:the tree Minfty}

In this section, we use the results presented in the previous section to rigorously construct our object $\cM^\infty$.
First, we need to justify that the 
tree $\cM^\bullet$ 
is well-defined as a random variable in $\mathbb L^{\bullet,c}$. 
%
Then, the tree $\cM^\infty$ is constructed as
\begin{align}\label{eq:def cMinfinity}
	\cM^\infty:=\mathrm{Chain}\left(\cM_x^\bullet,\  x\in \mathscr{P} \right),
\end{align}
 where:
\begin{itemize}
	\item  the point process $\mathscr{P}$ is a Poisson point process with intensity $\frac{\dd t}{t}$ on $\intervalleoo{0}{\infty}$,
	\item  conditionally on $\mathscr P$ the sequence $\left(\cM_x^\bullet, x\in \mathscr{P} \right)$ is independent and
	\begin{align*}
	\cM^\bullet_x\overset{(d)}{=} \mathrm{Scale}(x^{-1}, x^{-3}; \cM^\bullet),
	\end{align*}
	for the random measured tree $\cM^\bullet$.
\end{itemize}
We need to check that conditions \ref{assum:chain1}, \ref{assum:chain2}, \ref{assum:chain3} hold almost surely in order for $\cM^\infty$ to be well-defined. 
In fact, checking that $\Ec{\diam(\cM^\bullet)}$ and $\Ec{\mathrm{mass}(\cM^\bullet)}$ are finite will be enough to ensure that those assumptions hold.

On our way to define $\cM^\bullet$, we first introduce $\widetilde\cM^\bullet$ as the result of an aggregation process 
\begin{align}\label{eq:def of tildeNbullet as aggreg}
	\widetilde\cM^\bullet:=\mathrm{Aggreg}((\mathcal W_t)_{t\geq 1},\cT^\bullet, \eta^\mathrm{Br})
\end{align}
where 
\begin{itemize}
	\item the seed $\cT^\bullet$ is given by a Brownian tree of random mass $X$ with  density $\frac{\exp\left(-\frac{x}{2}\right)}{\sqrt{2\pi x}}\mathrm{d}x$, pointed at a uniform random point,
\item the weight process $(\mathcal W_t)_{t\geq 1}$, started at time $1$ with value $X$, is an increasing pure-jump càdlàg process and the process $((t,\cW_t))_{t\geq 1}$  is a Markov Feller process with generator $\mathbf{A}$ so that for any smooth and compactly supported function $f:(0,\infty) \times \intervalleoo{0}{\infty} \rightarrow \R$ we have
\begin{align}
	\label{eq:second definition of continuous W}
	\mathbf{A}f(t,x)= \partial_t f(t,x) +  \int_{0}^{\infty}\left(f(t,x+y)-f(t,x)\right)\cdot  x \cdot \frac{\exp\left(-\frac{t^2}{2}y\right)}{y^\frac{3}{2}\sqrt{2\pi}}\dd y,
\end{align} 
	\item the block-distribution family $\eta^\mathrm{Br}$ is such that for any $t>1,w>0$, the distribution $\eta^\mathrm{Br}(t,w)$ is that of a Brownian tree with mass $w$. 
	This means that a tree under $\eta^\mathrm{Br}(t,w)$ is distributed as $\mathrm{Scale}(w^{\frac{1}{2}},w;\cT)$, where $\cT$ is the Brownian tree of mass $1$\footnote{Here we use Aldous' normalization: the tree $\cT$ can be defined by its contour function, which is given by twice a Brownian excursion of duration $1$, see \cite[Section~2.3]{legall_random_2006}.}.
\end{itemize}
Then, denoting
$
	Y:= \lim\frac{\mathcal W_t}{t}
$
our random tree $\cM^\bullet$ is obtained as 
\begin{align}\label{eq:def Nbullet as scaled tildeNbullet}
	\cM^\bullet:= \mathrm{Scale}(1,Y; \widetilde\cM^\bullet).
\end{align}
In this section, we want to prove that \ref{assum:weight process} and  \ref{assum:blocks} hold for the construction above, in order to apply Proposition~\ref{prop:aggregate is well-defined}.
We also need to check that the random variable $Y$ indeed exists and is non-zero almost surely.

All of that, along with the proof of other properties of the random metric space $\cM^\bullet$, will be the content of the proof of Theorem~\ref{thm:properties of Nbullet}, which is the goal of this section.

\subsection{Properties of the weight process and proof of Theorem~\ref{thm:properties of Nbullet}}
\label{subsec:properties of the weight process}
We start by stating the properties of the weight process $(\cW_t)_{t\geq 1}$ that we need in the proof of Theorem~\ref{thm:properties of Nbullet} and that we prove in the rest of the section. 
Recall that we write $w_t=\cW_t - \cW_{t-}$ for any $t>1$.
\begin{proposition}\label{prop:properties of the continuous weight process}
	The process $(\mathcal{W}_t)_{t\geq 1}$ introduced above satisfies the following properties
	\begin{enumerate}[label=(\roman*)]
		\item\label{it:cW over t converges as} Almost surely, 
		\begin{align*}
			\frac{\mathcal{W}_t}{t} \underset{t \rightarrow \infty}\rightarrow Y,
		\end{align*}
		where the limiting random variable is almost surely positive.
		\item\label{it:sup of W over t has exponential moments} The random variable 
		\begin{align*}
			Z_1 :=\sup_{t\geq 1} \left(\frac{\mathcal{W}_t}{t}\right)
		\end{align*}
		is almost surely finite and the tail of its distribution decays at most exponentially fast.
		\item\label{it:sup of jumps} The random variable 
		\begin{align*} 
			Z_2 :=\sup_{t\geq 1} \left(\frac{w_t}{t^{-2}(1\vee  \log^2t)}\right)
		\end{align*}
		is almost surely finite and the tail of its distribution decays at most exponentially fast.
		\item\label{it:small jumps dont contribute} For any $\delta>0$ we almost surely have 
		\begin{align*}
			\sum_{1\leq s \leq t}w_s \ind{w_s \leq s^{-2-\delta}} = o(t).
		\end{align*}
	\end{enumerate}
\end{proposition}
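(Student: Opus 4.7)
\emph{Proof sketch.} The key observation is that $M_t := \cW_t/t$ is a non-negative martingale. Applying the generator to $f(t,x)=x/t$ and using $\int y\,\nu_t(\dd y)=1/t$ (via the substitution $u=t^2y/2$), where $\nu_t(\dd y):=\frac{e^{-t^2y/2}}{y^{3/2}\sqrt{2\pi}}\dd y$, one finds $\mathbf A f = -x/t^2 + (x/t)(1/t) = 0$. Since $\Ec{X}=1$, $(M_t)_{t\geq 1}$ has constant mean $1$, so $M_t\to Y$ a.s.\ by martingale convergence. A parallel calculation with $f(t,x)=x^2/t^2$ and $\int y^2\,\nu_t(\dd y)=1/t^3$ gives $\mathbf A f = x/t^5$, whence $\sup_t\Ec{M_t^2}<\infty$, the convergence holds in $L^2$, and $\Ec{Y}=1$.

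For the a.s.\ positivity of $Y$ in~\ref{it:cW over t converges as}, the same computation yields the conditional formula $\Ecsq{Y^2}{\cF_t}=M_t^2+M_t/(3t^3)$. Together with $\Ecsq{Y}{\cF_t}=M_t$, Paley--Zygmund gives
\[\Pcsq{Y>0}{\cF_t}\geq \frac{M_t^2/4}{M_t^2+M_t/(3t^3)}=\frac{1}{4\bigl(1+1/(3t^3M_t)\bigr)}.\]
Since $(\cW_t)$ is non-decreasing we have $M_t\geq X/t$ a.s., hence $t^3M_t\geq t^2X\to\infty$ a.s., and the right-hand side stays above $1/8$ for $t$ large. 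Lévy's $0$--$1$ law then forces $\mathbf{1}_{Y>0}\geq 1/8$ a.s., yielding $Y>0$ a.s.

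For~\ref{it:sup of W over t has exponential moments}, Doob's maximal inequality gives $Z_1<\infty$ a.s., and to obtain an exponential tail the plan is to iterate the moment calculation. Using $\int y^k\,\nu_t(\dd y)=c_k\,t^{-(2k-1)}$ with $c_k=2^{k-1/2}\Gamma(k-1/2)/\sqrt{2\pi}$, the leading $k=1$ contribution exactly cancels $\partial_t$, leaving
\[\mathbf A(x^p/t^p) = \sum_{k=2}^p \binom{p}{k}\frac{c_k\,M_t^{p-k+1}}{t^{3k-2}},\]
and integrating over $t$ and inducting on $p$ yields $\sup_t\Ec{M_t^p}\leq C^p\,p!$; Doob's $L^p$ inequality then translates this into an exponential tail for $Z_1$. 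For~\ref{it:sup of jumps}, bound $\Pc{Z_2\geq K}$ by the expected number of jumps with $w_t\geq K t^{-2}(1\vee\log^2 t)$, which after using $\cW_t\leq Z_1 t$ is at most $\Ec{Z_1}\int_1^\infty t\cdot\nu_t\!\left([K t^{-2}(1\vee\log^2 t),\infty)\right)\dd t$. The tail bound $\nu_t([y,\infty))\leq C e^{-t^2y/2}/(t^2 y^{3/2})$ together with the change of variables $u=\log t$ turns the $t\geq e$ portion into $\int \exp(3u-Ku^2/2)u^{-3}\,\dd u$, which is superexponentially small in $K$; the compact part $t\in[1,e]$ is handled directly using the Gamma-like tail of $X$.

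Finally, for~\ref{it:small jumps dont contribute}, compensating the Poisson integral gives
\[\Ec{\sum_{1\leq s\leq t}w_s\mathbf{1}_{w_s\leq s^{-2-\delta}}} = \int_1^t \Ec{\cW_s}\int_0^{s^{-2-\delta}} y\,\nu_s(\dd y)\,\dd s \leq C\int_1^t s^{-\delta/2}\,\dd s = O\bigl(t^{1-\delta/2}\bigr),\]
after bounding $e^{-s^2y/2}\leq 1$ in the inner integral, which is $o(t)$; this is upgraded to the a.s.\ statement via a second-moment/Chebyshev argument on dyadic scales, exploiting the independence of the Poisson integrals on disjoint time intervals. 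The main obstacle is arguably the a.s.\ positivity of $Y$: it is the only step that goes beyond standard martingale estimates, and proceeds cleanly only because the generator gives the closed form for $\Ecsq{Y^2}{\cF_t}$. The $L^p$ bootstrap in~\ref{it:sup of W over t has exponential moments}, though conceptually standard, requires careful tracking of the combinatorial constants to close the induction with the correct factorial growth.
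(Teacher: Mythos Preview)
Your arguments for \ref{it:cW over t converges as}, \ref{it:sup of jumps} and \ref{it:small jumps dont contribute} are essentially the paper's: martingale + $L^2$ + Chebyshev (the paper) or Paley--Zygmund (you) for positivity, and compensator/first-moment bounds for the jump statements. Two expository slips: in \ref{it:sup of jumps} there is no need to route through $Z_1$ or the tail of $X$ --- the expected number of jumps with $w_s\ge K s^{-2}(1\vee\log^2 s)$ is exactly $\int_1^\infty \Ec{\cW_{s-}}\,\nu_s\bigl([K s^{-2}(1\vee\log^2 s),\infty)\bigr)\dd s=\int_1^\infty s\,\nu_s(\cdots)\dd s$ by predictable compensation and $\Ec{\cW_s}=s$; and in \ref{it:small jumps dont contribute} the ``independence of the Poisson integrals on disjoint time intervals'' is false (both depend on the running $\cW$), but this is harmless since a first-moment/Markov argument on dyadic blocks (which is what the paper does, after intersecting with $\{Z_1\le A\}$) already gives Borel--Cantelli.

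The real gap is \ref{it:sup of W over t has exponential moments}. The paper does \emph{not} use a moment bootstrap: it works directly with the Poisson representation $\cW_t=X+\sum_{(s,m,y)\in\sQ} y\,\ind{m<\cW_{s-},\,s\le t}$, bounds increments $\cW_{k+1}-\cW_k$ by the total $y$-mass of $\sQ$ below a moving barrier, and applies Chernoff bounds via the Laplace exponent $F(\lambda,t)=t-\sqrt{t^2-2\lambda}$ along a sequence $A_{k+1}=A_k(1+k^{-1}+k^{-5/4})$. Your induction, as written, does not close: once you bound $\int_1^\infty \Ec{M_s^{p-1}}s^{-4}\dd s\le a_{p-1}/3$ with $a_j:=\sup_s\Ec{M_s^j}$, the dominant $k=2$ term in your recursion is $\binom{p}{2}c_2\,a_{p-1}/3=p(p-1)a_{p-1}/6$, so the ansatz $a_p\le C^p p!$ forces $(p-1)/(6C)\le 1$, impossible for large $p$; the naive recursion yields $(p!)^2$-type growth of the \emph{bound}. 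The true moments \emph{do} satisfy $\Ec{Y^p}\le C^p p!$ --- in fact solving $\partial_t\phi=F(\mu,t)\partial_\mu\phi$ by characteristics gives $\Ec{e^{\lambda Y}}=1/w_0$ with $(w_0+2)^2(w_0-1)=-9\lambda$, so the MGF is finite for $\lambda<4/9$ --- but this information is hidden in the $s$-dependence of $\Ec{M_s^{p-1}}$: since $\Ec{M_s^j}$ increases from $\Ec{X^j}$ to $\Ec{Y^j}$ and $\Ec{Y^j}/\Ec{X^j}\asymp(9/8)^j$, replacing $\Ec{M_s^{p-1}}$ by its supremum is exponentially lossy in $p$. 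Either carry out the characteristics/MGF computation (which is a clean alternative to the paper's approach and closer in spirit to your generator calculations), or follow the paper's Chernoff route; the bare moment induction does not suffice.
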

We now prove Theorem~\ref{thm:properties of Nbullet} from the definition of $\cM^\bullet$ given by \eqref{eq:def of tildeNbullet as aggreg} and \eqref{eq:def Nbullet as scaled tildeNbullet}, using the properties of the weight process given in Proposition~\ref{prop:properties of the continuous weight process}. 
\begin{proof}[Proof of Theorem~\ref{thm:properties of Nbullet}]
	We first need to apply Proposition~\ref{prop:aggregate is well-defined} to get that $\widetilde{\cM}^\bullet$ is well-defined as a random variable in $\bL^{\bullet,c}$. 
	The fact that $\cM^\bullet$ is well-defined then just follows.
	For that we can check conditions \ref{assum:weight process} and \ref{assum:blocks}. 
	Assumption~\ref{assum:blocks} is actually immediate in this case using the fact that the diameter of the Brownian tree of mass $1$ has an exponential moment (use for example \cite{kennedy_distribution_1976} and the construction of the CRT from the Brownian excursion, see \cite[Section~2.3]{legall_random_2006}). 
	Then, assumption \ref{assum:weight process} is satisfied thanks to items \ref{it:sup of W over t has exponential moments} and \ref{it:sup of jumps} of  Proposition~\ref{prop:properties of the continuous weight process}.
	This entails that $\widetilde{\cM}^\bullet$ is well-defined as a random variable in $\bL^{\bullet,c}$ and the Hausdorff distance between $\widetilde{\cM}^\bullet$ and its seed $\cT^\bullet$ has moments of all orders. Since $\diam(\cT^\bullet)$ has moments of all orders as well, this entails that the same is true for $\diam(\widetilde{\cM}^\bullet)$.
	Now, since $\cM^\bullet=\mathrm{Scale}(1,Y;\widetilde{\cM}^\bullet)$ the above discussion also shows that $\cM^\bullet$ is well-defined and its diameter has moments of all orders. 
	Since Proposition~\ref{prop:properties of the continuous weight process}.\ref{it:sup of W over t has exponential moments} ensures that $Y$ has moments of all order, this allows us to conclude that Theorem~\ref{thm:properties of Nbullet}.\ref{it:mass and diameter Nbullet moments of all orders} holds.
	
	For Theorem~\ref{thm:properties of Nbullet}.\ref{it:dimension Nbullet is 3}, we use Proposition~\ref{prop:dimension of aggregate is 3}. For that it suffices to check that the assumptions \ref{assum:Minkowski coverings} and \ref{assum:aghaus:weight process grows linearly},\ref{assum:aghaus:small jumps don't contribute to weight process} and \ref{assum:aghaus:random point on block is far from root} hold for the aggregation process that defines $\widetilde{\cM}^\bullet$. 
	Conditions \ref{assum:Minkowski coverings} and \ref{assum:aghaus:random point on block is far from root}  follow easily from the fact that $\eta^{\mathrm{Br}}(t,w)$ is the law of a Brownian tree of mass $w$. 
	Conditions \ref{assum:aghaus:weight process grows linearly},\ref{assum:aghaus:small jumps don't contribute to weight process} follow from Proposition~\ref{prop:properties of the continuous weight process}.
	This concludes the proof of Theorem~\ref{thm:properties of Nbullet}.
\end{proof}
The rest of the section is devoted to proving Proposition~\ref{prop:properties of the continuous weight process}.
\subsection{Definition of the weight process from a Poisson Point Process}
\label{subsec:definition of the continuous weight process from PPP}
In this section, we first present an alternative construction of the process $(\cW_t)_{t\geq 1}$ from a Poisson point process. 
This will allow for some easier computations in the next section. 

\paragraph{Some stochastic differential equation driven by a Poisson point process.}
In order to match the setting of \cite{ikeda_stochastic_1989}, we assume in this section that we are working on a probability space $(\Omega,\mathcal F, \P)$ on which we can define the random variable $X$, the total mass of the tree $\cT^\bullet$, together with an independent Poisson point process $\sQ$ on $\intervalleoo{1}{\infty}\times \intervalleoo{0}{\infty}^2$ with intensity
\begin{align*}
	 u(t,y)\cdot \dd t \otimes \dd m \otimes \dd y\qquad \text{where} \qquad u(t,y)=\frac{\exp\left(-\frac{t^2}{2}y\right)}{y^\frac{3}{2}\sqrt{2\pi}}.
\end{align*}
Note already that 
\begin{align}\label{eq:moments of u(t,y)}
	\int_{0}^{\infty}y \cdot u(t,y) \dd y= \frac{1}{t} \qquad \text{and} \qquad  \int_{0}^{\infty}y^2 \cdot u(t,y) \dd y = \frac{1}{t^3},
\end{align}
as this will be useful later on.

Denote by $\mathcal{B}\left(\intervalleoo{0}{\infty}^2\right)$ the set of Borel-measurable subsets of $\intervalleoo{0}{\infty}^2$.
For any $U\in \mathcal{B}\left(\intervalleoo{0}{\infty}^2\right)$ introduce $N_\sQ(t,U):=\#\sQ \cap (\intervalleof{0}{t} \times U)$ and consider $(\mathcal{F}_t)_{t\geq 1}$ the usual augmentation of the filtration generated by those processes together with the random variable $X$, i.e.
\begin{align*}
\sigma\left(X, \left(N_\sQ(s,U), \ 1\leq s \leq t,  \ U\in \mathcal{B}\left(\intervalleoo{0}{\infty}^2\right) \right)\right),
\end{align*}
 so
that $(t\mapsto N_\sQ(t,U))$ is $(\mathcal F_t)$-adapted, for every $U\in \mathcal{B}\left(\intervalleoo{0}{\infty}^2\right)$, and $X$ is $\mathcal F_1$-measurable.
Following \cite[Chapter III, Definition~3.1]{ikeda_stochastic_1989}, it is natural to introduce the corresponding \emph{compensator} measure $\hat{N}_\sQ(t,\cdot)=\Ec{N_\sQ(t,\cdot)}$ where for any $U\in \mathcal{B}\left(\intervalleoo{0}{\infty}^2\right)$ we have $\hat{N}_\sQ(t,U)=\int_{1}^{t}\int_{\intervalleoo{0}{\infty}^2}  \ind{(m,y)\in U} \cdot u(s,y) \dd s \dd m \dd y$.

From there,
for any starting value $x_1$
 we introduce the following equation on an unknown function $(\cX(t))_{t\geq 1}$
\begin{align*}
	\cX(t) = x_1 + \sum_{(s,m,y)\in \sQ} y \cdot \ind{m< \cX(s-),\ s\leq  t}.
\end{align*}
which can also be expressed as the following jump-type SDE
\begin{align}\label{eq:SDE}
	\cX(t) = x_1 + \int_{1}^{t}\int_{\intervalleoo{0}{\infty}^2} g(\cX(s-),m,y) N_\sQ(\dd s\dd m \dd y )
\end{align}
where $g(x,m,y)=y\ind{m<x}$.

We can check that for any $t\geq 1$, for any two values $x',x$ we have 
\begin{align}\label{eq:lipschitz condition}
 	\int_{\intervalleoo{0}{\infty}^2} |g(x',m,y) - g(x,m,y)| u(t,y) \dd m \dd y 
 	\leq  |x'-x|,
 \end{align}
which we call the \emph{Lipschitz} condition.  
We would like to deduce from this condition that the SDE \eqref{eq:SDE} admits a unique strong solution. 
Unfortunately we are not quite in the the classical setting of \cite[Theorem~IV.9.1]{ikeda_stochastic_1989} or \cite[Theorem~III.2.32]{jacod_limit_2003}.
The first reference is aimed at being used for \emph{compensated} sums of jumps and includes a square, i.e.\ requires a condition of the form 
\begin{align*}
	\int_{\intervalleoo{0}{\infty}^2} |g(x',m,y) - g(x,m,y)|^2 u(t,y) \dd m \dd y 
	\leq K |x'-x|^2,
\end{align*}
which does not hold in our setting. 
The second reference asks for a pointwise condition of the form 
\begin{align*}
	|g(x',m,y) - g(x,m,y)| \leq \rho(m,y) \cdot |x-x'|,
\end{align*} 
which again does not hold in our case. 
It would be possible to use the more general results of \cite{xi_jump_2019}, but in the end it seems more straightforward to just prove existence and pathwise uniqueness of a solution from elementary considerations, following the proof of \cite[Theorem~IV.3.1]{ikeda_stochastic_1989}. 

We first check pathwise  uniqueness. 
Let $\cX$ and $\cY$ be two solutions of our SDE with initial condition $x_1$ at time $t=1$, by which we mean that $\cX$ and $\cY$ are two càdlàg, $(\cF_t)_{t\geq 1}$-adapted processes for which \eqref{eq:SDE} holds.
For any $t\geq 1$ we have 
\begin{align*}
	\sup_{1\leq s \leq t}|\cX(s) - \cY(s)| 
	&\leq  \int_{1}^{t}\int_{\intervalleoo{0}{\infty}^2} \left| g(\cX(s-),m,y)-g(\cY(s-),m,y)\right| N_\sQ(\dd s\dd m \dd y ).
\end{align*}
Then, for any $N\geq 1$ we let $\sigma_N=\inf\enstq{t\geq 1}{\cX_t \geq N}$ and $\tau_N=\inf\enstq{t\geq 1}{\cY_t \geq N}$. 
Applying the above inequality to the time $t\wedge \sigma_N\wedge \tau_N$ and taking expectations we get 
\begin{align*}
	&\Ec{\sup_{1\leq s \leq t}|\cX(s\wedge \sigma_N\wedge \tau_N) - \cY(s\wedge \sigma_N\wedge \tau_N)|} \\
	&\leq  \Ec{\int_{1}^{t} \ind{s \leq \sigma_N\wedge \tau_N }\int_{\intervalleoo{0}{\infty}^2} \left| g(\cX(s-),m,y)-g(\cY(s-),m,y)\right| N_\sQ(\dd s\dd m \dd y )}\\
	&\leq  \Ec{\int_{1}^{t} \left| \cX(s-) -\cY(s-)\right| \ind{s \leq \sigma_N\wedge \tau_N } \dd s}\\
	&\leq  \int_{1}^t \Ec{\sup_{1\leq u \leq s}\left| \cX(u\wedge \sigma_N\wedge \tau_N) -\cY(u\wedge \sigma_N\wedge \tau_N)\right|},
\end{align*}
where we used the Lipschitz condition \eqref{eq:lipschitz condition} in the second inequality.
This implies using Gronwall's lemma that for any fixed $t\geq 1$ we have $\Ec{\sup_{1\leq s \leq t}|\cX(s\wedge \sigma_N\wedge \tau_N) - \cY(s\wedge \sigma_N\wedge \tau_N)|}=0$. 
Taking $N\rightarrow \infty$ and using monotone convergence we get $\Ec{\sup_{1\leq s \leq t}|\cX(s) - \cY(s)|}=0$.
This ensures pathwise uniqueness for solutions of \eqref{eq:SDE}.

For the existence of a strong solution, we can use the method of successive approximations by defining $\cX^0$ as the constant function equal to $0$ on $\intervallefo{1}{\infty}$ and defining recursively $\cX^n$ as 
\begin{align*}
\cX^n(t) = x_1 + \int_{1}^{t}\int_{\intervalleoo{0}{\infty}^2} g(\cX^{n-1}(s-),m,y) N_\sQ(\dd s\dd m \dd y).
\end{align*}
Note that from this definition $\cX^1$ is constant equal to $x_1$.
Using the same line of reasoning as for the proof of pathwise uniqueness, one can check that we have, for $n\geq 2$, using the Lipschitz condition \eqref{eq:lipschitz condition},
\begin{align*}
	&\Ec{\sup_{1\leq s \leq t}|\cX^n(s) - \cX^{n-1}(s)|} \\
	&\leq	\Ec{\int_{1}^{t}\int_{\intervalleoo{0}{\infty}^2} |g(\cX^{n-1}(s-),m,y)-g(\cX^{n-2}(s-),m,y)| N_\sQ(\dd s\dd m \dd y ) }\\
	&\leq  \int_{1}^t  \Ec{ |\cX^{n-1}(s) -\cX^{n-2}(s)|}\\
	&\leq  \int_{1}^t \Ec{ \sup_{1\leq u \leq s}|\cX^{n-1}(u) -\cX^{n-2}(u)|}.
\end{align*}
From there, by iterating the above and using that $\cX^1-\cX^0$ is identically equal to $x_1$ we get that for any $n\geq 1$ and for any $t\geq 1$, 
\[\Ec{\sup_{1\leq s \leq t}|\cX^n(s) - \cX^{n-1}(s)|} \leq \frac{(t-1)^{n-1}}{(n-1)!}x_1,\]
so the sequence $(\Ec{\sup_{1\leq s \leq t}|\cX^n(s) - \cX^{n-1}(s)|})_{n\geq 1}$ is summable. 
This ensures that $(t\mapsto\cX^n(t))$ almost surely converges uniformly on every compact interval as $n\rightarrow\infty$ and we can check that the limit $t\mapsto \cX(t)$ is indeed a strong solution to the SDE.

\paragraph{Definition and properties of $(\cW_t)_{t\geq 1}$.}
The process $(\mathcal W_t)_{t\geq 1}$ is then defined as the solution of \eqref{eq:SDE}, started from the value $X$ at time $1$, where $X$ is the total mass of the tree $\cT^\bullet$.
Let us already mention that, from Proposition~\ref{prop:properties of the continuous weight process}\ref{it:sup of W over t has exponential moments}, which we prove in the next subsection, 
for any $t\geq 1$ we have $\Ec{(\cW_t)^2} \leq C t^2$ with $C$ a positive constant. 
Note that by construction, $(\mathcal W_t)_{t\geq 1}$ is almost surely positive and increasing.
Now, writing $\cW_t$ as 
\begin{align}
	\cW_t = X + \int_{1}^{t}\int_{\intervalleoo{0}{\infty}^2} g(\cW_{s-},m,y) N_\sQ(\dd s \dd m \dd y)
\end{align}
ensures that $(\cW_t)_{t\geq 1}$ is a semi-martingale in the sense of \cite[Definition~II.4.1]{ikeda_stochastic_1989}, as the term $g(\cW_{s-},m,y)$ appearing in the integral is $(\mathcal F_s)$-predictable. 

Applying the Itô formula \cite[Theorem~II.5.1]{ikeda_stochastic_1989} with any twice differentiable function $F$ we get that, for $t\geq 1$,
\begin{align*}
F(\cW_{t})-F(\cW_{1})  &=\int_{1}^{t}\int_{\intervalleoo{0}{\infty}^2}\left( F(\cW_{s-}+g(\cW_{s-},m,y))-F(\cW_{s-})\right) N_\sQ(\dd s \dd m \dd y).
\end{align*}
Now, we aim at subtracting a term from the last display in a way that we get a martingale. 
The standard way to do that is to subtract a term containing the same integral but against the compensator $\hat{N}_\sQ(\dd s \dd m \dd y)$ instead of the point process $N_\sQ(\dd s \dd m \dd y)$. 
For this we first need to check some integrability conditions:
We can check that when we integrate the absolute value of the integrand against the compensator $\hat{N}_\sQ(\dd s \dd m \dd y)$ we get
\begin{align*}
	&\int_{1}^{t}\int_{\intervalleoo{0}{\infty}^2}\left| F(\cW_{s-}+g(\cW_{s-},m,y))-F(\cW_{s-})\right| \hat{N}_\sQ(\dd s \dd m \dd y)\\
	 &= \int_{1}^{t}\int_{\intervalleoo{0}{\infty}^2}\left| F(\cW_{s-}+y \cdot \ind{m<\cW_{s-}})-F(\cW_{s-})\right| u(s,y) \dd s \dd m \dd y\\
	 &= \int_{1}^{t}\int_{\intervalleoo{0}{\infty}}\left| F(\cW_{s-}+y)-F(\cW_{s-})\right|\cdot  \cW_{s-} \cdot  u(s,y) \dd s \dd y.
\end{align*}
Now suppose that $F$ is taken so that it additionally satisfies that for any $x,y\geq 0$ we have $|F(x+y)-F(x)|\leq C \cdot (1+x+y)\cdot y$. Note that this holds for any differentiable $F$ with bounded derivative for example, as well as for the function $x\mapsto x^2$. 
For such a function $F$, the above integral is bounded above by 
\begin{align*}
	\int_{1}^{t}\int_{\intervalleoo{0}{\infty}} C \cdot (1+\cW_{s-}+y)\cdot y \cdot  \cW_{s-}\cdot  u(s,y) \dd s \dd y = C \int_{1}^{t} \left(\frac{\cW_{s-}}{s} + \frac{(\cW_{s-})^2}{s} + \frac{\cW_{s-}}{s^3} \right) \dd s,
\end{align*}
and now the expectation of the right-hand-side is finite for any $t\geq1$, thanks to the bound $\Ec{(\cW_s)^2} \leq C s^2$. 
This allows us to use \cite[(3.8) in Chapter~II]{ikeda_stochastic_1989}, which ensures that the compensated process
\begin{align}\label{eq:dynkin formula from stochastic calculus}
	F(\cW_{t})-F(\cW_{1}) - \int_{1}^{t}\int_{\intervalleoo{0}{\infty}}\left( F(\cW_{s-}+y)-F(\cW_{s-})\right)\cdot  \cW_{s-} \cdot  u(s,y) \dd s \dd y
\end{align}
is an $(\mathcal F_t)$-martingale. This will be useful later on.
Note in particular that
\begin{align*}
\Ecsq{F(\cW_{t+h})-F(\cW_{t})}{\mathcal F_t} 
&= \int_{t}^{t+h}\int_{\intervalleoo{0}{\infty}}\Ecsq{\left( F(\cW_{s-}+y)-F(\cW_{s-})\right) \cW_{s-})}{\mathcal F_t} u(s,y) \dd s \dd y\\
&\underset{h\rightarrow 0+}{=} h\cdot \int_{\intervalleoo{0}{\infty}}\left(F(\cW_t+y)-F(\cW_t)\right) \cdot \cW_t\cdot  u(t,y) \dd y + o(h).
\end{align*}
This entails that $(\cW_t)_{t\geq 1}$ is a time-inhomogeneous Feller process whose generator $\bfA_t$ at time $t$ acts on functions $f$ as 
\begin{align*}
	\bfA_t f (x) = \int_{\intervalleoo{0}{\infty}}\left(f(x+y)-f(x)\right) \cdot x \cdot  u(t,y) \dd y.
\end{align*}
	
The reason for describing in \eqref{eq:second definition of continuous W} the generator of the process $((t,\cW_t))_{t\geq 1}$ instead of $(\cW_t)_{t\geq 1}$ is just a matter of technicality in the proofs, the former is time-homogeneous whereas the latter is not. 
From the above we get the following lemma.
\begin{lemma}\label{lem:regular dynkin formula}
	The process $((t,\cW_t))_{t\geq 1}$ defined above is a time-homogeneous Markov Feller process with generator $\mathbf A$ characterized by its action on, say, the smooth functions $f:\intervallefo{1}{\infty}\times \R_+ \rightarrow \R$ with compact support, with 
	\begin{align}\label{eq:definition generator weight process}
		\mathbf A f(t,x)= \partial_tf(t,x)+ \int_{0}^{\infty}\left(f(t,x+y)-f(t,x)\right) \cdot  x \cdot \frac{\exp\left(-\frac{t^2}{2}y\right)}{y^\frac{3}{2}\sqrt{2\pi}} \cdot \dd y.
	\end{align}
\end{lemma}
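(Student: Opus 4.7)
The plan is to apply Itô's formula for jump-type stochastic integrals (as in the paragraph leading to \eqref{eq:dynkin formula from stochastic calculus}) to a smooth compactly supported test function depending jointly on time and space, and then to read off the generator of the augmented process $((t,\cW_t))_{t\geq 1}$.

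First, I would fix a smooth compactly supported $f:\intervallefo{1}{\infty}\times \R_+\to \R$ and apply Itô's formula \cite[Theorem~5.1]{ikeda_stochastic_1989} to $t\mapsto f(t,\cW_t)$. The only new feature compared to the computation already carried out in the excerpt for $F(x)$ is the appearance of a drift term $\int_1^t \partial_s f(s,\cW_{s-})\,\dd s$ arising from the explicit time-dependence of $f$; the jump part has the same structure as before, with $F$ replaced by $f(s,\cdot)$. Concretely,
\[
f(t,\cW_t) - f(1,\cW_1) = \int_1^t \partial_s f(s,\cW_{s-})\,\dd s + \int_1^t\!\int_{\intervalleoo{0}{\infty}^2}\!\bigl(f(s,\cW_{s-}+g(\cW_{s-},m,y)) - f(s,\cW_{s-})\bigr)\,N_\sQ(\dd s \dd m \dd y).
\]

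Second, I would check the integrability condition needed to compensate the Poisson integral. Since $f$ is smooth with compact support, $|f(s,x+y)-f(s,x)|\leq C(1\wedge y)$ uniformly in $(s,x)$; moreover $(1\wedge y)\cdot y^{-3/2}$ is integrable on $\intervalleoo{0}{\infty}$ (near zero it behaves like $y^{-1/2}$, near infinity like $y^{-3/2}$), and the remaining factor $\cW_{s-}$ has finite expectation on bounded time intervals by the analysis preceding the lemma. Therefore \cite[(3.8) in Chapter~2]{ikeda_stochastic_1989} applies and yields that
\[
f(t,\cW_t) - f(1,\cW_1) - \int_1^t \mathbf A f(s,\cW_s)\,\dd s
\]
is an $(\mathcal F_t)$-martingale, with $\mathbf A$ exactly as in \eqref{eq:definition generator weight process}. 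This identifies $\mathbf A$ as the extended generator of $((t,\cW_t))_{t\geq 1}$ and, in conjunction with the convergence of the left-hand side as $h\to 0$ already indicated just before the lemma, gives the generator formula in the usual semigroup sense on smooth compactly supported test functions.

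Third, the time-homogeneous Markov property of $((t,\cW_t))_{t\geq 1}$ is automatic from pathwise uniqueness of the SDE \eqref{eq:SDE}: conditionally on $\mathcal F_t$, the process $(\cW_{t+h})_{h\geq 0}$ solves the analogous SDE driven by the translated Poisson measure, whose intensity is a deterministic function of the starting time $t$ and of $h$. For the Feller property one has to verify that the semigroup $P_h$ of $((t,\cW_t))$ maps $C_0$ to $C_0$ and is strongly continuous at $h=0$; I would do this by a coupling argument, building two copies of $\cW$ started at neighbouring initial conditions $(t,x)\leq(t',x')$ from the same Poisson measure $\sQ$ and using monotonicity of the SDE in its initial value to bound the discrepancy. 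The main technical obstacle is the quantitative control of this coupling uniformly in the starting point; however, because the jump rate in \eqref{eq:definition generator weight process} vanishes as $x\to 0$ and the intensity $u(t,y)$ decays super-exponentially in $t$ on the relevant range of $y$, the number of jumps on any compact time interval is well controlled and continuity follows.
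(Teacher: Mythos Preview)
Your proposal is correct and follows essentially the same route as the paper: the paper first applies the It\^o formula and compensation to functions $F(x)$ of the space variable alone to identify the time-inhomogeneous generator $\mathbf A_t$, and then simply observes that augmenting with the time coordinate yields the time-homogeneous generator $\mathbf A$ (the lemma is stated as a direct consequence of the preceding computations, with no separate proof). Your version is a mild reorganisation, applying It\^o directly to $f(t,\cW_t)$, and your integrability bound $|f(s,x+y)-f(s,x)|\le C(1\wedge y)$ for compactly supported $f$ is in fact slightly cleaner than the paper's $|F(x+y)-F(x)|\le C(1+x+y)y$; you also add a sketch of the Feller property via monotone coupling, which the paper does not spell out.
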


\paragraph{Intuition behind this description of $(\cW_t)_{t\geq 1}$.}
From all of the above, the process $(\mathcal W_t)_{t\geq 1}$ satisfies $\cW_1=X$ and for all $t\geq 1$,
\begin{align}\label{eq:def Wtx}
	\mathcal W_t = X + \sum_{(s,m,y)\in \sQ} y \cdot \ind{m< \mathcal W_{s-},\ s\leq  t}.
\end{align} 
A way to imagine this process informally is to think of the first dimension as time, the second dimension as space, and the third one as a count of mass, so that every point $(s,m,y)$ is seen as a point at time $s$, at position $m$, with mass $y$.
This way we can imagine letting the time run starting from $1$ and following the value $\mathcal W_t$ of the function started at height $X$. 
Then at any time $t$ that an atom $(t,m,y)$ is present, if the point is below $\mathcal W_{t-}$ then the function grows by an amount  $w_t=\mathcal W_t-\mathcal W_{t-}$, which corresponds to the mass of that point.
\subsection{A priori estimates on the weight process}\label{subsec:a priori estimates on the weight process}
For any measurable subset $E$ of $\intervalleoo{1}{\infty} \times  \intervalleoo{0}{\infty}^2$ we define
\begin{align}\label{eq:definition MQ(E)}
	M^{\sQ}(E):= \sum_{(s,m,y)\in \sQ} y \cdot \ind{(s,m,y)\in E}.
\end{align}
In the time-space-mass interpretation of the construction appearing at the end of Section~\ref{subsec:definition of the continuous weight process from PPP}, the quantity $M^{\sQ}(E)$ corresponds to the total mass of points present in the set $E$.
We are interested in this quantity because of the result stated below, which states that if the process $\cW$ is below some level $A$ at time $t$ and the total mass $M^{\sQ}(\intervalleof{t}{t'}\times \intervalleof{0}{A+B} \times \intervalleoo{0}{\infty} )$ of points present below level $A+B$ in the time interval $\intervalleof{t}{t'}$ is smaller than $B$, then the process cannot reach level $A+B$ by time $t'$.

\begin{lemma}\label{lem:control of the weight process through its increments}
For two different times $t<t'$ and any two constants $A,B>0$, on the event where
\begin{align*}
	\mathcal W_t\leq A \qquad \text{and} \qquad M^{\sQ}(\intervalleof{t}{t'}\times \intervalleof{0}{A+B} \times \intervalleoo{0}{\infty} )< B
\end{align*}
we have 
\begin{align*}
	\mathcal W_{t'}< A + B.
\end{align*}
\end{lemma}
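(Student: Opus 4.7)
The plan is to argue by contradiction using a first-passage time. Set
\[
\tau := \inf\bigl\{s \in \intervalleff{t}{t'} : \cW_s \geq A+B\bigr\},
\]
with the convention $\inf \emptyset = +\infty$. If $\tau > t'$ then $\cW_{t'} < A+B$ and there is nothing to prove, so assume $\tau \le t'$. Since $\cW$ is càdlàg and non-decreasing, $\cW_\tau \geq A+B$ and $\cW_{s} < A+B$ for every $s \in \intervallefo{t}{\tau}$, which in turn forces $\cW_{\tau-} \le A+B$.

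The key observation is that every jump of $\cW$ in the interval $\intervalleff{t}{\tau}$ is produced, through the SDE representation \eqref{eq:SDE}, by an atom $(s,m,y) \in \sQ$ with $m < \cW_{s-}$. For $s \in \intervallefo{t}{\tau}$ we have $\cW_{s-} \le \cW_s < A+B$, and for $s = \tau$ we have $\cW_{\tau-} \le A+B$; in either case $m < A+B$. Summing the mass of these atoms and using \eqref{eq:definition MQ(E)}, I get
\[
\cW_\tau - \cW_t \;=\; \sum_{(s,m,y) \in \sQ,\ s\in \intervalleof{t}{\tau}} y \cdot \ind{m < \cW_{s-}} \;\le\; M^{\sQ}\!\left(\intervalleof{t}{\tau}\times \intervalleof{0}{A+B} \times \intervalleoo{0}{\infty}\right),
\]
and the right-hand side is at most $M^{\sQ}\!\left(\intervalleof{t}{t'}\times \intervalleof{0}{A+B} \times \intervalleoo{0}{\infty}\right) < B$ by hypothesis. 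Combined with $\cW_t \leq A$ this yields $\cW_\tau < A+B$, contradicting $\cW_\tau \geq A+B$.

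There is one mild subtlety to watch, namely the possibility that $\cW_{\tau-} = A+B$ so that the strict inequality $m < A+B$ from $m < \cW_{s-}$ becomes the wide one $m \le A+B$; but since the intensity measure $\dd m$ charges no individual point, this event has probability zero, or one can simply note that in that case already $\cW_{\tau-} - \cW_t \ge B$ is produced by atoms in $\intervallefo{t}{\tau} \times \intervalleof{0}{A+B} \times \intervalleoo{0}{\infty}$ (using strict $<$), which contradicts the hypothesis directly. No step here is really an obstacle: the whole proof is a careful bookkeeping argument built on the deterministic monotone structure of the SDE \eqref{eq:SDE}, and the interesting content is just the right choice of the stopping time $\tau$.
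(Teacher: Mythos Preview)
Your proof is correct and follows essentially the same approach as the paper: define the first-passage time $\tau$ (the paper calls it $t''$), use the SDE representation to bound the increment $\cW_\tau - \cW_t$ by $M^{\sQ}\!\left(\intervalleof{t}{t'}\times \intervalleof{0}{A+B} \times \intervalleoo{0}{\infty}\right)$, and derive a contradiction. One small remark: the ``mild subtlety'' you raise is in fact a non-issue, since $m < \cW_{s-} \le A+B$ already gives the \emph{strict} inequality $m < A+B$, hence $m \in \intervalleof{0}{A+B}$ automatically; no measure-zero argument is needed.
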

\begin{proof}
	Assume that we are on the event where the first condition is satisfied and where $\mathcal W_{t'}> A + B$. 
Then there exists some $t''= \inf\enstq{t\leq s\leq t'}{\mathcal W_s \geq A+B}$. We then have 
\begin{align*}
	A+B\leq \mathcal W_{t''} &= \mathcal W_{t}+ \sum_{(s,m,y)\in \sQ} y \cdot \ind{m< \mathcal W_{s-},\ t< s\leq t''} \\
	&\leq A+ M^{\sQ}(\intervalleof{t}{t'}\times \intervalleof{0}{A+B} \times \intervalleoo{0}{\infty} ) < A+B,
\end{align*}
which is a contradiction, so the event that we considered in the first place is empty. 
This concludes the proof by contradiction.
\end{proof}

Using Campbell's theorem for Poisson point processes (see for example \cite[Section~3.2]{kingman_poisson_1993} for a reference), we get that for any $\lambda \in \mathbb C$ for which the right-hand-side makes sense we have
\begin{align*}
	\Ec{\exp(\lambda \cdot M^{\sQ}(\intervalleof{c}{d}\times \intervalleof{a}{b} \times \intervalleoo{0}{\infty} ))} 
	&= \int_c^d \dd{t} \int_a^b\dd{m} \int_{0}^{\infty} (e^{\lambda y}-1) \cdot \frac{e^{-\frac{t^2}{2}y}}{y^\frac{3}{2}\sqrt{2\pi}} \dd{y}\\
	 &= (b-a) \int_c^d F(\lambda,t)\dd{t},
\end{align*}
where we write $F(\lambda,t)= \int_{0}^{\infty} (e^{\lambda y}-1) \cdot \frac{e^{-\frac{t^2}{2}y}}{y^\frac{3}{2}\sqrt{2\pi}} \dd{y}$, again, for values of $\lambda$ for which this integral makes sense. 
Now we have the following control on the values of $F(\lambda,t)$.
\begin{lemma}\label{lem:expansion for F(lambda,t)}
For $\lambda<\frac{t^2}{2}$ the value $F(\lambda,t)$ is well-defined and 
\begin{align*}
	F(\lambda,t)\underset{\substack{\lambda,t\rightarrow\infty \\ \lambda \ll t^2}}{=} \frac{\lambda}{t} + O\left(\frac{\lambda^2}{t^3}\right)
\end{align*}
\end{lemma}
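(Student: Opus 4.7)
The plan is to compute $F(\lambda,t)$ in closed form and then Taylor expand. The condition $\lambda < t^2/2$ is exactly what is needed for the integrand to be integrable near $y = \infty$, since after distributing one sees that we need $\lambda - t^2/2 < 0$; near $y = 0$ the bracket $e^{\lambda y} - 1 \sim \lambda y$ kills the $y^{-3/2}$ singularity up to $y^{-1/2}$, which is integrable.

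The key identity I would use is the elementary formula
\begin{align*}
  \int_0^\infty \frac{1 - e^{-ay}}{y^{3/2}}\, \mathrm d y = 2\sqrt{\pi a}, \qquad a>0,
\end{align*}
which follows from a single integration by parts (with $u = 1-e^{-ay}$ and $\mathrm d v = y^{-3/2}\mathrm d y$) combined with the standard Gaussian integral $\int_0^\infty y^{-1/2}e^{-ay}\,\mathrm d y = \sqrt{\pi/a}$. Rewriting the integrand of $F(\lambda,t)$ as
\begin{align*}
  \frac{e^{\lambda y}-1}{y^{3/2}}\cdot e^{-t^2 y/2} = \frac{(1-e^{-t^2y/2}) - (1-e^{-(t^2/2-\lambda)y})}{y^{3/2}},
\end{align*}
applying the identity twice (which is legitimate because both integrals converge under $\lambda < t^2/2$) gives
\begin{align*}
  F(\lambda,t) = \frac{1}{\sqrt{2\pi}} \Bigl( 2\sqrt{\pi}\cdot\sqrt{t^2/2} - 2\sqrt{\pi}\cdot \sqrt{t^2/2-\lambda}\Bigr) = t - \sqrt{t^2 - 2\lambda}.
\end{align*}

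With this closed form in hand, the asymptotic expansion is immediate. Using $\sqrt{1-x} = 1 - x/2 - x^2/8 + O(x^3)$ applied to $x = 2\lambda/t^2 \to 0$ (since $\lambda \ll t^2$), I would get
\begin{align*}
  F(\lambda,t) = t - t\sqrt{1-\tfrac{2\lambda}{t^2}} = \frac{\lambda}{t} + \frac{\lambda^2}{2t^3} + O\!\left(\frac{\lambda^3}{t^5}\right),
\end{align*}
which yields the stated bound $F(\lambda,t) = \lambda/t + O(\lambda^2/t^3)$. There is no real obstacle here; the only small subtlety is recognizing that the bracketed difference of exponentials can be organized so that the identity above applies cleanly, and keeping careful track of the constants $\sqrt{2\pi}$ and $\sqrt{\pi}$ that cancel.
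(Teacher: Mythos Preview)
Your proof is correct and actually takes a slightly different route from the paper. The paper expands $e^{\lambda y}-1$ as a power series, integrates term by term using $\int_0^\infty y^{k-3/2}e^{-t^2y/2}\,\dd y=(t^2/2)^{-k+1/2}\Gamma(k-\tfrac12)$, and organizes the result as $\frac{\lambda}{t}+\frac{\lambda^2}{t^3}G(2\lambda/t^2)$ with $G$ a power series of radius $1$. Your approach instead produces the closed form $F(\lambda,t)=t-\sqrt{t^2-2\lambda}$ via the identity $\int_0^\infty y^{-3/2}(1-e^{-ay})\,\dd y=2\sqrt{\pi a}$ and then Taylor expands. The two methods are equivalent---your closed form is exactly the sum of the paper's series---but your version is arguably more transparent: it makes the radius-of-convergence condition $2\lambda/t^2<1$ visible as the branch point of $\sqrt{t^2-2\lambda}$, and it gives all higher-order terms at once. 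The only (minor) thing to watch is justifying the split of the integral into two convergent pieces, which you do correctly under $\lambda<t^2/2$.
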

\begin{proof}
Using an integration by parts we get 
\begin{align*}
F(\lambda,t)&= \int_{0}^{\infty} (e^{\lambda y}-1) \cdot \frac{e^{-\frac{t^2}{2}y}}{y^\frac{3}{2}\sqrt{2\pi}} \dd{y} \\
&= \left[(-2y^{-\frac{1}{2}}) \cdot  \frac{e^{-(\frac{t^2}{2} - \lambda)y} - e^{-\frac{t^2}{2} y}}{\sqrt{2\pi}}\right]_{y\rightarrow 0}^{y \rightarrow \infty} -  \int_0^\infty (-2y^{-\frac{1}{2}})  \frac{- (\frac{t^2}{2} - \lambda) e^{-(\frac{t^2}{2} - \lambda)y} + \frac{t^2}{2}e^{-\frac{t^2}{2} y}}{\sqrt{2\pi}} \dd y.
\end{align*}
The first term is just 0. For the second term, we use the fact that for any $z>0$ we have $\int_0^\infty y^{-\frac{1}{2}} e^{-zy}\dd y = \sqrt{\frac{\pi }{z}}$ to compute the integral and get that
\begin{align*}
F(\lambda,t)&= \sqrt{2} \cdot \left( \sqrt{\frac{t^2}{2}} - \sqrt{\frac{t^2}{2} - \lambda}\right) = t \cdot \left(1 - \sqrt{1-\frac{2\lambda }{t^2}}\right). 
\end{align*}
The proof of the lemma follows easily from this expression for $F(\lambda,t)$.
\end{proof}
Using those two results (Lemma~\ref{lem:control of the weight process through its increments} and Lemma~\ref{lem:expansion for F(lambda,t)}) repeatedly, together with some Chernoff bounds, we can obtain some tail bound for the values of the process $(\mathcal W_t)_{t\ge 1}$ and its jumps.
Below, we prove points \ref{it:sup of W over t has exponential moments}, \ref{it:sup of jumps} and \ref{it:small jumps dont contribute} of Proposition~\ref{prop:properties of the continuous weight process}.
\begin{proof}[Proof of Proposition~\ref{prop:properties of the continuous weight process}.\ref{it:sup of W over t has exponential moments}]
Let $(A_k)_{k\geq 1}$ be an increasing sequence of positive real numbers that we will specify later on. 
Using a union bound and the Markov property at times $k\geq 1$, we get
\begin{align*}
	\Pp{\exists k \geq 1,\ \mathcal W_k > A_k}\leq \Pp{\mathcal W_1> A_1}+ \sum_{k=1}^\infty\Ppsq{\mathcal W_{k+1} > A_{k+1}}{\mathcal W_k \leq A_k}.
\end{align*}
From Lemma~\ref{lem:control of the weight process through its increments} we get that 
\begin{align*}
	\Ppsq{\mathcal W_{k+1} > A_{k+1}}{\mathcal W_k \leq A_k}\leq  \Pp{M^{\sQ}(\intervalleof{k}{k+1}\times \intervalleof{0}{A_{k+1}} \times \intervalleoo{0}{\infty})> (A_{k+1}-A_k)}.
\end{align*}
Using a Chernoff bound and Lemma~\ref{lem:expansion for F(lambda,t)}, for any $k\geq 1$ and $\lambda<k^2/2$,  
\begin{align}\label{eq:increment process W chernoff bound}
	&\Pp{M^{\sQ}(\intervalleof{k}{k+1}\times \intervalleof{0}{A_{k+1}} \times \intervalleoo{0}{\infty})> (A_{k+1}-A_k)}\notag \\
	&\leq \exp \left(-\lambda(A_{k+1}-A_k) + A_{k+1}\cdot \int_{k}^{k+1} F(\lambda,t)\dd{t}\right) \notag \\
	&\leq \exp \left(-\lambda(A_{k+1}-A_k) + A_{k+1}\cdot \left(\lambda \log(1+\frac{1}{k})+ O\left(\frac{\lambda^2}{k^3}\right)\right)\right).
\end{align}
For our purposes, we fix some $A>0$ and use the sequence $(A_k)$ defined in such a way that
\begin{align*}
 A_1=A  \qquad \text{and} \qquad A_{k+1}=A_k \left(1+\frac{1}{k} +\frac{1}{k^{\frac{5}{4}}}\right).
\end{align*}
From this, remark that we have $A\cdot k\leq A_k\leq C \cdot A \cdot k$ for all $k\geq 1$ for some constant $C$ and then 
\begin{align*}
A_{k+1}-A_k=A_k\cdot \left(\frac{1}{k}+ \frac{1}{k^{\frac{5}{4}}}\right) \geq \frac{A_k}{k} + \frac{A}{k^{\frac14}} \qquad  \text{and} \quad \frac{A_{k+1}}{k} \leq \frac{A_k}{k} +\frac{C'}{k},
\end{align*}
for some constant $C'$.
Using the above and writing $\log(1+\frac{1}{k})=\frac{1}{k} + O\left(\frac{1}{k^2}\right)$ we can bound the term that appears in the exponential in \eqref{eq:increment process W chernoff bound} as
\begin{align*}
&-\lambda(A_{k+1}-A_k) + A_{k+1}\cdot \left(\lambda \log(1+\frac{1}{k})+ O\left(\frac{\lambda^2}{k^3}\right)\right) \\
&\leq - \lambda \left(\frac{A_k}{k} + \frac{A}{k^{\frac14}} \right) + \lambda A_{k+1}\cdot \left( \frac{1}{k} + O\left(\frac{1}{k^2}\right)\right) +  O\left(A_{k+1} \cdot \frac{\lambda^2}{k^3}\right)\\
&\leq  \lambda\cdot \left( -\frac{A}{k^{\frac{1}{4}} }+ O\left(\frac{1}{k}\right)\right) + O\left(\frac{\lambda^2}{k^{2}}\right).
\end{align*}
Now taking $\lambda= k^{\frac{5}{4}}$ and $A$ large enough, the bounds \eqref{eq:increment process W chernoff bound} are summable in $k$ and the obtained sum is exponentially small in $A$. 
This proves that with complementary probability we have $\cW_k \leq A_k \leq C\cdot A \cdot k$ for all $k\geq 1$; otherwise stated, the quantity
\begin{align*}
	\Pp{\sup_{k\geq 1} \frac{\cW_k}{k} \geq x}
\end{align*}
decays exponentially in $x$.
The analogous result where the supremum is taken on all the real numbers $t\geq 1$ follows from the monotonicity of $\cW$.
\end{proof}
\begin{proof}[Proof of Proposition~\ref{prop:properties of the continuous weight process}.\ref{it:sup of jumps}]
First, we use a union bound to write
\begin{align*}
	\Pp{Z_2\geq A} \leq   \Pp{Z_1 \leq A,\ Z_2\geq A} + \Pp{Z_1 > A}.
\end{align*}
Then 
\begin{align*}
\Pp{Z_1 \leq A,\ Z_2\geq A}&\leq \Pp{\sQ \cap \enstq{(t,m,y)}{t\geq 1, \ y \geq A (1\vee \log^2t) / t^2, \ m \leq A t}\neq \emptyset}\\
&= 1- \exp \left(- \int_{1}^{\infty} \dd t  \int_{A (1\vee \log^2t) / t^2}^{\infty} \dd y \int_{0}^{At} \dd m  \frac{\exp\left(-\frac{t^2}{2}y\right)}{y^\frac{3}{2}\sqrt{2\pi}} \right)\\
&\leq \int_{1}^{\infty} \dd t  \int_{A (1\vee \log^2t) / t^2}^{\infty} \dd y \int_{0}^{At} \dd m  \frac{\exp\left(-\frac{t^2}{2}y\right)}{y^\frac{3}{2}\sqrt{2\pi}}\\
&\leq \int_{1}^{\infty} \dd t \frac{A t }{\sqrt{2\pi}}  \exp\left(-\frac{A}{2}(1\vee \log^2 t) \right) \int_{A (1\vee \log^2t) / t^2}^{\infty}  \frac{1}{y^\frac{3}{2}} \dd y \\
&\leq  \int_{1}^{\infty} \dd t \frac{2 \sqrt{A}  t }{(1\vee \log t)\sqrt{2\pi}}  \exp\left(-\frac{A}{2}(1\vee \log^2 t) \right)\\
&\leq C \exp(- c A),
\end{align*}
for some constants $C, c>0$. 
Since the second term $\Pp{Z_1 > A}$ also decays exponentially in $A$ thanks to Proposition~\ref{prop:properties of the continuous weight process}.\ref{it:sup of W over t has exponential moments}, this concludes the proof.
\end{proof}
\begin{proof}[Proof of Proposition~\ref{prop:properties of the continuous weight process}.\ref{it:small jumps dont contribute}]
We write
\begin{align*}
	\Ec{\ind{Z_1\leq A} \sum_{t< s \leq 2t} w_s\ind{w_s\leq s^{-2-\delta}}}
	&\leq \Ec{M^\sQ(\intervalleof{t}{2t}\times \intervalleof{0}{2At} \times \intervalleff{0}{t^{-2-\delta}})}\\
	&\leq \int_{t}^{2t} \dd s \int_{0}^{2At} \dd m \int_{0}^{t^{-2-\delta}} y \frac{\exp(-\frac{s^2}{2}y)}{\sqrt{2\pi} y^{3/2}} \dd y\\
	&\leq \cst A t \int_{t}^{2t} t^{-1-\delta/2} \dd s\\
	&\leq \cst A t^{1-\delta/2}.
\end{align*}
Using Markov's inequality, summing over a sequence $t_n=2^n$ and using the Borel-Cantelli lemma we get that almost surely for any choice of $A$ we have   
\begin{align*}
	\ind{Z_1\leq A}\sum_{1< s \leq t} w_s\ind{w_s\leq s^{-2-\delta}} =O(t^{1-\delta/4})
\end{align*}
This proves the claim because the random variable $Z_1$ takes only finite values almost surely.
\end{proof}

\subsection{Martingale associated to the weight process}
We prove the following lemma, which directly ensures that Proposition~\ref{prop:properties of the continuous weight process}.\ref{it:cW over t converges as} holds.

\begin{lemma}\label{lem:continuous weight process divided by t converges}
The process $\left(\frac{\cW_t}{t} \right)_{t\geq 1}$ is a positive $L^2$ martingale. We have 
	\begin{align}
	\frac{\cW_t}{t} \underset{t\rightarrow \infty}{\rightarrow} Y,
	\end{align}
	almost surely and in $L^2$. 
	The limiting random variable $Y$ is positive almost surely. 
\end{lemma}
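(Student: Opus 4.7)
The plan is to invoke the stochastic calculus set up in the previous subsection to show that $\cW_t/t$ is an $L^2$-bounded martingale, and then settle a.s.\ positivity of its limit by a conditional Paley--Zygmund argument.

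For the martingale property and the first moment, I would apply the compensator formula~\eqref{eq:dynkin formula from stochastic calculus} with $F(x)=x$ (which satisfies the required growth bound since $|F(x+y)-F(x)|=y\le (1+x+y)y$). Using the first identity in~\eqref{eq:moments of u(t,y)}, this shows that $\cW_t-X-\int_1^t \cW_{s-}/s\,\dd s$ is a martingale. Setting $m(t):=\Ec{\cW_t}$ gives $m'(t)=m(t)/t$ with $m(1)=\Ec{X}=1$, whence $m(t)=t$. Applying Lemma~\ref{lem:regular dynkin formula} to $f(t,x)=x/t$, for which $\mathbf{A}f\equiv 0$ by the same moment identity, shows that $\cW_t/t$ is a non-negative local martingale with constant expectation $1$, hence a true martingale.

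For the $L^2$ bound, I would apply~\eqref{eq:dynkin formula from stochastic calculus} with $F(x)=x^2$, using both identities in~\eqref{eq:moments of u(t,y)}, and solve the linear ODE $v'(t)=2v(t)/t+1/t^2$ with $v(1)=\Ec{X^2}=3$, yielding $\Ec{\cW_t^2}=\tfrac{10}{3}t^2-\tfrac{1}{3t}$. Therefore $\sup_{t\ge 1}\Ec{(\cW_t/t)^2}\le 10/3$, and Doob's $L^2$ martingale convergence theorem produces an a.s.\ and $L^2$ limit $Y$ with $\Ec{Y}=1$ and $\Ec{Y^2}=10/3$.

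The main obstacle is the a.s.\ positivity of $Y$. My plan is to re-run the previous ODE starting from an arbitrary initial data $(s,x)$; this gives $\Ec{Y^2\mid\cW_s=x}=x^2/s^2+x/(3s^4)$ (passing to the limit is justified by uniform integrability coming from the $L^2$-bound), hence by the Markov property
\begin{align*}
\Ecsq{Y}{\cF_t}=\frac{\cW_t}{t}, \qquad \Varsq{Y}{\cF_t}=\frac{\cW_t}{3t^4}.
\end{align*}
The conditional Paley--Zygmund inequality then yields, for every $\theta\in(0,1)$,
\begin{align*}
\Pcsq{Y>0}{\cF_t}\;\ge\;\Pcsq{Y>\theta\cW_t/t}{\cF_t}\;\ge\;\frac{(1-\theta)^2}{1+1/(3t^2\cW_t)}.
\end{align*}
Since $\cW$ is non-decreasing with $\cW_1=X>0$ a.s., the right-hand side tends to $(1-\theta)^2$ almost surely as $t\to\infty$; meanwhile L\'evy's upward theorem gives $\Pcsq{Y>0}{\cF_t}\to\mathbf{1}_{\{Y>0\}}$ a.s. Combining these and letting $\theta\downarrow 0$ forces $\mathbf{1}_{\{Y>0\}}=1$ a.s., which concludes the proof.
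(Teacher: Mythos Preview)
Your proposal is correct and follows essentially the same route as the paper: compute the first and second (conditional) moments via the compensator formula~\eqref{eq:dynkin formula from stochastic calculus} using~\eqref{eq:moments of u(t,y)}, deduce that $(\cW_t/t)$ is an $L^2$-bounded martingale, and then use the conditional moments $\Ecsq{Y}{\cF_t}=\cW_t/t$, $\Varsq{Y}{\cF_t}=\cW_t/(3t^4)$ to show $Y>0$ a.s. The only cosmetic differences are that the paper works conditionally from the start (rather than first computing unconditional moments), and that it concludes via Chebyshev ($\Ppsq{Y=0}{\cF_t}\le 1/(3t^2\cW_t)$) instead of your Paley--Zygmund bound---both rest on exactly the same variance computation.

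One small technical wrinkle: your appeal to Lemma~\ref{lem:regular dynkin formula} for $f(t,x)=x/t$ is not directly licensed, since that lemma is stated for compactly supported $f$. This is harmless, because you have already established from~\eqref{eq:dynkin formula from stochastic calculus} with $F(x)=x$ that $\cW_t-\int_1^t \cW_{s-}/s\,\dd s$ is a true martingale; an integration by parts (or the paper's ODE computation of $\Ecsq{\cW_t/t}{\cF_s}$) then yields the martingale property of $\cW_t/t$ without invoking the generator on a non-compactly-supported test function.
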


\begin{proof}[Proof of Lemma~\ref{lem:continuous weight process divided by t converges}]
The proof is made of two parts: a first and a second moment computation. 
 \\
\textbf{First moment computation.}
Using that the expression \eqref{eq:dynkin formula from stochastic calculus} with $F(x)=x$ is a martingale we get for any $t\geq s$,
\begin{align*}
\Ecsq{\cW_t - \cW_s}{\mathcal F_s}
= \Ecsq{\int_s^t \cW_{u-} \cdot \int_0 ^\infty v \cdot  \frac{\exp\left(-\frac{u^2}{2}v\right)}{v^\frac{3}{2}\sqrt{2\pi}}\dd v \dd u}{\mathcal F_s}
= \int_{s}^{t} \frac{\Ecsq{\cW_u}{\mathcal F_s}}{u} \dd u,
\end{align*}
where we used \eqref{eq:moments of u(t,y)} in the second equality. 
Taking derivatives, we get that $\frac{\dd }{\dd t} \Ecsq{\cW_t}{\mathcal F_s} = \frac{\Ecsq{\cW_t}{\mathcal F_s}}{t}$, so that
\begin{align*}
\frac{\mathrm d}{\mathrm d t}\left( \frac{\Ecsq{\cW_t}{\mathcal F_s}}{t} \right)= \frac{1}{t^2}\cdot\left(\frac{\Ecsq{\cW_t}{\mathcal F_s}}{t}\cdot t -\Ecsq{\cW_t}{\mathcal F_s}\right)=0.
\end{align*}
This ensures in that for any $t\geq s \geq 1$ we have $\Ecsq{\frac{\cW_t}{t}}{\cW_s}=\frac{\cW_s}{s}$, so the
the process $(\frac{\cW_t}{t})_{t\geq t}$ is a martingale.
Being a positive martingale, it converges almost surely to a limit $Y$.
\\
\textbf{Second moment computation.}
Using that the expression \eqref{eq:dynkin formula from stochastic calculus} with $F(x)=x^2$ is a martingale we get that 
\begin{align*}
\Ecsq{\cW_t^2 - \cW_s^2}{\mathcal F_s} &= \Ecsq{\int_{s}^{t} \dd u \int_0^\infty (2y \mathcal{W}_{u-} +y^2 ) (\cW_{u-}) \frac{\exp\left(-\frac{u^2}{2}y\right)}{y^\frac{3}{2}\sqrt{2\pi}}\mathrm d y}{\mathcal F_s} \\
&= \int_s^t \left(2\Ecsq{\cW_u^2}{\mathcal F_s} \cdot \frac{1}{u} + \Ecsq{\cW_u}{\mathcal F_s} \cdot \frac{1}{u^3} \right) \dd u
\end{align*}
so that $\frac{\dd}{\dd t} \Ecsq{\cW_t^2}{\mathcal F_s}=2\Ecsq{\cW_t^2}{\mathcal F_s} \cdot \frac{1}{t} + \Ecsq{\cW_t}{\mathcal F_s} \cdot \frac{1}{t^3}$ and 
\begin{align*}
\frac{\mathrm d }{\mathrm  d t}\Ecsq{\frac{\cW_t^2}{t^2}}{\mathcal F_s} 
&=\frac{1}{t^2}\left( 2\Ecsq{\cW_t^2}{\mathcal F_s} \cdot \frac{1}{t} + \Ecsq{\cW_t}{\mathcal F_s} \cdot \frac{1}{t^3}\right) - \frac{2}{t^3}\Ecsq{\cW_t^2}{\mathcal F_s} \\
&= \Ecsq{\cW_t}{\mathcal F_s} \cdot \frac{1}{t^5} \\
&= \frac{\cW_s}{s} \cdot \frac{1}{t^4}
\end{align*}
And so integrating between $s$ and $t$, we get
\begin{align*}
\Ecsq{\frac{\cW_t^2}{t^2}}{\mathcal F_s} =\frac{\cW_s^2}{s^2} + \frac{\cW_s}{s}\left(\frac{1}{3s^3}-\frac{1}{3t^3}\right) \underset{t\rightarrow\infty}{\rightarrow} \frac{\cW_s^2}{s^2} + \frac{\cW_s}{3s^4}.
\end{align*}
The process $\left(\frac{\cW_t}{t}, \ t\geq 1\right)$ is a positive martingale that is bounded in $L^2$ and hence converges in $L^2$ to the non-negative limit $Y$.
From what we computed above, we have
\begin{align*}
\Ecsq{Y}{\cW_t}=\frac{\cW_t}{t} \quad \text{ and } \quad \Varsq{Y}{\cW_t}=\frac{\cW_t}{3t^4}.
\end{align*} 
Then using the Chebychev inequality and the fact that $\cW_t$ is non-decreasing in $t$ almost surely we get 
\begin{align*}
\Ppsq{Y=0}{\cW_t}\leq \frac{\Varsq{Y}{\cW_t}}{\Ecsq{Y}{\cW_t}^2}\leq \frac{1}{3 t^3 \cW_t}
\end{align*}
which decreases to $0$ almost surely as $t\rightarrow \infty$.
Hence writing $\Pp{Y=0}=\Ec{\Ppsq{Y=0}{\cW_t}}$ and using monotone convergence we get that $\Pp{Y=0}=0$. 
\end{proof}
Note that it would be possible to do the same computation we $F(x)=x^k$ to obtain information about the value of higher moments of $Y$ recursively. 

%

\section{The root component of the WMSF of the PWIT and its scaling limit}\label{sec:scaling limit result}
In this section, we present in detail the construction of the object $\mathrm M^\infty$ that appears in Theorem~\ref{thm:local convergence louigi} as the local weak limit of $M_n$. 
We then interpret this construction in the framework of chains and aggregation processes, in order to prove the convergence of this object to $\mathcal M^\infty$, which is the content of Theorem~\ref{thm:convergence of the MST of the PWIT to Minfty}. 

\begin{table}[htbp]\caption{Table of Notation for Section~\ref{sec:scaling limit result}}
	\centering 
	\begin{tabular}{l c p{13cm} }
		\toprule
		\multicolumn{3}{c}{}\\
		\multicolumn{3}{c}{\underline{About Poisson--Galton--Watson trees}}\\
		\multicolumn{3}{c}{}\\
		$\PGW(\lambda)$ & : & Poisson--Galton--Watson distribution on trees, with parameter $\lambda$\\
		$\abs{\PGW(\lambda)}$ & : & by abuse of notation, a random variable with the distribution of the total number of vertices of a Poisson--Galton--Watson tree with parameter $\lambda$\\
		$\theta(\lambda)$ & : & the probability $\Pp{\abs{\PGW(\lambda)}=\infty}$\\
		$\lambda^*$ & : & the dual parameter associated to $\lambda>1$,  defined by the relation $\lambda e^{-\lambda}=\lambda^* e^{-\lambda^*}$\\
		\multicolumn{3}{c}{}\\
		\multicolumn{3}{c}{\underline{Objects related to the structure of  $\rM^\infty$}}\\
		\multicolumn{3}{c}{}\\
		$\alpha_1,\alpha_2,\dots$ & : & forward maximal weights in decreasing order \\
		$\sA$ &:& set of forward maximal weights\\
		$\rT_\alpha^\bullet$ & : & pond located just before the forward maximal edge of weight $\alpha$\\
		$\rM_\alpha^\bullet$ & : & obtained as the result of the Poisson--Galton--Watson Aggregation Process (PGWAP) started from $\rT_\alpha^\bullet$ \\
		$(W^\alpha(s))_{s \geq \alpha}$ & : & process counting the number of vertices during the PGWAP started from $\rT_\alpha^\bullet$\\
		$(W(s))_{s \geq \alpha}$ & : & Markov process with the same transitions as $W^\alpha$\\
		$X_\alpha$ & : & number of vertices of $\rT_\alpha^\bullet$ so that $W^{\alpha}(\alpha)=X_\alpha$\\
		$W^\alpha(\infty)$ &:& (finite) limiting value of  $(W^\alpha(s))_{s \geq \alpha}$, it is the number of vertices in $\rM_\alpha^\bullet$\\
		$\hat{T}$ & : & version of a tree $T$ endowed with a phantom root \\
		$\hat{\rM}^\bullet_\alpha$ & : & version of $\rM^\bullet_\alpha$ with an added phantom root\\
		$\widetilde{\rM}^\bullet_\alpha$ & : & version of $\hat{\rM}^\bullet_\alpha$ with normalized measure\\
		$\eta^{\mathrm{Poi}}$ & : & family $(\eta^{\mathrm{Poi}}(t,w), \ t>0, w>0)$ of distributions where a tree under $\eta^{\mathrm{Poi}}(t,w)$ is distributed as $\hat{T}$ where $T$ is a Poisson-Galton-Watson tree conditioned on having exactly $\lfloor w \rfloor$ vertices, endowed with a phantom root.\\
		$L_k$ & : & type of the $k$-th vertex along the spine of $\rT^\infty$\\
	
		\multicolumn{3}{c}{}\\
		\multicolumn{3}{c}{\underline{Scaled versions of the discrete objects and their corresponding limits}}\\
		\multicolumn{3}{c}{}\\
		\\
		$\sA^r$ &:& rescaled version $\enstq{r^{-1}\cdot(\alpha -1)}{\alpha \in \sA}$, tending to $\sP$\\
		$\sP$ & : & Poisson Point Process of intensity $\frac{\dd t}{t}$ on $\intervalleoo{0}{\infty}$\\
		$\left(	\mathsf{W}^{r}(t)\right)_{t\geq 1}$ & : & defined as $\left(r^2\cdot W^{1+r}(1+rt)\right)_{t\geq 1}$\\
		$(\cW_{t})_{t\geq 1}$ & : & weight process used in the construction of $\cM^\bullet$ \\
		$\mathbf{A}^r$ &: & generator of the process $\left((t,	\mathsf{W}^{r}(t))\right)_{t\geq 1}$\\
		$\mathbf{A}$ &: & generator of the process $\left((t,	\cW(t))\right)_{t\geq 1}$\\
		$\mathsf{W}^{r}(1)$ &: & starting value of $\left(	\mathsf{W}^{r}(t)\right)_{t\geq 1}$, also expressed as $r^{-2}\cdot X_{1+r}$\\
		$\cW_1$ &: & starting value of $\left(	\cW_t\right)_{t\geq 1}$, also expressed as $X$\\
		$r \cdot C_0^{-1} \cdot \mathsf{W}^{r}(\infty)$ & : & scaled version of the end-value of the process $\left(	\mathsf{W}^{r}(t)\right)_{t\geq 1}$ \\ 
		$Y$ & : & defined as $\lim_{t\rightarrow \infty} \frac{\cW_t}{t}$\\
		$\eta^{\mathrm{Poi},r}$ & : & defined from $\eta^{\mathrm{Poi}}$, where a tree sampled under $\eta^{\mathrm{Poi},r}(t,w)$ is distributed as $\mathrm{Scale}(r,r^2;\hat{T})$, where  $\hat{T}\sim \eta^{\mathrm{Poi}}(t,w)$.\\
		$\eta^{\mathrm{Br}}$ & : & family $(\eta^{\mathrm{Br}}(t,w), \ t>0, w>0)$ of distributions where a tree under $\eta^{\mathrm{Br}}(t,w)$ is a Brownian tree of mass $w$\\
		\bottomrule
	\end{tabular}
\end{table}

\subsection{Description of the root component of the WMSF of the PWIT}\label{subsec:description of the MST of the PWIT}
Recall the definition of $\mathrm M^\infty$ from Section~\ref{subsec:structure of root component of the WMSF of the PWIT} in the introduction.
In the tree $\mathrm M^\infty$, there is a unique infinite path starting from the root. 
For every vertex $v$ we can define its \emph{type} as the maximal weight of edges on the path between $v$ and $\infty$. 
This is always a number in the interval $\intervalleoo{1}{\infty}$.
In what follows, we describe the tree $\mathrm M^\infty$ as a rooted tree whose vertices have a type and forget about the weights of the edges and the planar structure.
First, we have to recall a few fact about Poisson--Galton--Watson trees and their probability of being infinite. 

\subsubsection{Around the probability of extinction of Poisson--Galton--Watson trees}
For any $\lambda\geq  1$ we write $\theta(\lambda)=\Pp{\abs{\PGW(\lambda)}=\infty}$. The dual parameter $\lambda^*$ is such that $\lambda e^{-\lambda}=\lambda^* e^{-\lambda^*}$. 
 We state a few useful facts about those quantities: 
\begin{itemize}
	\item The value $\theta=\theta(\lambda)$ is defined by the equation $1-\theta=e^{-\lambda \theta}$ and satisfies  $\theta(1)=0$ and $\theta(\lambda)\underset{\lambda \rightarrow \infty}{\rightarrow} 1$.
	\item For any $\lambda>1$ we have $\lambda^*=\lambda (1-\theta(\lambda))$.
	\item For any $\lambda>1$ we have
	\begin{align}\label{eq:theta prime tends to 2 at 1}
		\theta'(\lambda)=\frac{\theta(\lambda)(1-\theta(\lambda))}{1 - \lambda^*} \qquad \text{and} \qquad\theta'(\lambda) \underset{\lambda \rightarrow 1}{\rightarrow} 2.
	\end{align} 
	\item For $\epsilon>0$ we have 
	\begin{align}\label{eq:expansion (1+eps)^*}
			(1+\epsilon)^* =1-\epsilon+\frac{2}{3}\epsilon^2 + O(\epsilon^3).
	\end{align}
	\item For any $\lambda>1$ we have 
	\begin{align}\label{eq:relation theta lambda star}
		\frac{1-\theta(\lambda)}{1-\lambda^*} \leq \frac{1}{\lambda-1}.
	\end{align}
\item For any $\lambda>1$ we have 
\begin{align}\label{eq:relation theta lambda star second}
	\frac{1-\theta(\lambda)}{(1-\lambda^*)^3} \leq \frac{1}{(\lambda -1)^3}.
\end{align}
\end{itemize} 
The first four facts can be found in \cite{nachmias_wired_2024} and \cite[Corollary~3.19]{hofstad_random_2017}.
We only prove \eqref{eq:relation theta lambda star}, since \eqref{eq:relation theta lambda star second} is obtained is a very similar way.
\begin{proof}[Proof of \eqref{eq:relation theta lambda star}]
For $\theta=\theta(\lambda)$, since $1-\theta=e^{-\lambda \theta}$ we have $\lambda=-\frac{\log(1-\theta)}{\theta}$. 
Then since $\lambda^*=\lambda (1-\theta)$ we have $\lambda^*= -\frac{(1-\theta)\log(1-\theta)}{\theta}$.
Proving the inequality \eqref{eq:relation theta lambda star} then reduces to proving
\begin{align*}
	\frac{1-\theta}{1 + \frac{(1-\theta)\log(1-\theta)}{\theta}} \leq \frac{1}{-\frac{\log(1-\theta)}{\theta}-1},
\end{align*}
which in turns corresponds to 
\begin{align*}
	-2(1-\theta) \log(1-\theta) - 2\theta + \theta^2 \leq 0.
\end{align*}
Now we can just check that the right-hand-side is $0$ for $\theta=0$ and the derivative in $\theta$ is negative for $\theta \in \intervalleoo{0}{1}$. This concludes the proof.
\end{proof}

\subsubsection{Distribution of $(\rT^\infty,\rM^\infty)$.}
\label{subsubsec:distribution of T and M}
As explained in the introduction, we can consider the sequence $e_1,e_2,\dots$ of forward maximal edges in the invasion percolation process run from the root $\rho$ of the PWIT, and their corresponding weights $\alpha_1 > \alpha_2 >\dots$. 
Writing $\sA:=\{\alpha_1,\alpha_2,\dots \}$, if we disconnect $\rT^\infty$ (resp.\ $\rM^\infty)$ by removing all the edges $e_1,e_2,\dots$, we get a sequence of connected component that we call $\rT_\alpha^\bullet$ (resp. $\rM_\alpha^\bullet$), indexed by $\alpha\in \sA$. 
Those components are rooted at their lowest vertex $\rho_\alpha$ and pointed at their highest vertex $v_\alpha$.
Since the whole structure of $\rT^\infty$ (resp.\ $\rM^\infty)$ can be recovered from the data of $\sA$ and $(\rT_\alpha^\bullet)_{\alpha \in \sA}$ (resp. $(\rM_\alpha^\bullet)_{\alpha \in \sA}$), we just need to express their joint distribution. 
From \cite[Section~2.2]{addario-berry_local_2013}, we get that:
\begin{itemize}
	\item The sequence $(\alpha_k)_{k\geq 1}$ is a Markov process in $k$, where
	\begin{itemize}
		\item $\alpha_1$ has density $\theta'(y) \dd y$ on $\intervalleoo{1}{\infty}$,
		\item for any $k\geq 1$, conditionally on $\alpha_k$, the random variable  $\alpha_{k+1}$ has density $\frac{\theta'(y) }{\theta(\alpha_k)} \ind{y<\alpha_k} \dd y$.
	\end{itemize}
	\item Conditionally on $\sA$, the $((\rT_\alpha^\bullet, \rM_\alpha^\bullet), \ \alpha \in \sA)$ are independent and 
	\begin{itemize}
		\item The total number $X_\alpha$ of vertices of $\rT_\alpha^\bullet$ has the Borel--Tanner distribution with parameter $\alpha$ where for $m\geq1$,
		\begin{align*}
			\Pp{X_\alpha=m}=\frac{\theta(\alpha)}{\theta'(\alpha)} \frac{e^{-\alpha m}(\alpha m)^{m-1}}{(m-1)!},
		\end{align*}
	see \cite[Eq.~(2.2)]{addario-berry_local_2013}.
		\item Conditionally on $X_\alpha$, the tree $\rT_\alpha^\bullet$ has the distribution of a PGW tree conditioned on having $X_\alpha$ vertices\footnote{Note that the distribution of a tree under $\PGW(\lambda)$, conditioned on its number of vertices, does not depend on $\lambda$.}, pointed at a uniform random vertex.
		\item Conditionally on $\rT_\alpha^\bullet$, the tree $\rM_\alpha^\bullet$ is obtained by running the Poisson--Galton--Watson Aggregation Process from the seed  $\rT_\alpha^\bullet$.
	\end{itemize}
\end{itemize}

\paragraph{The Poisson--Galton--Watson Aggregation Process.}
We recall from \cite[Section~2.2]{addario-berry_local_2013} the definition of the Poisson--Galton--Watson Aggregation Process.
We define the process $(A(s))_{s\ge \alpha}$ started from some time $\alpha >1$ with some finite tree $A(\alpha)$.

At any time $s\geq \alpha$, for any vertex $v$ of $A(s)$, Poisson--Galton--Watson trees attach themselves at $v$ by having their root linked to $v$ by an edge;
at time $s$, attachments at every vertex occur at rate $(1-\theta(s))$.
A tree attaching to $v$ at time $\lambda$ is distributed as $\PGW(\lambda^*)$, independently of everything else.
This process stabilizes after a finite amount of time to some finite tree that we denote by $A(\infty)$.
In the above description, every $\rM^\bullet_\alpha$ is obtained from the corresponding $\mathrm{T}^\bullet_\alpha$ by running a PGW process started at time $\alpha$ from $A(\alpha)=\mathrm{T}^\bullet_\alpha$.

\paragraph{Interpretation in the $\mathrm{Aggreg}$ construction.}
Start the PGWA process at time $\alpha$ as defined above, from the seed tree $\mathrm{T}^\bullet_\alpha$.  
We introduce the weight process $(W^\alpha(s))_{s\geq \alpha}$ as
\begin{equation*}
	W^\alpha(s):= \#\{\text{vertices in the aggregate at time } s\}.
\end{equation*}
Note that the starting value $W^\alpha(\alpha)$ is equal to $X_\alpha$, the number of vertices in the tree $\mathrm{T}^\bullet_\alpha$.
Now for any $s$ we denote by $w_s=W^\alpha(s) -W^\alpha(s-)$ and we consider the jump times of this process $J^\alpha=\enstq{s\geq \alpha }{w_s \neq 0}$. 
Each of those jumps corresponds to the attachment of a tree to the aggregate. For any $s\in J^\alpha$, we call $T_s$ the corresponding tree. This tree is considered as rooted at the closest vertex to the seed graph $\mathrm{T}^\bullet_\alpha$.
The total number of vertices of that tree is given by the size of the jump $w_s$, and conditionally on $w_s$, its distribution is that of a $\PGW$-tree conditioned to have that number of vertices. 

Then, conditionally on $(W^\alpha(s))_{s\geq \alpha}$, and $\mathrm{T}^\bullet_\alpha$ and $(T_s, \ s\in J^\alpha)$, all these trees are glued on each other. 
From the definition of the process, a rooted tree $T_s$ corresponding to a jump at time $s$ is attached on a uniform vertex $x_s$ on the aggregate at time $s-$ by adding an edge between its root and $x_s$. 
In order to be in the context of the aggregation process of Section~\ref{subsec:presentation aggregation process}, we introduce a slight modification in the way we express this attachment procedure: for every rooted tree $T$, we define the tree $\hat{T}$ as the tree $T$ with the addition of a parent to the root of $T$, which is then the root of $\hat{T}$ (which we call the phantom root). 
When we consider those trees as endowed with the counting measure, the phantom root has zero mass. 
Whenever a gluing happens at time $s$, we can now express that as having the phantom root of the tree $\hat{T}_s$ identified to vertex $x_s$ uniformly chosen on the aggregate.

In the end, we can describe this process as an aggregation process in the sense of Section~\ref{subsec:presentation aggregation process}, with
\begin{itemize}
	\item  weight process $(W^\alpha(s))_{s\geq \alpha}$,
	\item  seed metric space $\mathrm{T}^\bullet_\alpha$,
	\item  block distributions $\eta^{\mathrm{Poi}}(t,w)$ where for any $t>1$, a tree under $\eta^{\mathrm{Poi}}(t,w)$ is distributed as a Poisson-Galton-Watson tree conditioned on having exactly $\lfloor w \rfloor$ vertices, endowed with a phantom root (and a single point with no mass if $\lfloor w \rfloor=0$).
\end{itemize}
Additionally, the process $(W^\alpha(s))_{s\geq \alpha}$ is Markovian and we can express its transitions explicitly. 
This will be done in more details in Section~\ref{subsubsec:study of the weight process}.
The process $(W^\alpha(s))_{s\geq \alpha}$ almost surely stabilizes to some value as $s\rightarrow \infty$ and we write $W^\alpha(\infty):=\lim_{s\rightarrow \infty} W^\alpha(s)$.
According to this description we can write $\hat{\rM}^\bullet_\alpha$ as the result of a PGWA process started from the seed tree $\hat{\rT}^\bullet_{\alpha}$ so that 
\begin{align}\label{eq:def N alpha as scaled version of tilde N alpha}
	\hat{\rM}^\bullet_\alpha := \mathrm{Scale} \left(1,W^\alpha(\infty);\widetilde{\rM}^\bullet_\alpha\right),
\end{align}
where
\begin{align}\label{eq:def tilde N alpha as aggregation process}
	\widetilde{\rM}^\bullet_\alpha := \mathrm{Aggreg}((W^\alpha(s))_{s\geq \alpha}\ , \hat{\mathrm{T}}^\bullet_{\alpha}\ , \eta^{\mathrm{Poi}}),
\end{align}
where the scaling part of the expression in \eqref{eq:def N alpha as scaled version of tilde N alpha} just appears from the fact that in our definition of the aggregation process, the measure carried by the aggregate is normalized to a probability measure. 

\subsubsection{Alternative description of the process}\label{subsubsec:alternative description as a branching process}
\paragraph{Another description of the distribution of $\rT_\alpha^\bullet$.}
Another way of constructing the tree $\mathrm{T}^\bullet_{\alpha}$ for a given $\alpha$ is to first construct a spine of length given by a $\mathrm{Geo}(\alpha^*)$-distributed random variable linking the root $\rho_\alpha$ to the marked point $v_\alpha$, and to let every vertex on the spine have a progeny off the spine distributed as independent $\mathrm{PGW}(\alpha^*)$ random trees. 
This is the content of \cite[Theorem~30, Theorem~31]{addario-berry_invasion_2012}.

\paragraph{The types along the spine.} 
The previous paragraph together with the fact that the sequence of weights $(\alpha_k)_{k\geq 1}$ of the forward maximal edges has an explicit description given in Section~\ref{subsubsec:distribution of T and M} allows to give an explicit description of the sequence $(L_k)_{k\geq 0}$ that describes the type of the vertex at height $k$ along the spine of the tree. 
This process admits a scaling limit, this is studied in Section~\ref{subsec:convergence of the set of weights of forward maximal edges}. 

\paragraph{Description as a branching process with types.}
We can describe the trees $\rT^\infty$ and $\rM^\infty$ using a branching process in discrete time.
This branching process considers individuals with types in $\intervalleoo{1}{\infty}$. 
Additionally, individuals can be normal or special and there are two types of reproduction events that happen simultaneously: giving birth to individuals of smaller or equal type, and adopting individuals of larger type. 
\begin{itemize}
	\item A normal individual of type $\alpha$
	\begin{itemize}
		\item gives birth to a Poisson number $\Poi(\alpha^*)$ of normal individuals of type $\alpha$,
		\item adopts normal individuals of larger type that are given by the points of a Poisson process with intensity $(1-\theta(y)) \ind{y> \alpha} \mathrm d y$
	\end{itemize}
	\item A special individual of type  $\alpha$
	\begin{itemize}
		\item gives birth to and adopts normal individuals with the same distribution as normal individuals do,
		\item gives birth to exactly one special individual. 
		This special child has the same type $\alpha$ as the parent with probability $\alpha^*$ and has a different type with probability $(1-\alpha^*)$. 
		In that case its type is taken in the interval $\intervalleoo{1}{\alpha}$ with a distribution that has a density given by $\mathbf 1_{\intervalleoo{1}{\alpha}}(y)\frac{\theta'(y)}{\theta(\alpha)}\mathrm d y$. 
	\end{itemize}
\end{itemize}
The tree $\mathrm M^\infty$, seen as a rooted tree with types, has the same distribution as the genealogy of this process started with one special individual with type taken under the distribution with density $\theta'(y)\mathrm{d} y$ on the interval $\intervalleoo{1}{\infty}$.
The tree $\mathrm T^\infty \subset \mathrm M^\infty$ corresponds to the tree obtained by removing the adoption mechanism in this process.

\subsection{Analogy between the discrete construction and the continuous one: an outline of the proof of Theorem~\ref{thm:convergence of the MST of the PWIT to Minfty}}\label{subsec:outline of proof scaling limit theorem}
From the description of $\mathrm{M}^\infty$, we have
\begin{align}\label{eq:hat T and hat M as a chain}
 \hat{\mathrm{M}}^\infty = \mathrm{Chain}\left(\hat{\rM}_\alpha^\bullet, \  \alpha \in \sA\right),
\end{align}
where we use to phantom root construction as in the previous section: for a discrete rooted tree $T$ (seen as a measured metric space), we define $\hat{T}$ as the tree $T$ to which we add a parent $\hat{\rho}$ of the root $\rho$ of $T$. The measure associated to $\hat{T}$ does not charge $\hat{\rho}$ and $\hat{T}$ is rooted at $\hat{\rho}$. 
We can easily check that the assumptions \ref{assum:chain1}, \ref{assum:chain2} and \ref{assum:chain3} hold here so that the construction \eqref{eq:hat T and hat M as a chain} makes sense.

We want to use the result of Proposition~\ref{prop:convergence of chains}, which allows to have the convergence of metric spaces defined as chains to spaces of the same type. 
For that we write
\begin{align*}
	\sA^r:= \enstq{r^{-1}\cdot (\alpha -1) }{\alpha \in \sA}
\end{align*}
so that 
\begin{align}
 \mathrm{Scale}\left(r,\frac{r^3}{C_0};\hat{\mathrm{M}}^\infty\right)&= \mathrm{Scale}\left(r,\frac{r^3}{C_0};\mathrm{Chain}\left(\hat{\rM}_\alpha^\bullet, \  \alpha \in \sA\right)\right)\notag\\
 &= \mathrm{Chain}\left(\mathrm{Scale}\left(r,\frac{r^3}{C_0}; \hat{\rM}_{1+tr}^\bullet\right), \  t \in \sA^r\right).
\end{align}
From this description, recalling the analogous definition of the continuous object
\begin{align*}
	\cM^\infty=\mathrm{Chain}\left(\cM_x^\bullet, \ x\in \mathscr{P} \right),
\end{align*}
we want to check that the assumptions of Proposition~\ref{prop:convergence of chains} are satisfied. 
For that, in order to check that \ref{assum:convergence chains 1} is satisfied we need to prove the following:
\begin{enumerate}[label=(\alph*)]
	\item\label{step:Er converges to P} The point process $\sA^r$ converges vaguely towards a Poisson point process with intensity measure $\frac{\dd t}{t}$ on $\intervalleoo{0}{\infty}$, this will be the content of Lemma~\ref{lem:values taken by chain converge to PPP}.
	\item We have the convergence \label{step:convergence of Nbullet} 
	\begin{align*}
		\mathrm{Scale}(r,C_0^{-1}\cdot r^3;\hat{\rM}_{1+r}^\bullet) \rightarrow \cM^\bullet
	\end{align*}
	 in distribution as $r\rightarrow 0$, this will be the content of Proposition~\ref{prop:convergence in distribution towards cN}.

	In order to prove this convergence, we recall the description \eqref{eq:def tilde N alpha as aggregation process} and rescale things so that 
	\begin{align*}
\mathrm{Scale}\left(r,1;\widetilde{\rM}^\bullet_{1+r} \right) = \mathrm{Aggreg}((r^{-2}\cdot W^{1+r}(1+rt))_{t\geq 1} \ , \mathrm{Scale}(r,r^2; \hat{\mathrm{T}}^\bullet_{1+r}) \ , \eta^{\mathrm{Poi}, r}).
	\end{align*}
where a tree sampled according to $\eta^{\mathrm{Poi}, r}(t,w)$ has the distribution of $\mathrm{Scale}\left(r,r^2;\hat{\mathrm{T}}_{\lfloor w r^{-2} \rfloor}\right)$  where $\mathrm{T}_{k}$ is a PGW tree conditioned to having exactly $k$ vertices. 
Recall the definition of $\widetilde\cM^\bullet$ as 
\begin{align*}
		\widetilde\cM^\bullet:=\mathrm{Aggreg}((\mathcal W_t)_{t\geq 1},\cT^\bullet, \eta^\mathrm{Br}). 
\end{align*}

Then the convergence of $\mathrm{Scale}\left(r,1;\widetilde{\rM}^\bullet_{1+r} \right)$ to $\widetilde\cM^\bullet$ consists in applying Proposition~\ref{prop:convergence of aggregation processes} for which three main ingredients are required:
\begin{itemize}
	\item convergence of the seed $\mathrm{Scale}(r,r^2; \hat{\mathrm{T}}^\bullet_{1+r}) \rightarrow \cT^\bullet$, this will be done in Section~\ref{subsec:convergence of the seed},
	\item convergence of the weight process $(r^{-2}\cdot W^{1+r}(1+rt))_{t\geq 1} \rightarrow (\cW_t)_{t\geq 1}$,
	\item convergence of $\eta^{\mathrm{Poi}, r}$ towards $\eta^\mathrm{Br}$, which follows from well-known scaling limit theorems, see \cite{aldous_continuum_1991}. 
\end{itemize}
The last step in order to get the convergence $\mathrm{Scale}(r,C_0^{-1}\cdot r^3;\hat{\rM}_{1+r}^\bullet) \rightarrow \cM^\bullet$ is to write $\mathrm{Scale}(r,C_0^{-1}\cdot r^3;\hat{\rM}_{1+r}^\bullet)= \mathrm{Scale}(r,C_0^{-1}\cdot r^3\cdot W^{1+r}(\infty);\widetilde{\rM}_{1+r}^\bullet)$ and just invoke the fact that $C_0^{-1}\cdot r^3\cdot W^{1+r}(\infty) \underset{r\rightarrow 0}{\rightarrow} Y = \lim_{t\rightarrow \infty} \frac{\cW_t}{t}$ in distribution, and that convergence takes place jointly with the previous ones. 
\item We also need to check that some technical assumptions hold at several steps of the way, namely \ref{assum:tight control on weight process} and \ref{assum:uniform control on tail} for the application of Proposition~\ref{prop:convergence of aggregation processes} in the convergence towards $\cM^\bullet$. This will be contained in Section~\ref{subsubsec:control on the discrete weight process}.
\item We also need to check that the  assumptions \ref{assum:convergence chains 2} and \ref{assum:convergence chains 3} indeed hold to be able to apply Proposition~\ref{prop:convergence of chains}. 
This will be done in Section~\ref{subsec:tightness near the root}.
\end{enumerate}
In the end, the proof of Theorem~\ref{thm:convergence of the MST of the PWIT to Minfty} uses everything above and is found in Section~\ref{subsec:proof of main theorem}.

\subsection{Convergence of the set of weights of forward maximal edges}\label{subsec:convergence of the set of weights of forward maximal edges}
The goal of Section~\ref{subsec:convergence of the set of weights of forward maximal edges} is to prove the following lemma.
\begin{lemma}\label{lem:values taken by chain converge to PPP}
	As $r\rightarrow 0$ we have
	\begin{align*}
		\sA^r \rightarrow \mathscr P,
	\end{align*}
	in distribution for the vague topology on $\intervalleoo{0}{\infty}$, where $\mathscr P$ is a Poisson process of intensity $\frac{\dd t}{t}$.
\end{lemma}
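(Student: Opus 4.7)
The plan is to first recognize that $\sA$ itself is a Poisson point process on $\intervalleoo{1}{\infty}$ with an explicit intensity, and then perform a standard change of variables and asymptotic expansion to identify the limit of the rescaled intensity on $\intervalleoo{0}{\infty}$.

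\textbf{Step 1: $\sA$ is a Poisson point process.} From the description in Section~\ref{subsubsec:distribution of T and M}, the sequence $(\alpha_k)_{k\geq 1}$ is Markov: $\alpha_1$ has CDF $\theta$ on $\intervalleoo{1}{\infty}$, and conditionally on $\alpha_k$, the conditional CDF of $\alpha_{k+1}$ at $z \in \intervalleoo{1}{\alpha_k}$ is $\theta(z)/\theta(\alpha_k)$. The probability integral transform then gives $\theta(\alpha_1)$ uniform on $\intervalleoo{0}{1}$, and $\theta(\alpha_{k+1})/\theta(\alpha_k)$ uniform on $\intervalleoo{0}{1}$ independently of the past. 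Setting $V_k = -\log \theta(\alpha_k)$, the increments $V_{k+1}-V_k$ are i.i.d.\ Exp$(1)$, and $V_1 \sim $ Exp$(1)$, so $\{V_k\}$ are the arrival times of a rate-$1$ Poisson process on $\intervalleoo{0}{\infty}$. Pushing forward by $y=e^{-x}$ and then by $\theta^{-1}$, the set $\sA = \{\alpha_k\}_{k \geq 1}$ is a Poisson point process on $\intervalleoo{1}{\infty}$ with intensity
\begin{align*}
\frac{\theta'(\alpha)}{\theta(\alpha)} \, \dd \alpha.
\end{align*}

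\textbf{Step 2: Rescaling and asymptotic analysis.} Under the bijection $\alpha \mapsto r^{-1}(\alpha - 1)$, the point process $\sA^r$ is Poisson on $\intervalleoo{0}{\infty}$ with intensity
\begin{align*}
\mu_r(\dd t) = \frac{r \, \theta'(1+rt)}{\theta(1+rt)} \, \dd t.
\end{align*}
By \eqref{eq:theta prime tends to 2 at 1} we have $\theta'(1+rt) \to 2$, and from $\theta(1)=0$ together with $\theta'(1)=2$ one gets $\theta(1+rt)=2rt+o(rt)$, so the density of $\mu_r$ converges pointwise to $1/t$. On any compact $K \subset \intervalleoo{0}{\infty}$, this convergence is uniform: continuity of $\theta'$ at $1$ and a matching uniform lower bound $\theta(1+rt)\geq c\, rt$ on $K$ for small $r$ give a dominating bound, so that $\mu_r(K) \to \int_K \frac{\dd t}{t}$.

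\textbf{Step 3: Conclusion via convergence of Poisson processes.} Vague convergence of intensity measures implies convergence in distribution (for the vague topology on point measures) of the corresponding Poisson processes, a standard fact about PPPs. Since $\mu_r \to \frac{\dd t}{t}$ vaguely on $\intervalleoo{0}{\infty}$, we conclude $\sA^r \to \mathscr P$ in distribution, as desired.

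The only step that requires any care is the uniform estimate on $\mu_r$ in Step~2 — everything else is essentially a bookkeeping exercise once the Poisson character of $\sA$ is uncovered. This Poissonization of $\sA$ (Step~1) is the conceptual heart of the argument.
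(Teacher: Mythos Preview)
Your proof is correct and takes a genuinely different route from the paper's. The paper never explicitly recognizes $\sA$ as a Poisson point process; instead, it works directly with the Markov chain $(\alpha_k)$, shows that $\mathrm{Law}\bigl(\tfrac{\alpha_1-1}{x-1}\mid \alpha_1\le x\bigr)\to\mathrm{Unif}(0,1)$ as $x\to1$, uses the strong Markov property to identify the law of $\alpha_{T_x}$, and then iterates to match the product-of-uniforms characterization \eqref{eq:characterization PPP as sequence} of $\mathscr P\cap(0,K)$. Your argument instead packages the same Markov structure into the statement that $\sA$ is a PPP with intensity $\tfrac{\theta'(\alpha)}{\theta(\alpha)}\dd\alpha$, after which the lemma reduces to a routine computation of limiting intensities and an appeal to the standard fact that vague convergence of intensities yields vague convergence in law of the associated Poisson processes. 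Your approach is shorter and more conceptual once the Poisson structure is exposed; the paper's approach is more self-contained, avoiding any appeal to general PPP convergence theorems.
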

In order to prove the convergence stated in the lemma, we use the fact that the distribution of $\mathscr P$ is characterized by the following: for any $K>0$, if we enumerate $\mathscr P \cap \intervalleoo{0}{K}$ in decreasing order as $X_1>X_2>\dots $, then $(X_1,X_2,\dots)$ has the same distribution as 
\begin{align}\label{eq:characterization PPP as sequence}
	(KU_1,KU_1U_2,\dots),
\end{align}
where $(U_i)_{i\geq 1}$ is a sequence of i.i.d.\ uniform random variables on $\intervalleoo{0}{1}$. 

\begin{proof}[Proof of Lemma~\ref{lem:values taken by chain converge to PPP}]
First, recall the random variable $\alpha_1$ with density $\theta'(y)\ind{y>1}$.
Because of \eqref{eq:theta prime tends to 2 at 1}, we can get that for $x\rightarrow 1$, we have the following convergence in distribution 
\begin{align}\label{eq:convergence in law toward uniform}
	\mathrm{Law}\left(\frac{\alpha_1-1}{x-1} \ \bigg| \ \alpha_1 \leq x\right) \underset{x\rightarrow 1}{\rightarrow} \mathrm{Unif}\intervalleoo{0}{1}.
\end{align}
Now, note that the transition of the Markov chain $(\alpha_k)_{k\geq 1}$ is such that conditionally on $\{\alpha_k=x\}$, the term $\alpha_{k+1}$ has the same distribution as $\alpha_1$ conditionally on $\{\alpha_1 \leq x\}$. 
This entails that, for $T_x=\inf\enstq{k\geq 1}{\alpha_k\leq x}$\footnote{Note that we can easily see that $T_x$ is almost surely finite from the description above.}, then $\alpha_{T_x}$ has the distribution of $\alpha_1$ conditional on $\{\alpha_1\leq x\}$. Indeed, for any $1\leq y\leq x$, 
\begin{align*}
	\Pp{\alpha_{T_x}\leq y}&= \sum_{k=1}^{\infty} \Pp{T_x=k} \Ppsq{\alpha_k\leq y}{T_x=k}\\
	&=\sum_{k=1}^{\infty} \Pp{T_x=k} \Ppsq{\alpha_k\leq y}{\alpha_{k-1}>x,\alpha_{k}\leq x}\\
	&=\sum_{k=1}^{\infty} \Pp{T_x=k} \Ppsq{\alpha_1\leq y}{\alpha_1\leq x}=\Ppsq{\alpha_1\leq y}{\alpha_1<x}.
\end{align*}
Now we have $\sA=\{\alpha_1,\alpha_2,\dots\}$ and $\sA^r=r^{-1}\cdot \{\alpha_1-1,\alpha_2-1,\dots\}$. Fix some $K>0$. 
Applying the above for $x=1+rK$, the highest point in $\sA^r\cap \intervalleoo{0}{K}$ is $r^{-1}\cdot (\alpha_{T_{1+rK}}-1)$, 
and has the distribution $\mathrm{Law}\left( \frac{\alpha_1 - 1}{r} \bigg| \ \alpha_1 \leq 1+rK\right)$ 
so that thanks to \eqref{eq:convergence in law toward uniform}, we have $r^{-1}\cdot (\alpha_{T_{1+rK}}-1) \underset{r\rightarrow 0}{\rightarrow} K U_1$ 
with $U_1\sim \mathrm{Unif}\intervalleoo{0}{1}$. 
Using \eqref{eq:convergence in law toward uniform} iteratively, we can then easily get the finite dimensional convergence of $r^{-1}(\alpha_{T_{1+rK}}-1,\alpha_{T_{1+rK}+1}-1,\alpha_{T_{1+rK}+2}-2 \dots )$ towards $(KU_1,KU_1U_2,\dots)$ that appears in \eqref{eq:characterization PPP as sequence}. 
This concludes the proof of the lemma.
 \end{proof}

\paragraph{Connection with the lower envelope process.}
For completeness, in order to connect our current work to the approach of \cite{angel_invasion_2008}, we can also consider the process $(L_k)_{k\geq 0}$ of the types of vertices along the spine of $\rT^\infty$.
From the considerations of Section~\ref{subsubsec:alternative description as a branching process}, 
we get that $(L_k)_{k\geq 0}$ is a Markov process where $L_0$ has density $\theta'(y)$ on $\intervalleoo{1}{\infty}$ and the transitions are given by
\begin{align*}
	\Ppsq{L_{k+1}=L_k}{L_k=\alpha}&= \alpha^*,\\
	\Ppsq{L_{k+1}\in \mathrm{d}y}{L_k=\alpha}&= (1-\alpha^*)\frac{\theta'(y)}{\theta(\alpha)} \ind{y \leq \alpha }\dd y.
\end{align*}
Using \cite[Proposition~3.3]{angel_invasion_2008} which treats a very similar case, we get the following convergence in the Skorokhod topology for any positive $\epsilon$,
\begin{align}\label{eq:convergence towards the poisson lower envelope}
	(k(L_{\lceil k t \rceil}-1))_{t\geq \epsilon} \underset{k\rightarrow\infty}{\rightarrow} (\cL(t))_{t\geq \epsilon}, 
\end{align} 
where $(\cL(t))_{t>0}$ is called the Poisson Lower Envelope Process.
It is defined as such:
Let $\sR$ denote the Poisson point process on the positive quadrant with intensity 1. 
Then $\cL: \intervalleoo{0}{\infty}\rightarrow \intervalleoo{0}{\infty}$ is defined by
\begin{align*}
\cL(t):= \min \enstq{y>0}{(x,y)\in \sR\text{ for some } x \leq t}.
\end{align*}
The process $t\mapsto \cL(t)$ is positive and non-increasing and tends to $\infty$ at $0$ and to $0$ at $\infty$. This process is intimately linked with a Poisson Point Process of intensity $\frac{\dd t}{t}$, as the next lemma asserts. 
\begin{lemma}
The set of values $\sP := \enstq{\cL(t)}{t\in \intervalleoo{0}{\infty}}$, seen as a point process on $\intervalleoo{0}{\infty}$, is distributed as a Poisson point process with intensity measure $\frac{\mathrm{d}t}{t}$.
\end{lemma}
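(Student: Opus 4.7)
The plan is to identify the law of the restriction $\sP\cap(0,K]$ for each $K>0$ and match it with the characterization of a Poisson process of intensity $\dd t/t$ recalled at~\eqref{eq:characterization PPP as sequence}. Enumerating the elements of $\sP\cap(0,K]$ in decreasing order as $y_1>y_2>\cdots$, the target is to show $(y_n)_{n\geq 1}\overset{(d)}{=}(KU_1, KU_1U_2, \ldots)$ for an i.i.d.\ sequence $(U_n)_{n\geq 1}$ of $\mathrm{Unif}(0,1)$ random variables; that will immediately give that $\sP\cap(0,K]$ is a Poisson process of intensity $\dd t/t$ on $(0,K]$, and since $K$ is arbitrary, $\sP$ is Poisson of intensity $\dd t/t$ on $(0,\infty)$.

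First I would observe that a value $y>0$ lies in $\sP$ if and only if there exists a (necessarily unique) atom $(X(y),y)\in\sR$ with $\sR\cap([0,X(y)]\times[0,y))=\emptyset$, i.e., $(X(y),y)$ is a \emph{lower-left record} of $\sR$. Enumerating these records with $y$-coordinate in $(0,K]$ by increasing $x$-coordinate yields $(X_n,y_n)_{n\geq 1}$ with $X_1<X_2<\cdots$ and $y_1>y_2>\cdots$, so that $\sP\cap(0,K]=\{y_n:n\geq 1\}$. For the base case, $(X_1,y_1)$ is the atom of $\sR\cap((0,\infty)\times(0,K])$ with smallest $x$-coordinate; representing a rate-one Poisson process on this strip as a rate-$K$ point process on the $x$-axis carrying i.i.d.\ $\mathrm{Unif}(0,K]$ $y$-marks immediately gives $y_1\overset{(d)}{=}KU_1$, independent of $X_1$.

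The inductive step relies on the strong Markov property of $\sR$ along the $x$-axis. Setting $\cG_t:=\sigma(\sR\cap((0,t]\times(0,\infty)))$, the random variable $X_n$ is a $(\cG_t)$-stopping time, and conditionally on $\cG_{X_n}$ the shifted process $\{(x-X_n,y):(x,y)\in\sR,\,x>X_n\}$ is an independent rate-one Poisson process on $(0,\infty)^2$. Since $y_n$ is $\cG_{X_n}$-measurable, a short verification (using only that $(X_n,y_n)$ is itself a lower-left record) shows that the next lower-left record $(X_{n+1},y_{n+1})$ is precisely the atom of this shifted process with smallest $x$-coordinate among those with $y$-coordinate strictly below $y_n$. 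Applying the base-case argument with $y_n$ in place of $K$ then yields $y_{n+1}\overset{(d)}{=}y_nU_{n+1}$ for $U_{n+1}\sim\mathrm{Unif}(0,1)$ independent of $(y_i)_{i\leq n}$, and telescoping produces the claimed joint distribution.

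The main subtlety is the clean application of the strong Markov property along the $x$-direction, together with the identification of the next lower-left record after $(X_n,y_n)$. A minor preliminary verification needed is that $\sP$ is a.s.\ locally finite on $(0,\infty)$, so the enumeration $(y_n)$ is well-defined; this is guaranteed by the fact that $\cL$ has jump-rate equal to its current height, hence finitely many jumps on any time interval of the form $[T_b,T_a]$ with $0<a<b<\infty$.
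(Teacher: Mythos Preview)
Your proposal is correct and follows essentially the same route as the paper: both arguments rest on the characterization~\eqref{eq:characterization PPP as sequence} and the observation that each successive value of $\cL$ is, conditionally on the past, a $\mathrm{Unif}(0,1)$ multiple of the previous one. The paper states this conditional-uniform fact in terms of the jump times of $t\mapsto\cL(t)$ and leaves the remaining details to the reader, whereas you spell them out via lower-left records of $\sR$ and the strong Markov property along the $x$-axis; these are two phrasings of the same mechanism.
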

\begin{proof}
	This fact comes easily from the characterization \eqref{eq:characterization PPP as sequence}, the fact that $t\mapsto\cL(t)$ decreases from infinity at times $0$ and the fact that at very jump time $t>0$ of the process, the value $\cL(t)$ conditionally on the trajectory on the interval $\intervalleoo{0}{t}$, has the distribution of $\cL(t-) \cdot U$, where $U\sim \mathrm{Unif}\intervalleoo{0}{1}$. The details are left to the reader. 
\end{proof}

\subsection{Convergence of the seed $\mathrm{Scale}(r,r^2;\mathrm T^\bullet_{1+r})$}
\label{subsec:convergence of the seed}
From Section~\ref{subsubsec:distribution of T and M}, a way to sample $\mathrm{T}^\bullet_{\alpha}$ is to first sample a random variable $X_\alpha$ with the Borel--Tanner distribution with parameter $\alpha$, which is such that for all $m\geq 1$
\begin{align*}
	\Pp{X_\alpha=m}=\frac{\theta(\alpha)}{\theta'(\alpha)} \frac{e^{-\alpha m}(\alpha m)^{m-1}}{(m-1)!},
\end{align*}
and then let $\mathrm{T}^\bullet_{\alpha}$ be a PGW tree conditioned to have $X_\alpha$ vertices. 
The vertex $v_\alpha$ is then uniformly picked in $\mathrm{T}^\bullet_{\alpha}$. 
Using Sterling's approximation we have, for $m\rightarrow\infty$
\begin{align*}
	m!=\sqrt{2\pi m}\cdot \left(\frac{m}{e}\right)^m\cdot (1+o(1)).
\end{align*} 
Then, for any $x\in\intervalleoo{0}{\infty}$ and $r\rightarrow0$ we have
\begin{align*}
\Pp{X_{1+r}=\lfloor xr^{-2}\rfloor}&=\frac{\theta(1+r)}{\theta'(1+r)} \frac{e^{-r\lfloor xr^{-2}\rfloor}}{r^{-1}\sqrt{2\pi x}} (1+r)^{\lfloor xr^{-2}\rfloor}\cdot (1+o(1))\\
&=\frac{r^2}{\sqrt{2\pi x}}\cdot \exp\left(-\frac{x}{2}\right)\cdot (1+o(1))
\end{align*}
and this proves that 
\begin{align}\label{eq:convergence size of seed}
	r^2\cdot X_{1+r} \overset{(d)}{\underset{r \rightarrow 0}{\rightarrow}} X
\end{align}
where $X$ has density $\frac{\exp\left(-\frac{x}{2}\right)}{\sqrt{2\pi x}}\mathrm{d}x$ on $\intervalleoo{0}{\infty}$, which is the density of $\sqrt{2}\cdot G$ where $G$ is  $\mathrm{Gamma}(\frac{1}{2})$-distributed. In fact the above convergence also takes place in $L^2$ and $\Ec{X}=1$ and $\Var{X}=2$.

From there and from known results about the convergence of $\PGW$ trees conditioned on their size towards the Brownian tree, it is easy to check that we have the convergence 
\begin{align}\label{eq:convergence to Tbullet}
	\mathrm{Scale}(r,r^2;\mathrm T^\bullet_{1+r}) \underset{r\rightarrow 0}{\rightarrow} \cT^\bullet,
\end{align}
where $\cT^\bullet$ is a Brownian tree with mass $\sqrt{2}\cdot \mathrm{Gamma}(\frac{1}{2})$-distributed, pointed at a point sampled at random using a normalized version of its weight measure. 

\subsection{Properties and convergence of the discrete weight process}\label{subsec:properties and convergence of the discrete weight process}

\subsubsection{Definition of the weight process as a Markov process} \label{subsubsec:study of the weight process}

Recall the definition of the PGWA process started from at time $\alpha$ from some seed tree $S_\alpha$. 
The aggregate starts at $A(\alpha)=S_\alpha$ and at any time $s\geq \alpha$, growing event happen at each vertex of the aggregate at rate $(1-\theta(s))$.  
Whenever an growing event happens, a tree with distribution $\PGW(s^*)$ is attached to the corresponding vertex. 
From this description, it is easy to check that the weight process $(W(s))_{s\geq \alpha}$ defined as
\begin{equation*}
W(s):= \#\{\text{vertices in $A(s)$ at time } s\},
\end{equation*}
is then a time-inhomogeneous Markov process such that at time $s$, a jump happens with rate $W(s)\cdot(1-\theta(s))$ and the size of the corresponding jump is distributed as the number of vertices of a $\PGW(s^*)$-distributed tree. 
This entails that the process $((s,W(s)))_{s\geq \alpha}$ is a time-homogeneous Markov process with generator $A$ characterized by the fact that, given any compactly supported smooth function $f:\R_+\times \R_+ \rightarrow \R$  we have   
\begin{align}\label{eq:generator for weight of the PGW process}
	Af(s,x)
	&=\partial_s f + x\cdot(1-\theta(s))\cdot  \sum_{k=1}^{\infty}\left(f(s,x+k)-f(s,x)\right) \cdot \Pp{\abs{\PGW(s^*)}=k}.
\end{align}
Note that, by construction, this process satisfies the so-called \emph{branching property}: 
if we denote by $\left(W(s)\right)_{s\ge \alpha}$ and $\left(W'(s)\right)_{s\ge \alpha}$ two independent processes with the same transitions as above, started from respectively $k_1$ and $k_2$, then their sum $\left(W(s)+W'(s)\right)_{s\ge \alpha}$ would still be a Markov process with the same transitions, started at $k_1+k_2$.

In the rest of the section, we will use $(W(s))_{s\geq \alpha}$ to denote a general version of this Markov process started at time $\alpha$ and write $(W^\alpha(s))_{s\geq \alpha}$ for the version of this process started at time $\alpha$ from the value $X_\alpha$, the number of vertices of $\rT_\alpha^\bullet$, which is the version of the process that appears in the construction of $\rM^\bullet_\alpha$, see Section~\ref{subsubsec:distribution of T and M}.

\subsubsection{Moment computations}
In order to compute the moments, we will follow the same strategy as in the proof of Lemma~\ref{lem:continuous weight process divided by t converges},
and rely on the fact that for the functions $F$ given by $x\mapsto x$ and $x\mapsto x^2$, one can prove that 
\begin{align}\label{eq:dynkin formula discrete}
	F(W(t)) - F(W(\alpha)) - \int_{\alpha}^t W(s-) \cdot (1-\theta(s))\cdot  \sum_{k=1}^{\infty}\left(F(W(s-)+k)-F(W(s-))\right) \cdot \Pp{\abs{\PGW(s^*)}=k},
\end{align}
is a martingale,
in the same way as we prove the analogous statement \eqref{eq:dynkin formula from stochastic calculus} for process $(\cW_t)_{t\geq 1}$, by representing $(W(t))_{t\geq \alpha}$ as the solution of an SDE. 

\paragraph{Expectation of $W^{\alpha}(s)$.}
We follow the same steps as in the proof of Lemma~\ref{lem:continuous weight process divided by t converges} and use the fact that the process described in \eqref{eq:dynkin formula discrete} is a martingale.
We get, using the same steps, 
\begin{align*}
\frac{\mathrm d}{\mathrm d s} \Ecsq{W(s)}{W(\alpha)} &= \Ecsq{W(s)}{W(\alpha)} (1-\theta(s)) \Ec{\abs{\PGW(s^*)}}\\
&= \Ecsq{W(s)}{W(\alpha)} \frac{(1-\theta(s))}{1-s^*}. 
\end{align*}
Integrating the above differential equation yields
\begin{align}\label{eq:expression for expectation of volume PGW process}
	\Ecsq{W(s)}{W(\alpha)} = W(\alpha)\cdot \exp\left(\int_{\alpha}^s \frac{1-\theta(\lambda)}{1-\lambda^*} \mathrm d \lambda\right).
\end{align} 
Since the right-hand side tends to a finite limit as $s\rightarrow\infty$ we can write, denoting $W(\infty)$ as the limit of the increasing process $s\mapsto W(s)$ and using monotone convergence
\begin{align}
\Ecsq{W(\infty)}{W(\alpha)} = W(\alpha) \cdot \exp\left(\int_{\alpha}^\infty \frac{(1-\theta(\lambda))}{1-\lambda^*} \mathrm d \lambda\right).
\end{align} 
Assuming that $\alpha\leq 2$, (we are interested in what happens when $\alpha\rightarrow1$), we can write the second term in the last display as
\begin{align}\label{eq:expectation for final weight of PGWAP}
\exp\left( \int_{\alpha}^\infty \frac{(1-\theta(\lambda))}{1-\lambda^*} \mathrm d \lambda \right)
&=\exp\left( \int_{\alpha}^2 \frac{1}{\lambda -1} \mathrm d \lambda \right) \cdot  \exp\left( \int_{\alpha}^\infty \frac{(1-\theta(\lambda))}{1-\lambda^*} - \frac{\ind{\lambda\leq 2}}{\lambda -1} \mathrm d \lambda \right)\notag\\
&= \frac{1}{\alpha -1}\cdot  \exp\left( \int_{\alpha}^\infty \frac{(1-\theta(\lambda))}{1-\lambda^*} - \frac{\ind{\lambda\leq 2}}{\lambda -1} \mathrm d \lambda \right),
\end{align}
where the last integrand is integrable over $\intervallefo{1}{\infty}$ and so the second term in the above product is bounded above by the value
\begin{align}\label{eq:first appeareance C_0}
	C_0:=\exp \left(\int_{1}^\infty \frac{(1-\theta(\lambda))}{1-\lambda^*} - \frac{\ind{\lambda\leq 2}}{\lambda -1} \mathrm d \lambda\right),
\end{align}
and converges to that value when $\alpha\rightarrow1$.

\begin{remark}\label{rem:W(s)/(s-1) is a supermartingale}
	From the computations above, we also get that for any $s\geq \beta\geq \alpha$ we have
	\begin{align*}
			\Ecsq{\frac{W(s)}{s-1}}{W(\beta)} &= \frac{W(\beta)}{\beta-1} \cdot \frac{\beta-1}{s-1} \cdot \exp\left(\int_{\beta}^s \frac{1-\theta(\lambda)}{1-\lambda^*} \mathrm d \lambda\right)\\
			&= \frac{W(\beta)}{\beta-1} \cdot \exp\left(\int_{\beta}^s \frac{1-\theta(\lambda)}{1-\lambda^*} - \frac{1}{\lambda -1}\mathrm d \lambda\right).
	\end{align*}
Since from \eqref{eq:relation theta lambda star} the integrand in the last display is negative for any $\lambda>1$, this entails that $(\frac{W(s)}{s-1})_{s\geq \alpha}$ is a supermartingale in its own filtration. This will be useful later on.
\end{remark}

\paragraph{Second moment of $W(s)$.}
We apply the same method as above with $F(x)=x^2$. 
We use in the computation below the fact that for any parameter $\lambda\in \intervallefo{0}{1}$ the second moment for the size of a Poisson--Galton--Watson tree with parameter $\lambda$ is given by $\Ec{\abs{\PGW(\lambda)}^2}=\frac{1}{(1-\lambda)^3}$, see \cite[Eq. (4.1)]{abraham_note_2013}.
\begin{align*}
	&\frac{\mathrm d}{\mathrm d s} \Ecsq{\left(W(s)\right)^2}{W(\alpha)}\\
	&= 2 \Ecsq{\left(W(s)\right)^2}{W(\alpha)} (1-\theta(s)) \Ec{\abs{\PGW(s^*)}} + \Ecsq{W(s)}{W(\alpha)} (1-\theta(s)) \Ec{\abs{\PGW(s^*)}^2}\\
	&=2 \Ecsq{\left(W(s)\right)^2}{W(\alpha)} \frac{1-\theta(s)}{1-s^*}+ \Ecsq{W(s)}{W(\alpha)} \frac{1-\theta(s)}{(1-s^*)^3}.
\end{align*}
This linear differential equation is of the form $y' +p(s) y = f(s)$ with $p(s)= - 2\frac{1-\theta(s)}{1-s^*}$ and $f(s)=\Ecsq{W(s)}{W(\alpha)} \frac{1-\theta(s)}{(1-s^*)^3}$. 
We integrate the differential equation in $s$ and get
\begin{multline*}
	 \Ecsq{\left(W(s)\right)^2}{W(\alpha)} \\
	 = \exp\left(2 \int_{\alpha}^s \frac{(1-\theta(\lambda))}{1-\lambda^*} \mathrm d \lambda\right) \cdot \left((W(\alpha))^2+ \int_\alpha ^s 
	 e^{ - 2 \int_{\alpha}^\lambda \frac{(1-\theta(t))}{1-t^*} \mathrm d t } \Ecsq{W(\lambda)}{W(\alpha)} \frac{1-\theta(\lambda)}{(1-\lambda^*)^3}\ \dd \lambda\right)
\end{multline*}
and again, the above display has a limit when $s\rightarrow\infty$ so 
\begin{multline*}
\Ecsq{\left(W(\infty)\right)^2}{W(\alpha)} \\
=\exp\left(2 \int_{\alpha}^\infty \frac{(1-\theta(\lambda))}{1-\lambda^*} \mathrm d \lambda\right) \cdot  \left((W(\alpha))^2+\int_\alpha ^\infty e^{ - 2 \int_{\alpha}^\lambda \frac{(1-\theta(t))}{1-t^*} \mathrm d t }  \Ecsq{W(\lambda)}{W(\alpha)} \frac{1-\theta(\lambda)}{(1-\lambda^*)^3}\mathrm d \lambda\right).
\end{multline*}
Using the last display and \eqref{eq:expression for expectation of volume PGW process} we can then get
\begin{align}\label{eq:expression variance}
	\Varsq{W(\infty)}{W(\alpha)}&= \exp\left(2 \int_{\alpha}^\infty \frac{(1-\theta(\lambda))}{1-\lambda^*} \mathrm d \lambda\right) \cdot \int_\alpha^{\infty}  e^{ - 2 \int_{\alpha}^\lambda \frac{(1-\theta(t))}{1-t^*} \mathrm d t } \Ecsq{W(\lambda)}{W(\alpha)} \frac{1-\theta(\lambda)}{(1-\lambda^*)^3}\ \dd \lambda.
\end{align}
We now bound the RHS of the above equality, which is the product of two terms.
We first take care of the integral term and we bound each of the factors in the integrand:	
Using the equality \eqref{eq:expression for expectation of volume PGW process} and then applying the inequality \eqref{eq:relation theta lambda star}, we get that for all $s\geq \alpha$, 
\begin{align*}
\Ecsq{W(s)}{W(\alpha)} = W(\alpha)\cdot \exp\left(\int_{\alpha}^s \frac{1-\theta(\lambda)}{1-\lambda^*} \mathrm d \lambda\right) 
\leq W(\alpha)\cdot \exp\left(\int_{\alpha}^s \frac{1}{\lambda-1} \mathrm d \lambda\right)
= W(\alpha)\cdot \frac{s-1}{\alpha -1}.
\end{align*}
We can bound the two other factors using respectively that $e^{-x}\leq 1$ when $x\geq 0$ and the inequality \eqref{eq:relation theta lambda star second}.
Plugging those three bounds into the integral term of \eqref{eq:expression variance} we get 
\begin{align*}
	\int_\alpha ^\infty e^{ - 2 \int_{\alpha}^\lambda \frac{(1-\theta(t))}{1-t^*} \mathrm d t } \Ecsq{W(\lambda)}{W(\alpha)} \frac{1-\theta(\lambda)}{(1-\lambda^*)^3}\dd \lambda 
	\leq \int_\alpha ^\infty 1 \cdot  W(\alpha)\frac{\lambda-1}{\alpha -1}\cdot  \frac{1}{(\lambda-1)^3}\dd \lambda  \leq \frac{W(\alpha)}{(\alpha-1)^2}.
\end{align*}
For the first term of \eqref{eq:expression variance}, we just use \eqref{eq:expectation for final weight of PGWAP} and \eqref{eq:first appeareance C_0}, which together show that this term is bounded above by $\frac{(C_0)^2}{(\alpha-1)^2}$. 

In the end, putting everything together we have
\begin{align}\label{eq:variance bound for final weight of PGW}
\Varsq{W(\infty)}{W(\alpha)} \leq \frac{(C_0)^2W(\alpha)}{(\alpha -1)^4}.
\end{align}
 \subsubsection{Convergence of the weight process}\label{subsubsec:convergence of the weight process}
We introduce the following rescaled version of the process
\begin{align*}
\left(	\mathsf{W}^{r}(t)\right)_{t\geq 1}=\left(r^2\cdot W^{1+r}(1+rt)\right)_{t\geq 1}.
\end{align*}
We are going to compare this process to the  process $(\mathcal{W}_t)_{t\geq 1}$ and we recall that $Y$ is defined as the almost sure limit $Y:=\lim_{t\rightarrow\infty}  \frac{\cW_t}{t}$.
 \begin{proposition}\label{prop:joint convergence weight process and limit}
 We have the joint convergence in distribution
 	\begin{align*}
 	\left(\left(\mathsf{W}^{r}(t)\right)_{t\geq 1},r\cdot \mathsf{W}^{r}(\infty)\right) \underset{r\rightarrow 0 }{\longrightarrow} \left((\mathcal{W}_t)_{t\geq 1}, C_0\cdot Y\right),
 	\end{align*} 
 	where 
 	\begin{align}\label{eq:def C0}
 	C_0:=\exp\left( \int_{1}^\infty \frac{1-\theta(\lambda)}{1-\lambda^*} - \frac{\ind{\lambda\leq 2}}{\lambda -1} \mathrm d \lambda \right),
 	\end{align}
 and the convergence of the process is in the sense of the Skorokhod convergence on every bounded interval.
 \end{proposition}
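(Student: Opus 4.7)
The plan is to prove the process convergence on bounded intervals via Markov generator convergence, and then deduce the joint convergence with $r\cdot\mathsf{W}^r(\infty)$ using the conditional moment formulas from Section~\ref{subsubsec:study of the weight process}. Convergence of the initial value $\mathsf{W}^r(1) = r^2 X_{1+r}\to X=\cW_1$ in $L^2$ is already established in~\eqref{eq:convergence size of seed}.

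For the generator convergence, applying the rescaling $\mathsf{W}^r(t)=r^2 W^{1+r}(1+rt)$ to~\eqref{eq:generator for weight of the PGW process} yields, for a smooth compactly supported test function $h:\intervallefo{1}{\infty}\times\R_+\to\R$,
\begin{align*}
  \mathbf{A}^r h(t,y) = \partial_t h(t,y) + \frac{y\,(1-\theta(1+rt))}{r}\sum_{k=1}^\infty\bigl(h(t,y+r^2 k)-h(t,y)\bigr)\,\Pp{\abs{\PGW((1+rt)^*)}=k}.
\end{align*}
Stirling's formula gives $\Pp{\abs{\PGW(1-\epsilon)}=k}\sim (2\pi)^{-1/2}\, k^{-3/2}\, e^{-\epsilon^2 k/2}$ as $k\to\infty$; combining this with the expansion $1-(1+rt)^*=rt + O(r^2)$ from~\eqref{eq:expansion (1+eps)^*}, the change of variable $w=r^2 k$, and $\theta(1+rt)\to 0$, a direct computation gives $\mathbf{A}^r h(t,y)\to\mathbf{A}h(t,y)$ with $\mathbf{A}$ the operator in~\eqref{eq:definition generator weight process}. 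Combined with a priori moment bounds on $\mathsf{W}^r(T)$ (obtained from~\eqref{eq:expression for expectation of volume PGW process}) and on its jump sizes (analogous to Section~\ref{subsec:a priori estimates on the weight process} adapted to the discrete process), standard Markov convergence theorems (e.g.\ Ethier--Kurtz) yield $(\mathsf{W}^r(t))_{t\in[1,T]}\to(\cW_t)_{t\in[1,T]}$ in the Skorokhod topology for every $T$.

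For the joint convergence, note that $\mathsf{W}^r(\infty)<\infty$ almost surely whereas $\cW_t\to\infty$, so the rescaling $r\mathsf{W}^r(\infty)$ must capture the linear growth of $\cW$ at infinity. Applying the Markov property at real time $s=1+rt$ to~\eqref{eq:expression for expectation of volume PGW process} and~\eqref{eq:variance bound for final weight of PGW} gives
\begin{align*}
  \Ecsq{r\mathsf{W}^r(\infty)}{\mathsf{W}^r(t)} = \frac{\mathsf{W}^r(t)}{t}\cdot C_0(1+rt), \qquad \Varsq{r\mathsf{W}^r(\infty)}{\mathsf{W}^r(t)}\leq \frac{C_0^2\,\mathsf{W}^r(t)}{t^4},
\end{align*}
where $C_0(s)=\exp\bigl(\int_s^\infty\bigl(\tfrac{1-\theta(\lambda)}{1-\lambda^*}-\tfrac{\mathbf 1_{\lambda\leq 2}}{\lambda-1}\bigr)\dd\lambda\bigr)\to C_0$ as $s\to 1^+$. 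Hence, for fixed $T$, letting $r\to 0$ yields $r\mathsf{W}^r(\infty) - C_0\cdot \mathsf{W}^r(T)/T \to 0$ in $L^2$; combining with the process convergence on $[1,T]$ and Lemma~\ref{lem:continuous weight process divided by t converges} (under which $\cW_T/T\to Y$ in $L^2$ as $T\to\infty$), a standard $\epsilon/3$ argument, letting first $r\to 0$ and then $T\to\infty$, produces the joint convergence. The main obstacle will be making the Stirling asymptotic for $\Pp{\abs{\PGW(\lambda)}=k}$ uniform enough in $k$ to pass to the limit in the sum defining $\mathbf{A}^r h$: one has to treat separately the regimes $k\ll r^{-2}$, $k\asymp r^{-2}$ (where the Gaussian weight $e^{-\epsilon^2 k/2}$ produces the limiting intensity $y^{-3/2}e^{-t^2 y/2}$), and $k\gg r^{-2}$ (where the same Gaussian tail controls the residual contribution).
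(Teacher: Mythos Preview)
Your proposal is correct and follows essentially the same approach as the paper: generator convergence (with the three-regime Stirling analysis you anticipate) for the process part, then the conditional first and second moment formulas~\eqref{eq:expression for expectation of volume PGW process} and~\eqref{eq:variance bound for final weight of PGW} at a large intermediate time $T$ to capture $r\mathsf{W}^r(\infty)$, followed by the double limit $r\to 0$ then $T\to\infty$. One small imprecision: for \emph{fixed} $T$ the $L^2$ distance between $r\mathsf{W}^r(\infty)$ and $C_0\,\mathsf{W}^r(T)/T$ does not tend to $0$ as $r\to 0$; rather, its limsup is bounded by $C_0^2\,\E[\cW_T]/T^4$, which then vanishes as $T\to\infty$---but this is exactly what your $\epsilon/3$ argument handles.
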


\begin{proof}
The proof works in two steps. 
First we prove the convergence of $ \left(\left(t, \mathsf{W}^{r}(t)\right)\right)_{t\geq 1}$ towards $\left((t,\mathcal{W}_t)\right)_{t\geq 1}$ as Feller processes by proving the convergence of their generator and the convergence of their starting distribution.
Then in a second step we prove that this convergence holds jointly with that of $r\cdot \mathsf{W}^{r}(\infty)$.

\paragraph{Step 1: convergence of Feller processes via their generators.}
Fix a smooth function $f$ with compact support in $\intervallefo{1}{\infty}\times \R_+$. We first write down the generator $\mathbf A^r$ of the process $(t,\mathsf{W}^{r}(t))$ applied to the function $f$ as 
\begin{align}\label{eq:generator Ar}
	&{}\mathbf A^r f(t,x)\notag \\
	&=  \partial_t f + \frac{x}{r^2} \cdot r \cdot (1-\theta(1+rt)) \cdot \int_{u=1}^{\infty} (f(t,x+ r^2\lfloor u \rfloor)-f(t,x))\Pp{\abs{\PGW((1+rt)^*)}=\lfloor u \rfloor} \dd u  \notag \\
	&= \partial_t f + (1-O(rt)) \cdot x \cdot r^{-3} \cdot 
	\int_{y=r^2}^{\infty} (f(t, x+r^2\lfloor r^{-2}y \rfloor)-f(t, x))\Pp{\abs{\PGW((1+rt)^*)}=\lfloor r^{-2}y \rfloor} \dd y
\end{align}

For any $\lambda>0$ and $k\geq 1$ we have, using the cycle lemma,
\begin{align}\label{eq:prob that PGW has k vertices}
\Pp{\abs{\PGW(\lambda)}=k}=\frac{1}{k}\cdot \Pp{\sum_{i=1}^{k} \mathrm{Poi}(\lambda)=k-1}=\frac{1}{k}\cdot \Pp{ \mathrm{Poi}(\lambda k)=k-1}= \frac{e^{-\lambda k} (\lambda k)^{k -1}}{k!}.
\end{align}
For a bounded $t\geq 1$ and $r$ going to $0$, we have the expansion $(1+tr)^*=1-tr + \frac{2}{3}(tr)^2 + O((tr)^3)$. 
We plug in $k=\lfloor y r^{-2} \rfloor$ in the equality above and using Stirling's approximation and the asymptotic expansion of the logarithm near $1$ and get the following fact, the proof of which is postponed to the end of Section~\ref{subsubsec:convergence of the weight process}.
\begin{fact}\label{fact:probability PGW is large}
Let $T>0$. There exists constants $c,C>0$ such that for $r\rightarrow 0$ and uniformly for $t\in\intervalleff{1}{T}$, we have 
\begin{equation*}
	\Pp{\abs{\PGW((1+rt)^*)}=\lfloor r^{-2}y \rfloor} \left\lbrace
	\begin{aligned}
		&= \frac{r^{3}\exp\left(-\frac{t^2}{2} y\right)}{y^\frac{3}{2}\sqrt{2\pi}} (1+o(1)) \quad &\text{uniformly for } y\in \intervalleff{r^{\frac32}}{r^{-\frac12}}\\
		&\leq \frac{r^{3}\exp\left(-c \frac{t^2}{2}y+C\right)}{y^\frac{3}{2}\sqrt{2\pi}} \quad &\text{for } y\in \intervallefo{r^{-\frac12}}{\infty}
	\end{aligned}\right.
\end{equation*}
\end{fact}
Now, we can approximate the integral appearing on the right-hand-side of \eqref{eq:generator Ar} by splitting it into three pieces. First,
\begin{align*}
	&r^{-3}\cdot \left|\int_{y=r^2}^{r^{\frac{3}{2}}} (f(t, x+r^2\lfloor r^{-2}y \rfloor)-f(t, x))\Pp{\abs{\PGW((1+rt)^*)}=\lfloor r^{-2}y \rfloor} \dd y \right | \\
	&= r^{-1} \left|\int_{y=1}^{r^{-\frac{1}{2}}} (f(t, x+r^2\lfloor y \rfloor)-f(t, x))\Pp{\abs{\PGW((1+rt)^*)}=\lfloor y \rfloor} \dd y \right | \\
	 &\leq  r^{-1} \cdot \sup_{z \in \intervalleff{0}{r^{-1/2}}} |f(t, x+r^2 z)-f(t, x)| \cdot 1 \leq r^{\frac{1}{2}}  \norm{\partial_x f}_\infty 
\end{align*}
which tends to $0$ as $r\rightarrow 0$. 
Also we get 
\begin{align*}
	&r^{-3}\cdot \left|\int_{y=r^{-1/2}}^{\infty} (f(t, x+r^2\lfloor r^{-2}y \rfloor)-f(t, x))\Pp{\abs{\PGW((1+rt)^*)}=\lfloor r^{-2}y \rfloor} \dd y \right |\\
	&\leq 
	\int_{y=r^{-1/2}}^{\infty} \norm{\partial_x f}_\infty y  \frac{\exp\left(-c \frac{t^2}{2}y+C\right)}{y^\frac{3}{2}\sqrt{2\pi}} \dd y,
\end{align*}
which also tends to $0$ as $r\rightarrow 0$, uniformly in $t\in \intervalleff{1}{T}$.
Last, we have 
\begin{align*}
	r^{-3}\cdot \int_{y=r^{\frac{1}{2}}}^{r^{-\frac{1}{2}}} (f(t, x+r^2\lfloor r^{-2}y \rfloor)-f(t, x))\Pp{\abs{\PGW((1+rt)^*)}=\lfloor r^{-2}y \rfloor} \dd y\\ \underset{r\rightarrow 0}{\rightarrow}  \int_{0}^{\infty} (f(t, x+y)-f(t, x))\cdot \frac{\exp\left(-\frac{t^2}{2}y\right)}{y^\frac{3}{2}\sqrt{2\pi}}\dd y
\end{align*}
by dominated convergence, using again that for any $z\geq 0$ we have $|f(t, x+z)-f(t, x)| \leq \norm{\partial_x f}_\infty \cdot z$. 
Putting everything together we get that for any fixed function $f$ that is smooth and compactly supported we have the convergence
\begin{align*}
\mathbf A^r f(t,x)
&\underset{r\rightarrow 0}{\rightarrow} \partial_t f + x \cdot \int_{0}^{\infty} (f(t, x+y)-f(t, x))\cdot \frac{\exp\left(-\frac{t^2}{2}y\right)}{y^\frac{3}{2}\sqrt{2\pi}}\dd y
\end{align*}
and we recognize the limit as $\mathbf Af(t,x)$ where $\mathbf A$ is the generator of the process $((t,\mathcal{W}_t))_{t\geq 1}$ as described in \eqref{eq:definition generator weight process}.
Recall also the convergence $\mathsf{W}^{r}(1) = r^2\cdot X_{1+r} \underset{r \rightarrow0}{\rightarrow} X = \cW_1$ from Section~\ref{subsec:convergence of the seed}. 
Using a general result \cite[Theorem~17.25]{kallenberg_foundations_2002} for the convergence of Markov process via their generators, we get the convergence of $\left(\left(t,\mathsf{W}^{r}(t)\right)\right)_{t\geq 1}$ towards $\left((t,\mathcal{W}_t)\right)_{t\geq 1}$ for the Skorokhod topology on every bounded interval. 

\paragraph{Step 2: convergence of the limiting value.}
Fix some large $T$. 
Conditionally on the value $\mathsf{W}^{r}(T)$, the computation \eqref{eq:expectation for final weight of PGWAP} ensures that the limiting value $r \cdot \mathsf{W}^{r}(\infty)$ has conditional expectation given by
\begin{align*}
\Ecsq{r \cdot \mathsf{W}^{r}(\infty)}{\mathsf{W}^{r}(T)}&=\Ecsq{r^3\cdot  W^{1+r }(\infty)}{W^{1+r }(1+rT)} \\
&=r^3\cdot  \frac{ W^{1+r }(1+rT)}{rT}\cdot \exp\left( \int_{1+rT}^\infty \frac{(1-\theta(\lambda))}{1-\lambda^*} - \frac{\ind{\lambda\leq 2}}{\lambda -1} \mathrm d \lambda \right)\\
&=\frac{r^2\cdot W^{1+r }(1+rT)}{T}\cdot \exp\left( \int_{1+rT}^\infty \frac{(1-\theta(\lambda))}{1-\lambda^*} - \frac{\ind{\lambda\leq 2}}{\lambda -1} \mathrm d \lambda \right)\\
&\underset{r\rightarrow 0}{\rightarrow}  C_0 \cdot \frac{\cW_{T}}{T},
\end{align*}
where the convergence on the last line is in distribution, jointly with that of $\left(\mathsf{W}^{r}(t)\right)_{t\in \intervalleff{1}{T}}$ towards $\left(\mathcal{W}_t\right)_{t\in \intervalleff{1}{T}}$. 
The conditional variance of the same quantity can be bounded above using \eqref{eq:variance bound for final weight of PGW} as
\begin{align*}
	\Varsq{r \cdot \mathsf{W}^{r}(\infty)}{\mathsf{W}^{r}(T)}
	&= \Varsq{r^3 \cdot W^{1+r}(\infty)}{W^{1+r}(1+rT)} \\
	&\leq r^6 \cdot 
	(C_0)^2 \cdot  W^{1+r}(1+rT) \cdot (rT)^{-4}\\
	&\leq  \cst \cdot \frac{r^2 \cdot W^{1+r}(1+rT)}{T^4}\\
	&\leq \cst \cdot \frac{r^2 \cdot W^{1+r}(1+rT)}{T} \cdot \frac{1}{T^3}\\
	&\underset{r\rightarrow 0}{\rightarrow} \cst \frac{\cW_{T}}{T} \cdot \frac{1}{T^3},
\end{align*} 
again, in distribution, jointly with the previous convergences. 

As $T\rightarrow\infty$, we have $\frac{\cW_{T}}{T}\rightarrow Y$ and so the two last displays satisfy
\begin{align*}
	C_0 \cdot \frac{\cW_{T}}{T} \underset{T\rightarrow \infty}{\rightarrow} C_0 \cdot Y \quad \text{and} \quad \cst \cdot \frac{\cW_{T}}{T} \cdot \frac{1}{T^3} \underset{T\rightarrow \infty}{\rightarrow} 0.
\end{align*} 
This ensures that for large $T$ and small $r$, the value $r \cdot \mathsf{W}^{r}(\infty)$ is very concentrated around its conditional expectation at time $T$ which is close to its $r\rightarrow 0$ limit $C_0 \cdot \frac{\cW_{T}}{T}$, which in turn is close to its $T\rightarrow \infty$ limit $C_0 \cdot Y$. 
This concludes the proof.
\end{proof}
\begin{proof}[Proof of Fact~\ref{fact:probability PGW is large}]
We use the Stirling formula $k!\underset{k\rightarrow \infty}{=}\exp\left(k \log k - k + \log(\sqrt{2\pi k})+o_k(1)\right)$ to write
\begin{align*}
\Pp{\abs{\PGW(\lambda)}=k}&= \frac{e^{-\lambda k} (\lambda k)^{k -1}}{k!}\\
&
\underset{k\rightarrow \infty}{=}\exp \left(- \lambda k + (k-1)\log(\lambda k)-  k \log k + k - \log(\sqrt{2\pi k})+o_k(1)\right)\\
&\underset{k\rightarrow \infty}{=}\exp \left(- \lambda k + (k-1)\log(\lambda)- \log k + k - \log(\sqrt{2\pi k})+o_k(1)\right)\\
&\underset{k\rightarrow \infty}{=}\exp \left((\log \lambda -(\lambda-1)) k -\log(\lambda) -\log(k^{\frac{3}{2}}\sqrt{2\pi})+o_k(1)\right)
\end{align*}
Now, we  plug in $\lambda=(1+tr)^*\underset{r\rightarrow 0}{=}1-tr+\frac{2}{3}(tr)^2 + O((tr)^3)$, thanks to \eqref{eq:expansion (1+eps)^*}. 
We get that  
\begin{align*}
\log(\lambda)- (\lambda-1)&\underset{\lambda \rightarrow 1}{=} -\frac{1}{2} (\lambda -1)^2 + O((\lambda -1)^3)\\
&= -\frac{1}{2}(tr)^2+ O((tr)^3).
\end{align*}
Setting $k=k(x)=\lfloor y r^{-2}\rfloor$, for values $y\geq r^{\frac{3}{2}}$ (so that $k\rightarrow\infty$ as $r\rightarrow 0$) we get that $\lfloor y r^{-2}\rfloor=y r^{-2}(1+o(1))$ and so
\begin{align*}
	\Pp{\abs{\PGW((1+tr)^*)}=\lfloor y r^{-2}\rfloor} = \frac{r^3}{y^\frac{3}{2} \sqrt{2\pi}} \exp\left(-\frac{1}{2}t^2 y  + O(t^3 y r) + o(1)\right).
\end{align*}
The result follows from this asymptotics.
\end{proof}

\subsubsection{Control on the process.}\label{subsubsec:control on the discrete weight process}
We want bounds on the whole process $\left( \mathsf{W}^{r}(t)\right)_{t\geq 1}$ that hold at fixed $r$ namely that there exists two random variable $Z_{r,1}$ and $Z_{r,2}$ such that for all $t\geq 1$ we have 
\begin{align}\label{eq:control on the rescaled weight process}
\mathsf{W}^{r}(t)\leq Z_{r,1}\cdot t \quad \text{ and } \quad  (\mathsf{W}^{r}(t)-\mathsf{W}^{r}(t-))\leq Z_{r,2}\cdot \frac{1\vee (\log^2 t)}{t^2},
\end{align}
and such that $(Z_{r,1})_{0<r\leq 1}$ and $(Z_{r,2})_{0<r\leq 1}$ are tight.

For that, we first check that the processes $(\frac{ \mathsf{W}^{r}(t)}{t})_{t\ge 1}$ for all values of $r$ are positive supermartingales. 
This follows from Remark~\ref{rem:W(s)/(s-1) is a supermartingale}. 
Denote
\begin{align}
	Z_{r,1}:= \sup_{t\geq 1} \left(\frac{\mathsf{W}^{r}(t)}{t}\right) \quad \text{ and } \quad Z_{r,2}:= \sup_{t\geq 1} \left(\frac{\mathsf{W}^{r}(t)-\mathsf{W}^{r}(t-)}{{(1\vee \log^2 t)}/{t^2}}\right).
\end{align}
For what follows, we use the notation $M_t:=\frac{\mathsf{W}^{r}(t)}{t}$ to lighten the expressions.
First, for any $A>0$ we can set $\tau_A:=\inf\enstq{t\geq 1}{M_t \geq A}$ and write 
\begin{align*}
	\Ec{r^2\cdot X_{1+r}}=\Ec{M_{1}} \geq \Ec{M_{t\wedge \tau_A}} \geq A \cdot\Pp{\tau_A \leq t}.
\end{align*}
where we used the optional sampling theorem for supermartingales and the fact that $(M_t)$ only takes positive values.  
Then we have
\begin{align*}
	\Pp{Z_{r,1}\geq A}= \Pp{\tau_A <\infty}= \lim_{t\rightarrow \infty} \Pp{\tau_A \leq t} \leq \frac{\Ec{r^2\cdot X_{1+r}}}{A}\leq \frac{\cst}{A},
\end{align*}
because the convergence \eqref{eq:convergence size of seed} also takes place in $L^1$. 

For the second type of control, we use the same method as in Section~\ref{subsec:a priori estimates on the weight process} and assume that the process $W^{\alpha}$ was defined from some Poisson point process $\sS$ on $\intervalleoo{\alpha}{\infty}\times \R_+ \times \N_+ $ with intensity
\begin{align*}
\dd s \otimes 	\dd m \otimes \mathrm{Law}(\abs{\PGW(s^*)})
\end{align*}
in such a way that $W^{\alpha}(\alpha)=  X_\alpha $ and for $t\geq \alpha$,
\begin{align*}
	W^{\alpha}(t)=  X_\alpha + \sum_{(m,y,s)\in \sS} y \cdot \ind{m< W^{\alpha}(s-),\ s\leq t},
\end{align*}
analogously to how $(\cW_t)_{t\geq 1}$ was defined in Section~\ref{subsec:definition of the continuous weight process from PPP}.
Now reason on the event $\{Z_{1,r} \leq A\}$ and write
\begin{align*}
&\Pp{Z_{2,r}\geq A, Z_{1,r}\leq A}\\ &\leq \Pp{\Poi\left(A\int_{t=1}^{\infty}  t (1- \theta((1+rt)^*)) \dd t \int_{x=A \frac{\log^2 t}{t^2}}^\infty \Pp{|\PGW((1+rt)^*)| \geq r^{-2} x}\right) \geq 1}\\
&\leq \cst \cdot  A \cdot \exp(-A/2).
\end{align*}
This entails that $\Pp{Z_{r,2}\geq A} \leq \Pp{Z_{2,r}\geq A, Z_{1,r}\leq A} + \Pp{Z_{1,r}\geq A}$ and the right-hand-side tends to $0$ as $A\rightarrow \infty$ uniformly in $0< r\leq 1 $.
\subsection{Convergence of $\mathrm{Scale}(r,r^3;\rM^\bullet_{1+r})$}
\label{subsec:convergence of Nbullet}
We finally prove the following result. 
\begin{proposition}\label{prop:convergence in distribution towards cN}
	We have the convergence in distribution as $r\rightarrow 0$ in the Gromov--Hausdorff--Prokhorov topology
	\begin{align}\label{eq:convergence to Nbullet}
		\mathrm{Scale}\left(r,\frac{r^3}{C_0};\rM^\bullet_{1+r}\right)\rightarrow \cM^\bullet.
	\end{align}
	This convergence takes place jointly with \eqref{eq:convergence to Tbullet}.
\end{proposition}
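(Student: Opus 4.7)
The plan is to apply the aggregation-process convergence of Proposition~\ref{prop:convergence of aggregation processes} to prove $\mathrm{Scale}(r,1;\widetilde{\rM}^\bullet_{1+r}) \to \widetilde{\cM}^\bullet$, then rescale the mass by the independent factor $r\cdot\mathsf{W}^{r}(\infty)/C_0 \to Y$ to obtain the stated convergence. Recall from \eqref{eq:def N alpha as scaled version of tilde N alpha}–\eqref{eq:def tilde N alpha as aggregation process} and the rescaling in Section~\ref{subsec:outline of proof scaling limit theorem} that
\begin{align*}
  \mathrm{Scale}\!\left(r,1;\widetilde{\rM}^\bullet_{1+r}\right)
  = \mathrm{Aggreg}\!\left((\mathsf{W}^{r}(t))_{t\ge 1},\ \mathrm{Scale}(r,r^2;\hat{\mathrm{T}}^\bullet_{1+r}),\ \eta^{\mathrm{Poi},r}\right),
\end{align*}
while $\widetilde{\cM}^\bullet=\mathrm{Aggreg}((\cW_t)_{t\ge 1},\cT^\bullet,\eta^{\mathrm{Br}})$ by \eqref{eq:def of tildeNbullet as aggreg}. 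Since $\mathrm{Scale}(r,r^3/C_0;\hat{\rM}^\bullet_{1+r})=\mathrm{Scale}(1,r\cdot\mathsf{W}^{r}(\infty)/C_0;\mathrm{Scale}(r,1;\widetilde{\rM}^\bullet_{1+r}))$ and $\cM^\bullet=\mathrm{Scale}(1,Y;\widetilde{\cM}^\bullet)$, the result will follow once the aggregation-process limit is established jointly with $r\cdot \mathsf{W}^{r}(\infty)/C_0 \to Y$.

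The second step is therefore to verify the five assumptions of Proposition~\ref{prop:convergence of aggregation processes}. The convergence of the weight process \ref{assum:convergence weight process} and \ref{assum:convergence seed} together with the joint convergence of the end-value are exactly Proposition~\ref{prop:joint convergence weight process and limit} and \eqref{eq:convergence to Tbullet}. For \ref{assum:convergence block distribution}, we need $\eta^{\mathrm{Poi},r}(t,w)\to\eta^{\mathrm{Br}}(t,w)$ on $\mathbb L^c$ and continuity of $(t,w)\mapsto\eta^{\mathrm{Br}}(t,w)$. The former is Aldous' classical scaling limit of conditioned Galton–Watson trees \cite{aldous_continuum_1991}: a $\PGW$-tree conditioned to have $\lfloor wr^{-2}\rfloor$ vertices, rescaled by $(r,r^2)$, converges to the Brownian tree of mass $w$; the additional phantom root contributes nothing in the limit since its distance and mass tend to zero. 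Continuity of $\eta^{\mathrm{Br}}$ in $(t,w)$ reduces to the trivial observation that $\eta^{\mathrm{Br}}(t,w)=\mathrm{Scale}(w^{1/2},w;\cT)$ does not depend on $t$ and is continuous in $w$. Assumption~\ref{assum:tight control on weight process} is exactly the content of \eqref{eq:control on the rescaled weight process} established in Section~\ref{subsubsec:control on the discrete weight process}, and \ref{assum:uniform control on tail} follows from the standard exponential tail on the diameter of a conditioned Galton–Watson tree (via the tail of a Brownian-excursion maximum, cf.\ \cite{kennedy_distribution_1976}), which is uniform in $w$ after the $\sqrt{w}$ rescaling.

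Applying Proposition~\ref{prop:convergence of aggregation processes} then yields $\mathrm{Scale}(r,1;\widetilde{\rM}^\bullet_{1+r})\to\widetilde{\cM}^\bullet$ in distribution in $\mathbb L^{\bullet,c}$, and this convergence is joint with those of the seed $\mathrm{Scale}(r,r^2;\hat{\mathrm{T}}^\bullet_{1+r})\to\cT^\bullet$ and of the weight process including its limit, because the aggregation construction is measurable in all of its inputs. Combining this with the joint convergence $r\cdot\mathsf{W}^{r}(\infty)/C_0 \to Y$ from Proposition~\ref{prop:joint convergence weight process and limit} and using the continuity of the $\mathrm{Scale}$ operation on $\mathbb L^{\bullet,c}$, we obtain
\begin{align*}
  \mathrm{Scale}\!\left(r,\tfrac{r^3}{C_0};\hat{\rM}^\bullet_{1+r}\right)
  =\mathrm{Scale}\!\left(1,\tfrac{r\cdot\mathsf{W}^{r}(\infty)}{C_0};\mathrm{Scale}(r,1;\widetilde{\rM}^\bullet_{1+r})\right)
  \xrightarrow[r\to 0]{(d)} \mathrm{Scale}(1,Y;\widetilde{\cM}^\bullet)=\cM^\bullet,
\end{align*}
and removing the (mass-zero, distance $r$) phantom root is immediate since $r\to 0$. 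The main obstacle is the verification of the rather delicate assumption \ref{assum:tight control on weight process} together with the joint convergence of the end-value $r\cdot \mathsf{W}^r(\infty)$, since this requires genuine martingale estimates on the weight process through scales, both of which have been carried out in Sections~\ref{subsubsec:convergence of the weight process}–\ref{subsubsec:control on the discrete weight process}; once those inputs are in hand, the rest is a mechanical application of the general convergence results of Section~\ref{sec:constructions and convergences}.
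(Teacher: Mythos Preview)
Your proof is correct and follows essentially the same approach as the paper: verify the hypotheses \ref{assum:convergence weight process}--\ref{assum:uniform control on tail} of Proposition~\ref{prop:convergence of aggregation processes} to obtain $\mathrm{Scale}(r,1;\widetilde{\rM}^\bullet_{1+r})\to\widetilde{\cM}^\bullet$, then combine with the joint convergence of $r\cdot\mathsf{W}^r(\infty)/C_0\to Y$ from Proposition~\ref{prop:joint convergence weight process and limit} and the continuity of $\mathrm{Scale}$. The only minor discrepancy is your justification of \ref{assum:uniform control on tail}: the Kennedy reference gives the tail of the Brownian excursion maximum, i.e.\ the \emph{limiting} object, whereas the assumption requires a bound uniform in $r$ (hence for all finite conditioned Poisson--Galton--Watson trees); the paper instead invokes the sub-Gaussian height tails of \cite{addario-berry_sub_2013}, which apply directly at the discrete level and give the required uniformity.
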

In order to prove such a convergence, we will use Proposition~\ref{prop:convergence of aggregation processes} and the description of both $\mathrm{Scale}(r,r^3;\rM^\bullet_{1+r})$ and $\cM^\bullet$ as resulting from aggregation processes. 
Recall the definition of the objects $\cM^\bullet$ and $\widetilde{\cM}^\bullet$ from the beginning of Section~\ref{sec:the tree Minfty}.

\begin{proof}[Proof of Proposition~\ref{prop:convergence in distribution towards cN}]
In order to prove the proposition, we will check that the assumptions of Proposition~\ref{prop:convergence of aggregation processes} are satisfied in our setting where $\widetilde{\rM}_{1+r}^\bullet$ is defined as the result of an aggregation process with
\begin{itemize}
	\item weight process $\left(\mathsf{W}^{r}(t)\right)_{t\geq 1}$,
	\item seed given by $\mathrm{Scale}\left(r,r^2; \hat{\mathrm{T}}^\bullet_{1+r}\right)$,
	\item block distribution $\eta^{\mathrm{Poi},r}(t,w)$. 
\end{itemize}
The convergence $\left(\mathsf{W}^{r}(t)\right)_{t\geq 1}\underset{r\rightarrow 0}{\rightarrow} (\cW_t)_{t\geq 1}$ is ensured by Proposition~\ref{prop:joint convergence weight process and limit}. 
The convergence of the seed $\mathrm{Scale}\left(r,r^2; \mathrm{T}^\bullet_{1+r}\right)\underset{r\rightarrow 0}{\rightarrow} \cT^\bullet$ was obtained in \eqref{eq:convergence to Tbullet}. 
The convergence of the block distribution $\eta^{\mathrm{Poi},r}(t,w)$ to $\eta^{\mathrm{Br}}(t,w)$ the law of the Brownian tree of mass $w$ follows from classical results. 
Last, the two technical assumptions \ref{assum:tight control on weight process} and \ref{assum:uniform control on tail} are handled respectively by \eqref{eq:control on the rescaled weight process} and a general result of \cite{addario-berry_sub_2013} on tail bounds for the height of conditioned Galton--Watson trees. 

Thanks to the above, we can apply Proposition~\ref{prop:convergence of aggregation processes} to get the convergence 
\begin{align*}
	\mathrm{Scale}\left(r,1; \widetilde{\rM}_{1+r}^\bullet\right) \underset{r\rightarrow 0}{\rightarrow} \widetilde{\cM}^\bullet,
\end{align*}
where $\widetilde{\cM}^\bullet$ is defined as in the previous section from the weight process $(\mathcal{W}_t)_{t\geq 1}$, seed $\cT^\bullet$ and block distribution $\eta^{\mathrm{Br}}$.
Then using the joint convergence from Proposition~\ref{prop:joint convergence weight process and limit}, we get that the convergence in the last display takes place jointly with 
\begin{align*}
	r^3\cdot C_0^{-1} \cdot W^{1+r}(\infty) \rightarrow Y.
\end{align*}
In the end, since $\hat{\rM}_{1+r}^\bullet=\mathrm{Scale}\left(1,W^{1+r}(\infty); \widetilde{\rM}_{1+r}^\bullet\right)$ and $\cM^\bullet= \mathrm{Scale}\left( 1,Y;\widetilde{\cM}^\bullet\right)$ we get 
\begin{align*}
\mathrm{Scale}\left(r,\frac{r^3}{C_0}; \hat{\rM}_{1+r}^\bullet\right) \underset{r\rightarrow 0}{\rightarrow} \cM^\bullet.
\end{align*}
\end{proof} 

\subsection{Tightness near the root}\label{subsec:tightness near the root}
Recall from Section~\ref{subsec:description of the MST of the PWIT} the description of $\mathrm{T}^\infty$ and $\mathrm{M}^\infty$ from $(\mathrm{T}_\alpha^\bullet)_{\alpha\in \sA}$ and $(\rM_\alpha^\bullet)_{\alpha\in \sA}$. 
For any $\alpha>1$ introduce the following
\begin{align*}
	\mathrm{T}_{\geq \alpha}^\bullet = \bigcup_{\beta \in \sA, \beta \geq \alpha} \mathrm{T}_\beta^\bullet  \qquad \text{ and } \qquad \rM_{\geq \alpha}^\bullet  = \bigcup_{\beta \in \sA, \beta \geq \alpha} \rM_\beta^\bullet ,
\end{align*}
where the common marked point of $\mathrm{T}_{\geq \alpha}^\bullet$ and $\rM_{\geq \alpha}^\bullet$ is that of the lowest $\beta$ value, and the common root is $\emptyset$. 
We show the following:
\begin{lemma}\label{lem:tightness near the root}
For any $\epsilon>0$, we have the following 
\begin{enumerate}[label=(\roman*)]
	\item \label{it:tightness mass T 1+Ar} $\limsup_{A\rightarrow \infty} \limsup_{r\rightarrow 0}\Pp{r^2\cdot \mathrm{mass}(\mathrm{T}_{\geq 1+Ar}^\bullet)> \epsilon} = 0 $,
	\item \label{it:tightness diam T 1+Ar} $ \limsup_{A\rightarrow \infty} \limsup_{r\rightarrow 0} \Pp{r \cdot  \mathrm{diam}(\mathrm{T}_{\geq 1+Ar}^\bullet)> \epsilon}= 0$,
	\item\label{it:tightness mass N 1+Ar} $\limsup_{A\rightarrow \infty} \limsup_{r\rightarrow 0}\Pp{r^3\cdot \mathrm{mass}(\rM_{\geq 1+Ar}^\bullet)> \epsilon} = 0 $,
	\item\label{it:tightness diam N 1+Ar} $\limsup_{A\rightarrow \infty} \limsup_{r\rightarrow 0} \Pp{r \cdot  \mathrm{diam}(\rM_{\geq 1+Ar}^\bullet)> \epsilon}= 0$
\end{enumerate}
\end{lemma}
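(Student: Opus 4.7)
The strategy is to establish all four assertions via Markov's inequality after controlling the expected value of each quantity by an integral against the mean measure $\nu$ of the point process $\sA$. Because $\rT^\bullet_{\geq 1+Ar}$ and $\rM^\bullet_{\geq 1+Ar}$ are chains (in the sense of Section~\ref{subsec:chain construction}), the two mass quantities are exact sums and the two diameters are bounded above by $\sum_{\alpha \in \sA \cap \intervallefo{1+Ar}{\infty}}(1 + \diam(\rT^\bullet_\alpha))$ and its $\rM$-analogue, where the $+1$ per block accounts for the phantom-root edge identifying consecutive ponds. Conditionally on $\sA$, the summands are independent with laws depending only on $\alpha$, so Campbell's formula turns each of the four claims into a bound on
\begin{align*}
\int_{1+Ar}^\infty \Ec{Q_\alpha} \, \nu(\dd \alpha)
\end{align*}
for $Q_\alpha$ equal respectively to $X_\alpha$, $\diam(\rT^\bullet_\alpha)$, $W^\alpha(\infty)$ and $\diam(\rM^\bullet_\alpha)$.

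To compute $\nu$, I would let $f_k$ denote the density of $\alpha_k$ and use the Markov description from Section~\ref{subsubsec:distribution of T and M}: $f_1 = \theta' \ind{y>1}$ and $f_{k+1}(\alpha) = \theta'(\alpha) \int_\alpha^\infty f_k(x)/\theta(x) \dd x$. Summing and solving the resulting integral equation for $F := \sum_{k\ge 1} f_k$ gives $F(\alpha) = \theta'(\alpha)/\theta(\alpha)$, which by \eqref{eq:theta prime tends to 2 at 1} behaves like $(\alpha-1)^{-1}$ as $\alpha\to 1^+$ and decays super-exponentially at infinity; moreover $\nu\bigl([1+Ar,\infty)\bigr) = -\log \theta(1+Ar) = O(\lvert\log(Ar)\rvert)$. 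The four per-component expectations would then be bounded as follows: $\Ec{X_\alpha} = (1-\alpha^*)^{-2}$ by the Borel--Tanner formula combined with the second moment of a Poisson--Galton--Watson tree conditioned on extinction; $\Ec{W^\alpha(\infty)} \leq \cst \cdot (\alpha-1)^{-3}$ by multiplying \eqref{eq:expectation for final weight of PGWAP} by $\Ec{X_\alpha}$; $\Ec{\diam(\rT^\bullet_\alpha)} \leq \cst \cdot (\alpha-1)^{-1}$ via Cauchy--Schwarz and the sub-Gaussian tails on the diameter of a size-conditioned Galton--Watson tree from \cite{addario-berry_sub_2013}; and $\Ec{\diam(\rM^\bullet_\alpha)} \leq \cst \cdot (\alpha-1)^{-1}$ via the inequality $\diam(\rM^\bullet_\alpha) \leq \diam(\rT^\bullet_\alpha) + 2\, d_\rmH(\rM^\bullet_\alpha, \rT^\bullet_\alpha)$ combined with a second-moment version of Proposition~\ref{prop:aggregate is well-defined} applied to the rescaled aggregation process, using the uniform tail bounds on $Z_{r,1}, Z_{r,2}$ established in Section~\ref{subsubsec:control on the discrete weight process}. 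By \eqref{eq:expansion (1+eps)^*}, each bound takes the form $\Ec{Q_\alpha} \leq \cst \cdot (\alpha-1)^{-k}$ near $\alpha = 1$, with $k \in \{1,2,3\}$ matching the power of $r$ in the statement.

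Inserting these bounds into Campbell's formula produces a main term of order $\int_{1+Ar}^\infty (\alpha-1)^{-k-1} \dd \alpha = O((Ar)^{-k})$, together with a subdominant $\lvert\log(Ar)\rvert$ term in the diameter cases arising from the $+1$ per block. Multiplying by $r^k$ and applying Markov's inequality yields $\limsup_{r\to 0} \Pp{\cdot > \epsilon} \leq \cst/A^k$, which tends to $0$ as $A \to \infty$; the logarithmic contribution is killed because $r \lvert\log(Ar)\rvert \to 0$ for fixed $A$. The main technical obstacle is the last of the four per-component bounds: Proposition~\ref{prop:aggregate is well-defined} only asserts finiteness of $\Ec{d_\rmH^k}$ for fixed random variables $Z_1, Z_2$, and to obtain a bound uniform in $\alpha$ close to $1$ one must track the moment estimates through the proof of that proposition while invoking the uniform exponential tail bounds on $(Z_{r,1}, Z_{r,2})$ obtained in Section~\ref{subsubsec:control on the discrete weight process}. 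Once this uniform control is in place, the four parts of the lemma follow at once.
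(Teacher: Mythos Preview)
Your approach via Campbell's formula and Markov's inequality is genuinely different from the paper's, and it works cleanly for items~(i)--(iii): the intensity $F(\alpha)=\theta'(\alpha)/\theta(\alpha)\sim(\alpha-1)^{-1}$ is correct, as is $\Ec{X_\alpha}=(1-\alpha^*)^{-2}$ (via the spine description), and the integrals behave as you describe. The paper takes a shorter route: it couples the entire chain $(\rT_{\geq 1+Ar}^\bullet,\rM_{\geq 1+Ar}^\bullet)$ inside a \emph{single} block $(\check{\rT}_{1+Ar/2}^\bullet,\check{\rM}_{1+Ar/2}^\bullet)$ having the law of $(\rT_{1+Ar/2}^\bullet,\rM_{1+Ar/2}^\bullet)$, using monotonicity of the spine length ($H_{Ar}\le \check H_{Ar}$ w.h.p.\ since $Ar\cdot H_{Ar}\Rightarrow\mathrm{Exp}(1)$ while $Ar\cdot\check H_{Ar}\Rightarrow\mathrm{Exp}(1/2)$), monotonicity of the PGW offspring (all types exceed $1+Ar$, hence have dual below $(1+Ar/2)^*$), and monotonicity of the aggregation in the starting time. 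The lemma then follows from the distributional convergences \eqref{eq:convergence to Tbullet} and \eqref{eq:convergence to Nbullet} alone, with the $A$-scaling producing $(2/A)\diam(\cM^\bullet)$, etc., in the limit.

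There is, however, a genuine gap in your item~(iv). You write that you will invoke ``the uniform exponential tail bounds on $(Z_{r,1},Z_{r,2})$ obtained in Section~\ref{subsubsec:control on the discrete weight process}'', but that section only proves $\Pp{Z_{r,1}\ge A}\le \cst/A$ via the supermartingale/optional-stopping argument, and consequently $\Pp{Z_{r,2}\ge A}\le \cst/A$ as well (the exponential piece in the bound for $Z_{r,2}$ is swamped by the $1/A$ term coming from $\{Z_{r,1}>A\}$). With only $1/A$ tails, $Z_{r,1}$ has no finite first moment, so the conditional bound of Lemma~\ref{lem:diameter bounds of aggregate}, which contains a $Z_1 Z_2^{3/2}$ term, does not integrate to give a finite $\Ec{d_\rmH(\rM^\bullet_\alpha,\rT^\bullet_\alpha)}$. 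Your Markov-inequality step for $\diam(\rM^\bullet_\alpha)$ therefore does not go through as written. One could likely repair this by reproving the exponential tails of Proposition~\ref{prop:properties of the continuous weight process}\ref{it:sup of W over t has exponential moments} in the discrete setting (mimicking the Chernoff arguments of Section~\ref{subsec:a priori estimates on the weight process}), but that is real additional work not supplied by the paper. The paper's coupling sidesteps this entirely: since it reduces to a single block and only needs convergence in distribution of $r\cdot\diam(\rM^\bullet_{1+r})$, no moment control on the diameter of $\rM^\bullet_\alpha$ is ever required.
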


This corresponds to conditions \ref{assum:convergence chains 2} and \ref{assum:convergence chains 3} in the setting of the convergence of $\mathrm{T}^\infty$ (resp. $\mathrm{M}^\infty$) to $\cT^\infty$  (resp. $\cM^\infty$). 
\begin{proof}

First, recall that $L_k$ denotes the type of the vertex at height $k$ along the spine. We let $H_{r}:= \sup \enstq{k\geq 0}{L_k\geq 1 + r}$. 
Using the convergence \eqref{eq:convergence towards the poisson lower envelope} we can prove that 
\begin{align}\label{eq:convergence rHrA}
r\cdot H_{r} \underset{r\rightarrow 0}{\rightarrow} \tau:= \sup \enstq{ t \geq 0}{\cL(t)\geq 1},
\end{align}
and it is easy to check from the definition of $t\mapsto \cL(t)$ that $\tau\sim \mathrm{Exp}(1)$.
Then, in order to get the tree $\mathrm{T}_{\geq 1+r}^\bullet$ we need to consider the biological progeny off the spine of each of the $H_{r}$ vertices on the spine (here we use the language of  the branching process description of Section~\ref{subsubsec:alternative description as a branching process}).
Each of those trees is a $\PGW(\alpha^*)$ where $\alpha$ corresponds to its type. 
By definition, we have $\alpha \geq 1+r$ so that $\alpha^*\leq (1+r)^*$ for all types $\alpha$ of vertices present along the spine below height $H_r$.

In what follows, we consider another pair of trees $(\check{\rT}_{1+\frac{r}{2}}^\bullet, \check{\rM}_{1+\frac{r}{2}}^\bullet)$ that has the same distribution as $(\rT_{1+\frac{r}{2}}^\bullet, \rM_{1+\frac{r}{2}}^\bullet)$, and we couple their construction with that of $\rT_{\geq 1+r}^\bullet$ and $\rM_{\geq 1+r}^\bullet$ in such a way that with high probability we have 
\begin{align}\label{eq:coupling piece of tree close to the root}
	\mathrm{T}_{\geq 1+r}^\bullet\subseteq \check{\mathrm{T}}_{1+\frac{r}{2}}^\bullet \qquad \text{and} \qquad \rM_{\geq 1+r}^\bullet\subseteq \check{\rM}_{ 1+\frac{r}{2}}^\bullet.
\end{align}
For that, we denote by $\check{H_r}$ the distance between the root and the marked point in $\check{\mathrm{T}}_{1+\frac{r}{2}}^\bullet$ and remark that $r\cdot \check{H_r}\underset{r\rightarrow 0}{\rightarrow} \mathrm{Exp}(\frac{1}{2})$ in distribution. 
Then, conditionally on $\check{H_r}$, the tree  $\check{\mathrm{T}}_{1+\frac{r}{2}}^\bullet$ is obtained by considering the progeny off the spine of those vertices which is distributed as $\PGW((1+\frac{r}{2})^*)$. Here, note that $ (1+r)^*\leq(1+\frac{r}{2})^* $. 
From there it is clear that we can couple $H_r$ and $\check{H_r}$ on an event of high probability so that $H_r\leq \check{H_r}$ and, still on that event, couple the progeny off of the spine from vertices in one and the other tree to get $\mathrm{T}_{\geq 1+r}^\bullet\subseteq \check{\mathrm{T}}_{1+\frac{r}{2}}^\bullet$. 

Then, coupling the Poisson--Galton--Watson Aggregation Processes respectively started at time $\beta>1+r$ for each of the $\mathrm{T}_\beta^\bullet$ with $\beta \geq 1+r$ that constitute $\mathrm{T}_{\geq 1+r}^\bullet$ and started at the earlier time $1+\frac{r}{2}$ from  $\check{\mathrm{T}}_{1+\frac{r}{2}}^\bullet$ in the obvious way,  we obtain trees $\rM_{\geq 1+r}^\bullet\subseteq \check{\rM}_{ 1+\frac{r}{2}}^\bullet$, where the right-hand one has the distribution of $\rM_{1+\frac{r}{2}}^\bullet$.

From \eqref{eq:coupling piece of tree close to the root}, the result of the lemma is easily obtained from the convergences \eqref{eq:convergence to Tbullet} and \eqref{eq:convergence to Nbullet} for the mass and diameter of $\mathrm{T}_{1+r}^\bullet$ and $\rM_{1+r}^\bullet$.
\end{proof}
 \subsection{Convergence to $\mathcal{T}^\infty$ and to $\mathcal{M}^\infty$}
 \label{subsec:proof of main theorem}
 We now have everything to prove the main result of this section.
\begin{proof}[Proof of Theorem~\ref{thm:convergence of the MST of the PWIT to Minfty}]
Following the outline given in Section~\ref{subsec:outline of proof scaling limit theorem}, we can check that \ref{step:Er converges to P} is satisfied thanks to Lemma~\ref{lem:values taken by chain converge to PPP} and that \ref{step:convergence of Nbullet} is satisfied thanks to Proposition~\ref{prop:convergence in distribution towards cN}.
This ensures that assumption \ref{assum:convergence chains 1} is satisfied for  $\mathrm{Chain}\left(\mathrm{Scale}(r,r^3; \hat{\rM}_{1+tr}^\bullet), t \in \sA^r\right)=\hat{\mathrm{M}}^\infty$. 
Now, we just need to check \ref{assum:convergence chains 2} and \ref{assum:convergence chains 3}, which are ensured by Lemma~\ref{lem:tightness near the root}. 
Considering all this and  the definition of $\mathcal{M}^\infty$ as $\mathrm{Chain}\left(\cM_x^\bullet, x\in \mathscr{P} \right)$ in \eqref{eq:def cMinfinity} 
 we can now apply Proposition~\ref{prop:convergence of chains} to get the convergence in distribution in $\mathbb L$,
\begin{align*}
	\mathrm{Scale}(r,r^3;\hat{\mathrm{M}}^\infty) \underset{r\rightarrow 0}{\rightarrow} \mathcal{M}^\infty.
\end{align*}

Going through the exact same proof, it is easy to see that the same arguments can be made for $\mathrm{Chain}\left(\mathrm{Scale}(r,r^2; \hat{\mathrm{T}}_{1+tr}^\bullet), t \in \sA^r\right)$, which ensures that the second convergence holds as well
 \begin{align*}
 	\mathrm{Scale}(r,r^2;\hat{\mathrm{T}}^\infty) \underset{r\rightarrow 0}{\rightarrow} \mathcal{T}^\infty.
 \end{align*}
The fact that the two convergence take place jointly is clear from the fact that the convergence of each link of the chain takes place jointly, thanks to Proposition~\ref{prop:convergence in distribution towards cN}.
\end{proof}
\newpage

\appendix

\section{Chain construction}
\label{app:chain}

In this section, we prove results stated in Section~\ref{subsec:chain construction} about the $\mathrm{Chain}$ construction.
\begin{proof}[Proof of Lemma~\ref{lem:chain is well defined}]
Recall the framework defined before the statement of Lemma~\ref{lem:chain is well defined} and assume that conditions \ref{assum:chain1}, \ref{assum:chain2} and \ref{assum:chain3} are satisfied. 
The fact that $\mathrm{Chain}(\mathsf L_t^\bullet, t\in \mathscr{N})$ is a length space can be derived from the definition as a gluing of length spaces along a line.
Then, we just need to check that this tree is indeed in $\mathbb L$, meaning that its closed balls are compact and have finite mass.

Consider the closed ball of radius $R$ around $\rho_\infty$. 
Let us prove that this ball is pre-compact. 
First, let $A_R:=\sup\enstq{A\in \mathscr N}{\sum_{t\in \mathscr{N}\cap \intervalleoo{A}{\infty}} d_t(\rho_t,v_t)> R}$ where $\sup \emptyset$ is interpreted as the infimum of $\mathscr N$. 
Note that this set can only be empty whenever $\mathscr N$ has a smallest element, thanks to \ref{assum:chain3}, so that $A_R$ is a positive real number.  
This entails that the ball of radius $R$ is contained in the set $\bigcup_{t\in \mathscr N \cap (A_R,\infty]}L_t$, seen as a subset of $\mathrm{Chain}(\mathsf L_t^\bullet, t\in \mathscr{N})$.

Now let $\epsilon>0$ and construct an $\epsilon$-covering of that set. 
First, we place a ball with centre $\rho_\infty$. 
From \ref{assum:chain1} and \ref{assum:chain2}, we can find $A$ large enough so that $\sum_{t\in \mathscr{N}\cap \intervalleoo{A}{\infty}} d_t(\rho_t,v_t)<\frac{\epsilon}{2}$ and $\sup_{t\in \mathscr{N}\cap \intervalleoo{A}{\infty}} \diam(\mathsf L_t^\bullet) <\frac{\epsilon}{2}$.
For such an $A$ we have $\mathrm{Ball}(\rho_\infty,R)\setminus \mathrm{Ball}(\rho_\infty,\epsilon) \subset \bigcup_{t\in (A_R,A)\cap \mathscr N}L_t$. Now, since the set $\mathscr{N}$ is discrete, the set $(A_R,A)\cap \mathscr N$ only contains a finite number of points.
Since every one of the finite number of sets $L_t$ for $t\in (A_R,A)\cap \mathscr N$ is compact, we can cover them with a finite number of balls of radius $\epsilon$. 
In the end, we just constructed a covering of $\mathrm{Ball}(\rho_\infty,R)$ with balls of radius $\epsilon$. 
This closed ball is hence compact.
Now we just have to check that it has finite mass. This is ensured by the same argument, using this time the second requirement in \ref{assum:chain1}.
\end{proof}

Proof of Proposition~\ref{prop:convergence of chains}. 
Assume \ref{assum:convergence chains 1}.
Using Skorokhod's representation theorem, we can assume that we are working with a coupling that ensures that this convergence is almost sure. 
This entails that $((t,\mathsf L_t^{\bullet,r}))_{t\in \mathscr N^r}$ converges a.s.\ as $r\rightarrow0$ to $((t,\mathsf L_t^{\bullet,0}))_{t\in \mathscr N^0}$ as point processes on $\intervalleoo{0}{\infty}$ with marks in $\mathbb L^{\bullet,c}$. 
It follows that for any interval $(a,A)$ such that the endpoints $a,A$ are not in $\mathscr N^0$, for $r$ small enough, the number of elements in $\mathscr N^r\cap (a,A)$ stabilizes to some number $N$ which is the cardinal of $\mathscr N^0\cap (a,A)$. 
We denote by $u_1^r,\dots , u_N^r$ their elements in ascending order.
Again, from the convergence assumption, for any $1\leq i \leq N$ we have
\begin{align*}
	u_i^r \underset{r\rightarrow0}{\rightarrow}  u_i^0 \qquad \text{and} \qquad \mathsf L_{u_i^r}^{\bullet,r} \underset{r\rightarrow0}{\rightarrow}  \mathsf L_{u_i^0}^{\bullet,0} \quad \text{in $\mathbb L^{\bullet,c}$.}
\end{align*} 
It is easy to see that this shows that this proves that
\begin{align*}
 \mathrm{Chain}(\mathsf L_t^{\bullet,r}, t\in \mathscr{N}^r\cap (a,A)) \underset{r\rightarrow0}{\rightarrow} \mathrm{Chain}(\mathsf L_t^{\bullet,0}, t\in \mathscr{N}^0\cap (a,A)) 
\end{align*}
in $\mathbb L^c$ (see for example \cite[Lemma~2.4]{senizergues_geometry_2021}, in the context where the tree $\theta$ is just a line with $N$ vertices).
Now, using \ref{assum:chain1} and \ref{assum:chain2}, we have, for a fixed $a$,
\begin{align*}
	 \mathrm{Chain}(\mathsf L_t^{\bullet,0}, t\in \mathscr{N}^0\cap (a,A)) \underset{A\rightarrow\infty}{\rightarrow} \mathrm{Chain}(\mathsf L_t^{\bullet,0}, t\in \mathscr{N}^0\cap (a,\infty)).
\end{align*}
Combined with the previous display, this entails that we can find a deterministic sequence $(A_r)_r$ tending to infinity as $r\rightarrow0$ so that almost surely
\begin{align*}
	\mathrm{Chain}(\mathsf L_t^{\bullet,r}, t\in \mathscr{N}^r\cap (a,A_r)) \underset{r\rightarrow0}{\rightarrow} \mathrm{Chain}(\mathsf L_t^{\bullet,0}, t\in \mathscr{N}^0\cap (a,\infty)).
\end{align*}
Now, we can check that assumptions \ref{assum:convergence chains 2} and \ref{assum:convergence chains 3} ensure that 
\begin{align*}
	d_{\mathrm{GHP}}(\mathrm{Chain}(\mathsf L_t^{\bullet,r}, t\in \mathscr{N}^r\cap (a,A_r),\mathrm{Chain}(\mathsf L_t^{\bullet,r}, t\in \mathscr{N}^r\cap (a,\infty))))  \underset{r\rightarrow0}{\rightarrow} 0
\end{align*}
in probability so that 
\begin{align*}
	\mathrm{Chain}(\mathsf L_t^{\bullet,r}, t\in \mathscr{N}^r\cap (a,\infty))\underset{r\rightarrow0}{\rightarrow} \mathrm{Chain}(\mathsf L_t^{\bullet,0}, \  t\in \mathscr{N}^0\cap (a,\infty)),
\end{align*}
in probability.
Since we assumed that \ref{assum:chain3a} holds almost surely for $(\mathsf L_t^{\bullet,0}, \  t\in \mathscr{N}^0)$, for any fixed realization the ball of radius $r$ around the root of $\mathrm{Chain}(\mathsf L_t^{\bullet,0}, \  t\in \mathscr{N}^0)$ is contained in $\mathrm{Chain}(\mathsf L_t^{\bullet,0}, \  t\in \mathscr{N}^0\cap (a,\infty))$ for $a>0$ small enough. 
This ensures that we have the convergence 
\begin{align*}
	\mathrm{Chain}(\mathsf L_t^{\bullet,r}, t\in \mathscr{N}^r)\underset{r\rightarrow0}{\rightarrow} \mathrm{Chain}(\mathsf L_t^{\bullet,0}, \  t\in \mathscr{N}^0),
\end{align*}
in probability in $\mathbb L$, for this particular coupling. This ensures that in general, this convergence takes place in distribution, so that the proposition is proved.

\section{Aggregation processes}\label{sec:general version of the aggregation process}
\label{app:agg}

In this section, we introduce a general version of an \emph{aggregation process}.
We prove that under some reasonable assumptions that will be met in our applications, such a process yields a compact measured metric space in the limit and prove some properties of that limit (namely Hausdorff dimension, diffusivity of the measure).

This construction is very similar to the general line-breaking construction of Curien and Haas \cite{curien_random_2017}, except that we glue trees instead of segments, and that the times of gluing here can be dense instead of being discrete.
The ideas that we develop in this section are very similar to those already present in \cite{senizergues_random_2019} in a similar context of gluing random metric spaces together. 

In the next section, we prove the convergence of sequences of such process, under some reasonable assumptions, and this is the result that allows us to prove convergence results between different objects with this construction.   

\subsection{Construction and notation}
\subsubsection{Reminder of the aggregation process construction}
We recall the construction of an aggregation process from Section~\ref{subsec:presentation aggregation process}. We start with 
\begin{itemize}
	\item a \emph{weight process} $(W_t)_{t\geq t_0}$, which is a càdlàg increasing pure-jump process, such that $W_{t_0}>0$, (we denote by $\mathscr J$ its set of jump times),
	\item a \emph{seed} 
	$\mathsf S^\bullet$,
	which is a compact, rooted, pointed, measured length space (\emph{i.e.} an element of $\mathbb L^{\bullet,c}$) and has total mass equal to $W_{t_0}$. 
	\item a \emph{block-distribution family} $\eta=\left(\eta(t,w): \ t>t_0, w>0\right)$ of distributions on $\mathbb L^{c}$ that is such that for any $t,w$, an object $\mathsf{B}(t,w)$ sampled under the measure $\eta(t,w)$ almost surely has mass $w$. 
\end{itemize}
Note that in all this section, \textbf{we always consider $(W_t)_{t\geq t_0}$ as deterministic}. For constructions using a random $(W_t)_{t\geq t_0}$ we will first condition on $(W_t)_{t\geq t_0}$. 
Then, independently for any jump time $t\in \mathscr J$ of the weight process $(W_t)_{t\geq t_0}$ we have
\begin{itemize}
	\item A rooted metric space $\mathsf{B}_t$, with root $\rho_t$, carrying a mass measure $\nu_s$ of total mass $w_t=W_t-W_{t-}$, and which is sampled under the measure $\eta(t,w_t)$.
	\item $X_t$ a random point sampled on $\mathsf{S}^\bullet \sqcup \bigsqcup_{s\in\intervalleoo{t_0}{t}}\mathsf{B}_s$ according to a normalized version of $\nu_S+\sum_{t\in\intervalleoo{t_0}{t}} \nu_s$ which has finite total mass $W_{t-}$.
\end{itemize}
Then, the random metric space $\mathsf A_\infty^\bullet :=\mathrm{Aggreg}((W_t)_{t\geq t_0},\mathsf S^\bullet,\eta)$ that we construct is obtained, as a random pointed metric space, by identifying the pairs $(X_t,\rho_t)$ for all $t\in \mathscr{J}$, and keeping the root and marked point of $S^\bullet$. 
We endow this set with a probability measure $\mu_\infty$ defined as the following weak limit
\begin{align*}
	\mu_\infty:= \lim_{t\rightarrow \infty} \frac{1}{W_t}(\nu_S+\sum_{s\in\intervalleoo{t_0}{t}} \nu_s).
\end{align*}

\subsubsection{Construction coupled with a weighted recursive tree}
Let us be more precise here in the way that we define the points $(X_t, t\in \mathscr J)$: 
\begin{itemize}
	\item We let $(U_t, t\in \mathscr J)$ be independent uniform variables in $\intervalleff{0}{1}$,
	\item For every $t\in \mathscr J$ we let $K_t=\inf\enstq{s \leq t}{ W_s \geq U_t\cdot W_{t-}}$ so that $K_t$ takes values in $\{t_0\}\cup \mathscr J\cap \intervalleoo{t_0}{t}$. 
	Remark that for any $s\in \{t_0\}\cup \left(\mathscr J\cap \intervalleoo{t_0}{t}\right)$ we have, conditionally on the weight process,  $\Pp{K_t=s}=\frac{w_s}{W_{t-}}$.
	\item Then, conditionally on $K_t$, we let $X_t$ be sampled on $\sfB_{K_t}$ under a normalized version of the measure $\nu_{K_t}$, with $\sfB_{t_0}$ interpreted as the seed $\mathsf{S}$.
\end{itemize}
We can easily check that defined in the way described above, the points $X_t$ have the appropriate distribution. 
The introduction of the random variables $(K_t, t\in \mathscr J)$ allows us to define the following order relation $\prec$ on  $\{t_0\}\cup \mathscr J$ defined as the order relation generated by $s\prec t$ whenever $K_t=s$.

This relation encodes the tree structure along which the blocks $(\mathsf{B}_t,\ t \in \mathscr{J})$ are glued. 
A way to picture this structure is to imagine a discrete (locally infinite) tree $\ttT$ whose vertices are labelled by $\mathscr J$ and such for any $t\in \mathscr J$ the parent of the vertex labelled $t$ is $K_t$.
We can also define the corresponding increasing tree process $(\ttT_t)_{t\geq t_0}$ defined so that for any $t$, the tree $\ttT_t$ corresponds to the set of vertices of labels smaller than or equal to $t$.

From the construction, the process $(\ttT_t)_{t\geq t_0}$ is a form of weighted recursive tree as studied in \cite{mailler_random_2019} in the sense that for any $t\in \mathscr J$, the parent of vertex labelled $t$ is chosen among $\{t_0\}\cup \mathscr J \cap \intervalleoo{t_0}{t}$ with probability proportional to their weight, independently for all $t\in \mathscr J$.
Some properties of weighted recursive trees remain true in this general setting, like the following proposition.
\begin{proposition}\label{prop:height of random point is sum of bernoulli}
For any jump time $t\in \mathscr J$ the random variables
\begin{align*}
	(\ind{s\prec t}, s\in \mathscr J \cap \intervalleoo{t_0}{t})
\end{align*}
are independent Bernoulli random variables with respective parameter $\frac{w_s}{W_s}$.
%
\end{proposition}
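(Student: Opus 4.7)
The plan is to prove first the marginal statement $\Pp{s \prec t} = w_s/W_s$, and then upgrade it to mutual independence by exploiting that successive stretches of the ancestor chain are measurable with respect to disjoint families of parent-choice variables $(K_u)_u$, which are independent by construction.

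For the marginal, I would induct on the cardinality of $\mathscr{J}\cap(s,t)$. In the base case this set is empty, so $W$ is constant on $[s,t)$, giving $W_{t-}=W_s$, and the only way to have $s\prec t$ is $K_t=s$, which happens with probability $w_s/W_{t-}=w_s/W_s$. For the induction step, decompose on the value of $K_t$:
\begin{align*}
\Pp{s\prec t} = \Pp{K_t=s} + \sum_{u\in\mathscr J\cap(s,t)}\Pp{K_t=u}\,\Pp{s\prec u},
\end{align*}
using that $\{s\prec u\}$ is measurable with respect to $\sigma(K_v:v\in\mathscr J\cap(t_0,u])$ and therefore independent of $K_t$. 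Plugging in $\Pp{K_t=u}=w_u/W_{t-}$, the induction hypothesis $\Pp{s\prec u}=w_s/W_s$, and the identity $\sum_{u\in\mathscr J\cap(s,t)}w_u = W_{t-}-W_s$, one gets the telescoping
\begin{align*}
\Pp{s\prec t} = \frac{w_s}{W_{t-}}\left(1 + \frac{W_{t-}-W_s}{W_s}\right) = \frac{w_s}{W_s}.
\end{align*}

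For independence, the key observation is that the ancestors of $t$ in $\mathscr J$ form a chain in $\prec$ whose real-valued labels are automatically linearly ordered (since every vertex has a strictly smaller parent label). So for any finite increasing subset $s_1<s_2<\dots<s_k$ of $\mathscr J\cap(t_0,t)$, setting $s_{k+1}:=t$,
\begin{align*}
\bigcap_{i=1}^{k}\{s_i\prec t\} \;=\; \bigcap_{i=1}^{k}\{s_i\prec s_{i+1}\}.
\end{align*}
Each event $\{s_i\prec s_{i+1}\}$ is measurable with respect to $\sigma(K_u:u\in\mathscr J\cap(s_i,s_{i+1}])$, and the $(K_u)_{u\in\mathscr J}$ are independent, so these $\sigma$-algebras are mutually independent. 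Combined with the marginal formula applied on each subinterval, this yields
\begin{align*}
\Pp{\bigcap_{i=1}^{k}\{s_i\prec t\}} \;=\; \prod_{i=1}^{k}\Pp{s_i\prec s_{i+1}} \;=\; \prod_{i=1}^{k}\frac{w_{s_i}}{W_{s_i}}.
\end{align*}
Since this joint-success identity holds for every finite subset, mutual independence of the family $(\ind{s\prec t})_{s\in\mathscr J\cap(t_0,t)}$ with the stated Bernoulli parameters follows by the standard characterization of mutual independence of Bernoullis via joint success probabilities.

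The only real obstacle is bookkeeping: one must check carefully that $\{s\prec u\}$ is indeed $\sigma(K_v:v\in\mathscr J\cap(t_0,u])$-measurable (which follows from the iterative definition of $\prec$ via $K$), and that the decomposition of the ancestor chain into the segments $(s_i,s_{i+1}]$ partitions the relevant index set. Beyond this, no probabilistic subtlety enters, as the construction is explicitly a product of independent weighted parent selections.
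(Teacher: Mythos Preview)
The paper does not prove this statement; it simply cites \cite{curien_random_2017} and \cite{mailler_random_2019}. Your independence step---rewriting $\bigcap_i\{s_i\prec t\}$ as $\bigcap_i\{s_i\prec s_{i+1}\}$ and noting that these events live in the independent $\sigma$-fields $\sigma(K_u:u\in\mathscr{J}\cap(s_i,s_{i+1}])$---is correct and is essentially the standard argument.

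The marginal computation, however, has a real gap. You induct on the cardinality of $\mathscr{J}\cap(s,t)$, but in the paper's central application this set is infinite: the continuous weight process $(\cW_t)_{t\ge1}$ driving the construction of $\cM^\bullet$ has jump intensity proportional to $y^{-3/2}\,\dd y$ and therefore has infinitely many jumps in every interval, so an induction on cardinality cannot even be set up. The recursion you derive is still valid by countable additivity, and the constant function $u\mapsto w_s/W_s$ does satisfy it; what is missing is a uniqueness argument in place of the induction. One fix: setting $q_u=\Pp{s\prec u}-w_s/W_s$, the recursion gives $|q_t|\le(1-W_s/W_{t-})\sup_{u\in\mathscr{J}\cap(s,t)}|q_u|$, and taking the supremum over $t\in\mathscr{J}\cap(s,T]$ together with $W_{t-}\le W_T$ forces it to vanish. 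Alternatively, bypass the recursion: for every $u>s$ one has $\Ppsq{K_u=s}{K_u\le s}=w_s/W_s$ regardless of $u$, and since at each step of the ancestral chain from $t$ the conditional probability of jumping to a value $\le s$ is at least $W_s/W_{t-}>0$, the chain a.s.\ crosses level $s$ in finitely many steps and, when it does, lands exactly at $s$ with probability $w_s/W_s$.
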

\begin{proof}
	See \cite[Section~2.2]{curien_random_2017} or \cite[Corollary~8]{mailler_random_2019}. 
\end{proof}
\subsubsection{Organization of the section}
We recall the assumptions \ref{assum:weight process} and \ref{assum:blocks} which state that 
\begin{itemize}
	\item For some non-increasing function $\delta: \intervallefo{t_0}{\infty} \rightarrow\intervalleff{0}{1}$ that tends to $0$ as $t\rightarrow \infty$, for some random variables $Z_1$ and $Z_2$ we have for all $t\geq t_0$,
	\begin{align*}
		W_t\leq Z_1 \cdot t \quad \text{ and } \quad w_t\leq Z_2 \cdot t^{-2+\delta(t)}. 
	\end{align*}
\textbf{Note that here we see $(W_t)_{t\geq t_0}$ as deterministic so $Z_1$ and $Z_2$ are just constants.} Note that without loss of generality, we can always assume that $Z_1,Z_2\geq 1$, and we do so in the rest of this section.
	\item 
	We have
	\begin{align*}
		\sup_{t\geq t_0, w>0}\Pp{\frac{\diam \mathsf B(t,w)}{\sqrt{w}}\geq x}<C_1\cdot \exp\left(-C_2 x\right)
	\end{align*} 
\end{itemize}

The first goal of this section is to prove Proposition~\ref{prop:aggregate is well-defined}, which ensures that under the conditions above, the random measured metric space $\mathrm{Aggreg}((W_t)_{t\geq t_0},\mathsf S^\bullet,\eta)$ is well-defined as an element of $\mathbb{L}^{\bullet,c}$. 
Along the way, we will also provide some probabilistic bounds on the tail of the diameter of the obtained object that only depend on $Z_1$ and $Z_2$, in such a way that, whenever we work with a random weight process, it is possible to integrate those bounds with respect to $Z_1$ and $Z_2$, see Lemma~\ref{lem:diameter bounds of aggregate}. 
The way this is done is by first proving the almost sure compactness of the object (Lemma~\ref{lem:aggregate is compact + bound on hausdorff distance to seed} in Section~\ref{app:subsec:bounds on diameter and compactness}) and then proving that the probability measure that it carries is indeed almost surely well-defined, which is done in Proposition~\ref{prop:convergence to mu infty} of Section~\ref{app:subsec:endowing the aggregate with a probability measure}. 
The proof of Proposition~\ref{prop:aggregate is well-defined} can be found in Section~\ref{app:subsubsec:proof of prop aggregate is well-defined}.

The second goal is to prove Proposition~\ref{prop:dimension of aggregate is 3}, which under some additional conditions on the weight process $(W_t)_{t\geq t_0}$ and on the block distribution $\eta$, allows us to get its Hausdorff and Minkowski dimension, as well as some information about the measure $\mu_\infty$. This is done in Section~\ref{subsec:app:upper-bound on Minkowski dimension} and Section~\ref{subsec:app:lower-bound on the Hausdorff dimension}.

Third, we prove Proposition~\ref{prop:convergence of aggregation processes}, which ensures that under the appropriate conditions, a sequence of objects defined as the result of aggregation processes converge to an object constructed from a limiting version of the aggregation process. This is done in Section~\ref{subsec:app:convergence of aggregation processes}.

\subsection{Bounds on diameter and compactness of $\sfA_\infty$}
\label{app:subsec:bounds on diameter and compactness}
In this section, we prove that under assumptions \ref{assum:weight process} and \ref{assum:blocks} the object $\sfA_\infty^\bullet$ obtained from the aggregation process is almost surely compact and we provide some quantitative estimate on the tail of its diameter.
In this section, we omit the $\bullet$ superscript on pointed objects to lighten the notation.
\subsubsection{Bounding distance in the aggregate}
Recall the definition of $\sfA_\infty$ as the metric gluing
	\begin{align*}
		(A_\infty,\rho,v,d) :=\overline{\left(\mathsf S \sqcup \bigsqcup_{t> t_0} \mathsf B_t\right)/ \sim}
	\end{align*}
where every $\rho_t$ is identified with the corresponding point $X_t$. Then for any for any jump $s\in \mathscr{J}$ and any point $x\in \mathsf{B}_s$, by definition, the distance from $x$ to the seed $\mathsf{S}$ in $\sfA_\infty$ is bounded above by the sum of the contribution of all the block along the chain that links $\mathsf{B}_s$ to $\mathsf{S}$ in $\sfA_\infty$. Hence, recalling that $K_t$ is defined such that $X_t \in \mathsf{B}_{K_t}$,
\begin{align}\label{eq:distance in the aggregate is bounded above by sum of diameters}
	d(x,\mathsf{S})= d(x,\rho_s)+\sum_{t_0\prec u \prec s} d(X_u,\rho_{K_u}) &\leq \sum_{t_0\prec u\preceq s}\diam(\mathsf{B}_u).
\end{align}
The idea is then to use the probabilistic description of the set $\enstq{u}{t_0\prec u \prec s}$ and the tails of the diameter of the blocks $\mathsf{B}_u$ given by \ref{assum:blocks}.

For that, we first decompose the set of jump times $\mathscr J$ into
\begin{align*}
	\mathscr J= \mathscr E^* \sqcup \bigsqcup_{k=1}^\infty \mathscr E(2^{-k})
\end{align*}
where for any $\epsilon$ we set $\mathscr E(\epsilon):=\enstq{t>t_0}{\epsilon \le  w_t< 2\epsilon}$ and $\mathscr E^*=\enstq{t> t_0}{w_t \geq 1}$.
 
Now for $s\in \mathscr J$, for $x\in \mathsf B_s$, we decompose further the sum appearing on the right-hand-side of \eqref{eq:distance in the aggregate is bounded above by sum of diameters} according to the sizes of the different jumps involved in the sum
\begin{align}\label{eq:upper bound distances in the aggregate}
	d(x,\mathsf{S}) 
	&\leq \sum_{t_0\prec u\preceq s}\diam(\mathsf B_u)\notag\\
	&\leq \sum_{t_0\prec u\preceq s} \sum_{k=1}^\infty \diam(\mathsf B_u) \ind{u\in \mathscr E(2^{-k})} +\sum_{t_0\prec u\preceq s} \diam(\mathsf B_u) \ind{u\in \mathscr E^*}\notag\\
	&\leq \sum_{k=1}^\infty \underset{Y(k)}{\underbrace{\left(\max_{u\in \mathscr E(2^{-k})}\diam(\mathsf B_u)\right)}}  \underset{X(k)}{\underbrace{\left(\max_{r\in \mathscr{J}}\sum_{t_0\prec u\preceq r} \ind{u\in \mathscr E(2^{-k})}\right)}} + \underset{Z}{\underbrace{\sum_{u \in \mathscr E^*} \diam(\mathsf B_u)}} 
\end{align}
Remark that the last display does not depend on $x$ nor $s$ anymore, so that the Hausdorff distance $d_H(\sfA_{\infty},\mathsf{S})$ is smaller than the last display.
Now it just remains to prove that the right-hand-side is almost surely finite under our assumptions. 

The first part $X(k)$ will be handled using concentration inequalities about sums of independent Bernoulli random variables, thanks to Proposition~\ref{prop:height of random point is sum of bernoulli}.
For the second part $Y(k)$, we will use the tail behaviour for the diameter of the blocks that is ensured by \ref{assum:blocks}. 
Also, $Z$ is just a finite sum of random variables so it is almost surely finite. 

\subsubsection{Controls on the quantities $X(k)$ and $Y(k)$}
We prove here a technical result that contains bounds for the quantities $X(k)$ and $Y(k)$ that appear on the right-hand-side of \eqref{eq:upper bound distances in the aggregate}.
\begin{lemma}\label{lem:bounds on Xk and Yk}
Under assumptions \ref{assum:weight process} and \ref{assum:blocks}, there exists constants $c,C$ and independent random variables $X$ and $Y$ such that for all $k\geq 1$ we have
	\begin{align*}
		X(k) \le X \cdot (\log(Z_1 Z_2) +1) \cdot 7k\cdot \quad \text{ and } \quad Y(k) \le Y\cdot k \cdot  2^{-\frac{k}{2}} 
	\end{align*}
	and such that for $x\geq C$,
	\begin{align*}
		\Pp{X\geq x} \leq C \cdot Z_1 \cdot Z_2\cdot \exp\left(-cx^2 \right)
	\end{align*}
	and
	\begin{align*}
		\Pp{Y\geq x} \leq C \cdot Z_1 \cdot Z_2\cdot\exp\left(-c x \right). 
	\end{align*}
\end{lemma}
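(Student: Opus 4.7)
The plan is to control $X(k)$ and $Y(k)$ separately by a union bound over $\mathscr E(2^{-k})$ combined with concentration, then define the random variables $X$ and $Y$ as the suprema
\[
X := \sup_{k\geq 1}\frac{X(k)}{7k(\log(Z_1 Z_2)+1)},\qquad Y := \sup_{k\geq 1}\frac{Y(k)}{k\cdot 2^{-k/2}},
\]
and get the tail bounds on $X,Y$ by summing the per-$k$ bounds. Independence of $X$ and $Y$ is automatic: $Y$ is measurable with respect to the diameters $(\diam\mathsf B_u)_{u\in\mathscr J}$, whereas $X$ depends only on the parent assignments $(K_t)_{t\in\mathscr J}$ (equivalently the auxiliary variables $(U_t)_{t\in\mathscr J}$), and these two families are independent.

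\textbf{Preliminary step: cardinality of $\mathscr E(2^{-k})$.} A jump $u$ with $w_u\geq 2^{-k}$ satisfies, by \ref{assum:weight process}, $u^{2-\delta(u)}\leq Z_2\cdot 2^k$. Since $\delta(t)\to 0$, fixing a small $\epsilon>0$ this gives $u\leq T_k:=C_\epsilon (Z_2 2^k)^{1/(2-\epsilon)}$ for all but a bounded number of $u$'s. Combining with the total-mass bound $\sum_{u\in\mathscr E(2^{-k})}w_u\leq W_{T_k}\leq Z_1 T_k$ and $w_u\geq 2^{-k}$, we get $|\mathscr E(2^{-k})|\leq C\cdot Z_1\cdot Z_2\cdot e^{\beta k}$ for some explicit constant $\beta>0$.

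\textbf{Step on $Y(k)$.} For each $u\in\mathscr E(2^{-k})$, $w_u<2^{-k+1}$, so \ref{assum:blocks} yields $\P(\diam\mathsf B_u\geq y)\leq C_1\exp(-C_2\, y\,2^{(k-1)/2})$. A union bound over $\mathscr E(2^{-k})$ combined with the preliminary estimate gives
\[
\P\!\left(Y(k)\geq x\cdot k\cdot 2^{-k/2}\right)\leq C\cdot Z_1 Z_2\cdot e^{\beta k}\cdot C_1\exp\!\left(-C_2 x k/\sqrt 2\right).
\]
For $x$ larger than a universal constant, the right-hand side is summable in $k$ and dominated by $C Z_1 Z_2 e^{-c x}$, giving the claimed tail on $Y$.

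\textbf{Step on $X(k)$.} Rewrite $X(k)=\max_{u\in\mathscr E(2^{-k})} D(u)$ where $D(u):=\#\{v\in\mathscr E(2^{-k}):v\preceq u\}$. By Proposition~\ref{prop:height of random point is sum of bernoulli}, $D(u)-\mathbf 1_{u\in\mathscr E(2^{-k})}$ is a sum of independent Bernoullis with parameters $\{w_v/W_v:v\in\mathscr E(2^{-k})\cap (t_0,u)\}$. Using $w_v/W_v = 1-W_{v-}/W_v\leq\log(W_v/W_{v-})$ and telescoping, its mean is at most $\log(W_u/W_{t_0})\leq \log(Z_1 T_k/W_{t_0})\leq C(\log(Z_1 Z_2)+k)$ for $u\leq T_k$. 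A standard Chernoff/Bernstein bound for sums of independent Bernoullis then gives
\[
\P\!\left(D(u)\geq 7xk(\log(Z_1 Z_2)+1)\right)\leq\exp\!\left(-c\, x\, k\,(\log(Z_1 Z_2)+1)\right)
\]
for $x\geq$ some universal constant. Combining with the union bound over $u\in\mathscr E(2^{-k})$ (absorbing the $e^{\beta k}$ factor by taking $x$ large), summing over $k$, and using $\log(Z_1 Z_2)+1\geq 1$ to control the exponent in the quadratic regime, yields the tail bound $\P(X\geq x)\leq C Z_1 Z_2\exp(-c\, x^2)$ for $x\geq C$.

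\textbf{Main obstacle.} The conceptually delicate point is the per-$k$ concentration bound on $X(k)$: one must carefully exploit the Bernoulli representation of Proposition~\ref{prop:height of random point is sum of bernoulli} and make sure that the Chernoff exponent beats both the geometric factor $e^{\beta k}$ coming from $|\mathscr E(2^{-k})|$ and is strong enough in $x$ to produce the claimed quadratic tail, with constants uniform in $k$. The $Y(k)$ step is more straightforward and essentially amounts to a union bound against the sub-exponential tail assumption \ref{assum:blocks}.
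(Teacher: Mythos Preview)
Your overall strategy matches the paper's: define $X,Y$ as the indicated suprema, use the Bernoulli representation of Proposition~\ref{prop:height of random point is sum of bernoulli} plus a Chernoff bound for $X(k)$, and a union bound against \ref{assum:blocks} for $Y(k)$. The $Y(k)$ step and the independence argument are fine and essentially identical to the paper's.

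There are, however, two genuine problems in your treatment of $X(k)$. First, your bound on the mean of $D(u)$ via $\sum_v w_v/W_v \le \log(W_u/W_{t_0})\le \log(Z_1 T_k/W_{t_0})$ drags in $W_{t_0}$, which is \emph{not} controlled by $Z_1,Z_2$ (think of the application where $W_{t_0}=X$ is the random mass of the seed and can be arbitrarily small). The constants $c,C$ in the lemma must be independent of the weight process --- this is exactly what makes the subsequent moment bound in Lemma~\ref{lem:diameter bounds of aggregate} uniform. The paper avoids this by a different (and crucial) trick: it lower-bounds $W_v$ using \emph{only} the jumps in $\mathscr E(2^{-k})$ up to $v$, all of which lie in $[2^{-k},2^{-k+1})$, so that $\sum_{v\in\mathscr E(2^{-k})} w_v/W_v \le 2\sum_{j\le |\mathscr E(2^{-k})|} 1/j \le 2(1+\log(Z_1 Z_2\, 2^{2k}))$, giving the clean bound $7k(\log(Z_1 Z_2)+1)$ with no reference to $W_{t_0}$. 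Second, the Chernoff inequality you wrote down has a \emph{linear} exponent $\exp(-c\,x\,k(\log(Z_1 Z_2)+1))$; summing this over $k$ against the $e^{\beta k}$ union-bound factor yields only $\Pp{X\ge x}\le C Z_1 Z_2\,e^{-cx}$, not $e^{-cx^2}$. The vague appeal to a ``quadratic regime'' does not bridge this gap. The paper instead applies the quadratic multiplicative Chernoff bound $\Pp{S\ge(1+y)\mu_k}\le \exp(-\tfrac{1}{3}\mu_k y^2)$, then uses $\mu_k\ge 7k$ to get a per-$k$ exponent of the form $(c_1-c_2 x^2)k$, which after summation produces the stated sub-Gaussian tail.
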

As a preliminary result, we prove that deterministically, for all $k\geq 0$ we have
\begin{align}\label{eq:upper bound number of blocks of given scale}
	|\mathscr E(2^{-k})| \leq Z_1\cdot Z_2 \cdot 2^{2k}.
\end{align}
\begin{proof}[Proof of \eqref{eq:upper bound number of blocks of given scale}]
Thanks to \ref{assum:weight process}, there exists $Z_1,Z_2$ and some non-increasing function $\delta:\intervallefo{t_0}{\infty}\rightarrow \intervalleff{0}{1}$ so that for all $t\geq t_0$,
\begin{align*}
	W_t\leq Z_1\cdot  t \quad \text{ and } \quad w_t\leq Z_2\cdot t^{-2+\delta(t)}.
\end{align*}
From the above display, if for any $\epsilon$ we define $t_\epsilon$ as
$t_\epsilon= Z_2\cdot \epsilon^{-1}$, then for any $t> t_\epsilon$ we have 
\begin{align*}
	w_t\leq Z_2\cdot t^{-2+\delta(t)} \leq Z_2 \cdot t_\epsilon^{-1} < \epsilon.
\end{align*}
This entails that $\mathscr E(\epsilon)\cap \intervalleoo{t_\epsilon}{\infty}=\emptyset$. 
Hence we have
\begin{align*}
	W_{t_\epsilon} \geq \sum_{t\in \mathscr E(\epsilon)} w_t \geq |\mathscr E(\epsilon)| \cdot \epsilon.
\end{align*}
And so we get
\begin{align*}
	|\mathscr E(\epsilon)| \leq  \epsilon^{-1} \cdot W_{t_\epsilon} \leq  \epsilon^{-1}\cdot Z_1 \cdot t_\epsilon \leq Z_1\cdot Z_2 \cdot \epsilon^{-2}.
\end{align*}
Applying the former to $\epsilon=2^{-k}$ concludes the proof of \eqref{eq:upper bound number of blocks of given scale}.
\end{proof}

Now we turn to the proof of Lemma~\ref{lem:bounds on Xk and Yk}.
\begin{proof}[Proof of Lemma~\ref{lem:bounds on Xk and Yk}]
We first study the random variables $(X(k),\ k\geq 1)$.  
For that we write, using a union bound,
\begin{align*}
\Pp{X(k)\geq x}&= \Pp{\max_{u \in \mathscr E(2^{-k})}\left(\sum_{v\preceq u} \ind{v\in \mathscr E(2^{-k})}\right)\geq x}\\
&\leq \sum_{u \in \mathscr E(2^{-k})} \Pp{\sum_{v\preceq u} \ind{v\in \mathscr E(2^{-k})}\geq x}
\end{align*}
But for any such $u$ in the sum
\begin{align*}
\Pp{\sum_{v\preceq u} \ind{v \in \mathscr E(2^{-k})}\geq x} = \Pp{ \sum_{v\in\mathscr E(2^{-k})} \ind{v\preceq u} \geq x},
\end{align*}
and all the $\ind{v\preceq u}$, for $v< u$ are independent Bernoulli r.v. with parameters $\frac{w_v}{W_v}$, thanks to Proposition~\ref{prop:height of random point is sum of bernoulli}.
The terms $\ind{v\preceq u}$ for $v>u$ are identically $0$ and the term $\ind{u\preceq u}$ is identically $1$, which we interpret as Bernoulli random variables of parameter $0$ and $1$ respectively.
We can bound the sum of the parameters of those Bernoulli random variables by
\begin{align*}
1+\sum_{v \in \mathscr E(2^{-k})} \frac{w_v}{W_v} \leq 1 + \sum_{v \in \mathscr E(2^{-k})} \frac{w_v}{\sum_{u \in  \mathscr E(2^{-k})}w_u \ind{u \leq v}}
&\leq 1+ 2 \sum_{i=1}^{|\mathscr E(2^{-k})|} \frac{1}{i} \\
&\leq 1+ 2 \left(1+ \log(|\mathscr E(2^{-k})|)\right)\\
&\leq 2\log(Z_1Z_2) + 4k\log 2+3\\
&\leq (\log(Z_1Z_2)+1)\cdot 7k,
\end{align*}
where we use the bound \eqref{eq:upper bound number of blocks of given scale}. 
Then using the last display together with some classical estimates for sums of independent Bernoulli r.v.\ we get for any $y>0$
\begin{align*}
\Pp{\sum_{v\in \mathscr E(2^{-k})} \mathrm{Ber}\left( \frac{w_v}{W_v} \right)\geq (1+y) (\log(Z_1Z_2)+1)\cdot 7k}
 &\leq \exp\left(-\frac{1}{3}y^2\cdot 7k \right)
\end{align*}  
so that for all $x>1$,
\begin{align*}
&\Pp{X(k)\geq ((\log(Z_1 Z_2) +1) \cdot 7k)\cdot x}\\
&\leq |\mathscr E(2^{-k})|\cdot  \exp\left(-\frac{1}{3}\cdot (x-1)^2 \cdot 7k \right) \\
&\underset{\eqref{eq:upper bound number of blocks of given scale}}{\leq} Z_1 Z_2 \cdot  \exp\left(\left(-\frac{1}{3}\cdot (x-1)^2 \cdot 7k \right)  + 2k \log 2\right).
\end{align*}
We can find constant $c_1,c_2,C>0$ so that the expression that appears in the exponential in the last display is bounded above for all $x\geq 1$ and all $k\geq 1$ by $(c_1-c_2\cdot x^2) k + C$.
In the end, for all $x> 1$,
\begin{align*}
	\Pp{X(k)\geq ((\log(Z_1 Z_2) +1) \cdot 7k)\cdot x}
	&\leq C\cdot Z_1\cdot Z_2 \cdot \exp((c_1-c_2 x^2) \cdot k).
\end{align*}
Introducing 
\begin{align*}
	X:=\sup_{k\geq 1} \left(\frac{X(k)}{(\log(Z_1 Z_2) +1) \cdot 7k}\right),
\end{align*}
we can use an union-bound and write, for all $x$ sufficiently large so that $(c_1-c_2 x^2)\leq -1$ we have
\begin{align*}
	\Pp{X\geq x} \leq \sum_{k=1}^\infty C\cdot Z_1\cdot Z_2 \cdot \exp((c_1-c_2 x^2) \cdot k) &= C\cdot Z_1\cdot Z_2 \cdot \frac{\exp(c_1-c_2 x^2)}{1 - \exp(c_1-c_2 x^2)} \\
	&\leq C\cdot Z_1\cdot Z_2 \cdot \exp(-c_2 x^2),
\end{align*}
by adequately changing the value of $C$ at the last inequality.

Now let us turn to the study of $(Y(k), \ k \geq 1)$.
Recall that the blocks $(\mathsf B_s)_{s \in \mathscr{J}}$  are random and independent, and that they are such that
\begin{align*}
\Pp{\frac{\diam \mathsf B_s}{\sqrt{w_s}}\geq x}\leq C_1 \exp\left(-C_2 x\right).
\end{align*} 
Then, using a union-bound, the Markov inequality and \eqref{eq:upper bound number of blocks of given scale} we get 
\begin{align*}
\Pp{Y(k)\geq x\cdot k\cdot  2^{\frac{k}{2}}}
&=\Pp{\max_{u\in \mathscr E(2^{-k})}\diam(\mathsf B_u)\geq x \cdot k\cdot  2^{-k/2}}\\
 &\leq |\mathscr E(2^{-k})| \cdot C_1 \exp\left(-C_2 x k\right)\\
& \leq Z_1 \cdot Z_2\cdot C_1 \cdot  \exp \left( (2\log 2 -C_2 x)k\right)
\end{align*}
 which is summable in $k$ if we take $x$ large enough (in a way that does not depend on $Z_1$ and $Z_2$).
 In the end, introducing 
 \begin{align*}
 	Y:=\sup_{k\geq 1} \left(\frac{Y(k)}{k\cdot 2^{\frac{k}{2}}}\right),
 \end{align*}
 using a union bound we get 
\begin{align*}
\Pp{Y\geq x\cdot k\cdot  2^{\frac{k}{2}}}\leq  C \cdot Z_1 \cdot Z_2 \cdot\exp(-c x),
\end{align*}
by adequately changing the values of the constants $c,C$.
This concludes the proof of the lemma.
\end{proof}

\subsubsection{Proof of compactness of $\sfA_\infty$}
We now prove the following lemma.  
\begin{lemma}\label{lem:aggregate is compact + bound on hausdorff distance to seed}
	Under \ref{assum:weight process} and \ref{assum:blocks}, the aggregation process (considered without measure) yields an almost surely compact metric space $\mathsf{A}_\infty$. 
\end{lemma}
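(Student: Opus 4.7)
The plan is to leverage the uniform distance bound \eqref{eq:upper bound distances in the aggregate}, which controls $d(x,\mathsf S)$ for every $x\in\sfA_\infty$ in terms of the random variables $X(k),Y(k)$ and $Z$, together with the tail estimates of Lemma~\ref{lem:bounds on Xk and Yk}. This will first give that $\sfA_\infty$ is a.s.\ bounded (indeed, of finite Hausdorff distance to the seed). To upgrade boundedness to compactness, I will approximate $\sfA_\infty$ by a truncation $\sfA_\infty^{(N)}$ that retains only blocks of weight at least $2^{-N}$; this truncation is compact, and the residual tail will be shown to vanish in Hausdorff distance.

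For the first step, \eqref{eq:upper bound distances in the aggregate} yields $d_H(\sfA_\infty,\mathsf S)\le \sum_{k\ge 1}X(k)Y(k)+Z$. By Lemma~\ref{lem:bounds on Xk and Yk} one has $X(k)Y(k)\le 7XY(\log(Z_1Z_2)+1)\,k^2\,2^{-k/2}$ with $X,Y$ a.s.\ finite, and $\sum_k k^2 2^{-k/2}<\infty$. For $Z$, assumption \ref{assum:weight process} forces $t\lesssim Z_2$ whenever $w_t\ge 1$, and since $(W_t)$ is càdlàg increasing and bounded on compact sets, the number of jumps of size at least $1$ is finite (their sum is bounded above by $W_t$ at some fixed $t$). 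Combined with \ref{assum:blocks} ensuring each $\diam(\mathsf B_u)<\infty$ a.s., the finite sum $Z$ is a.s.\ finite.

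For compactness, set
\[
\sfA_\infty^{(N)} := \mathsf S \;\cup\; \bigcup_{t\in \mathscr E^*\cup \bigcup_{k\le N}\mathscr E(2^{-k})} \mathsf B_t \;\subset\; \sfA_\infty.
\]
By \eqref{eq:upper bound number of blocks of given scale}, this is a finite union of compact metric spaces, glued along a finite tree of identifications $\rho_t\sim X_t$, hence itself compact. Now take $x\in \mathsf B_s$ with $s\notin \sfA_\infty^{(N)}$ (so $w_s<2^{-N}$) and walk up the ancestral chain $s\succ K_s\succ K_{K_s}\succ\cdots$ until hitting the first ancestor in $\sfA_\infty^{(N)}$ (which must occur since $\mathsf S\subset \sfA_\infty^{(N)}$). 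All intermediate blocks have weight $<2^{-N}$, so the same argument yielding \eqref{eq:upper bound distances in the aggregate} gives $d(x,\sfA_\infty^{(N)})\le \sum_{k>N}X(k)Y(k)$, which tends to $0$ a.s.\ as $N\to\infty$ by the above summability. Thus $\sfA_\infty$ is totally bounded, and being complete by construction (completion of a gluing), it is a.s.\ compact.

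The main obstacle is the summability of $\sum_k X(k)Y(k)$, where the polynomial-in-$k$ growth of $X(k)$ must be defeated by the exponential-in-$k$ decay of $Y(k)$. The polynomial bound on $X(k)$ reflects a concentration estimate for a sum of independent Bernoullis (via the spine structure of Proposition~\ref{prop:height of random point is sum of bernoulli}) applied to a logarithmically growing mean controlled by $\log|\mathscr E(2^{-k})|\lesssim k$; the exponential decay of $Y(k)$ comes from the $\sqrt{w}$ scaling of block diameters in \ref{assum:blocks} against the exponentially small typical scale $2^{-k/2}$. Getting the right powers in both estimates in Lemma~\ref{lem:bounds on Xk and Yk} is the delicate part of the argument.
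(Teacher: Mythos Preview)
Your proof is correct and follows essentially the same approach as the paper: both define the truncations $\sfA_\infty^{(N)}$ containing only blocks of weight $\ge 2^{-N}$, use \eqref{eq:upper bound number of blocks of given scale} to see these are compact, and then invoke the tail bound $\sum_{k>N} X(k)Y(k)\to 0$ from Lemma~\ref{lem:bounds on Xk and Yk} to control the Hausdorff distance. The only cosmetic difference is that the paper phrases the conclusion as ``$(\sfA^{(k)})$ is Cauchy in Hausdorff distance, hence converges to a compact limit identified as $\sfA_\infty$'', whereas you phrase it as ``$\sfA_\infty$ is totally bounded and complete''; these are equivalent.
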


\begin{proof}[Proof of Lemma~\ref{lem:aggregate is compact + bound on hausdorff distance to seed}]
First, under the assumption of the lemma we can apply Lemma~\ref{lem:bounds on Xk and Yk} to ensure that almost surely 
\begin{align*}
\sum_{k\geq 1}^{\infty} X(k) Y(k) \leq  7 XY(1+\log(Z_1 Z_2)) \sum_{k=1}^{\infty}k^2 2^{-\frac{k}{2}}<\infty.
\end{align*}
Recalling \eqref{eq:upper bound distances in the aggregate}, this already ensure that the metric space $\sfA_\infty$ is bounded. 
Since we want to prove the stronger statement that it is compact, we need to work a little bit more.
First introduce for any $k\geq 1$ the set 
	\begin{align}\label{eq:definition A^(k)}
		\sfA^{(k)}:= \mathsf S \cup \bigcup_{t\in \mathscr E^*}\sfB_t \cup \bigcup_{i=0}^k\bigcup_{t\in \mathscr E(2^{-i})}\sfB_t
	\end{align}
seen as a subset of $\sfA_\infty$.
Since it is a finite union of compact sets, it is itself compact as well. 

Then, from the same argument that led to \eqref{eq:upper bound distances in the aggregate}, we can show that for any $1\leq k\leq \ell$ we have the following upper-bound on the Hausdorff distance between the compact subsets $\sfA^{(k)}$ and $\sfA^{(\ell)}$ 
\begin{align}\label{eq:A^(k) is a Cauchy sequence}
	d_{\rmH}(\sfA^{(k)},\sfA^{(\ell)})\leq 2 \sum_{i\geq k+1}^{\infty} X(i) Y(i).
\end{align}
Now since the right-hand-side of the inequality almost surely tends to $0$ as $k\rightarrow \infty$, this ensures that the sequence $(\sfA^{(k)})_{k\geq 0}$ of compact subsets of the Polish space $\sfA_\infty$ is a Cauchy sequence for the Hausdorff distance, hence it converges. 
We can then identify $\sfA_\infty$ as its limit, which is then indeed compact. This proves the claim of the lemma.
\end{proof}
To finish, we state a more quantitative result. 
\begin{lemma}\label{lem:diameter bounds of aggregate}
	Under \ref{assum:weight process} and \ref{assum:blocks}, the aggregation process (considered without measure) is such that
	for any $p\geq 1$,
	\begin{align*}
		\Ec{d_{\rmH}(\sfA_{\infty},\mathsf{S})^p}^{\frac{1}{p}}\leq  C_p\cdot\left(Z_1 Z_2^{3/2} +350 (1+\log(Z_1 Z_2)) (Z_1 Z_2)^{\frac{1}{p}}\right),
	\end{align*}
	where $C_p$ is a constant that only depends on $p$ and on $\eta$ but not on $(W_t)_{t\geq t_0}$.
\end{lemma}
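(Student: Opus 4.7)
}
The plan is to start from the deterministic upper bound \eqref{eq:upper bound distances in the aggregate}, which gives
\[
d_{\rmH}(\sfA_\infty,\mathsf{S}) \;\le\; Z \;+\; \sum_{k=1}^\infty X(k)\,Y(k),
\]
and then bound the $L^p$-norm of each term separately. The random variables $X(k)$ and $Y(k)$ are controlled by Lemma~\ref{lem:bounds on Xk and Yk}, while $Z=\sum_{u\in\mathscr E^*}\diam(\sfB_u)$ is a finite sum of diameters of blocks of mass $\ge 1$, controlled by \ref{assum:blocks}.

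For the $Z$ term, I would argue that the set $\mathscr E^*$ is finite with known cardinality and total mass. The same reasoning that proves \eqref{eq:upper bound number of blocks of given scale} (using $w_t \le Z_2 t^{-2+\delta(t)}$) shows $\mathscr E^*\subset (t_0, Z_2]$, and since each block in $\mathscr E^*$ has mass at least $1$ while $W_{Z_2}\le Z_1Z_2$, we get $|\mathscr E^*|\le Z_1 Z_2$ and $w_u\le Z_2$ for $u\in\mathscr E^*$. By the triangle inequality in $L^p$ and \ref{assum:blocks},
\[
  \Ec{Z^p}^{1/p} \;\le\; \sum_{u\in \mathscr E^*} \Ec{\diam(\sfB_u)^p}^{1/p} \;\le\; C_p \sum_{u\in \mathscr E^*}\sqrt{w_u} \;\le\; C_p\,|\mathscr E^*|\sqrt{Z_2} \;\le\; C_p\, Z_1 Z_2^{3/2}.
\]

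For the sum $\sum_k X(k)Y(k)$, Lemma~\ref{lem:bounds on Xk and Yk} gives $X(k)Y(k)\le 7XY(1+\log(Z_1Z_2))\,k^2\,2^{-k/2}$ with $X,Y$ \emph{independent}, so that summing in $k$ yields
\[
  \sum_{k\ge 1}X(k)Y(k) \;\le\; \bigl(1+\log(Z_1Z_2)\bigr)\,XY\,\cdot 7\sum_{k\ge 1}k^2 2^{-k/2} \;\le\; 350\,\bigl(1+\log(Z_1Z_2)\bigr)\,XY,
\]
using the numerical estimate $7\sum_{k\ge 1}k^2 2^{-k/2}=7\cdot\frac{2^{-1/2}(1+2^{-1/2})}{(1-2^{-1/2})^3}< 350$. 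It then remains to bound $\Ec{(XY)^p}^{1/p}$. By Cauchy--Schwarz in $L^p$, this is at most $\Ec{X^{2p}}^{1/(2p)}\Ec{Y^{2p}}^{1/(2p)}$, and the tail bounds of Lemma~\ref{lem:bounds on Xk and Yk} give
\[
  \Ec{X^{2p}} \;\le\; C_0^{2p} + \int_{C_0}^\infty 2p\,x^{2p-1}\cdot CZ_1Z_2\,e^{-cx^2}\dd x \;\le\; C_p\,Z_1Z_2,
\]
and similarly for $Y$ (with an exponential tail rather than Gaussian; the same kind of computation gives $\Ec{Y^{2p}}\le C_p Z_1Z_2$ when $Z_1Z_2\ge 1$). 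Hence $\Ec{(XY)^p}^{1/p} \le C_p (Z_1Z_2)^{1/p}$, and adding the two contributions yields the claimed bound.

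None of the steps is genuinely hard; the main subtlety is the careful bookkeeping of constants when integrating the tail bounds, and in particular checking that the explicit numerical constant $350$ is large enough to absorb the geometric sum. The Cauchy--Schwarz split into $L^{2p}$-norms is what lets the logarithm factor from Lemma~\ref{lem:bounds on Xk and Yk} appear \emph{outside} the $L^p$-computation, rather than interacting with the tail estimates on $X$ and $Y$.
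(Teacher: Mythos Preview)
Your proof is correct and follows essentially the same approach as the paper: start from \eqref{eq:upper bound distances in the aggregate}, bound $\sum_k X(k)Y(k)\le 350(1+\log(Z_1Z_2))XY$, apply Cauchy--Schwarz to control $\Ec{(XY)^p}^{1/p}$ via the tail bounds of Lemma~\ref{lem:bounds on Xk and Yk}, and handle $Z$ separately using $|\mathscr E^*|\le Z_1Z_2$ and $w_u\le Z_2$. The only cosmetic difference is that you bound $Z$ via the $L^p$ triangle inequality directly, whereas the paper factors out $\max_{u\in\mathscr E^*}\sqrt{w_u}$ before summing the normalized diameters; both routes give $\Ec{Z^p}^{1/p}\le C_p\,Z_1Z_2^{3/2}$.
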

\begin{proof}
	Thanks \eqref{eq:upper bound distances in the aggregate} and the previous lemma we have, 
	\begin{align*}
		d_{\rmH}(\sfA_{\infty},\mathsf{S}) &\leq Z+  XY(1+\log(Z_1 Z_2)) \cdot 7 \sum_{k=1}^{\infty}k^22^{-\frac{k}{2}} \\
		&\leq Z+350 \cdot XY (1+\log(Z_1 Z_2)).
	\end{align*}
Using the triangular inequality for the $L^p$ norm we get 
\begin{align*}
	\Ec{d_{\rmH}(\sfA_{\infty},\mathsf{S})^p}^{\frac{1}{p}} &\leq \Ec{Z^p}^{\frac{1}{p}}+  350 (1+\log(Z_1 Z_2))\cdot  \Ec{\left(XY \right)^p}^{\frac{1}{p}}\\
	&\leq \Ec{Z^p}^{\frac{1}{p}}+  350 (1+\log(Z_1 Z_2))\cdot  \Ec{X^{2p}}^{\frac{1}{2p}} \cdot \Ec{Y^{2p}}^{\frac{1}{2p}}
\end{align*}
where we used Cauchy-Schwarz in the second inequality. 
Using the tails of $X$ and $Y$ that are so that for $x$ larger than some constant	
\begin{align*}
	\Pp{X\geq x} \leq C \cdot Z_1 \cdot Z_2\cdot \exp\left(-cx^2 \right)\quad \text{and} 
	 \quad\Pp{Y\geq x} \leq C \cdot Z_1 \cdot Z_2\cdot\exp\left(-c x \right),
\end{align*}
we can get by integrating that there exists a constant $C_p$ such that 
\begin{align*}
	\Ec{X^{2p}}^{\frac{1}{2p}}\leq C_p (Z_1 Z_2)^{\frac{1}{2p}} \qquad \text{and} \qquad \Ec{Y^{2p}}^{\frac{1}{2p}}\leq C_p (Z_1 Z_2)^{\frac{1}{2p}}.
\end{align*}

Now for the term involving $Z$, without any additional assumption, we can use some crude bounds
	\begin{align*}
		\sum_{u \in \mathscr E^*} \diam(\mathsf B_u)= \sum_{u \in \mathscr E^*} \sqrt{w_u} \cdot \frac{\diam(\mathsf B_u)}{\sqrt{w_u}} &\leq (\max_{u \in \mathscr E^*} \sqrt{w_u}) \cdot \sum_{u \in \mathscr E^*}\frac{\diam(\mathsf B_u)}{\sqrt{w_u}}.
	\end{align*}
	In the last display, we have $(\max_{u \in \mathscr E^*} \sqrt{w_u})\leq \sqrt{Z_2}$ and $ \sum_{u \in \mathscr E^*}\frac{\diam(\mathsf B_u)}{\sqrt{w_u}}$ is a sum of independent variables with a common exponential tail containing $|\mathscr E^*|\leq Z_1 Z_2$ terms. 
	This ensures that, up to increasing the value of $C_p$, we get 
	\begin{align*}
		\Ec{Z^p}^{\frac{1}{p}} \leq Z_1 Z_2^{\frac{3}{2}} \cdot C_p.
	\end{align*}
	Putting everything together concludes the proof of the lemma.
	
\end{proof}

\subsection{Endowing $\mathsf{A}_\infty$ with a probability measure: definition of $\mu_\infty$}
\label{app:subsec:endowing the aggregate with a probability measure}
Now that under assumptions \ref{assum:weight process} and \ref{assum:blocks}, Lemma~\ref{lem:aggregate is compact + bound on hausdorff distance to seed} ensures that the object $\mathsf A_\infty :=\mathrm{Aggreg}((W_t)_{t\geq t_0},\mathsf S,\eta)$ is almost surely well-defined as a compact random metric space, we want to endow it with some probability measure. 
Recall the definition for any $t\geq t_0$ of the measure $\mu_t$
\begin{align*}
	\mu_t:=\frac{1}{W_t}\sum_{t_0 \leq s \leq t} \nu_s,
\end{align*}
where as usual we abuse notation by seeing every one of the blocks $\sfB_s$ as a subset of $\sfA_\infty$. 
The measure process $(\mu_t)_{t\geq t_0}$ takes values in the set of probability measures on $\sfA_\infty$ which is compact. 
Endowing the set of probability measure on $\sfA_\infty$ with the so-called Lévy-Prokhorov distance yields a compact metric space that has the topology of weak convergence.

\paragraph{Elements of notation.}
In what follows, we will need some elements of notation. For $t\geq t_0$: 
\begin{itemize}
	\item We let $\sfA_t\subset \sfA_\infty$ be defined as 
	\begin{align*}
		\mathsf{S} \cup \bigcup_{s\in \intervalleof{t_0}{t}} \sfB_s,
	\end{align*}
the subset of $\sfA_\infty$ constituted by the blocks arrived at time $s\leq t$. 
	\item We let $p_{t}:\sfA_\infty \rightarrow \sfA_t$ be the projection towards $\sfA_t$, which maps any $x\in \sfA_\infty$ to the point $p_t(x)$ which makes the distance $d(x,p_t(x))$ minimal. 
	\item We let $\sD(t)=\enstq{s\in \mathscr J}{s\succeq t}$, be the set of descendants of $t$, (recall the definition of $\preceq$ from the beginning of the section).
	\item We let $\sfD(t)= \overline{\bigcup_{s \in\sD(t)} \mathsf B_s}$ be the corresponding subset of $\sfA_\infty$.
	\item In a similar manner, for $\mathcal C\subset \sfA_t$ we let $\sD(t,\mathcal C)=\enstq{s\in \mathscr J}{p_{t}(\rho_s)\in \mathcal C}$. 
	\item Similarly, define $\sfD(t,\mathcal C)= \mathcal C \cup \overline{\bigcup_{s \in\sD(t,\mathcal C)} \mathsf B_s}$.
\end{itemize}

\begin{proposition}\label{prop:convergence to mu infty}
	The process of measures $(\mu_t)_{t\geq t_0}$ on $\sfA_\infty$ converges weakly to a limit $\mu_\infty$. 
	Furthermore, a random point $Y_\infty$ that has distribution $\mu_\infty$ conditionally on $\sfA_\infty$ is such that the events
	\begin{align*}
		\{Y_\infty \in \sfD(t)\} \quad \text{ for } t\in \mathscr J,
	\end{align*}
	are independent with respective probability $\frac{w_t}{W_t}$.
\end{proposition}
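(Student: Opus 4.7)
The plan is to prove this in three stages: establish a martingale identity for the masses of the descendant sets $\sfD(t)$, use this to construct the weak limit $\mu_\infty$, and then derive the independence claim combinatorially.

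For each $t \in \mathscr{J}$, set $M_t(s) := \sum_{u \succeq t,\, u \le s} w_u$, so that $\mu_s(\sfD(t) \cap \sfA_s) = M_t(s)/W_s$ and $M_t(t) = w_t$. At each jump $r > t$, the parent rule $\mathbb{P}(K_r = u \mid \mathcal{F}_{r-}) = w_u/W_{r-}$ immediately gives $\mathbb{P}(K_r \succeq t \mid \mathcal{F}_{r-}) = M_t(r-)/W_{r-}$, hence $\mathbb{E}[M_t(r) \mid \mathcal{F}_{r-}] = M_t(r-) \cdot W_r/W_{r-}$. Dividing by $W_r$ shows that $(M_t(s)/W_s)_{s \ge t}$ is a $[0,1]$-valued martingale, which by Doob converges a.s.\ and in $L^p$ to some $\xi_t \in [0,1]$ with $\mathbb{E}[\xi_t] = w_t/W_t$.

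Second, $\sfA_\infty$ is almost surely compact by Lemma~\ref{lem:aggregate is compact + bound on hausdorff distance to seed}, so the space of probability measures on it is weakly compact; hence $(\mu_s)$ has subsequential weak limits $\mu$. Since each $\sfD(t)$ is closed, portmanteau yields $\mu(\sfD(t)) \ge \limsup_s \mu_s(\sfD(t)) = \xi_t$; applying the analogous inequality to the open set $\sfD(t)^c$, together with the observation that each individual block satisfies $\mu_s(\sfB_u) = w_u/W_s \to 0$ (so no atomic mass accumulates at the finitely many gluing points in $\partial \sfD(t)$), gives equality $\mu(\sfD(t)) = \xi_t$ for every $t$ almost surely. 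Since $\{\sfD(t) : t \in \mathscr{J}\}$ forms a $\pi$-system generating the Borel $\sigma$-algebra of $\sfA_\infty$, these constraints pin down $\mu$ uniquely, so $\mu_s \Rightarrow \mu_\infty$ without passing to a subsequence.

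For the independence claim, fix distinct $t_1 < \cdots < t_k$ in $\mathscr{J}$ and condition on $\mathcal{F}_{t_k}$. The martingale identity at $s = t_k$ gives $\mathbb{P}(Y_\infty \in \sfD(t_k) \mid \mathcal{F}_{t_k}) = \mathbb{E}[\xi_{t_k} \mid \mathcal{F}_{t_k}] = w_{t_k}/W_{t_k}$. The other events $\{Y_\infty \in \sfD(t_i)\}$ are $\mathcal{F}_{t_k}$-measurable through the ancestor indicators $\mathbf{1}[t_i \prec t_k]$, since the subtrees $\sfD(t_i)$, $\sfD(t_k)$ are either nested (when $t_i \prec t_k$) or essentially disjoint (and so incompatible for $Y_\infty$). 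Proposition~\ref{prop:height of random point is sum of bernoulli} states precisely that these indicators are independent Bernoullis with parameters $w_{t_i}/W_{t_i}$, so by induction on $k$ (removing the largest time at each step) one recovers $\mathbb{P}\bigl(\bigcap_i \{Y_\infty \in \sfD(t_i)\}\bigr) = \prod_i w_{t_i}/W_{t_i}$.

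The main obstacle I expect is the equality $\mu(\sfD(t)) = \xi_t$ in step 2, rather than the mere inequality $\mu(\sfD(t)) \ge \xi_t$ that portmanteau supplies directly: the boundary $\partial \sfD(t)$ contains a gluing point, and ruling out atomic mass of $\mu_\infty$ there requires the vanishing-block observation, so this step needs care. The independence computation in step 3 is combinatorially delicate but is essentially a direct continuum analogue of Proposition~\ref{prop:height of random point is sum of bernoulli} using the Markov structure of the weighted recursive tree.
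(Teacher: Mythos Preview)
Your step 2 contains a genuine gap: the family $\{\sfD(t):t\in\mathscr J\}$ does \emph{not} generate the Borel $\sigma$-algebra of $\sfA_\infty$, so knowing $\mu(\sfD(t))=\xi_t$ for all $t$ does not pin down a subsequential limit $\mu$. To see this, take any two points $x,y$ in the seed $\mathsf S$ that are not gluing points. For every $t\in\mathscr J$ the intersection $\sfD(t)\cap\mathsf S$ is at most the single attachment point $\rho_t$, so neither $x$ nor $y$ lies in any $\sfD(t)$; the family therefore cannot separate $x$ from $y$, and the $\sigma$-algebra it generates is strictly coarser than Borel. Two candidate limits could in principle agree on every $\sfD(t)$ while distributing mass differently inside blocks. (One can try to rescue this by first arguing that any weak limit is supported on the completion boundary $\sfA_\infty\setminus\bigcup_u\sfB_u$, where the $\sfD(t)$ do separate points---but that is a nontrivial extra step you have not supplied, and even then one must check the $\sigma$-algebra is Borel on that subspace.) Relatedly, your ``no atom at $\rho_t$'' step is not justified: the observation $\mu_s(\sfB_u)\to 0$ gives, via portmanteau on the closed set $\{\rho_t\}$, only the trivial inequality $\mu(\{\rho_t\})\ge 0$.

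The paper avoids both issues by running the same martingale argument for $M_{t,s}(\mathcal C):=\mu_s(\sfD(t,\mathcal C))$ with \emph{arbitrary} measurable $\mathcal C\subset\sfA_t$, not just $\mathcal C=\sfB_t$. Applying this to a countable basis of each $\sfA_t$ yields convergence of the pushforwards $(p_t)_*\mu_s$ as $s\to\infty$; since $d_{\rmP}(\mu_s,(p_t)_*\mu_s)\le d_{\rmH}(\sfA_t,\sfA_\infty)\to 0$ as $t\to\infty$, one concludes directly that $(\mu_s)$ is Cauchy in the Prokhorov metric. This sidesteps any $\pi$-system or boundary-atom argument.

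Your step 3 is essentially right in spirit but the sentence ``the other events $\{Y_\infty\in\sfD(t_i)\}$ are $\mathcal F_{t_k}$-measurable'' is false as written: these events depend on $Y_\infty$. What is true (and what makes your computation work) is that $\mathbf 1\bigl[Y_\infty\in\bigcap_{i\le k}\sfD(t_i)\bigr]=\mathbf 1[Y_\infty\in\sfD(t_k)]\cdot\mathbf 1[t_i\prec t_k\ \forall i<k]$ up to a $\mu_\infty$-null set, and the second factor is $\mathcal F_{t_k}$-measurable. The paper instead takes the shorter route of applying Proposition~\ref{prop:height of random point is sum of bernoulli} to a point sampled from $\mu_s$ and passing to the limit.
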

The proof of Proposition~\ref{prop:convergence to mu infty} can be found in Section~\ref{subsubsec:proof of convergence measure AG}.
The methods of proof of this section rely on adaptions of arguments already present in \cite[Theorem~2.5]{curien_random_2017} and \cite[Lemma 5 and Section~4]{senizergues_random_2019}. 
\subsubsection{Generalized P\'olya urns with weight process $(W_t)_{t\geq t_0}$}
We start by describing an urn process, a slightly more general version of the one studied by Pemantle in \cite{pemantle_time_1990}. 
Let $a, b$ be two non-negative real numbers, with $a + b > 0$ ,
and $t_0$ be some real number and $(W_s)_{s\ge t_0}$ a non-increasing càdlàg pure jump process, whose jumps we call $(w_s)_{s\in \mathscr J}$ where $\mathscr J$ is the set of jump times of the process $s\mapsto W_s$. 

We refer to the
following process as a time-dependent Pólya urn starting at time $t_0$ with $a$ red balls and $b$ black balls and weight process $(W_s)_{s\ge t_0}$:
\begin{itemize}
	\item  At time $t_0$, the urn contains $a$ red balls and $b$ black balls with $a+b=W_{t_0}$,
	\item Then at every jump time $s$ of the process, a ball is drawn at random and replaced in the urn, along with $w_s$ additional balls of the same colour.
	For any $s\geq t_0$ we call $R_s$ the proportion of red balls in the urn at time $s$.
\end{itemize}
From that description, it is unclear if the definition of the process makes sense because we could draw infinitely many times in the urn in every finite interval of time. 
However, we can define a version of this whole process using a family $(V_r)_{r\in \mathscr{J}}$ i.i.d. random variables that are uniform on $\intervalleff{0}{1}$:
\begin{align}\label{eq:definition generalized polya urn martingale}
R_t = R_s + \sum_{t< r \leq s} \frac{w_r}{W_r}\cdot \left(\ind{V_r \leq R_{r-}}-R_{r-}\right). 
\end{align} 
In particular, with this definition we deterministically have for any $s \le t$,
\begin{align*}
|R_s -R_t| \leq \sum_{t< r \leq s} \frac{w_r}{W_r}\leq \frac{1}{W_t}\cdot (W_s-W_t)
\end{align*}
which ensures that the function that we define is almost surely càdlàg. From \eqref{eq:definition generalized polya urn martingale}, it is also immediate that this process is a martingale in its own filtration. Because its values are in $\intervalleff{0}{1}$ we automatically have the convergence of $R_s$ to some random variable $R_\infty$ almost surely and in all $L^p$ for $p\geq 1$.

From \cite[Theorem~4]{pemantle_time_1990}, easily adapted to this setting, if the process $(R_{t})_{t\geq t_0}$ starts with a positive value $R_{t_0}>0$, then we almost surely have
\begin{align}\label{eq:limiting value of pemantle urn is as positive}
	R_\infty>0.
\end{align}

\paragraph{Bounding the moments.}
In order to get a bound on the moments of $R_\infty$, we use the following reasoning, which is the same as the one used in the proof of \cite[Lemma~5]{senizergues_random_2019}.
For any jump time $s$ and integer $N\geq 2$ we have
\begin{align*}
\Ec{(R_s)^N}&= \Ec{\left(R_{s-}+ \frac{w_s}{W_s}\cdot \left(\ind{V_s \leq R_{s-}}-R_{s-}\right)\right)^N}\\*
&=\Ec{\left(\frac{W_{s-}}{W_s}R_{s-}+ \frac{w_s}{W_s}\ind{V_s \leq R_{s-}}\right)^N}\\
&\leq \left(\left(\frac{W_{s-}}{W_s}\right)^N + N \left(\frac{W_{s-}}{W_s}\right)^{N-1} \frac{w_s}{W_s} \right)\cdot  \Ec{(R_{s-})^N} + \sum_{\ell =0}^{N-2} \binom{N}{\ell}\Ec{(R_{s-})^{\ell +1}} \left(\frac{w_s}{W_s}\right)^{k-\ell}\\
&\leq \Ec{(R_{s-})^N} + \sum_{\ell =0}^{N-2} \binom{N}{\ell}\Ec{(R_{s-})^{\ell +1}} \left(\frac{w_s}{W_s}\right)^{k-\ell}.
\end{align*}
Hence, we write
\begin{align}\label{eq:recursion on moment for mass of red balls}
\Ec{(R_t)^N}&= \Ec{(R_{t_0})^N}+ \sum_{t_0< s \leq t} \left(\Ec{(R_s)^N}-\Ec{(R_{s-})^N}\right)\notag \\
&\leq \Ec{(R_{t_0})^N}+ \sum_{t_0< s \leq t}  \sum_{\ell =0}^{N-2} \binom{N}{\ell}\Ec{(R_{s-})^{\ell +1}} \left(\frac{w_s}{W_s}\right)^{k-\ell} \notag \\
&  \leq  \Ec{(R_{t_0})^N}+ \sum_{\ell =0}^{N-2} \binom{N}{\ell} \left(\sup_{s\geq t_0} \Ec{(R_{s-})^{\ell +1}}\right) \left(\sum_{s \geq t_0}  \left(\frac{w_s}{W_s}\right)^{N-\ell}\right).
\end{align}
This entails that we can have bounds on the moments of $R_\infty$ recursively. 
A useful inequality for those martingales is the following:
\begin{align}\label{eq:bound on variance of limiting mass}
	\Varsq{R_\infty}{R_{t}}= \Varsq{\sum_{r> t} \frac{w_r}{W_r}\cdot \left(\ind{V_r \leq R_{r-}}-R_{r-}\right)}{R_{t}}\leq R_{t}\cdot \sum_{r>t} \left(\frac{w_r}{W_r}\right)^2.
\end{align}

\subsubsection{Proof of Proposition~\ref{prop:convergence to mu infty}}
\label{subsubsec:proof of convergence measure AG}
We can already prove the almost sure well-definedness of $\mu_\infty$.
\begin{proof}[Proof of Proposition~\ref{prop:convergence to mu infty}]
	We can check the following: for any measurable set $\mathcal C \subset \sfA_t$, the process $s\mapsto M_{t,s}(\mathcal C)=\mu_s(\sfD(t,\mathcal C))$ has the distribution as the proportion of red balls in a time-dependent Pólya urn starting at time $t$ with $\mu_t(\mathcal C)$ red balls and $W_t - \mu_t(\mathcal C)$ black balls and weight process $(W_s)_{s\ge t}$. In particular, it is a martingale so it converges to an almost sure limit. 
	
	Note that this is already enough to get a convergence of the sequence $(\mu_t)_{t\geq t_0}$ because
	\begin{itemize}
		\item for a fixed $t$, using the martingale convergence on $M_{t,s}(\mathcal C)$ for a well-chosen countable number of subsets $\mathcal C$ yields the convergence of the process of measures $(p_t)_*\mu_s$;
		\item from the previous section, the Prokhorov distance between $\mu_s$ and $(p_t)_*\mu_s$ is smaller than $d_\rmH(\sfA_t,\sfA_\infty)$, which tends to $0$ as $t\rightarrow\infty$.
	\end{itemize}
	Hence by taking $t$ large and then $s$ large, we can show that $(\mu_s)_{s\geq t_0}$ is Cauchy for the Prokhorov distance. 
	Since we are working on a complete space, the associated space of measures is also complete and so $(\mu_s)$ converges to some limit $\mu_\infty$.
	
	The statement about the ancestry of a random point $Y_\infty$ taken under $\mu_\infty$ follows from the similar fact expressed for a point taken under $\mu_s$, see Proposition~\ref{prop:height of random point is sum of bernoulli} and the fact that $\mu_\infty$ is the limit of the sequence $(\mu_t)_{t\geq t_0}$. 
\end{proof}

\subsubsection{Quantitative results on the behavior of $(\mu_t)_{t\geq t_0}$}
We want to be a more precise than Proposition~\ref{prop:convergence to mu infty} so as to have some rate of convergence for these measures. 
This will be useful in the next section when we prove the GHP convergence of spaces constructed by an aggregation process.

First we prove a technical result that just follows from the assumption \ref{assum:weight process} on the weight process $(W_t)_{t\geq t_0}$.
\begin{lemma}\label{lem:sum of w/W to the power k}
	Under assumption \ref{assum:weight process} there exists a constant $C$ such that
\begin{align}
	\sum_{s \geq t}  \left(\frac{w_s}{W_s}\right)^{2} \leq \frac{C}{(W_t)^2} \cdot (Z_2)^2 \cdot Z_1 \cdot t^{-1+\delta(t)}.
\end{align}
If we furthermore assume \ref{assum:aghaus:weight process grows linearly} than for any $k\geq 2$
	\begin{align}
		\sum_{s \geq t}  \left(\frac{w_s}{W_s}\right)^{k} \leq t^{3-3k+o(1)}
	\end{align}
\end{lemma}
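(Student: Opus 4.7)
The plan is to deduce the first bound from an estimate on the raw sum $\sum_{s\ge t}w_s^2$ combined with the monotonicity $W_s\ge W_t$ for $s\ge t$, so that
\[
\sum_{s\ge t}\left(\frac{w_s}{W_s}\right)^2 \le \frac{1}{W_t^2}\sum_{s\ge t} w_s^2.
\]
I would split $\mathscr J\cap[t,\infty)$ by the dyadic scale of the jumps, writing $\mathscr E(2^{-k})=\{s:2^{-k}\le w_s<2^{1-k}\}$. The key observation is that if $s\in\mathscr E(2^{-k})$ then $2^{-k}\le w_s\le Z_2 s^{-2+\delta(s)}$ forces $s\le (Z_2 2^k)^{1/(2-\delta(s))}$; combined with $W_\cdot\le Z_1\cdot$, the total mass carried by such jumps before any given upper time is controlled by $Z_1(Z_2 2^k)^{1/(2-\delta(s))}$. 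This yields a count $|\mathscr E(2^{-k})\cap[t,\infty)|\le Z_1(Z_2 2^k)^{1/(2-\delta(t))}\cdot 2^k$, and summing $|\mathscr E(2^{-k})\cap[t,\infty)|\cdot 4\cdot 2^{-2k}$ over $k\ge k^*:=\log_2(Z_2^{-1}t^{2-\delta(t)})$ (the smallest scale that can still occur after time $t$) gives a geometric series whose first term, after simplification of exponents of $Z_2$, produces the announced bound up to constants.

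For the second claim, under \ref{assum:aghaus:weight process grows linearly} we have $W_s\ge c\, s$ for $s$ large enough (uniformly absorbing the random $Z_3$ into constants), so that
\[
\sup_{s\ge t}\frac{w_s}{W_s} \le \frac{Z_2 s^{-2+\delta(s)}}{c\, s}\Big|_{s=t} \le C\, t^{-3+\delta(t)}.
\]
The trivial inequality
\[
\sum_{s\ge t}\left(\frac{w_s}{W_s}\right)^{k} \;\le\; \Bigl(\sup_{s\ge t}\frac{w_s}{W_s}\Bigr)^{k-2}\sum_{s\ge t}\left(\frac{w_s}{W_s}\right)^{2}
\]
together with the first part of the lemma (applied with $W_t\sim Z_3 t$, giving $\sum(w_s/W_s)^2\le C\, t^{-3+\delta(t)}$) yields
\[
\sum_{s\ge t}\left(\frac{w_s}{W_s}\right)^{k} \le C\, t^{(k-2)(-3+\delta(t))}\cdot t^{-3+\delta(t)} = t^{3-3k+(k-1)\delta(t)} = t^{3-3k+o(1)},
\]
since $\delta(t)\to 0$.

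The main obstacle is the first claim: the naive count $|\mathscr E(2^{-k})|\le Z_1 Z_2 2^{2k}$ from \eqref{eq:upper bound number of blocks of given scale} is insufficient because it makes each dyadic contribution $O(1/W_t^{2})$ independently of $k$, so summing diverges. One must exploit the finer constraint $s\le (Z_2 2^k)^{1/(2-\delta(s))}$ to get a geometric decay in $k$; the exponents of $Z_2$ then recombine to exactly $Z_2$ (our computation gives a tighter $Z_1Z_2$ than the stated $Z_1Z_2^2$, which is harmless for later applications).
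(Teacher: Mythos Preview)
Your argument is correct and uses a different decomposition from the paper. For the first inequality the paper slices dyadically in \emph{time}: on each interval $(2^jt,2^{j+1}t]$ one writes $(w_s/W_s)^2\le w_s\cdot Z_2(2^jt)^{-2+\delta(t)}/W_t^2$ (bounding one factor of $w_s$ pointwise via \ref{assum:weight process} and using $W_s\ge W_t$), then controls the remaining sum by $\sum_{s\in(2^jt,2^{j+1}t]}w_s\le W_{2^{j+1}t}\le 2Z_1\cdot 2^{j}t$; summing the resulting geometric series in $j$ yields the claim with prefactor $Z_1Z_2$ (so the $Z_2^2$ in the statement is indeed slack, as you observed). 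You instead slice dyadically in \emph{jump size}, which forces you to sharpen the count \eqref{eq:upper bound number of blocks of given scale} to $|\mathscr E(2^{-k})|\le Z_1(Z_2 2^k)^{1/(2-\delta(t))}2^k$; you correctly note that the cruder bound would not sum. The paper's route is shorter because the telescoping $\sum w_s\le W$ avoids any jump-counting, while yours recycles the $\mathscr E(2^{-k})$ machinery already set up for the compactness argument. For the second inequality your reduction $\sum(w_s/W_s)^k\le(\sup_{s\ge t}w_s/W_s)^{k-2}\sum(w_s/W_s)^2$ is more economical than the paper, which simply reruns the dyadic-in-time estimate with the extra input $W_s\ge Z_3\, s(1+o(1))$ from \ref{assum:aghaus:weight process grows linearly}.
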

\begin{proof}[Proof of Lemma~\ref{lem:sum of w/W to the power k}]
	Under assumptions \ref{assum:weight process} we have
\begin{align*}
	W_t \leq Z_1 t \quad \text{and} \quad w_t \leq t^{-2+\delta(t)}.
\end{align*}
Hence for $s\geq t$,
\begin{align*}
	\left(\frac{w_s}{W_s}\right)^{k} \leq \frac{(Z_2\cdot t^{-1+\delta(t)})^{k-1}\cdot w_s}{(W_s)^k}.
\end{align*}
Hence 
	\begin{align*}
	\sum_{t< s \leq 2t}  \left(\frac{w_s}{W_s}\right)^{k} \leq \frac{(Z_2\cdot t^{-1+\delta(t)})^{k-1}}{W_t^2}  \cdot \left(\sum_{t< s \leq 2t}w_s\right) 
	\leq \frac{(Z_2\cdot t^{-1+\delta(t)})^{k-1}}{W_t^2}  \cdot Z_1 \cdot 2t 
\end{align*}
The first result follows by summing the last display for value of $t$ that double each time and taking $k=2$. 
The second assertion just follows from using the lower bound $W_t\geq Z_3 t$.
\end{proof}

We can now prove some quantitative estimates.
\begin{lemma}\label{lem:control limit mass of every block} Under \ref{assum:weight process} and \ref{assum:aghaus:weight process grows linearly}, for any $N\geq 1$ there exists a function $o(1)$ such that for all $t\geq t_0$ such that for all $t\in\mathscr{J}$,
	\begin{align*}
	\sup_{s\geq t}\Ec{M_{t,s}(\sfB_t)^N} \leq t^{-3N+o(1)}.
	\end{align*}
\end{lemma}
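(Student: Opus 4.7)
\textbf{Proof sketch for Lemma~\ref{lem:control limit mass of every block}.}
Fix $t\in\mathscr{J}$ and the block $\sfB_t$. As observed in the proof of Proposition~\ref{prop:convergence to mu infty}, the process $s\mapsto M_{t,s}(\sfB_t)$ is, after time $t$, distributed as the proportion of red balls in a time-dependent Pólya urn started at time $t$ with $w_t$ red balls and $W_t-w_t$ black balls, driven by the weight process $(W_s)_{s\geq t}$. In particular it is a bounded martingale with starting value
\begin{align*}
M_{t,t}(\sfB_t) = \frac{w_t}{W_t}.
\end{align*}
Using \ref{assum:weight process} and \ref{assum:aghaus:weight process grows linearly}, namely $w_t\leq Z_2 t^{-2+\delta(t)}$ and $W_t\sim Z_3 t$, we have $w_t/W_t \leq t^{-3+o(1)}$, so $\Ec{M_{t,t}(\sfB_t)^N}\leq t^{-3N+o(1)}$, which is the base value to plug into the moment recursion.

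The plan is to iterate the moment bound \eqref{eq:recursion on moment for mass of red balls}, applied to the urn started at time $t$ rather than at $t_0$, and then induct on $N$. Concretely, the inductive hypothesis is
\begin{align*}
\sup_{s\geq t}\Ec{M_{t,s}(\sfB_t)^{k}} \leq t^{-3k+o(1)} \qquad \text{for all } 1\leq k < N.
\end{align*}
The case $N=1$ is immediate from the martingale property: $\Ec{M_{t,s}(\sfB_t)}=w_t/W_t\leq t^{-3+o(1)}$. For the inductive step, \eqref{eq:recursion on moment for mass of red balls} gives
\begin{align*}
\Ec{M_{t,s}(\sfB_t)^N} \leq \Ec{M_{t,t}(\sfB_t)^N} + \sum_{\ell=0}^{N-2} \binom{N}{\ell} \left(\sup_{r\geq t} \Ec{M_{t,r}(\sfB_t)^{\ell+1}}\right) \sum_{r > t} \left(\frac{w_r}{W_r}\right)^{N-\ell}.
\end{align*}
The first term is bounded by $t^{-3N+o(1)}$. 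For each $\ell\in\{0,\dots,N-2\}$, the inductive hypothesis gives $\sup_{r\geq t}\Ec{M_{t,r}(\sfB_t)^{\ell+1}}\leq t^{-3(\ell+1)+o(1)}$, while Lemma~\ref{lem:sum of w/W to the power k} (applied with exponent $k=N-\ell\geq 2$) gives $\sum_{r>t}(w_r/W_r)^{N-\ell}\leq t^{3-3(N-\ell)+o(1)}$. Their product is $t^{-3(\ell+1)-3(N-\ell-1)+o(1)} = t^{-3N+o(1)}$, uniformly in $\ell$ since $N$ is fixed. Summing over the finite range of $\ell$ preserves the bound and closes the induction.

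The only mild subtlety is that the $o(1)$ terms in Lemma~\ref{lem:sum of w/W to the power k} and in the estimate on $w_t/W_t$ depend on $\delta(\cdot)$ and on the realization of $Z_1,Z_2,Z_3$; but since $N$ is fixed and the induction involves only finitely many applications of the lemma, all these $o(1)$ contributions can be absorbed into a single (random) $o(1)$ as $t\to\infty$. There is no real obstacle here — the argument is entirely a bookkeeping exercise around the moment recursion \eqref{eq:recursion on moment for mass of red balls} combined with the sharp tail estimates of Lemma~\ref{lem:sum of w/W to the power k}.
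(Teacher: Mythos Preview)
Your proposal is correct and follows essentially the same approach as the paper: induction on $N$, using the martingale property for the base case and the moment recursion \eqref{eq:recursion on moment for mass of red balls} together with Lemma~\ref{lem:sum of w/W to the power k} for the inductive step. Your discussion of the $o(1)$ bookkeeping is even slightly more explicit than the paper's.
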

\begin{proof}[Proof of Lemma~\ref{lem:control limit mass of every block}]
This proof follows the same idea as the proof of the analogous statement for the discrete time version of this process \cite[Lemma~5]{senizergues_random_2019}. 
The proof is by induction. For $N=1$ then the statement just follows from the fact that we are working with a martingale and that the initial value has the correct order.  
Then, for $N\geq 2$ we use the recursion given in \eqref{eq:recursion on moment for mass of red balls} and apply it to the process $s\rightarrow M_{t,s}(\sfB_t)$ to get 
\begin{align*}
	\sup_{s\ge t} \Ec{(M_{t,s}(\sfB_t))^N} &\leq \Ec{(M_{t,t}(\sfB_t))^N}+ \sum_{\ell =0}^{N-2} \binom{N}{\ell} \left(\sup_{s\geq t} \Ec{((M_{t,s-}(\sfB_t)))^{\ell +1}}\right) \left(\sum_{s \geq t}  \left(\frac{w_s}{W_s}\right)^{N-\ell}\right)\\
	&\leq \left(\frac{w_t}{W_t}\right)^N +\sum_{\ell =0}^{N-2} \binom{N}{\ell} t^{-3(\ell +1) + o(1)} \cdot t^{3-3(N-\ell)+o(1)}\\
	&\leq t^{-3N+o(1)},
\end{align*}
where in the last inequality we used assumption \ref{assum:weight process}, the induction hypothesis and the previous lemma. 
\end{proof}
\subsection{Proof of Proposition~\ref{prop:aggregate is well-defined}}\label{app:subsubsec:proof of prop aggregate is well-defined}
\begin{proof}[Proof of Proposition~\ref{prop:aggregate is well-defined}]
By construction, the space $\sfA_\infty^\bullet$  (considered without measure first) is a random rooted, pointed, length space. 
From Lemma~\ref{lem:aggregate is compact + bound on hausdorff distance to seed}, we get that under the assumptions \ref{assum:weight process} and \ref{assum:blocks}, it is almost surely compact. 
From Proposition~\ref{prop:convergence to mu infty}, it can almost surely be endowed with a probability measure $\mu_\infty$, thus making it a random rooted, pointed, measured compact length space, thus a random variable in $\bL^{\bullet,c}$.
Reasoning conditionally on the random weight process $(W_t)_{t\geq t_0}$, we get from Lemma~\ref{lem:diameter bounds of aggregate}, which is stated in the case of a deterministic weight process, that for any $p\geq 1$,
\begin{align*}
	\Ecsq{d_{\rmH}(\sfA_{\infty},\mathsf{S})^p}{(W_t)_{t\geq t_0}}\leq  \left(C_p\cdot\left(Z_1 Z_2^{3/2} +350 (1+\log(Z_1 Z_2)) (Z_1 Z_2)^{\frac{1}{p}}\right) \right)^p.
\end{align*}
Integrating the inequality above with respect to $(W_t)_{t\geq t_0}$ ensures that if $Z_1$ and $Z_2$ admit sufficiently high moment then $\Ec{d_{\rmH}(\sfA_{\infty},\mathsf{S})^p}<\infty$. This concludes the proof.
\end{proof}
\subsection{Upper-bound on Minkowski dimension}\label{subsec:app:upper-bound on Minkowski dimension}
In this section, we provide an upper-bound on the Minkowki dimension of the metric space $\sfA_\infty$, under some additional assumption \ref{assum:Minkowski coverings} on the laws of the blocks that we glue and the assumption that $\mathsf{S}$ has Minkoswki dimension less than $3$. This is the content of Proposition~\ref{prop:dimension of aggregate is 3}\ref{it:minkowski dimension is less than 3}. 

For any metric space $X$, we denote by $N_\epsilon(X)$ the minimal number of balls of radius $\epsilon$ needed to cover the space $X$. 
The upper-Minkowski dimension of $X$ is obtained as
\begin{align*}
	\overline{\dim}_{\mathrm{Mink}}(X)=\limsup_{\epsilon \rightarrow 0} \frac{\log N_\epsilon(X)}{- \log \epsilon}. 
\end{align*} 
By monotonicity, it is sufficient to take the above limsup along a sequence $(\epsilon_k)$ that is such that $\epsilon_k\rightarrow 0$ and $\frac{\log \epsilon_{k+1}}{\log{\epsilon_k}}\underset{k \rightarrow \infty}{\rightarrow} 1$

\begin{proof}[Proof of Proposition~\ref{prop:dimension of aggregate is 3}\ref{it:minkowski dimension is less than 3}]
	Using the same notation as in the last paragraph of the previous subsection, we want to construct a covering of $\sfA_\infty$ by balls of small radius. 
	
	Fix some $k\geq 1$ and set $\epsilon_k=2^{-\frac{k}{2}}$. 
	Then we consider an $\epsilon_k$-covering of the set $\sfA^{(k)}$ defined in \eqref{eq:definition A^(k)}. 
	The number of balls needed is then bounded above by
	\begin{align*}
		N_{\epsilon_k}(\mathsf{S}) +\sum_{t\in  \mathscr J: w_t\geq 2^{-k}} N_{\epsilon_k}(\sfB_t).
	\end{align*}
	Thanks to assumption \ref{assum:Minkowski coverings}, the expectation of the sum appearing in the last display is then smaller than 
	\begin{align*}
		\sum_{t\in  \mathscr J: w_t\geq 2^{-k}} C \cdot (1 \vee w_t \epsilon_k^{-2}) &\leq C \cdot  \epsilon_k^{-2} \cdot  \left(  \sum_{t\in  \mathscr J: w_t\geq 2^{-k}} w_t \right)  \\
		&\leq C \cdot (2^{-k})^{-\frac{3}{2}+f(k)},
	\end{align*}
	where the function $f(k)$ depends on $Z_1$ and $Z_2$ and tends to $0$ as $k\rightarrow\infty$. 
	Using Markov's inequality we have
	\begin{align*} \Pp{\sum_{t\in  \mathscr J: w_t\geq 2^{-k}} N_{\epsilon_k}(\sfB_t)\geq  (2^{-k})^{-\frac{3}{2}+f(k)-\frac{1}{\sqrt{k}}}} \leq C 2^{-\sqrt{k}},
	\end{align*}
	and the right-hand-side is summable in $k$. Hence using the Borel-Cantelli lemma, with probability one we have $\sum_{t\in  \mathscr J: w_t\geq 2^{-k}} N_{\epsilon_k}(\sfB_t)\leq  (2^{-k})^{-\frac{3}{2}+o(1)}$. 
	Note that because we assumed that $\mathsf{S}$ had upper-Minkowski dimension less than $3$ we have $N_{\epsilon_k}(\mathsf{S})\leq (\epsilon_k)^{3+o(1)}=(2^{-\frac{k}{2}})^{-\frac{3}{2}+o(1)}$, so that
	\begin{align*}
		N_{\epsilon_k}(\sfA^{(k)}) \leq N_{\epsilon_k}(\mathsf{S}) + \sum_{t\in  \mathscr J: w_t\geq 2^{-k}} N_{\epsilon_k}(\sfB_t) \leq  (2^{-\frac{k}{2}})^{-\frac{3}{2}+o(1)}.
	\end{align*}
	
	From there we consider $\epsilon'_k =2^{-\frac{k}{2}}+2\sum_{i\geq k+1}^{\infty} X(i) Y(i)$, where the $X(k),Y(k)$ are those appearing in \eqref{eq:upper bound distances in the aggregate}.
	Thanks to Lemma~\ref{lem:bounds on Xk and Yk}, we almost surely have $2^{-\frac{k}{2}} \leq \epsilon'_k \leq \cst k^2 2^{-k/2}$. 
	Now thanks to \eqref{eq:A^(k) is a Cauchy sequence} we have 
	$d_{\rmH}(\sfA^{(k)},\sfA_\infty) \leq 2\sum_{i\geq k+1}^{\infty} X(i) Y(i)$ so any point of $\sfA_\infty$ is at most at distance $2\sum_{i\geq k+1}^{\infty} X(i) Y(i)$ away from any point of $\sfA^{(k)}$.
	This ensures that considering the collection of balls with the same centres as in the first covering but where the radius is changed from $\epsilon_k$ to $\epsilon'_k$ yields a covering of $\sfA_\infty$.
	
	In the end, on an event of probability $1$ we have 
	\begin{align*}
		\limsup_{k\rightarrow \infty} \frac{\log N_{\epsilon'_k}(\sfA_\infty)}{- \log \epsilon'_k}\leq 3. 
	\end{align*}
	with a sequence $(\epsilon_k')$ that tends to $0$ and such that $\frac{\log \epsilon_{k+1}'}{\log{\epsilon_k'}}\rightarrow 1$. This concludes the proof.
\end{proof}
\subsection{Lower-bound on the Hausdorff dimension.}\label{subsec:app:lower-bound on the Hausdorff dimension}
In this section, we prove a lower-bound of $3$ on the Hausdorff dimension of $\mathsf{A}_\infty$ under the additional assumptions \ref{assum:aghaus:weight process grows linearly}, \ref{assum:aghaus:small jumps don't contribute to weight process} and \ref{assum:aghaus:random point on block is far from root}. 
This is the content of Proposition~\ref{prop:dimension of aggregate is 3}.\ref{it:Hausdorff dimension is more than 3}.
We follow the same idea as in the proof of  \cite[Proposition~12]{senizergues_random_2019}.  
We will show that almost surely, for a point $Y_\infty$ sampled from $\mu_\infty$, we have 
\begin{align*}
	\mu_\infty(\mathrm{Ball}(Y_\infty,r)) \leq r^{3 + o(1)} \quad \text{ as } r\rightarrow 0.
\end{align*}
Using the so-called mass-distribution principle, the last display entails that the Hausdorff dimension of our object is at least $3$. It also ensures that $\mu_\infty$ is diffuse.  

In order to get the last display, we first introduce the following notation 
\begin{align*}
	Y_t:=p_t(Y_\infty).
\end{align*}
We can remark that if $Y_\infty \in \sfD(t)$ for some $t\geq t_0$ then we have 
\begin{align}
	\mathrm{Ball}(Y_\infty, d(Y_\infty,Y_{t-})) \subset \sfD(t),
\end{align} 
where $\sfD(t)$ is the substructure descending from $\sfB_t$. Then, since we have controls on the moments of $\mu_\infty(\sfD(t))= M_{t,\infty}(\sfB_t)$ and an explicit description of $d(Y_\infty,Y_{t-})$ we can get that roughly $d(Y_\infty,Y_{t-})\approx t^{-1}$ and $M_{t,\infty}(\sfB_t) \leq t^{-3}$, which will yield the result. 
\begin{proof}[Proof of Proposition~\ref{prop:dimension of aggregate is 3}.\ref{it:Hausdorff dimension is more than 3}.]
First, for any $\delta$ positive, we consider the set $\mathscr{S}^\delta=\enstq{t\in \mathscr J}{w_t\geq t^{-2-\delta}}$.
For any $t\in \mathscr{S}^\delta$ and any $N\geq 1$ we have $\Ec{M_{t,\infty}(\sfB_t)^N}\leq t^{-3N+o(1)}$, thanks to Lemma~\ref{lem:control limit mass of every block}.
For any $\epsilon>0$ we can use Markov's inequality and get that
$\Pp{M_{t,\infty}(\sfB_t)\geq t^{-3+\epsilon}}\leq t^{-N\epsilon + o(1)}$. 
Taking $N$ large enough, the last display is summable over $t\in \mathscr{S}^\delta$. Doing this reasoning for a sequence of $\epsilon$ tending to $0$ and using the Borel-Cantelli lemma, we get that along $t\in \mathscr{S}^\delta$ we have 
\begin{align*}
\mu_\infty (\sfD(t))= M_{t,\infty}(\sfB_t)\leq  t^{-3 +o(1)}, 
\end{align*}
where the function $o(1)$ in the exponent is random. 

Now, let $\epsilon$ be a positive number and for any $n$ we let $t_n=\exp\left((1+\epsilon)^n\right)$. 
\begin{lemma}\label{lem:local behaviour close to random point}
Under our assumptions, with probability $1$, for all $n$ large enough, there exists $\hat t_n\in \intervallefo{t_{n-1}}{t_{n}}\cap \mathscr S^\delta$ such that $Y_\infty \in \sfD(\hat t_n)$ and $d(Y_\infty, Y_{\hat t_n-})\geq a \cdot t_n^{-1-\delta/2}$.
\end{lemma}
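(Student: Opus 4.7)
The strategy is to search along the ``ancestral chain'' of $Y_\infty$ in the tree $\ttT$, namely the random set $\Xi := \enstq{t\in \mathscr J}{\xi_t = 1}$ with $\xi_t := \ind{Y_\infty \in \sfD(t)}$. By Proposition~\ref{prop:convergence to mu infty} the indicators $(\xi_t)_{t \in \mathscr J}$ are independent Bernoullis of parameter $w_t/W_t$; moreover $\Xi$ is almost surely totally ordered by $\preceq$, since two incomparable descendant regions $\sfD(s), \sfD(t)$ are disjoint inside $\sfA_\infty$ (the unique exit of $\sfD(s)$ is the attachment point $X_s$, which cannot lie in $\sfD(t)$ when $s$ is not a descendant of $t$).

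The first step is to show that $\Xi$ has many elements inside $\intervallefo{t_{n-1}}{t_n}\cap \mathscr S^\delta$. Splitting $\intervallefo{t_{n-1}}{t_n}$ dyadically and combining \ref{assum:aghaus:weight process grows linearly} (which yields $W_t\asymp Z_3 t$) with \ref{assum:aghaus:small jumps don't contribute to weight process} (which lets us discard the jumps outside $\mathscr S^\delta$), each dyadic piece contributes at least a positive constant to $\sum w_t/W_t$, hence
\[
\E\!\left[\left|\Xi \cap \intervallefo{t_{n-1}}{t_n} \cap \mathscr S^\delta\right|\right] \;=\; \sum_{t \in \intervallefo{t_{n-1}}{t_n} \cap \mathscr S^\delta} \frac{w_t}{W_t} \;\gtrsim\; \log(t_n/t_{n-1}) \;=\; \epsilon(1+\epsilon)^{n-1} \;=:\; m_n.
\]
Since the $\xi_t$ are independent, the variance of this cardinality is bounded by its mean, so Chebyshev's inequality gives that it is at least $m_n/2$ with probability $1-O(1/m_n)$; as $m_n$ grows exponentially in $n$, Borel--Cantelli yields almost surely $\left|\Xi \cap \intervallefo{t_{n-1}}{t_n} \cap \mathscr S^\delta\right| \geq m_n/2$ for all $n$ large.

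Enumerate the elements of $\Xi \cap \intervallefo{t_{n-1}}{t_n} \cap \mathscr S^\delta$ as $t^{(n)}_1 \prec \dots \prec t^{(n)}_{N_n}$, and for $i<N_n$ let $s^{(n)}_i$ denote the unique child of $t^{(n)}_i$ in $\ttT$ lying on the path to $t^{(n)}_{i+1}$. The key point is that, conditionally on the tree $\ttT$ \emph{and} on the indicators $(\xi_t)$, the attachment positions $(X_{s^{(n)}_i})_{i<N_n}$ remain independent, each distributed as $\nu_{t^{(n)}_i}/w_{t^{(n)}_i}$ on $\sfB_{t^{(n)}_i}$: indeed $(\xi_t)$ is a function of the tree structure and weight process only, while each $X_s$ is sampled from its block using independent randomness given $K_s$. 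Applying \ref{assum:aghaus:random point on block is far from root} independently to each $X_{s^{(n)}_i}$, the probability that none of them satisfies $d(X_{s^{(n)}_i}, \rho_{t^{(n)}_i}) \geq a\sqrt{w_{t^{(n)}_i}}$ is at most $(1-b)^{N_n - 1}$, which is summable in $n$; a second Borel--Cantelli application then yields such an index $i=i(n)$ almost surely for all $n$ large.

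Setting $\hat t_n := t^{(n)}_{i(n)}$, the condition $Y_\infty \in \sfD(\hat t_n)$ forces $Y_{\hat t_n -} = \rho_{\hat t_n}$ (the unique gateway from $\sfD(\hat t_n)$ to $\sfA_{\hat t_n -}$), and the geodesic from $Y_\infty$ to $\rho_{\hat t_n}$ in $\sfA_\infty$ must pass through $X_{s^{(n)}_{i(n)}} = \rho_{s^{(n)}_{i(n)}}$, so
\[
d(Y_\infty, Y_{\hat t_n -}) \;\geq\; d\bigl(X_{s^{(n)}_{i(n)}},\, \rho_{\hat t_n}\bigr) \;\geq\; a\sqrt{w_{\hat t_n}} \;\geq\; a\, \hat t_n^{-1-\delta/2} \;\geq\; a\, t_n^{-1-\delta/2},
\]
where we used $\hat t_n \in \mathscr S^\delta$ for the penultimate inequality and $\hat t_n < t_n$ for the last. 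The main obstacle to a more direct approach is that the naively tempting events $\{\xi_t = 1,\ d(Y_\infty, \rho_t) \geq a\sqrt{w_t}\}$ are not independent across $t$; the device above circumvents this by first committing to the entire chain $\Xi \cap \intervallefo{t_{n-1}}{t_n}\cap \mathscr S^\delta$ and only then exploiting the genuine conditional independence of the attachment points along it.
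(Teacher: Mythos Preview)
Your proof is correct and reaches the same conclusion as the paper, but by a genuinely different (and more explicit) route. The paper's argument directly asserts that the events $\{Y_\infty\in\sfD(t),\ d(Y_t,Y_{t-})\geq a\sqrt{w_t}\}$ are mutually independent across $t$ in the slice, and then bounds the ``none'' probability by $\prod_t(1-b\,w_t/W_t)\leq\exp\bigl(-b\sum_t w_t/W_t\bigr)$, invoking the separate Lemma~\ref{lem:sum of w/W along a slice} for the sum. You instead decouple the two sources of randomness: first use only the independence of the Bernoullis $\xi_t$ (Proposition~\ref{prop:convergence to mu infty}) and Chebyshev to show that the ancestral chain visits the slice at least $\asymp m_n$ times; then, conditioning on $\ttT$ and $(\xi_t)$, exploit the residual independence of the attachment points to find a deep one among them. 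This yields a two-step Borel--Cantelli rather than one. The paper's approach is shorter but leaves the independence claim unjustified; justifying it carefully amounts precisely to the conditional-independence observation you spell out, so your version is arguably more transparent about where the independence actually comes from.

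One imprecision to fix: the sentence ``$(\xi_t)$ is a function of the tree structure and weight process only'' is not literally true, since $(\xi_t)$ also depends on the randomness used to sample $Y_\infty$. What you need (and what does hold) is that $(\xi_t)_t$ is \emph{conditionally independent} of the blocks $(\sfB_s)$ and of the attachment positions $(X_s)$ given $\ttT$. This follows because each mass $\mu_\infty(\sfD(t))$ is $\ttT$-measurable (it is the limit of an urn driven only by the tree structure), so the conditional law of $(\xi_t)_t$ given the full construction equals its conditional law given $\ttT$ alone. With that correction, the rest of your argument goes through verbatim.
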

Assuming that the result of the lemma holds true, we have for any $t$ large enough, where we implicitly define $n$ such that $t_{n} \leq t \leq t_{n+1}$,
\begin{align*}
	\mu_{\infty}(\mathrm{Ball}(Y_\infty,a\cdot t^{-1-\delta/2}) )\leq \mu_{\infty}(\mathrm{Ball}(Y_\infty, d(Y_\infty,Y_{\hat t_n-}))) \leq \mu_{\infty}(\sfD(\hat t_n))\leq t^{-\frac{3}{(1+\epsilon)^2}+o(1)}
\end{align*}
This entails using the mass distribution principle that the Hausdorff dimension of $\sfA_\infty$ is at least $\frac{3}{(1+\delta/2)(1+\epsilon)^2}$. Taking $\epsilon\rightarrow 0$ and $\delta\rightarrow 0$ finishes the proof of the lower-bound on the Hausdorff dimension

Note that the last claim of Proposition~\ref{prop:dimension of aggregate is 3}.\ref{it:Hausdorff dimension is more than 3}, that $\mu_\infty$ has full support, follows from the fact that the seed and all the blocks carry a measure that has full support and the property of the weight martingale \eqref{eq:limiting value of pemantle urn is as positive} that ensures that any subset with positive $\mu_t$ mass also has some positive $\mu_\infty$ mass.
\end{proof}
\begin{proof}[Proof of Lemma~\ref{lem:local behaviour close to random point}]
	Let $n\geq 1$. 
	Remark that for any $t\in \intervallefo{t_{n-1}}{t_{n}}\cap \mathscr S^\delta$, 
	the probability that $Y_\infty\in \sfD(t)$ is $\frac{w_t}{W_t}$ and
	conditionally on $Y_\infty\in \sfD(t)$ we have $d(Y_\infty, Y_{t-})\geq d(Y_t,Y_{t-})$
	and the latter has (conditional) distribution that has the same law as $d(\rho,U(t,w_t))$.
Recalling assumption \ref{assum:aghaus:random point on block is far from root}, we have $\Pp{d(\rho,U(t,w_t))\geq a\sqrt{w_t}}\geq b$ and since $t$ is assumed to be in $\intervallefo{t_{n-1}}{t_{n}}\cap \mathscr S^\delta$ we have $w_t\geq t^{-2-\delta}\geq t_{n}^{-2-\delta}$. 
From that we get that
\begin{align*}
	\Pp{Y_\infty \in \sfD(t) \text{ and }d(Y_\infty, Y_{t})\geq a \cdot t_n^{-1-\delta/2}} \geq b \cdot \frac{w_t}{W_t}.
\end{align*}	
and those events are independent for different values of $t$.
	Hence
	\begin{align*}
		&\Pp{\nexists t\in \intervallefo{t_n}{t_{n+1}}\cap \mathscr S^\delta, \ Y_\infty \in \sfD(t) \text{ and }d(Y_\infty, Y_{t})\geq a \cdot t_n^{-1-\delta/2}} \\
		&\leq \prod_{t\in \intervallefo{t_n}{t_{n+1}}\cap \mathscr S^\delta} \left(1- b \frac{w_t}{W_t}\right)\\
		&\leq \exp\left(-b \cdot \sum_{t\in \intervallefo{t_n}{t_{n+1}}\cap \mathscr S^\delta}\frac{w_t}{W_t} \right)\\
		&\leq \exp\left(-b \cdot C_{\delta,\epsilon} \cdot \epsilon(1+\epsilon)^n\right),
	\end{align*}
	where in the last line we used the lemma stated below.
	Since the last display is summable in $n$, we can use the Borel-Cantelli lemma to conclude. 
\end{proof}

\begin{lemma}\label{lem:sum of w/W along a slice}
	Under the assumptions, for all $\epsilon,\delta>0$ there exists a constant $C_{\delta,\epsilon}$ such that for all $n\geq 1$ 
	\begin{align*}
	 \sum_{t\in \intervallefo{t_n}{t_{n+1}}\cap \mathscr S^\delta}\frac{w_t}{W_t} \geq C_{\delta,\epsilon} \epsilon (1+\epsilon)^n 
	\end{align*}
\end{lemma}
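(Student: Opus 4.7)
The plan is to compare the sum of interest to log-increments of the companion process
\[
\widetilde{W}_t := \sum_{s \leq t} w_s \, \ind{s \in \mathscr{S}^\delta},
\]
which by \ref{assum:aghaus:weight process grows linearly} and \ref{assum:aghaus:small jumps don't contribute to weight process} still satisfies $\widetilde{W}_t \sim Z_3 \cdot t$ as $t\to\infty$. Since $\widetilde{W}$ is pure-jump with jump set exactly $\mathscr{S}^\delta$, a telescoping identity together with the inequality $\log(1+x) \leq x$ yields
\[
\sum_{t \in (t_n,t_{n+1}] \cap \mathscr{S}^\delta} \frac{w_t}{\widetilde{W}_{t-}} \;\geq\; \log \widetilde{W}_{t_{n+1}} - \log \widetilde{W}_{t_n} \;=\; (1+o(1))\, \log\!\left(\frac{t_{n+1}}{t_n}\right) \;=\; (1+o(1))\, \epsilon(1+\epsilon)^n.
\]

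The second step is to transfer this estimate from $1/\widetilde{W}_{t-}$ to $1/W_t$. For $t \in \mathscr{S}^\delta$ with $t$ large, \ref{assum:weight process} combined with \ref{assum:aghaus:weight process grows linearly} forces $w_t / W_t \leq (Z_2/Z_3)\cdot t^{-3+\delta(t)} \to 0$, while \ref{assum:aghaus:small jumps don't contribute to weight process} gives $\widetilde{W}_t / W_t \to 1$; therefore
\[
\frac{\widetilde{W}_{t-}}{W_t} \;=\; \frac{\widetilde{W}_t}{W_t} - \frac{w_t}{W_t} \xrightarrow[t\to\infty]{} 1.
\]
Hence for any small $\eta>0$, once $t$ is large enough we have $w_t/\widetilde{W}_{t-} \leq (1-\eta)^{-1}\, w_t/W_t$; summing over $t\in (t_n,t_{n+1}] \cap \mathscr{S}^\delta$ and invoking the previous display yields, for all $n$ sufficiently large,
\[
\sum_{t \in [t_n,t_{n+1}) \cap \mathscr{S}^\delta} \frac{w_t}{W_t} \;\geq\; (1-\eta)^2\, \epsilon(1+\epsilon)^n.
\]
The constant $C_{\delta,\epsilon}$ is chosen small enough to absorb the finitely many small indices left out by this argument, which is in any case harmless since Lemma~\ref{lem:local behaviour close to random point} only uses the bound through a Borel--Cantelli step sensitive to large $n$.

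The main conceptual obstacle is the comparison step: the naive inequality $\widetilde{W}_{t-} \leq W_t$ gives $w_t/\widetilde{W}_{t-} \geq w_t/W_t$, which is the \emph{wrong} direction for transferring a lower bound. The point is that under the stated hypotheses this inequality is asymptotically tight, so the multiplicative loss can be absorbed into an arbitrarily small factor $1-\eta$; verifying this tightness is where all three assumptions \ref{assum:weight process}, \ref{assum:aghaus:weight process grows linearly} and \ref{assum:aghaus:small jumps don't contribute to weight process} are used together.
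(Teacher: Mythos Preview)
Your proof is correct and takes a genuinely different route from the paper's.

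The paper argues by dyadic decomposition: it splits $[t_n,t_{n+1})$ into roughly $I_n \sim \epsilon(1+\epsilon)^n/\log 2$ slices $[2^i t_n, 2^{i+1} t_n)$, bounds $W_t$ above by $W_{2^{i+1}t_n}$ on each slice, and uses $W_t\sim Z_3 t$ together with $\sum_{s\le t,\,s\in\mathscr S^\delta} w_s \sim Z_3 t$ to show that each slice contributes at least $\tfrac12+o(1)$. Summing the $I_n$ contributions gives the bound.

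Your approach is more analytic: you introduce the filtered process $\widetilde W$, exploit the exact telescoping identity $\log\widetilde W_{t_{n+1}}-\log\widetilde W_{t_n}=\sum\log(1+w_t/\widetilde W_{t-})$, and use $\log(1+x)\le x$ to get a clean lower bound $\sum w_t/\widetilde W_{t-}\ge (1+o(1))\,\epsilon(1+\epsilon)^n$. The transfer from $\widetilde W_{t-}$ to $W_t$ via the ratio $\widetilde W_{t-}/W_t\to 1$ is where the assumptions are bundled together, and you correctly flag that the naive inequality points the wrong way and must be reversed asymptotically. Your argument in fact yields a constant close to $1$ rather than $1/(2\log 2)$, though this plays no role downstream.

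Both proofs implicitly only establish the bound for $n$ large, and both are fine with this since the lemma feeds a Borel--Cantelli step; your explicit acknowledgement of this point is cleaner than the paper's.
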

\begin{proof}[Proof of Lemma~\ref{lem:sum of w/W along a slice}]
	We work under the assumptions \ref{assum:aghaus:weight process grows linearly} and \ref{assum:aghaus:small jumps don't contribute to weight process} for some $\delta>0$.
	This entails that $W_t=Z_3 t + o(t)$ and $\sum_{s\leq t, s\in \mathscr S^\delta}w_s = W_t + o(W_t)=Z_3 t + o(t) $. 
	First, we split the sum into slices as follows
	\begin{align*}
		\sum_{t\in \intervallefo{t_n}{t_{n+1}}\cap \mathscr S^\delta}\frac{w_t}{W_t}\geq \sum_{i=0}^{I_n} \sum_{t\in \intervallefo{2^i t_n}{2^{i+1} t_n}\cap \mathscr S^\delta}\frac{w_t}{W_t},
	\end{align*}
	where $I_n$ is defined as $I_n:=\max\enstq{i\geq 0}{2^it_n \leq t_{n+1}}$. 
	Now, for a given $i$, we have 
	\begin{align*}
		\sum_{t\in \intervallefo{2^i t_n}{2^{i+1} t_n}\cap \mathscr S^\delta}\frac{w_t}{W_t} &\geq \frac{1}{W_{2^{i+1} t_n}} \cdot  \sum_{t\in \intervallefo{2^i t_n}{2^{i+1} t_n}\cap \mathscr S^\delta} w_t\\
		&\geq \frac{1}{Z_3 2^{i+1} t_n + o(2^{i+1} t_n)} \cdot  (Z_3 \cdot 2^{i+1} t_n + o(2^{i} t_n) - (Z_3 \cdot 2^{i} t_n + o(2^{i+1} t_n)))\\
		&\geq \frac{1}{2}+o(1),
	\end{align*}
	so that $\sum_{t\in \intervallefo{t_n}{t_{n+1}}\cap \mathscr S^\delta}\frac{w_t}{W_t}\geq I_n(\frac{1}{2}+o(1))$. 
	Now, by definition of $I_n$ and of $t_n$ we have
	\begin{align*}
		I_n\sim \frac{\log(t_{n+1})-\log(t_n)}{\log 2}\sim \frac{\epsilon (1+\epsilon)^n}{\log 2}
	\end{align*}  as $n\rightarrow \infty$. 
	This completes the proof. 
\end{proof}
\subsection{Convergence of tree aggregation processes}\label{subsec:app:convergence of aggregation processes}
Finally in this section we prove Proposition~\ref{prop:convergence of aggregation processes}, which ensures that under the appropriate conditions, we have the convergence
  \begin{align*}
	\mathrm{Aggreg}((W_t^r)_{t\geq t_0},\mathsf S^{\bullet,r},\eta^r) \underset{r\rightarrow0}{\rightarrow} \mathrm{Aggreg}((W_t)_{t\geq t_0},\mathsf S^\bullet,\eta),
\end{align*}
in distribution in $\bL$. 
\begin{proof}[Proof of Proposition~\ref{prop:convergence of aggregation processes}]
%

Recall the setting introduced in Section~\ref{subsubsec:convergence of aggregation process}.
We write, for every $r$,
\begin{align*}
	 \mathsf{A}^{r,\bullet}_\infty :=\mathrm{Aggreg}((W_t^r)_{t\geq t_0},\mathsf S^{\bullet,r},\eta^r)
\end{align*}
 and $\mathsf{A}^{\bullet}_\infty :=\mathrm{Aggreg}((W_t)_{t\geq t_0},\mathsf S^\bullet,\eta)$ for the limit candidate. 
We will keep the same notation for both constructions, simply adding a superscript $r$ for the one that depends on the parameter $r$. For the whole proof below, we drop the $\bullet$ superscript for readability.

Let $k\geq 1$ and $t\geq t_0$.
We denote by $\sfA^{(k)}_t$ the subset of $\sfA_\infty$ obtained by only keeping the seed and the blocks corresponding to jump times $s\in \mathscr{J}\cup \intervalleof{t_0}{t}$ and whose weight $w_s$ are larger than $2^{-k}$. 
For simplicity, we assume that with probability $1$, no jump in $\mathscr{J}$ has size exactly $2^{-k}$. (Otherwise we could just replace $2^{-k}$ with another similar threshold that has that property). 
\begin{align*}
	\sfA^{(k)}_t=\mathsf{S}\cup \underset{w_s > 2^{-k}}{\bigcup_{s\in\mathscr J\cap \intervalleof{t_0}{t}}} \sfB_s.
\end{align*}
We consider the finite measure $\mu_t^{(k)}:=\mathbf{1}_{\sfA^{(k)}_t} \cdot \mu_t$ on $\sfA^{(k)}_t$ to see it as a rooted, measured, compact metric space. We define $\sfA^{(k),r}_t$ and $\mu_t^{(k),r}$ similarly for $\sfA_\infty^r$. 

First, let us show that for any fixed $k$ and $t$ then 
\begin{align}\label{eq:convergence Akzt}
	\sfA^{(k),r}_t \rightarrow \sfA^{(k)}_t
\end{align}
in distribution for the GHP topology.

By assumption \ref{assum:convergence weight process}, and using the Skhorokhod embedding theorem, we can couple $(W_s^r)_{s\geq t_0}$ and $(W_s)_{s\geq t_0}$ in such a way that $(W_s^r)_{s\in \intervalleff{t_0}{t}} \rightarrow(W_s)_{s\in \intervalleff{t_0}{t}}$ for the Skorokhod topology.
This entails that the position and size of the jumps of $(W_s^r)_{s\in \intervalleff{t_0}{t}}$ converge to that of $(W_s)_{s\in \intervalleff{t_0}{t}}$ on that interval. 

We enumerate the jump times $s_0,s_1,\dots$ of the process $(W_t)$ on the interval $\intervalleff{t_0}{T}$ say in non-increasing order of the size of the associated jump. 
The above convergence ensures that we can find times $s_0^r,s_1^r,\dots$ to that, as $r\rightarrow0$ we have $s_k^r\rightarrow s_k$ and $w_{s_k^r}^r \rightarrow w_{s_k}$ and $W_{s_k^r}^r \rightarrow W_{s_k}$. (Note that for a given $r$ and $k$, it is possible that $s_k^r$ is not a jump time of the process).
Now we couple the constructions:
\begin{itemize}
	\item We couple the seeds $\mathsf{S}^\bullet$ and  $\mathsf{S}^{r,\bullet}$ in such a way that their GHP distance is small (and tends to $0$ as $r \rightarrow \infty$), as given by assumption \ref{assum:convergence seed}.
	\item For any $k\geq 1$ for which $s_k^r\in \mathscr J^r$, we couple the blocks $\mathsf{B}_{s_k}$ and $\mathsf{B}_{s_k^r}^r$ which have respective distribution $\eta(s_k,w_{s_k})$ and $\eta^r(s_k^r,w_{s_k}^r)$ in such a way that their GHP distance is small. 
	This amounts to saying that we can isometrically embed $\mathsf{B}_{s_k}$ and $\mathsf{B}_{s_k^r}^r$ in the same metric space so that the Hausdorff distance of their image and Prokhorov distance of their push-forward measure is small. 
	This is possible thanks to assumption \ref{assum:convergence block distribution}. 
	\item We couple the uniform random variables $(U_t, \ t\in \mathscr J)$ and $(U_t^r, \ t\in \mathscr J^r)$ in such a way that $U_{s_k}=U_{s_k^r}$, whenever $s_k^r\in \mathscr J^r$. The rest of those can be chosen in an i.i.d.\ fashion.
	\item From all the above, for any $k\geq 1$ we almost surely have $K_{s_k}=K_{s_k^r}^r$ when $r$ is large enough. 
	In that case, we couple the random point $X_{s_k}$ with $X_{s_k^r}^r$ on respectively $\mathsf{B}_{s_k}$ and $\mathsf{B}_{s_k^r}^r$, in such a way that $X_{s_k}$ and $X_{s_k^r}^r$ are close together. 
	This is possible because they are embedded in such a way that the Prokhorov distance between their respective measure is small. 
\end{itemize}
Now, since $\sfA^{(k)}_t$ and $\sfA^{(k),r}_t$ are determined by a finite (though random) number of gluing events, they can be shown to be close to one another in GHP topology when $r$ is large enough. 
More precisely, $\sfA^{(k)}_t$ only depends on the blocks and gluing events corresponding to all the jumps in 
\begin{align*}
	\enstq{u \in \mathscr{J} \cap \intervalleoo{t_0}{t}}{u \preceq s \text{ for some } s\in\mathscr{J} \cap \intervalleoo{t_0}{t} \text{ with } w_s >2^{-k} },
\end{align*}
and similarly for $\sfA^{(k),r}_t$. 
Then the coupling of all those objects and quantities presented above ensures that the convergence \eqref{eq:convergence Akzt} holds, (see \cite[Lemma~2.4]{senizergues_growing_2022} for a similar statement only for the GH topology, the addition of measures is straightforward).

Now, in order to prove Proposition~\ref{prop:convergence of aggregation processes}, we just need to show that for any $\epsilon>0$, we have 
\begin{align*}
	\limsup_{t\rightarrow \infty} \limsup_{k\rightarrow \infty }\Pp{d_{\mathrm{GHP}}(\sfA^{(k),r}_t, \sfA^{r}_\infty) \geq \epsilon } \underset{r\rightarrow0}{\rightarrow} 0,
\end{align*} 
as well as a similar statement $\limsup_{t\rightarrow \infty} \limsup_{k\rightarrow \infty }\Pp{d_{\mathrm{GHP}}(\sfA^{(k),r}_t, \sfA^{r}_\infty) \geq \epsilon } = 0$ for the limit. 
Indeed, for any $\epsilon>0$, it would suffice to first take $t$ and $k$ large enough so that 
 $\Pp{d_{\mathrm{GHP}}(\sfA^{(k),r}_t, \sfA^{r}_\infty) \geq \frac{\epsilon}{3}}$ and $\Pp{d_{\mathrm{GHP}}(\sfA^{(k)}_t, \sfA_\infty) \geq \frac{\epsilon}{3} }$ are smaller than $\frac{\epsilon}{3}$ 
 and then take $r$ small enough so that $d_{\mathrm{GHP}}(\sfA^{(k),r}_t \sfA^{(k)}_t)\leq \frac{\epsilon}{3}$ with probability larger than $1-\frac{\epsilon}{3}$. 
This would ensure that on the coupling that we determined above, we have   $\Pp{d_{\mathrm{GHP}}(\sfA_\infty, \sfA^{r}_\infty) \geq \epsilon} \leq \epsilon$, so that $\sfA^{r}_\infty \rightarrow \sfA_\infty$ in probability.

Now, we fix some $\epsilon>0$ and we aim at showing that  we can find $k$ and $t$ such that for $r$ small enough we can couple $\sfA_\infty$ and $\sfA_\infty^{r}$ in such a way that their GHP distance is smaller than $\epsilon$ with probability at least $1-\epsilon$. 
This will show that $\sfA_\infty^{r}$ converges in distribution as $r\rightarrow0$ towards $\sfA_\infty$.

First, 
\begin{align*}
	d_{\rmH}(\sfA^{(k)}_t,\sfA_\infty) \underset{t\rightarrow \infty }{\rightarrow} d_{\rmH}(\sfA^{(k)},\sfA_\infty) \underset{k\rightarrow\infty }{\rightarrow} 0,
\end{align*}
thanks to \eqref{eq:A^(k) is a Cauchy sequence} so this term is small with arbitrary high probability, by taking $t$ and $k$ large enough. 
The same is true for $d_{\rmH}(\sfA^{r,(k)}_t,\sfA_\infty^r)$ using \ref{assum:tight control on weight process}. 
Then 
\begin{align*}
	d_{\rmP}(\mu_t^{(k)},\mu_\infty) \leq d_{\rmP}(\mu_t^{(k)},\mu_t) + d_{\rmP}(\mu_t,\mu_\infty) \leq (1- \mu_t(\sfA_t^{(k)})) + d_{\rmP}(\mu_t,\mu_\infty),
\end{align*}
where the second term can be made small by taking a large $t$ and the first one can be made small by taking a large $k$. 
Last 
\begin{align*}
d_{\rmP}(\mu_t^{(k),r},\mu_\infty^r) \leq d_{\rmP}(\mu_t^{(k),r},\mu_t^r) + d_{\rmP}(\mu_t^r,\mu_\infty^r) \leq (1- \mu_t^r(\sfA_t^{(k),r})) + d_{\rmP}(\mu_t^r,\mu_\infty^r).
\end{align*}
The first term is small if we take $t$ and $k$ large and then $r$ large. 

We just have to take care of the last term $d_{\rmP}(\mu_t^r,\mu_\infty^r)$.
For that, suppose that we have constructed $\sfA^{r,(k)}_t$ and $\sfA^{(k)}_t$ on the same probability space and that we are on the event where $d_{\mathrm{GHP}}(\sfA^{r,(k)}_t,\sfA^{(k)}_t)\leq \epsilon$. This entails that we can isometrically embed $\sfA^{r,(k)}_t$ and $\sfA^{(k)}_t$ in the same metric space with isometries $\phi$ and $\phi'$ so that $d_{\rmH}(\phi(\sfA^{r,(k)}_t), \phi'(\sfA^{(k)}_t))\leq \epsilon$ and $d_{\rmP}(\phi_*\mu_t^{(k),r} , \phi'_*\mu_t^{(k)})\leq \epsilon$.
Now since $\sfA^{(k)}_t$ is compact, it can be covered with a finite number $N(\epsilon)$ of balls of radius $\epsilon$. If we abuse notation and consider $\sfA^{r,(k)}_t$ and $\sfA^{(k)}_t$ as embedded in the same metric space as above, then using the same centres but taking the radii to be $2\epsilon$ instead of $\epsilon$ allows us to cover $\sfA^{r,(k)}_t$ as well. 

This gives us a covering of $\sfA^{r,(k)}_t$ of a certain number of pieces $\mathcal C_1^r, \mathcal C_2^r \dots \mathcal C_N^r$ which all have diameter smaller than $2\epsilon$, and where $N=N(\epsilon)$.
From there, we can show using estimates on the mass martingales that the $\mu_\infty^r$-mass of any of the $\sfD(t,\mathcal C^r)$  will not substantially deviate from their $\mu_t$-mass and this will prove that indeed those two measures are close in Prokhorov sense.

Then, for any $1\leq i \leq N(\epsilon)$ we know that we have
\begin{align*}
	\mu_\infty^r(\sfD(t,\mathcal C_i^r))= M_{t,\infty}(\mathcal C_i^r) \quad \text{ and }\quad \mu_t^r(\sfD(t,\mathcal C_i^r))= M_{t,t}(\mathcal C_i^r)= \mu_t(\mathcal C_i^r).
\end{align*}
The variance of $\mu_\infty^r(\sfD(t,\mathcal C_i^r))$ is, thanks to \eqref{eq:bound on variance of limiting mass} and Lemma~\ref{lem:sum of w/W to the power k}, bounded above by 
\begin{align*}
	\sum_{s >t}\left(\frac{w_s}{W_s}\right)^2 \leq \frac{1}{(W_t^r)^2}\cdot Z_1^r \cdot  (Z_2^r)^2 \cdot t^{-1+\delta(t)}.
\end{align*}
Note that using Cauchy-Schwarz inequality we have
\begin{align*}
\left(\sum_{i=1}^{N} |M_{t,\infty}(\mathcal C_i^r)- \Ec{M_{t,\infty}(\mathcal C_i^r)}|\right)^2\leq N(\epsilon)\cdot \sum_{i=1}^{N} |M_{t,\infty}(\mathcal C_i^r)- \Ec{M_{t,\infty}(\mathcal C_i^r)}|^2.
\end{align*}
Now using Markov's inequality we get 
\begin{align*}
	\Pp{\sum_{i=1}^N |M_{t,\infty}(\mathcal C_i^r)- \Ec{M_{t,\infty}(\mathcal C_i^r)}| \geq \epsilon}
	&\leq \frac{1}{\epsilon^2}\cdot N \cdot \sum_{i=1}^N\Var{M_{t,\infty}(\mathcal C_i^r)}\\
	&\leq N \cdot \epsilon^{-2}\cdot N \cdot \sum_{s\geq t}\left(\frac{w_s}{W_s}\right)^2\\
	& \leq \epsilon^{-2} N^2 \cdot \frac{1}{(W_t^r)^2}\cdot Z_1^r \cdot  (Z_2^r)^2 \cdot  t^{-1+\delta(t)}.
\end{align*} 
and the last display tends to $0$ in probability as $t\rightarrow \infty$, uniformly in $r$.
This ensures that with high probability as $t\rightarrow\infty$ we have
\begin{align*}
	\sum_{i=1}^N |\mu_\infty^r(\sfD(t,\mathcal C_i^r))- \mu_t^r(\sfD(t,\mathcal C_i^r))| \leq \epsilon.
\end{align*}
This ensures that we can couple two random variables $X$ and $Y$ with respective distribution $\mu_t^r$ and $\mu_\infty^r$ in such a way that with probability at least $1-2\epsilon$, the points $X$ and $Y$ fall into the same $\sfD(t,\mathcal C_i^r)$.
Since 
\begin{align*}
	\sup_{1\leq i\leq N}\diam(\sfD(t,\mathcal C_i^r))\leq 2\epsilon+2d_{\rmH}(\sfA_t^r,\sfA_\infty^r),
\end{align*}
this proves that the Prokhorov distance between $\mu_t^r$ and $\mu_\infty^r$ is smaller than $3\epsilon$ with high probability as $t$ is large.
This completes the proof.
\end{proof}

\end{document}